 \pgfplotsset{compat=newest,compat/show suggested version=false}
\numberwithin{equation}{section}
\numberwithin{figure}{section}
\theoremstyle{plain}
\newtheorem{theorem}{Theorem}[section]
\newtheorem{lemma}[theorem]{Lemma}
\newtheorem{proposition}[theorem]{Proposition}
\theoremstyle{definition}
\newtheorem{definition}[theorem]{Definition}
\newtheorem{hypothesis}[theorem]{Hypothesis}
\newtheorem{remark}[theorem]{Remark}
\newcommand{\E}{\mathbb{E}}
\newcommand{\ud}{\ensuremath{\mathrm{d} }}
\newcommand{\Norm}[1]{\left\| #1 \right\|}
\DeclareMathOperator{\sign}{\ensuremath{sign}}
\newcommand{\calG}{\mathcal{G}}
\newcommand{\calJ}{\mathcal{J}}
\newcommand{\calI}{\mathcal{I}}
\newcommand*{\one}{{ {\rm 1\mkern-1.5mu}\!{\rm I} }}
\newcommand{\R}{\mathbb{R}}
\renewcommand{\P}{\mathbb{P}}
\newcommand{\myEnd}{\hfill$\square$}
\newcounter{Pq}
\newcounter{Le}
\numberwithin{Pq}{section}
\numberwithin{Le}{section}
\title{
  Asymptotic properties of stochastic partial differential equations\\
  in the sublinear regime
}
\author[]{ Le Chen\footnote{Research is partially
supported by \textit{Mathematics and Physical Sciences-Collaboration Grants for
Mathematicians} from \textit{Simons Foundation} (Award Number: 959981). Email:
  \href{mailto:le.chen@auburn.edu}{le.chen@auburn.edu}, } }
\author[]{
  Panqiu Xia\footnote{Email: \href{mailto:pqxia@auburn.edu}{pqxia@auburn.edu}}
}
\affil{
  Department of Mathematics and Statistics\\
  Auburn University
}
\date{\today}
\begin{document}
\maketitle
\setlength{\parindent}{1.5em}

\begin{abstract}
  In this paper, we investigate stochastic heat equation with sublinear
  diffusion coefficients. By assuming certain concavity of the diffusion
  coefficient, we establish non-trivial moment upper bounds and almost sure
  spatial asymptotic properties for the solutions. These results shed light on
  the \textit{smoothing intermittency effect} under \textit{weak diffusion}
  (i.e., sublinear growth) previously observed by Zeldovich \textit{et
  al.}~\cite{zel-dovich.molchanov.ea:87:self-excitation}. The sample-path
  spatial asymptotics obtained in this paper partially bridge a gap in earlier
  works of Conus \textit{et al.}~\cite{conus.joseph.ea:13:on,
  conus.joseph.ea:13:on*1}, which focused on two extreme scenarios: a linear
  diffusion coefficient and a bounded diffusion coefficient. Our approach is
  highly robust and applicable to a variety of stochastic partial differential
  equations, including the one-dimensional stochastic wave equation and the
  stochastic fractional diffusion equations.

  \vspace{1em}

  \noindent\textit{Keywords}: Stochastic partial differential equation,
  sublinear growth, asymptotic concavity, moment bounds, intermittency, spatial
  asymptotics.

	\noindent{\it \noindent AMS 2010 subject classification.}
	Primary. 60H15; Secondary. 35R60.
\end{abstract}

{
  \hypersetup{linkcolor=black}
  \tableofcontents
}

\section{Introduction}
In this paper, we investigate the \textit{stochastic partial differential
equations} (SPDEs) in the sublinear regime. More precisely, we focus on the
asymptotic behavior of the following \textit{stochastic heat equation} (SHE)
\begin{align}\label{E:SHE}
  \begin{dcases}
    \frac{\partial}{\partial t} u(t,x) = \frac{1}{2} \Delta u(t,x) + \rho(u(t,x)) \dot{W} (t,x), & t>0, \: x\in\R^d, \\
    u(0,\cdot ) = \mu,
  \end{dcases}
\end{align}
where the diffusion coefficient $\rho(\cdot)$ is assumed to be locally bounded
and exhibits sublinear growth at infinity. Previous studies have extensively
examined the case when the diffusion coefficient $\rho(\cdot)$ has linear growth
at infinity, which results in an intermittent solution. In particular, a
solution is said to be \textit{intermittent} if the \textit{moment Lyapunov
exponents} $\overline{\lambda}_p$ and $\underline{\lambda}_p$ of the solution,
defined by
\begin{align*}
  \overline{\lambda}_p  \coloneqq \frac{1}{p} \limsup_{t \to \infty} \log \E\left[ \left| u(t,x) \right|^p \right] \quad \text{and} \quad
  \underline{\lambda}_p \coloneqq \frac{1}{p} \liminf_{t \to \infty} \log \E\left[ \left| u(t,x) \right|^p \right] \quad (p\ge 2),
\end{align*}
satisfy the property that $\underline{\lambda}_2>0$. The literature on this
topic is extensive, and interested readers may
consult~\cite{carmona.molchanov:94:parabolic,
foondun.khoshnevisan:09:intermittence, chen.dalang:15:moments, chen:15:precise,
khoshnevisan.kim.ea:17:intermittency} and references therein. Zeldovich
\textit{et al.}~\cite[Chapter 9]{zel-dovich.ruzmauikin.ea:90:almighty} have
observed that intermittency is a universal phenomenon that occurs irrespective
of the underlying properties of the instability in a random medium, as long as
the random field is of multiplicative type. However, they have also noted
in~\cite{zel-dovich.molchanov.ea:87:self-excitation} (see also ~\cite[Section
8.9]{zel-dovich.ruzmauikin.ea:90:almighty}) that ``\textit{smoothing
intermittency}'', where the high maxima of the solution have smaller growth,
should be expected in the presence of ``\textit{weak diffusion}''---when
$\rho(\cdot)$ exhibits sublinear growth at infinity. The authors substantiated
their statement by highlighting the power growth of the moments when
$\rho(\cdot)$ is bounded, or more specifically, when
\begin{align}\label{E:Ex-BoundedRho}
  \rho(u) = \frac{u}{1+u}\,,
\end{align}
and when the noise is white in time and space-independent. Inspired by their
previous work, the objective of this paper is to conduct a comprehensive
analysis of the smoothing intermittency property for some more general weak
diffusion cases and demonstrate how the moment growth rate of the solution is
influenced by that of $\rho(\cdot)$.

The examination of SPDEs in the sublinear regime is also motivated by the
requirement for more realistic biological population models. As highlighted by
K\"onig in the Appendix of~\cite{konig:16:parabolic}, the \textit{parabolic
Anderson model} (PAM) (i.e., $\rho(u) = \lambda u$) utilized in population
dynamics, leads to excessive branching and killing rates that are not reflective
of real-world scenarios as it lacks any form of birth or death control. In
contrast, introducing a sublinear growth for $\rho$ provides an avenue for
developing models that may address this issue and provide a more accurate
depiction of the dynamics of biological populations.

Both the PAM and the SHE with additive noise, or simply the \textit{additive
SHE}, (i.e., $\rho\equiv 1$, or more generally, the case when $\rho$ is bounded)
have been extensively studied in the literature. They represent two extreme
cases where rich properties have been previously derived. For instance, when
$d=1$, the noise is space-time white noise, and the initial condition $\mu$ is
constant, Conus \textit{et al.}~\cite{conus.joseph.ea:13:on} showed that
\begin{align}\label{E:Conus-SpAsym}
  \begin{dcases}\medskip
    \phantom{\log}\sup_{|x|\le R} u(t,x) \asymp \left[\log R\right]^{1/2} & \text{SHE with bounded $\rho$ (Theorem 1.2 {\it ibid.});} \\
    \log \sup_{|x|\le R} u(t,x) \asymp \left[\log R\right]^{2/3}          & \text{PAM (Theorem 1.3 {\it ibid.}).}
  \end{dcases}
\end{align}
Here and in this paper, we use the notation $f(x) \lesssim g(x)$ to denote that
there exists a nonrandom constant $C>0$ such that
$\liminf_{x\to\infty}f(x)/g(x)\le C$; the notation $f(x)\gtrsim g(x)$ is defined
analogously. We also write $f(x)\asymp g(x)$ if both $f(x) \lesssim g(x)$ and
$f(x) \gtrsim g(x)$. The distinct behaviors exhibited in~\eqref{E:Conus-SpAsym}
naturally raise questions regarding the dynamics when $\rho$ is neither bounded
nor linear, but rather demonstrates sublinear growth, thereby providing a
potential interpolation between these two extreme scenarios. To address the
scenario of sublinear growth, a significantly different method must be
developed, which constitutes the primary contribution of this paper.\medskip

Sublinear examples of $\rho(u)$ (for $u>0$) typically include
\begin{gather}\label{E:Ex-LipRho}
  \rho(u) = \frac{u}{(r+u)^{1-\alpha}}, \quad \text{$\alpha\in[0,1)$ and $r\ge 0$}; \\
  \rho(u) = u^\alpha \left[\log(e + u^2)\right]^{-\beta}, \quad \text{with }
  \begin{cases}
    \text{$\alpha = 0$ and $\beta<0$}        & \text{Case (i)} , \\
    \text{$\alpha\in(0,1)$ and $\beta\in\R$} & \text{Case (ii)} ,\\
    \text{$\alpha = 1$ and $\beta>0$}        & \text{Case (iii)};
  \end{cases}
  \label{E:Ex-LogRho} \shortintertext{and}
  \rho (u) = u\exp\left( -\beta \left(\log \big(\log (e+u^2\,)\big)\right)^{\kappa} \right)\quad \text{with $\kappa > 0$ and $\beta>0$}.
  \label{E:Ex-VSV}
\end{gather}
Here, $e$ denotes the \textit{Euler constant}. In particular, letting $r=1$ and
$\alpha=0$ in~\eqref{E:Ex-LipRho}, we reduce to the case of bounded $\rho$
in~\eqref{E:Ex-BoundedRho}. The following important example is a special case
of~\eqref{E:Ex-LipRho} when $r=0$:
\begin{align}\label{E:Ex-AlphaRho}
  \rho(u) = u^\alpha, \quad \alpha\in(0,1).
\end{align}
It should be noted that SHEs with the diffusion coefficient given
in~\eqref{E:Ex-AlphaRho} are closely related to
superprocesses; see,
e.g.,~\cite{dawson:93:measure-valued,etheridge:00:introduction,perkins:02:dawson-watanabe}.
Among the three cases in~\eqref{E:Ex-LogRho}, cases~(i) and (iii) are more
interesting, as they are logarithmic perturbations of the additive SHE and the
PAM, respectively. When $\kappa =1$, $\rho$ in~\eqref{E:Ex-VSV} reduces to
case~(iii) of~\eqref{E:Ex-LogRho}. Figure~\ref{F:Examples} below illustrates the
hierarchy of these examples. One would expect properties, such as the moment
growth rates and spatial asymptotics, to transition from those of the additive
SHE to those of the PAM. For example, for the case~\eqref{E:Ex-LipRho}, one
would expect a polynomial growth in $t$ of moments, while the
case~\eqref{E:Ex-VSV} should lead to some exponential growth, but with a
sublinear dependence on $t$ in the exponent. Note that examples
in~\eqref{E:Ex-LipRho}--\eqref{E:Ex-VSV} are all for the case when
$u\ge 0$. If the solution is signed, one needs to replace $u$ by $|u|$.

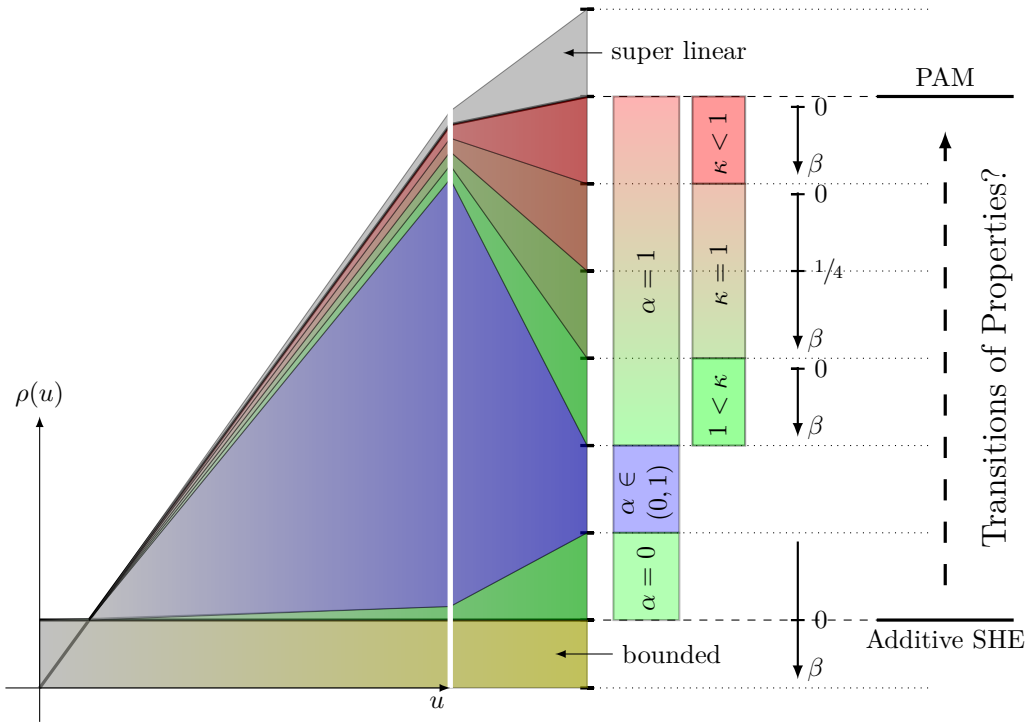
\begin{figure}[htpb!]
  \centering
  \begin{center}
    \begin{tikzpicture}[scale=0.9, transform shape]
      \tikzset{>=latex}
      \def\vd{0.2}
      \def\vx{6}
      \def\mag{2}
      \def\vy{9.0}
      \def\magbar{5.5}
      \newcommand{\MyColorH}{white}
      \newcommand{\MyColorG}{red}
      \newcommand{\MyColorF}{red!80!green}
      \newcommand{\MyColorE}{red!30!green}
      \newcommand{\MyColorD}{green}
      \newcommand{\MyColorC}{blue}
      \newcommand{\MyColorB}{green}
      \newcommand{\MyColorA}{yellow}

      \coordinate (o0) at (0,0);
      \coordinate (o1) at (0,1);           \coordinate (M0) at (\vx+\mag,0);
      \coordinate (R1) at (\vx,1.0);       \coordinate (M1) at (\vx+\mag,1);
      \coordinate (R2) at (\vx,1.0+1*\vd); \coordinate (M2) at (\vx+\mag,1+1*\vy/7);
      \coordinate (R3) at (\vx,8.5-5*\vd); \coordinate (M3) at (\vx+\mag,1+2*\vy/7);
      \coordinate (R4) at (\vx,8.5-4*\vd); \coordinate (M4) at (\vx+\mag,1+3*\vy/7);
      \coordinate (R5) at (\vx,8.5-3*\vd); \coordinate (M5) at (\vx+\mag,1+4*\vy/7);
      \coordinate (R6) at (\vx,8.5-2*\vd); \coordinate (M6) at (\vx+\mag,1+5*\vy/7);
      \coordinate (R7) at (\vx,8.5-1*\vd); \coordinate (M7) at (\vx+\mag,1+6*\vy/7);
      \coordinate (R8) at (\vx,8.5-0*\vd); \coordinate (M8) at (\vx+\mag,1+7*\vy/7);

      \begin{scope}[very thick]
        \draw [name path = Add] (0,1) -- (R1);
        \draw [name path = PAM] (0,0) -- (R7);
        \draw (R7) -- (M7);
        \draw (R1) -- (M1);
        \path[name intersections={of=Add and PAM}]
          (intersection-1) coordinate (o2);

        \draw (M0) --++ (0.1,0) --++(-0.2,0);
        \draw (M1) --++ (0.1,0) --++(-0.2,0);
        \draw (M2) --++ (0.1,0) --++(-0.2,0);
        \draw (M3) --++ (0.1,0) --++(-0.2,0);
        \draw (M4) --++ (0.1,0) --++(-0.2,0);
        \draw (M5) --++ (0.1,0) --++(-0.2,0);
        \draw (M6) --++ (0.1,0) --++(-0.2,0);
        \draw (M7) --++ (0.1,0) --++(-0.2,0);
        \draw (M8) --++ (0.1,0) --++(-0.2,0);

        \draw ([xshift = 11em]M7) --++ (2,0) node [midway,above]  (A) {PAM};
        \draw ([xshift = 11em]M1) --++ (2,0) node [midway, below] (B) {Additive SHE};
        \draw[shorten >= 1.2em, shorten <= 1.2em, ->, dash pattern=on 8pt off 8pt] (B) -- (A) node [midway, below, sloped, yshift = -1em] {\Large Transitions of Properties?};
      \end{scope}

      \begin{scope}[dotted]
        \def\myShift{5}
        \draw         (M0) --++ (\myShift,0);
        \draw[dashed] (M1) --++ (\myShift,0);
        \draw         (M2) --++ (\myShift,0);
        \draw         (M3) --++ (\myShift,0);
        \draw         (M4) --++ (\myShift,0);
        \draw         (M5) --++ (\myShift,0);
        \draw         (M6) --++ (\myShift,0);
        \draw[dashed] (M7) --++ (\myShift,0);
        \draw         (M8) --++ (\myShift,0);
      \end{scope}

      \begin{scope}[thick]
        \def\myShift{8em}

        \begin{scope}[shorten >=0.3em, shorten <=0.3em]
          \draw[->] ([xshift = \myShift]M6) -- ([xshift = \myShift]M4) node [right, yshift = 0.7em] {$\beta$};
          \draw[->] ([xshift = \myShift]M2) -- ([xshift = \myShift]M0) node [right, yshift = 0.7em] {$\beta$};
          \draw[->] ([xshift = \myShift]M4) -- ([xshift = \myShift]M3) node [right, yshift = 0.7em] {$\beta$};
          \draw[->] ([xshift = \myShift]M7) -- ([xshift = \myShift]M6) node [right, yshift = 0.7em] {$\beta$};
        \end{scope}

        \draw ([xshift = \myShift, yshift = -0.4em]M7) --++(-0.1,0) --++(0.2,0) node [right] {$0$};
        \draw ([xshift = \myShift, yshift = -0.4em]M6) --++(-0.1,0) --++(0.2,0) node [right] {$0$};
        \draw ([xshift = \myShift, yshift = -0.4em]M4) --++(-0.1,0) --++(0.2,0) node [right] {$0$};
        \draw ([xshift = \myShift]M5) --++(-0.1,0) --++(0.2,0) node [right] {$\sfrac{1}{4}$};
        \draw ([xshift = \myShift]M1) --++(-0.1,0) --++(0.2,0) node [right] {$0$};
      \end{scope}

      \begin{scope}[thick, every node/.style={right, rotate=90, black, opacity = 1, align = center}]
        \def\myShift{1.0em}
        \def\myShiftPlus{3.5em}
        \draw[top color = \MyColorG, bottom color = \MyColorD,opacity =0.3] ([xshift = \myShift]M3) rectangle node [xshift = -2.0em] {$\alpha=1$} ([xshift = \myShiftPlus]M7);
        \draw[fill = \MyColorC, opacity =0.3]                               ([xshift = \myShift]M2) rectangle node [xshift = -2.0em, text width =3em, ] {$\alpha\in$\\$(0,1)$} ([xshift = \myShiftPlus]M3);
        \draw[fill = \MyColorB, opacity =0.3]                               ([xshift = \myShift]M1) rectangle node [xshift = -1.7em] {$\alpha=0$} ([xshift = \myShiftPlus]M2);
      \end{scope}

      \begin{scope}[thick, every node/.style={right, rotate=90, black, opacity = 1, xshift = -1.7em}]
        \def\myShift{4em}
        \def\myShiftPlus{6em}
        \draw[fill = \MyColorG,opacity =0.4]                                 ([xshift = \myShift]M6) rectangle node {$\kappa<1$} ([xshift = \myShiftPlus]M7);
        \draw[top color = \MyColorF, bottom color = \MyColorE, opacity =0.3] ([xshift = \myShift]M4) rectangle node {$\kappa=1$} ([xshift = \myShiftPlus]M6);
        \draw[fill = \MyColorD,opacity =0.4]                                 ([xshift = \myShift]M3) rectangle node {$1<\kappa$} ([xshift = \myShiftPlus]M4);
      \end{scope}

      \begin{scope}[left color = white, opacity = 0.4, draw = none]
        \filldraw [right color = \MyColorA] (o0) -- (o1) -- (R1) -- (M1) -- (M0) -- (o0);
        \filldraw [right color = \MyColorB] (o1) -- (R1) -- (M1) -- (M2) -- (R2) -- (o1);
        \filldraw [right color = \MyColorC] (o2) -- (R2) -- (M2) -- (M3) -- (R3) -- (o2);
        \filldraw [right color = \MyColorD] (o2) -- (R3) -- (M3) -- (M4) -- (R4) -- (o2);
        \filldraw [right color = \MyColorE] (o2) -- (R4) -- (M4) -- (M5) -- (R5) -- (o2);
        \filldraw [right color = \MyColorF] (o2) -- (R5) -- (M5) -- (M6) -- (R6) -- (o2);
        \filldraw [right color = \MyColorG] (o2) -- (R6) -- (M6) -- (M7) -- (R7) -- (o2);
        \filldraw [right color = \MyColorH] (o2) -- (R7) -- (M7) -- (M8) -- (R8) -- (o2);
      \end{scope}

      \filldraw[white, line width = 2pt] (\vx,0) -- (\vx, 10);
      \draw [->] (-0.5,0) -- (\vx,0) node [below, xshift =-0.5em] {$u$};
      \draw [->] (0,-0.5) -- (0,  4) node [above] {$\rho(u)$};

      \begin{scope}[xshift =29em]
        \path (M0) -- (M1) node [midway, xshift = +3.2em] (A) {bounded};      \draw[->] (A) --++(-1.7,0);
        \path (M8) -- (M7) node [midway, xshift = +3.5em] (A) {super linear}; \draw[->] (A) --++(-1.7,0);
      \end{scope}

    \end{tikzpicture}
  \end{center}
  \caption{Examples of $\rho$ given in~\eqref{E:Ex-LipRho}--\eqref{E:Ex-VSV}.}
  \label{F:Examples}
\end{figure}

Let us proceed to set up the problem. The noise $\dot{W}$ in SHE~\eqref{E:SHE}
is a centered Gaussian noise that is white in time and homogeneously colored in
space. Its covariance structure is given by
\begin{align*}
	\E \left[\dot{W}(t,x) \dot{W}(s,y)\right] = \delta_0 (t-s) f(x-y).
\end{align*}
Here, $\delta_0$ denotes the Dirac delta measure at $0$ and $f$ is the
correlation (generalized) function on $\R^d$, which satisfies the following
hypothesis:

\begin{hypothesis}\label{H:corre}
  The correlation function $f:\R^d \to \R$ is a nonnegative and
  nonnegative-definite (generalized) function that is not identically zero, such
  that the following \textit{Dalang's condition}~\cite{dalang:99:extending} is
  satisfied:
	\begin{align}\label{E:Dalang}
		\int_{\R^d} \frac{\widehat{f}(\ud\xi)}{1 + |\xi|^2}<\infty,
	\end{align}
  where $\widehat{f}(\xi) = \int_{\R^d} f(x) e^{-i x\cdot \xi}\ud x$ is the
  Fourier transform of $f$.
\end{hypothesis}

By using the Fourier transform and the Plancherel theorem, it is easy to verify
that Dalang's condition~\eqref{E:Dalang} is equivalent to the following
condition (see, e.g.,~\cite[Formula (20)]{dalang:99:extending}):
\begin{align}\label{E:h}
		h(t)
    \coloneqq \int_0^t \ud s \iint_{\R^{2d}} \ud y \ud y'\: p_s(y) p_s(y') f(y-y')
    = \frac{1}{2}\int_0^{2t} \ud s \int_{\R^d} \ud z\: p_{s}(z) f(z)
    < \infty, \quad \forall t>0,
\end{align}
where $p_t(x) \coloneqq \left(2\pi\right)^{-d/2}\exp\left(-|x|^2/(2t)\right)$
refers to the heat kernel. The function $h$ plays an essential role in our main
result---Theorem~\ref{T:mup} below, whose asymptotic behaviors both at infinity
and around zero will be postponed to Appendix~\ref{S:app}. Note that the
nonnegativity assumption of $f$ in Hypothesis~\ref{H:corre} ensures that the
function $h(\cdot)$ is an increasing function on $\R_+$. \bigskip

To facilitate the analysis, we introduce the following hypothesis on $\rho$,
which covers all examples given in~\eqref{E:Ex-LipRho}--\eqref{E:Ex-VSV} with all $u$ replaced $|u|$:

\begin{hypothesis}\label{H:rho}
  The diffusion coefficient $\rho:\R\to \R$ satisfies the following properties:
  \begin{enumerate}[(i)]
    \item $\rho$ is a locally bounded function.
    \item $\displaystyle \lim_{x\to \pm \infty} \frac{\rho(x)}{x} = 0$;
    \item There exists a constant $M_0\geq 0$, such that the function $|\rho|$
      is concave separately on $(-\infty, -M_0]$ and $[M_0,\infty)$.
  \end{enumerate}
\end{hypothesis}

The initial condition $\mu$ in~\eqref{E:SHE} also plays an active role in
shaping the properties of the solution, for which we make the following
assumption:

\begin{hypothesis}\label{H:rough}
  The initial condition $\mu$ for SHE~\eqref{E:SHE} is a signed Borel
  measure\footnote{We follow the convention that when $\mu$ is absolutely
  continuous with the Lebesgue measure, it is identified as its Lebesgue
density.} on $\R^d$ such that
\begin{enumerate}[(i)]
 \item For any $(t,x) \in \R_+\times \R^d$, it holds that, in the sense of
   Lebesgue integral,
   \begin{align}\label{E:J0}
     \begin{aligned}
      & \calJ_0(t,x)   \coloneqq \int_{\R^d} \mu(\ud y)\: p_{t} (x-y) \in (-\infty,\infty), \quad \text{or equivalently,} \\
      & \calJ_+(t,x) \coloneqq \int_{\R^d} |\mu| (\ud y)\: p_{t} (x-y) < \infty,
     \end{aligned}
   \end{align}
   where $\mu = \mu_+ - \mu_-$ is the Hahn decomposition of $\mu$ and $|\mu| =
   \mu_+ + \mu_-$;
  \item Moreover, if $d\geq 2$, then for any $(t,x) \in \R_+\times \R^d$,
    \begin{align}\label{E:J1}
      \calJ_1(t,x)
      \coloneqq \int_0^t \ud s \iint_{\R^{2d}} \ud y\ud y'\:
      p_{t-s} (x-y) p_{t-s} (x-y') f(y - y') \calJ_0^2(s,y)
      <\infty.
    \end{align}
  \end{enumerate}
\end{hypothesis}

As usual, the solution to~\eqref{E:SHE} is understood as the mild solution to
the corresponding stochastic integral equation:
\begin{align}\label{E:Mild}
  u(t,x) = \mathcal{J}_0(t,x) + \int_0^t \int_{\R^{d}} p_{t-s}(x-y) \rho\left(u(s,y)\right) W(\ud s,\ud y), \quad t>0,\, x\in \R^d,
\end{align}
where the stochastic integral is understood in the sense of
Walsh~\cite{walsh:86:introduction, dalang:99:extending}. Now we are ready to
state our main result as follows:

\begin{theorem}\label{T:mup}
  Under Hypotheses~\ref{H:corre},~\ref{H:rho}, and~\ref{H:rough}, let $u(t,x)$
  be a solution to SHE~\eqref{E:SHE}. Then, for all $(t,x,p)\in \R_+ \times
  \R^d\times[2,\infty)$, it holds that
  \begin{align}\label{E:mup}
    \Norm{u(t, x)}_p^2 \leq
      & 2 \calJ_0^2(t,x)  + 2 (2\pi)^{d}\left(  h(t)^{-1} \calJ_1(t,x) + 4 K_M^2 p\: h(t)  + F^{-1}( 2 p\: h(t))   \right).
  \end{align}
  In~\eqref{E:mup}, $\calJ_0(\cdot,\circ)$, $\calJ_1(\cdot,\circ)$ and
  $h(\cdot)$ are defined in~\eqref{E:J0},~\eqref{E:J1} and~\eqref{E:h} above,
  respectively. The constant $K_M$ is defined as
  \begin{align}\label{E:KM}
    K_M \coloneqq \sup_{x\in (-M, M)} |\rho(x)|,
  \end{align}
  with $M$ given in part (iii) of Lemma~\ref{L:ccvtrho} below. The function
  $F(\cdot)$ and its inverse $F^{-1}(\cdot)$, both determined by $\rho$,
  are defined in~\eqref{E:F} and~\eqref{E:F^-1}, respectively. In particular,
  the following statements hold:
  \begin{enumerate}[(i)]
    \item If $d=1$, one can replace $4h(t)^{-1}\calJ_1(t,x)$ in~\eqref{E:mup} by
      $2^{7/2} \pi \calJ_+^2(t/2,x)$ (see~\eqref{E:J0} for notation);

    \item If $|\rho(\cdot)|$ is concave separately on $\R_+$ and $\R_-$, then
      one can take $M = K_M = 0$ in~\eqref{E:mup} to obtain
      \begin{align}\label{E:mup_M0}
        \Norm{u(t, x)}_p^2 \leq & 2\: \calJ_0^2(t,x) + 2 (2\pi)^{d} \left( h(t)^{-1} \calJ_1(t,x) + F^{-1}(2 p\: h(t)) \right);
      \end{align}

    \item If $\rho(\cdot)$ is not identically $0$ on $(-\infty, -M_0] \cup
      [M_0,\infty)$, then there exists a constant $C>0$ such that for all
      $(t,x,p) \in [1,\infty) \times \R^d \times [2,\infty)$,\footnote{Here $t$
      can be relaxed to $t > 0$, but the constant $C$ in~\eqref{E:mup_asy} will
    depend on $t$ when $t$ is close to $0$}
      \begin{align}\label{E:mup_asy}
        \Norm{u(t,x)}_p^2 \leq & 2\: \calJ_0^2(t,x) + C \left(h(t)^{-1} \calJ_1(t,x) + F^{-1}\big(C p\: h(t)\big)\right).
      \end{align}
      Moreover, if the initial condition is bounded, i.e., $u_0\in
      L^{\infty}(\R^d)$, the moment bound in~\eqref{E:mup_asy} can be simplified
      as follows:
      \begin{align}\label{E:mup_smp}
        \Norm{u(t,x)}_p^2 \leq C_* F^{-1}\big( C_* p\, h(t)\big)\quad \text{for some $C_* > 0$.}
      \end{align}
  \end{enumerate}
\end{theorem}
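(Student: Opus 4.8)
The plan is to convert the mild formulation~\eqref{E:Mild} into a closed -- essentially algebraic -- inequality for $v(t,x):=\Norm{u(t,x)}_p^2$ in which the nonlinearity is a concave majorant of $\rho^2$ and the ``clock'' is $h(\cdot)$, and then to invert it using $F$. Since finiteness of the $p$-th moments is not known a priori, I would run the argument on the truncated equations obtained by replacing $\rho$ with $\rho_n:=(-n)\vee\rho\wedge n$ -- which still satisfy Hypothesis~\ref{H:rho} with constants uniform in $n$ -- so that Picard iteration gives solutions $u_n$ with locally bounded moments, derive~\eqref{E:mup} for $u_n$ with $n$-independent constants, and let $n\to\infty$ by $L^p$-stability of the mild solution. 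The reduction is standard: with $\calI(t,x):=u(t,x)-\calJ_0(t,x)$, Minkowski's inequality gives $v(t,x)\le 2\calJ_0^2(t,x)+2\Norm{\calI(t,x)}_p^2$, and the Burkholder--Davis--Gundy inequality for Walsh integrals against spatially homogeneous noise (see~\cite{foondun.khoshnevisan:09:intermittence,chen.dalang:15:moments}) gives, for $p\ge 2$,
\[
  \Norm{\calI(t,x)}_p^2\ \le\ c\,p\int_0^t\!\ud s\iint_{\R^{2d}} p_{t-s}(x-y)\,p_{t-s}(x-y')\,f(y-y')\,\Norm{\rho(u(s,y))}_p\Norm{\rho(u(s,y'))}_p\,\ud y\,\ud y';
\]
symmetrizing via $2ab\le a^2+b^2$ and the invariance of the kernel under $y\leftrightarrow y'$ replaces the product of norms by $\Norm{\rho(u(s,y))}_p^2$. (The $(2\pi)^d$ in~\eqref{E:mup} records the Fourier normalisation of the kernel and I will not follow it here.)

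Next I would use Lemma~\ref{L:ccvtrho} to replace $\rho$ by a nondecreasing concave function $\Psi$ on $[0,\infty)$ built from $|\rho|^2$ (with $F$ of~\eqref{E:F} an antiderivative of $1/\Psi$, so that $F^{-1}$ solves $w'=\Psi\circ w$). Two bounds are needed: the pointwise estimate $\rho(z)^2\le K_M^2+\Psi(|z|^2)$, and the moment transfer $\Norm{\rho(u(s,y))}_p^2\le K_M^2+c\,\Psi\!\big(\Norm{u(s,y)}_p^2\big)$. In case~(ii), the concavity of $|\rho|$ already from the origin makes the transfer hold with $M=K_M=0$ and $c=1$, which is exactly what preserves the sharp constants in~\eqref{E:mup_M0}; for a general $\rho$ only the tail is concave and the transfer costs an absolute constant (via subadditivity-type estimates such as $g(ra)\le(r+1)g(a)$ for nondecreasing concave $g$ with $g(0)\ge0$), which produces the looser constant $C$ in~\eqref{E:mup_asy}. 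Feeding these into the previous step and then applying Jensen's inequality to the concave $\Psi$ against the space-time kernel renormalised to a probability measure of total mass $h(t)$ -- this renormalisation is the origin of the $h(t)^{-1}$ -- and writing $\mathfrak n(t,x)$ for the corresponding $h(t)^{-1}$-weighted kernel average of $\Norm{u(s,y)}_p^2$ over $(s,y)\in[0,t]\times\R^d$, I obtain the pointwise bound
\[
  v(t,x)\ \le\ 2\calJ_0^2(t,x)+c\,K_M^2\,p\,h(t)+c\,p\,h(t)\,\Psi\!\big(\mathfrak n(t,x)\big),
\]
and, after inserting $\Norm{u(s,y)}_p^2\le 2\calJ_0^2(s,y)+2\Norm{\calI(s,y)}_p^2$ into the definition of $\mathfrak n$, the self-contained inequality
\[
  \mathfrak n(t,x)\ \le\ c\,h(t)^{-1}\calJ_1(t,x)+c\,K_M^2\,p\,h(t)+c\,p\int_0^t\dot h(t-s)\,\Psi\!\big(\sup_{y}\mathfrak n(s,y)\big)\,\ud s .
\]

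To close the argument, note that $\Psi$ is nondecreasing and $h$ is concave (equivalently $\dot h$ nonincreasing), so splitting the last integral at $t/2$ and using the time change $\sigma=h(s)$ reduces the inequality, in the variable $\sigma$, to an algebraic one $g(\sigma)\le \text{source}(\sigma)+c\,p\,\sigma\,\Psi(g(\sigma))$ (the convolution part contributes at most $\sigma\,\Psi(g(\sigma))$ once $g$ is known to be nondecreasing), and an elementary comparison shows that any $g\ge0$ with $g\le A+B\,\Psi(g)$ satisfies $g\le C\big(A+F^{-1}(C B)\big)$, the residual linear-in-$\sigma$ term being absorbed into $F^{-1}$. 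Translating back yields $\sup_y\mathfrak n(t,y)\lesssim \sup_y h(t)^{-1}\calJ_1(t,y)+K_M^2\,p\,h(t)+F^{-1}(c\,p\,h(t))$, and inserting this into the pointwise bound above gives~\eqref{E:mup}. For part~(i), one replaces the general control of $h(t)^{-1}\calJ_1(t,x)$ by the one-dimensional heat-kernel estimate $h(t)^{-1}\calJ_1(t,x)\lesssim\calJ_+^2(t/2,x)$, a deterministic computation using the semigroup property of $p$ and the fact that $p_a\ast f$ attains its maximum at $0$. For part~(iii), on $t\ge1$ one has $h(t)\ge h(1)>0$, and since $\rho$ is not identically zero at infinity, $\Psi$ is bounded below there, so $F^{-1}$ grows at least linearly; hence $4K_M^2p\,h(t)$ -- and, when $u_0\in L^\infty$ so that $\calJ_0^2$ and $h(t)^{-1}\calJ_1$ are bounded, those terms too -- are absorbed into a multiple of $F^{-1}(Cp\,h(t))$, which gives~\eqref{E:mup_asy} and~\eqref{E:mup_smp}.

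\textbf{Main obstacle.} The delicate point is turning the genuinely infinite-dimensional inequality (one bound for every $(t,x)$, coupled through the spatial convolution) into a scalar, invertible one: this requires the Jensen reduction to the single profile $\mathfrak n$, the concavity of $h$ to tame the weight $\dot h(t-s)$, the elementary but non-obvious implication ``$g\le A+B\Psi(g)\Rightarrow g\lesssim A+F^{-1}(CB)$'', and -- most delicately -- the moment transfer $\Norm{\rho(u)}_p^2\lesssim K_M^2+\Psi(\Norm{u}_p^2)$, whose sharp versus lossy forms are precisely what separate~\eqref{E:mup_M0} from~\eqref{E:mup_asy}.
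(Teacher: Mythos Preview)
Your overall architecture---BDG/Minkowski, the concave moment transfer $\Norm{\rho(u)}_p^2\le K_M^2+\rho_2(M^2+\Norm{u}_p^2)$ (the paper's Lemma~\ref{L:rho-p}), Jensen against the normalised kernel, and the algebraic inversion ``$g\le A+B\Psi(g)\Rightarrow g\lesssim A+F^{-1}(CB)$'' (the paper's Lemma~\ref{L:Gmm})---matches the paper. The gap is in how you close the loop. You pass to the scalar quantity $g(t)=\sup_y\mathfrak n(t,y)$ and run a Bihari-type argument in time; this forces the source term $\sup_y h(t)^{-1}\calJ_1(t,y)$ into the estimate, and that supremum is \emph{not} assumed finite under Hypothesis~\ref{H:rough} (take e.g.\ $\mu(\ud x)=e^{|x|}\ud x$, for which $\calJ_0$ and hence $\calJ_1$ are unbounded in $x$). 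Consequently your final step---``inserting this into the pointwise bound above gives~\eqref{E:mup}''---does not recover the pointwise-in-$x$ estimate with $\calJ_1(t,x)$; at best it yields~\eqref{E:mup_smp} for bounded $u_0$. (Your auxiliary monotonicity claim ``$g$ nondecreasing'' is also unproved, though it can be sidestepped by replacing $g$ with its running supremum.)

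The paper avoids both the spatial supremum and any Gr\"onwall/Bihari in time by a structural identity you do not use. With $X=M^2+h(t)^{-1}Y$ at \emph{fixed} $(t,x)$, one inserts~\eqref{E:utx} back into the definition of $Y$; the resulting double time layer is collapsed via the heat-kernel factorisation
\[
  p_{t-s}(a)\,p_{s-r}(b)=p_{\frac{(s-r)(t-s)}{t-r}}\!\left(b-\tfrac{s-r}{t-r}(a+b)\right)p_{t-r}(a+b)
\]
together with~\cite[Lemma~2.6]{chen.kim:19:nonlinear}, giving~\eqref{E:2hsck} and hence the purely algebraic inequality $X\le b(p,t,x)+k(p,t)\,\rho_2(X)$ at that fixed $(t,x)$, with $b$ containing the \emph{pointwise} $h(t)^{-1}\calJ_1(t,x)$. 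Lemma~\ref{L:Gmm} then closes immediately. This factorisation step is precisely what you paper over with ``the convolution part contributes at most $\sigma\Psi(g(\sigma))$''; it also removes the need for concavity of $h$ (the paper uses only that $h$ is nondecreasing). Your treatment of parts~(i)--(iii) is otherwise in line with the paper.
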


The general moment bounds in the above theorem demonstrate how different
components of the SPDE affect the moment growth of the solution (see
Figure~\ref{F:Components} below), which is a concrete manifestation of the
statement in Zeldovich \textit{et
al}~\cite{zel-dovich.molchanov.ea:87:self-excitation} (see
also~\cite[Section~8.9]{zel-dovich.ruzmauikin.ea:90:almighty}) that \textit{the
  behavior of nonlinear solutions depends radically on the time behavior of the
potential and on the form of the nonlinearity.} When considering specific
scenarios, in order to utilize the above theorem, we have to study the
corresponding $F^{-1}(\cdot)$ as detailed in Section~\ref{S:Rho}, and $h(t)$ as
outlined in Section~\ref{S:app}. Following these analyses, we summarize the
results and derive the specific moment bounds in Section~\ref{S:Application}
below.

\begin{figure}[htpb!]
  \begin{center}
    \begin{tikzpicture}[scale=1, transform shape, -{Stealth[scale=1.5]}, double distance =2pt]

    \node (0) at (0,0) {$\Norm{u(t,x)}_p^2$};
    \node [right of = 0, xshift = 1em] (1)  {$\leq 2\:$};
    \node [right of = 1] (2)  {$\calJ_0^2(t,x)$};
    \node [right of = 2, xshift = 0.5em] (3)  {$+\: C \bigg($};
    \node [right of = 3] (4)  {$h(t)^{-1}$};
    \node [right of = 4, xshift = 0.5em] (5)  {$\calJ_1(t,x)$};
    \node [right of = 5, xshift =-0.3em] (6)  {$+$};
    \node [right of = 6, xshift =-1.0em] (7)  {$F^{-1}$};
    \node [right of = 7, xshift =-0.8em] (8)  {$\big(C p\:$};
    \node [right of = 8, xshift =-0.9em] (9)  {$\:\,h(t)$};
    \node [right of = 9, xshift =-0.9em] (10) {$\big)\bigg)$};

    \node[above of = 7, yshift = 1em] (Rho) {Sublinear $\rho$};
    \draw[double] (Rho) -- (7);

    \node[below of = 6, yshift = -2em] (Noise) {Noise structure $f$};
    \draw[double] (Noise) -- (9.south);
    \draw[double] (Noise) -- (4.south);

    \node[above of = 4, yshift = +2em, xshift = -0.5em] (Init) {Initial data $\mu$};
    \draw[double] (Init) -- (2.north);
    \draw[double] (Init) -- (5.north);
    \end{tikzpicture}
  \end{center}

  \caption{Structure of the moment bounds in~\eqref{E:mup_M0}
  or~\eqref{E:mup_asy}.}

  \label{F:Components}
\end{figure}
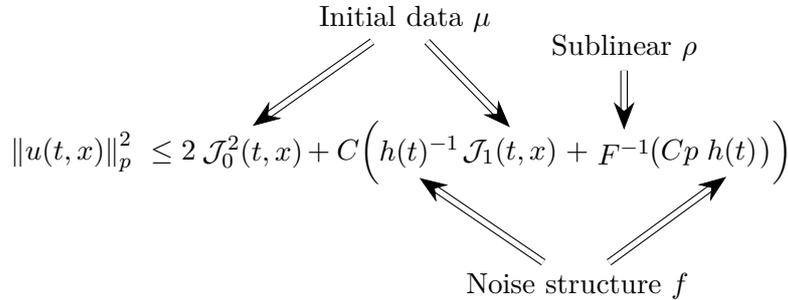

As some applications of the above moment bounds, we obtain the tail probability
of the solution, sample-path H\"older regularity, and further establish some
estimates for the sample-path asymptotics in the spatial variable, as shown
in~\eqref{E:asymspc} below. The sample-path spatial asymptotics have been
previously studied by Conus \textit{et al.}~\cite{conus.joseph.ea:13:on,
conus.joseph.ea:13:on*1} in two extreme cases, namely, the case when $\rho$ is
linear and that when $\rho$ is bounded. The result presented in
Theorem~\ref{T:Asymspc} below serves as an initial attempt to bridge the gap
between these two extreme cases by allowing $\rho$ to have sublinear growth.
Here, we have to admit that only the asymptotic upper bounds have been obtained,
while the more challenging lower bounds will be left for future investigation.

\begin{theorem}[Tail probability]\label{T:Tail}
  Assume that both Hypotheses~\ref{H:corre} and~\ref{H:rho} hold. Let $u(t,x)$
  be a solution to SHE~\eqref{E:SHE} with the initial condition $\mu \equiv 1$.
  Then for all $t \geq 1$, $x\in\R^d$ and $z \geq L_t$,
  \begin{align}\label{E:tail}
    \P\left(|u(t,x)| \geq z\right)
    \leq \exp \left(- (C_* h(t))^{-1} F \left(z^2\big/ \big(C_* e^2\big)\right)\right),
  \end{align}
  where the constant $C_* > 0$ is the same as that in~\eqref{E:mup_smp},
  $F(\cdot)$ and $F^{-1}(\cdot)$ are defined Definition~\ref{D:F},
  \begin{align}\label{E:Lt}
    L_t \coloneqq e\sqrt{2C_* M^2 + C_* F^{-1} \big(2 C_* h(t)\big)}\:,
  \end{align}
  and the constant $M$ in~\eqref{E:Lt} is the same as those in
  Theorem~\ref{T:mup}.
\end{theorem}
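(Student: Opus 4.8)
The plan is the classical conversion of moment bounds into a tail estimate through Markov's inequality, optimized over the moment order $p$. Since the initial condition $\mu\equiv 1$ is bounded and $t\ge 1$, part~(iii) of Theorem~\ref{T:mup}---and in particular the simplified bound~\eqref{E:mup_smp}---applies: there is a constant $C_*>0$ with $\Norm{u(t,x)}_p^2\le C_* F^{-1}\big(C_*\,p\,h(t)\big)$ for all $p\in[2,\infty)$, $x\in\R^d$ and $t\ge 1$. I would start from the elementary inequality, valid for every $p\ge 2$,
\begin{align*}
  \P\big(|u(t,x)|\ge z\big)\le z^{-p}\,\E\big[|u(t,x)|^p\big]=\left(\frac{\Norm{u(t,x)}_p^2}{z^2}\right)^{p/2}\le\left(\frac{C_* F^{-1}\big(C_*\,p\,h(t)\big)}{z^2}\right)^{p/2},
\end{align*}
noting that $h(t)>0$ for $t\ge 1$ by Hypothesis~\ref{H:corre} together with the monotonicity of $h$ recorded after~\eqref{E:h}.

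The heart of the argument is the choice of $p$. Rather than differentiating in $p$, I would force the bracket to equal $e^{-2}$, i.e.\ pick $p=p(t,z)$ so that $C_* F^{-1}\big(C_*\,p\,h(t)\big)=z^2/e^2$. Since $F$ is the strictly increasing inverse of $F^{-1}$ from Definition~\ref{D:F}, this is solved by
\begin{align*}
  p(t,z)=\big(C_*\,h(t)\big)^{-1}\,F\!\left(\frac{z^2}{C_*\,e^2}\right),
\end{align*}
and with this value the right-hand side above becomes $\big(e^{-2}\big)^{p/2}=e^{-p}$, which is exactly $\exp\!\big(-(C_*h(t))^{-1}F(z^2/(C_*e^2))\big)$, the asserted bound~\eqref{E:tail}.

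What remains---and this is the only genuine, if minor, obstacle---is checking that the chosen $p(t,z)$ is admissible: we need $p(t,z)\ge 2$, so that~\eqref{E:mup_smp} can be invoked, and we need $z^2/(C_*e^2)$ to lie in the range of $F^{-1}$, so that the identity $F^{-1}\big(F(z^2/(C_*e^2))\big)=z^2/(C_*e^2)$ holds and the right-hand side of~\eqref{E:tail} is even well defined; guaranteeing the latter is precisely the role of the summand $2C_*M^2$ in the definition~\eqref{E:Lt} of $L_t$, given how $F$ is normalized in Definition~\ref{D:F}. By monotonicity of $F$, the inequality $p(t,z)\ge 2$ is in turn equivalent to $z^2\ge C_*e^2\,F^{-1}\big(2C_*h(t)\big)$. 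Both requirements follow whenever $z\ge L_t$, since then $z^2/(C_*e^2)\ge 2M^2+F^{-1}\big(2C_*h(t)\big)$, which dominates each threshold separately. Hence for $z\ge L_t$ the displayed chain holds with $p=p(t,z)$ and yields~\eqref{E:tail}; I would close by remarking that the only structural facts about $F$ used are that it is the monotone inverse of $F^{-1}$ and its left-endpoint normalization, both immediate from Definition~\ref{D:F}.
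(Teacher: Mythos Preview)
Your argument is essentially the paper's proof unwrapped from its Legendre-transform packaging (Lemma~\ref{L:tail}): the paper sets $H(p)=\tfrac{p}{2}\log\big(C_*F^{-1}(C_*ph(t))\big)$ and bounds $H^*(y)$ from below by choosing $p^*(y)=(C_*h(t))^{-1}F\big(e^{2(y-1)}/C_*\big)$, which with $y=\log z$ is exactly your $p(t,z)$. So the route and the optimizing $p$ coincide.

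One point to tighten: you speak of $F$ as ``the strictly increasing inverse of $F^{-1}$'' and invoke the \emph{identity} $F^{-1}\!\circ F(x)=x$. In Definition~\ref{D:F} it is $F$ that is primary and $F^{-1}$ is only a right inverse; $F$ need not be strictly increasing, so in general one only has the inequality~\eqref{E:F-1F}, $F^{-1}\!\circ F(x)\le x$ for $x\ge 2M^2$. Fortunately this inequality goes the right way: with your choice of $p$ one gets $C_*F^{-1}(C_*ph(t))=C_*F^{-1}\!\big(F(z^2/(C_*e^2))\big)\le z^2/e^2$, so the bracket is at most $e^{-2}$ and the bound $e^{-p}$ still follows. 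Just replace ``identity'' by the one-sided relation~\eqref{E:F-1F} and your proof is complete.
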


To state the next two results, we need to introduce the following
\textit{improved Dalang condition} for the spatial correlation function $f$:

\begin{hypothesis}\label{H:efcdalang}
  The correlation function $f:\R^d \to \R$ is a nonnegative and
  nonnegative-definite (generalized) function that is not identically zero such
  that,
  \begin{align}\label{E:ED}
    \int_{\R^d} \frac{\widehat{f}(\ud\xi)}{(1 + |\xi|^2)^{1-\eta}}<\infty,
    \quad \text{for some $\eta \in (0,1)$.}
  \end{align}
\end{hypothesis}

\begin{theorem}[H\"older regularity]\label{T:Holder}
  Under parts (i) and (ii) of Hypothesis~\ref{H:rho}, part (i) of
  Hypothesis~\ref{H:rough}, and Hypothesis~\ref{H:efcdalang}, the solution
  $u(t,x)$ to~\eqref{E:SHE} has a version which is a.s. $\eta_1$--H\"older
  continuous in time and $\eta_2$--H\"older continuous in space on $(0,\infty)
  \times \R^d$ for all $\eta_1\in (0,\eta /2)$ and $\eta_2\in (0,\eta)$, where
  $\eta$ is given in~\eqref{E:ED}.
\end{theorem}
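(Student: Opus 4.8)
The plan is a Kolmogorov-continuity argument: bound the $p$-th moments of space and time increments of the mild solution~\eqref{E:Mild} by the right power of a parabolic distance, uniformly on compacts, and then pass to the continuous modification. Fix $T>1$ and let $(t,x),(t',x')$ range over a box $K=[1/T,T]\times[-T,T]^d$; constants below may depend on $T,d,p,f$. The first ingredient I would record is an a priori moment bound. Since $\rho$ is locally bounded with $\rho(z)/z\to0$ as $|z|\to\infty$ (parts (i)--(ii) of Hypothesis~\ref{H:rho}), it obeys $|\rho(z)|\le C_\rho(1+|z|)$; feeding this into~\eqref{E:Mild} and running the classical Walsh--Dalang existence/Gronwall estimates (as in the proof of Theorem~\ref{T:mup}, but needing only linear growth of $|\rho|$) gives, for every $p\ge2$, the bound $\Norm{u(s,y)}_p\le C_{T,p}\bigl(1+\calJ_+(s,y)\bigr)$ for $(s,y)\in(0,T]\times\R^d$, with $\calJ_+$ as in~\eqref{E:J0}. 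Together with part (i) of Hypothesis~\ref{H:rough} this keeps all space-time integrals below finite.

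Next, write $u(t,x)-u(t',x')=\bigl[\calJ_0(t,x)-\calJ_0(t',x')\bigr]+\bigl[I(t,x)-I(t',x')\bigr]$, where $I(t,x)=\int_0^t\int_{\R^d}p_{t-s}(x-y)\rho(u(s,y))W(\ud s,\ud y)$ and $p_r\equiv0$ for $r\le0$. The drift term $\calJ_0$ solves the heat equation, hence is $C^\infty$ and in particular Lipschitz on $K$, contributing $O(|t-t'|+|x-x'|)$. For the stochastic term I would apply the Burkholder--Davis--Gundy inequality in the Walsh--Dalang framework followed by Minkowski's integral inequality to obtain
\begin{equation*}
  \Norm{I(t,x)-I(t',x')}_p^2 \le C_p\int_0^\infty\ud s\iint_{\R^{2d}}|D_s(y)|\,|D_s(y')|\,f(y-y')\,\Norm{\rho(u(s,y))}_p\Norm{\rho(u(s,y'))}_p\,\ud y\,\ud y',
\end{equation*}
with $D_s(y)\coloneqq p_{t-s}(x-y)-p_{t'-s}(x'-y)$. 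Using $\Norm{\rho(u(s,y))}_p\le C(1+\calJ_+(s,y))$, the semigroup identity $\int_{\R^d}p_r(x-y)\calJ_+(s,y)\,\ud y=\calJ_+(r+s,x)$ to absorb the $\calJ_+$ factors against the kernel weights, and a Cauchy--Schwarz/convexity step to decouple $y$ and $y'$, the problem reduces to bounding the purely deterministic quantity $Q\coloneqq\int_0^\infty\ud s\iint_{\R^{2d}}D_s(y)D_s(y')f(y-y')\,\ud y\,\ud y'$ (and its $\calJ_+$-weighted analogue) by $C\bigl(|x-x'|^{2\eta}+|t-t'|^\eta\bigr)$ on $K$.

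The core estimate is the bound on $Q$, which I would carry out by Fourier transform: by Plancherel and $\widehat{p_r}(\xi)=e^{-r|\xi|^2/2}\one_{\{r>0\}}$,
\begin{equation*}
  Q=\int_0^\infty\ud s\int_{\R^d}\bigl|e^{-(t-s)|\xi|^2/2}e^{-ix\cdot\xi}-e^{-(t'-s)|\xi|^2/2}e^{-ix'\cdot\xi}\bigr|^2\,\widehat{f}(\ud\xi),
\end{equation*}
interpolating from $(t,x)$ to $(t,x')$ to $(t',x')$. For the spatial step the multiplier factors as $e^{-(t-s)|\xi|^2/2}(e^{-ix\cdot\xi}-e^{-ix'\cdot\xi})$; using $|e^{-ix\cdot\xi}-e^{-ix'\cdot\xi}|^2\le 4^{1-\eta}|x-x'|^{2\eta}|\xi|^{2\eta}$ and $\int_0^t e^{-(t-s)|\xi|^2}\ud s\le\min(T,|\xi|^{-2})\le C_T(1+|\xi|^2)^{-1}$ bounds the contribution by $C|x-x'|^{2\eta}\int_{\R^d}(1+|\xi|^2)^{\eta-1}\widehat{f}(\ud\xi)$, finite by the improved Dalang condition~\eqref{E:ED}. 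For the temporal step (say $t'\le t$) I would split $\int_0^\infty=\int_0^{t'}+\int_{t'}^t$: on $[t',t]$ the multiplier is $e^{-(t-s)|\xi|^2/2}$ and $\int_{t'}^t e^{-(t-s)|\xi|^2}\ud s=\int_0^{t-t'}e^{-r|\xi|^2}\ud r\le\min(t-t',|\xi|^{-2})$; on $[0,t']$ it is $e^{-(t'-s)|\xi|^2/2}(e^{-(t-t')|\xi|^2/2}-1)$, handled with $|e^{-(t-t')|\xi|^2/2}-1|^2\le C(t-t')^\eta|\xi|^{2\eta}$ and $\int_0^{t'}e^{-(t'-s)|\xi|^2}\ud s\le\min(T,|\xi|^{-2})$. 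In both pieces, separating $|\xi|\le1$ (where $\widehat{f}$ is finite) from $|\xi|>1$ (where $|\xi|^{-2}\le2(1+|\xi|^2)^{-1}$) and using $\min(a,b)\le a^\eta b^{1-\eta}$, the contribution is $\le C(t-t')^\eta\int_{\R^d}(1+|\xi|^2)^{\eta-1}\widehat{f}(\ud\xi)$. Altogether $\Norm{u(t,x)-u(t',x')}_p^2\le C_{p,K}\bigl(|x-x'|^{2\eta}+|t-t'|^\eta\bigr)$ on $K$.

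To conclude, this holds for every $p\ge2$; writing it in terms of the parabolic metric $\varrho\bigl((t,x),(t',x')\bigr)=|t-t'|^{1/2}+|x-x'|$ as $\Norm{u(t,x)-u(t',x')}_p\le C_{p,K}\,\varrho(\cdot,\cdot)^\eta$ and invoking Kolmogorov's continuity theorem (sending $p\to\infty$ to absorb the dimensional loss $(d+2)/p$) yields, on each $K$, a modification of $u$ that is $\eta_1$-H\"older in $t$ and $\eta_2$-H\"older in $x$ for all $\eta_1<\eta/2$, $\eta_2<\eta$; exhausting $(0,\infty)\times\R^d$ by such boxes and noting the modifications agree a.s.\ on overlaps gives the statement. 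I expect the main obstacle to be the temporal-increment estimate for $Q$ --- arranging the two-piece split so the exponents land exactly on the $1-\eta$ of~\eqref{E:ED} --- and, secondarily, taming the $s\downarrow0$ part of $I$ when $\mu$ is merely a measure; the latter I would handle by the device $p_{t-s}(x-y)=\int p_{t-t_0/2}(x-z)p_{t_0/2-s}(z-y)\,\ud z$ for $s<t_0/2$, which (via the stochastic Fubini and the mild formula at time $t_0/2$) rewrites the early part of $I$ as the heat semigroup applied to the fixed random field $u(t_0/2,\cdot)-\calJ_0(t_0/2,\cdot)$, hence a.s.\ smooth for $t>t_0/2$, leaving only the well-behaved late part $\int_{t_0/2}^t$ for the estimates above.
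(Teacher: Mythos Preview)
Your proposal is correct and follows essentially the same route as the paper. The paper's proof is one line: it invokes \cite[Theorem~1.8]{chen.huang:19:comparison} after supplying the only new ingredient, Lemma~\ref{L:PAM-BD}, which is precisely your first step (use $|\rho(z)|\le C_\rho(1+|z|)$ from parts (i)--(ii) of Hypothesis~\ref{H:rho} to get the PAM-type moment bound $\Norm{u(t,x)}_p\lesssim (\mu'*p_t)(x)$ under rough initial data). Your subsequent BDG/Minkowski decomposition, the Plancherel computation of the kernel-increment integral $Q$ under the improved Dalang condition~\eqref{E:ED}, and the Kolmogorov conclusion are exactly what \cite[Theorem~1.8]{chen.huang:19:comparison} carries out; you are simply unpacking that reference rather than citing it. The two obstacles you flag---matching the exponent $1-\eta$ in the temporal split, and taming the small-$s$ singularity when $\mu$ is a measure---are real and are handled in that reference in the same spirit you sketch (the second via the factorization/semigroup trick that pushes the rough initial layer into a smooth object).
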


\begin{theorem}[Spatial asymptotics]\label{T:Asymspc}
  Assume that both Hypotheses~\ref{H:rho} and~\ref{H:efcdalang} hold. Suppose
  that $\rho(\cdot)$ is not identically zero on $(- \infty, - M_0] \cup
  [M_0,\infty)$. Let $u(t,x)$ be a solution to SHE~\eqref{E:SHE} with the
  constant initial condition $\mu \equiv 1$. Then, there exists a positive
  constant $C$ such that for all $t > 1$,
  \begin{align}\label{E:asymspc}
    \sup_{|x|\le R} u(t,x) \lesssim \sqrt{F^{-1} \Big(C\: h(t) \log R\Big)}\:,
    \quad \text{a.s., as $R \to \infty$,}
  \end{align}
  where $F^{-1}(\cdot)$ is defined in~\eqref{E:F^-1}.
\end{theorem}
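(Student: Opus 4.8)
\textbf{Proof proposal for Theorem~\ref{T:Asymspc}.}
The plan is to promote the pointwise tail bound of Theorem~\ref{T:Tail} to a bound on $\sup_{|x|\le R}u(t,x)$ by a discretization of the ball, a union bound, and a Borel--Cantelli argument along a geometric sequence of radii, in the spirit of Conus \emph{et al.}~\cite{conus.joseph.ea:13:on}. Fix $t>1$. Since $\mu\equiv 1$ is bounded and $\rho$ is not identically zero on $(-\infty,-M_0]\cup[M_0,\infty)$, Theorem~\ref{T:Tail} applies and gives $\P(|u(t,x)|\ge z)\le \exp\bigl(-(C_*h(t))^{-1}F(z^2/(C_*e^2))\bigr)$ for every $x$ and every $z\ge L_t$; moreover Hypothesis~\ref{H:efcdalang} yields, through Theorem~\ref{T:Holder}, a spatially continuous version of $u(t,\cdot)$, so the supremum is well defined. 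For $R\ge 2$ I would cover $\overline{B}_R=\{x\in\R^d:|x|\le R\}$ by $N_R$ cubes $Q_1,\dots,Q_{N_R}$ of side $\delta_R=R^{-\gamma}$ (with $\gamma$ a large constant to be fixed) and centres $x_i\in\overline{B}_R$, so that $N_R$ is of order $(R/\delta_R)^d$, and split
\[
\sup_{|x|\le R}u(t,x)\ \le\ \underbrace{\max_{1\le i\le N_R}u(t,x_i)}_{=:\,\mathrm{I}_R}\ +\ \underbrace{\max_{1\le i\le N_R}\ \sup_{y\in Q_i}\bigl|u(t,y)-u(t,x_i)\bigr|}_{=:\,\mathrm{II}_R}.
\]

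For the main term I would choose the threshold $z_R:=e\sqrt{C_*\,F^{-1}\bigl(\beta C_* h(t)\log R\bigr)}$ with $\beta$ a large constant; by construction $(C_*h(t))^{-1}F\bigl(z_R^2/(C_*e^2)\bigr)=\beta\log R$, and since $z_R\to\infty$ we have $z_R\ge L_t$ for all large $R$. A union bound then gives $\P(\mathrm{I}_R\ge z_R)\le N_R R^{-\beta}\asymp R^{\,d(1+\gamma)-\beta}$, and taking $\beta>d(1+\gamma)$ makes this summable along $R_n=2^n$. By Borel--Cantelli, almost surely $\mathrm{I}_{R_n}\le z_{R_n}$ for all large $n$.

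For the oscillation term I would invoke the increment estimates underlying Theorem~\ref{T:Holder}: under Hypothesis~\ref{H:efcdalang} and with bounded constant initial data one has $\Norm{u(t,x)-u(t,y)}_p\le c_{p,t}\,|x-y|^{\eta_2}$ uniformly in $x,y$ with $|x-y|\le 1$, for every $\eta_2<\eta$ and every $p\ge 2$ (the law of $u(t,\cdot)$ being spatially stationary because $\mu\equiv 1$). A quantitative Kolmogorov, equivalently Garsia--Rodemich--Rumsey, bound applied inside each cube then gives, for $p>d/\eta_2$, $\Norm{\sup_{y\in Q_i}|u(t,y)-u(t,x_i)|}_p\le c_{p,t}'\,\delta_R^{\eta_2-d/p}$ uniformly in $i$, whence $\P(\mathrm{II}_R\ge 1)\le N_R\bigl(c_{p,t}'\delta_R^{\eta_2-d/p}\bigr)^p\asymp c_{p,t}''\,R^{\,d+2d\gamma-\gamma\eta_2 p}$. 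Fixing $p>2d/\eta_2$ and then $\gamma$ large enough that the exponent of $R$ is negative makes this summable along $R_n=2^n$, so almost surely $\mathrm{II}_{R_n}\le 1$ for all large $n$. Combining, almost surely $\sup_{|x|\le R_n}u(t,x)\le z_{R_n}+1\le 2e\sqrt{C_*}\,\sqrt{F^{-1}(C h(t)\log R_n)}$ for all large $n$, with $C:=\beta C_*$. Finally, for $R\in[R_n,R_{n+1}]$ with $n\ge 1$ one has $\log R_{n+1}\le 2\log R$, so by monotonicity of $R\mapsto\sup_{|x|\le R}u(t,x)$ and of $F^{-1}$ we obtain $\sup_{|x|\le R}u(t,x)\le 2e\sqrt{C_*}\,\sqrt{F^{-1}(2C h(t)\log R)}$ for all large $R$; this is~\eqref{E:asymspc} with $2C$ in place of $C$, and since the statement only asserts the existence of such a constant (and $\lesssim$ is defined through a $\liminf$), the argument is complete.

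The main obstacle is the oscillation estimate $\mathrm{II}_R$: one must produce a modulus-of-continuity bound for $u(t,\cdot)$ that is simultaneously \emph{uniform} over the growing ball $\overline{B}_R$ and whose constants $c_{p,t}',c_{p,t}''$ depend on $p$ mildly enough that, after raising to the $p$-th power and multiplying by $N_R$, the resulting probability still decays like a negative power of $R$. Here the sublinear regime is exactly what helps: the moments of $u$ and of its spatial increments grow only like $F^{-1}(Cph(t))^{1/2}$, not exponentially in $p$, which is what allows $p$ and $\gamma$ to be chosen as above; nonetheless, tracking these constants through the Kolmogorov/GRR step and balancing them against $N_R$ is the technical heart of the proof. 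A minor, routine point is the verification that $z_R\ge L_t$ for large $R$, which is immediate from $z_R\to\infty$.
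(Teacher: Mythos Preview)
Your argument is correct and follows the same overall architecture as the paper (discretize, union bound with the tail estimate of Theorem~\ref{T:Tail}, control the oscillation, Borel--Cantelli), but the treatment of the oscillation term is genuinely different. The paper discretizes on the \emph{fixed} integer lattice (unit balls $B_x$, $x\in\mathbb Z^d\cap \overline B_R$) and proves a dedicated lemma (Lemma~\ref{L:holder}(ii)) showing that $\Norm{\sup_{y_1,y_2\in B_x}|u(t,y_1)-u(t,y_2)|}_p^2\le C'F^{-1}(C_*ph(t))$, i.e.\ the oscillation over a unit ball obeys the \emph{same} moment bound as the solution itself; the Legendre-transform tail argument of Theorem~\ref{T:Tail} then applies verbatim to the oscillation, and both $\mathcal T_1$ and $\mathcal T_2$ are bounded by $R^{-2}$ with the \emph{same} threshold $Q(R)$. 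You instead shrink the mesh ($\delta_R=R^{-\gamma}$) and force $\mathrm{II}_R\le 1$ using only a uniform H\"older increment estimate plus GRR, at the cost of an enlarged lattice $N_R\asymp R^{d(1+\gamma)}$ and the extra balancing of $p,\gamma,\beta$. The trade-off: your route is more elementary in that it does not need the sharp $p$-dependence of the H\"older constant (any uniform $c_{p,t}$ suffices, since $p$ is fixed), whereas the paper's route is cleaner and symmetric but hinges on Lemma~\ref{L:holder}(ii), which in turn requires tracking that the increment constant is exactly of order $F^{-1}(C_*ph(t))$. Your closing remark that the sublinear growth in $p$ is ``exactly what helps'' is therefore slightly misplaced for your own argument---it is essential for the paper's approach, not yours.
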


The paper is organized as follows. We begin by expanding the discussions of the
main results in Section~\ref{S:Remark}, including a presentation of the proof
strategy of Theorem~\ref{T:mup}. We then proceed to prove the main results in
Section~\ref{S:main}. In Section~\ref{SS:lms}, we introduce several technical
lemmas, and their proofs can be found in Section~\ref{SS:lmmpf}. The proofs of
Theorems~\ref{T:mup},~\ref{T:Tail},~\ref{T:Holder} and~\ref{T:Asymspc} are
provided in Sections~\ref{SS:prof_mup},~\ref{SS:Tail},~\ref{SS:Holder}
and~\ref{SS:asymspc}, respectively. In Section~\ref{S:Rho}, we derive $F(\cdot)$
and $F^{-1}(\cdot)$ for examples in~\eqref{E:Ex-LipRho}--\eqref{E:Ex-VSV}.
Section~\ref{S:Others} explores potential generalizations of Theorem~\ref{T:mup}
to other SPDEs. In Section~\ref{S:Application}, we present several concrete
examples to illustrate, for example, the property transitions from the additive
SHE to the PAM. Lastly, in Appendix~\ref{S:app}, we present asymptotic formulas
for the function $h(\cdot)$ in various cases, and in Appendix~\ref{S:app_exam},
we provide a supplementary proof for results in Section~\ref{S:Application}.

\section{Remarks and strategy of the proof}\label{S:Remark}

In the following, we make a few comments/discussions on our assumptions and
results, and present the strategy of the proof of Theorem~\ref{T:mup}.

\paragraph{Assumptions on initial conditions} Here are some comments on the
assumptions made for the initial conditions in Hypothesis~\ref{H:rough}:
\textbf{(i)} Following~\cite{chen.dalang:15:moments}, we call the initial
condition satisfying inequality~\eqref{E:J0} the \textit{rough initial
condition} for SHE~\eqref{E:SHE}. In particular, the Dirac delta measure is a
special case; see also~\cite{conus.joseph.ea:14:initial}. Note that the Dirac
delta measure plays an important role in studying the asymptotic properties of
the PAM on $\R$; see~\cite{
amir.corwin.ea:11:probability,corwin:12:kardar-parisi-zhang}. As an easy exercise, condition~\eqref{E:J0} is
equivalent to
\begin{align*}
  -\infty < \int_{\R^d} e^{-a |x|^2} \mu(\ud x) <\infty, \quad \text{for all $a>0$}.
\end{align*}
\noindent\textbf{(ii)} Condition~\eqref{E:J1} is only a technical assumption. We
believe that this assumption can be removed. This will be left for future
investigation. If $\mathcal{J}_0(s,y)^2$ in~\eqref{E:J1} is replaced by
$\mathcal{J}_0(s,y) \mathcal{J}_0(s,y')$, then due to~\cite[Lemma
2.7]{chen.kim:19:nonlinear}, the integral is finite under Dalang's
condition~\eqref{E:Dalang} for all rough initial conditions. This extra
condition comes from the application of the inequality $\mathcal{J}_0(s,y)
\mathcal{J}_0(s,y')\le \frac{1}{2}\left(\mathcal{J}_0^2(s,y)+
\mathcal{J}_0^2(s,y')\right)$ in the proof of Theorem~\ref{T:mup}.
\textbf{(iii)} In case $d = 1$, condition~\eqref{E:J1} is automatically
satisfied by Lemma~\ref{L:J1d=1} below. On the other hand, for $d\ge 2$,
condition~\eqref{E:J1} excludes the Dirac delta initial condition. This can be
seen by setting $f(\cdot) \equiv 1$ (the space-independent noise), then the
integral in~\eqref{E:J1} with delta initial condition reduces to $\int_0^t s^{-d
/2} \ud s = \infty$. Instead, condition~\eqref{E:J1} holds when $\mu(\ud x) =
|x|^{-\ell}\ud x$ for any $\ell\in (0,1)$. This is due to the bound
in~\eqref{E:J1J0} and the fact that for such initial data, $\calJ_0(t,x)
\lesssim t^{-\ell/2}$; see~\cite{chen.eisenberg:22:invariant}. Roughly speaking,
the Dirac delta measure corresponds to the case when $\ell = d$. Removing
part~(ii) of Hypothesis~\ref{H:rough} will be left for future investigation.

\paragraph{Existence and uniqueness: the global Lipschitz case} If $\rho$ is
globally Lipschitz continuous such as those
in~\eqref{E:Ex-LipRho}--\eqref{E:Ex-VSV}, the existence and uniqueness of a
random field solution to SHE~\eqref{E:SHE} under rough initial conditions is
guaranteed (see~\cite{chen.dalang:15:moments*1, chen.kim:19:nonlinear}).
Moreover, since $\rho(0)=0$ in all
examples~\eqref{E:Ex-LipRho}--\eqref{E:Ex-VSV}, if the initial condition $\mu$
is a nonnegative measure and is not vanishing, then the solution $u(t,x)$ is
strictly positive almost surely for all $t>0$ and $x\in\R^d$ thanks to the
\textit{sample-path comparison principle} (see~\cite{mueller:91:long,
shiga:94:two, chen.huang:19:comparison}) One can further apply the
\textit{stochastic (moment) comparison principle}
(see~\cite{joseph.khoshnevisan.ea:17:strong, chen.kim:20:stochastic}) to bound
the moments from above by those of the \textit{parabolic Anderson model}
(see~\cite{carmona.molchanov:94:parabolic}), which corresponds to the case when
$\rho(u)= \lambda u$. However, moment bounds obtained in this way are generally
too rough and do not exhibit the ``smoothing intermittency'' effect as observed
in Zeldovich {\it et al}~\cite{zel-dovich.ruzmauikin.ea:90:almighty}.

\paragraph{Existence and uniqueness: the non-Lipschitz case} In case when $\rho$ is
not Lipschitz continuous, such as the example in~\eqref{E:Ex-AlphaRho},
Theorem~\ref{T:mup} can be used to provide some a priori moment estimates. It is
known that establishing the uniqueness of solutions to SHE~\eqref{E:SHE} in this
case is a challenging problem. However, it is commonly known that the existence
of solutions to~\eqref{E:SHE} can usually be established using some common
strategy. One first mollifies the non-Lipschitz $\rho$ function into a sequence
of globally Lipschitz continuous functions, based on which a sequence of random
fields are constructed. Under some mild conditions, it is possible to prove that
these random fields are jointly H\"{o}lder continuous with a uniform constant.
By combining the H\"{o}lder continuity with a priori estimates for solutions,
such as those given in Theorem~\ref{T:mup}, one can apply the
\textit{Kolmogorov-Chentsov criterion} (see, e.g.,~\cite[Corollary
16.9]{kallenberg:02:foundations}) to conclude that there is a convergent
subsequence of the aforementioned random fields, whose limit gives rise to one
solution to~\eqref{E:SHE}. Since the existence and uniqueness of solution is not
the focus of the paper. We will not pursue this direction here. Finally, we
would like to point out that the strong or even the weak well-posedness problem
of SHEs with non-Lipschitz coefficients is highly involved, and only a few
results are known. One may consult~\cite{mytnik:98:weak,
mytnik.perkins.ea:06:on, mytnik.perkins:11:pathwise} for the sublinear case, and
the recent works in~\cite{dalang.khoshnevisan.ea:19:global, salins:22:global,
chen.huang:22:superlinear} for the superlinear growth case.

\paragraph{Generality versus sharpness} The moment bounds obtained in
Theorem~\ref{T:mup} strike a balance between their level of generality and their
sharpness. Obtaining sharp moment asymptotics in general can be extremely
challenging and is typically only possible in some specific settings. For
instance, in the case when $d=1$, $\dot{W}$ is the space-time white noise, and
$\rho(u) = \sqrt{u}$ (the super-Brownian motion) with $u(0,x) \equiv 1$, the
second author and his collaborators~\cite{hu.wang.ea:23:moment} recently find
the following exact moment asymptotics:
\begin{align*}
  \E \left[u(t,x)^p\right]\asymp K^p p! \left(1 + t^{(p-1)/2}\right), \quad \text{as $t\to\infty$,}
\end{align*}
which is valid for any positive integer $p$ and $x\in \R$. This asymptotic
should be compared with the upper bounds obtained by Theorem~\ref{T:mup}:
\begin{align*}
  \E \left[u(t,x)^p\right]\leq K^p p! \left(1 + t^{p/2}\right).
\end{align*}
The difference between these bounds is a factor of $\sqrt{t}$, showing that the
bound in Theorem~\ref{T:mup} is not sharp. Although we only have this case
showing that our moment bounds are not sharp, we believe that this is generally
the case. Nonetheless, the method used in the proof of Theorem~\ref{T:mup} are
quite robust and can be easily extended to more general settings and a broad
class of stochastic partial differential equations (SPDEs), including the
\textit{stochastic wave equation} (SWE) (see Section~\ref{SS:SWEd}), and SPDEs
with fractional differential operators (see Section~\ref{SS:frac}). Moreover,
despite being sub-optimal, the moment bounds obtained in Theorem~\ref{T:mup} are
sufficient to provide a quantitative description of the ``smoothing
intermittency'' phenomenon introduced by Zeldovich \textit{et
al.}~\cite{zel-dovich.ruzmauikin.ea:90:almighty}.

\paragraph{H\"older regularity} The H\"{o}lder continuity for solutions to SHEs
has been extensively investigated in the literature. Notably, Konno and
Shiga~\cite{konno.shiga:88:stochastic} and
Reimers~\cite{reimers:89:one-dimensional} examined the H\"{o}lder continuity for
super-Brownian motions and Fleming--Viot processes driven by space-time white
noise with function-valued initial conditions. Sanz-Sol\'e and
Sarr\'a~\cite{sanz-sole.sarra:02:holder}, on the other hand, examined the
scenario where the noise satisfies the improved Dalang condition
(Hypothesis~\ref{H:efcdalang}) and $\rho$ satisfies the golobal Lipschitz
condition. Recently, Chen and Daland~\cite{chen.dalang:14:holder-continuity} and
Chen and Huang~\cite{chen.huang:19:comparison} generalized results
in~\cite{sanz-sole.sarra:02:holder} by allowing rough initial conditions (part
(i) of Hypothesis~\ref{H:rough}). Theorem~\ref{T:Holder} makes a slight
extension of \cite[Theorem~1.8]{chen.huang:19:comparison} by allowing the
non-Lipschitz condition on $\rho$, where the global Lipschitz condition is
replaced by ``locally bounded + linear growth at infinity'' of $\rho$. As far as
we know, the H\"{o}lder continuity for SHEs with $\rho(u)=\sqrt{u}$ starting
from the Dirac delta measure (one instance of the rough initial condition) has
not been studied in the literature. For the classical initial condition, one may
consult, e.g., \cite[Theorem~1.4.6]{xiong:13:three}.

\paragraph{Spatial asymptotics and tail estimates} Theorem~\ref{T:Asymspc}
provides an almost sure asymptotic upper bound for solutions to~\eqref{E:SHE} in
space. The proof of this theorem closely follows the approach presented
in~\cite{conus.joseph.ea:13:on}, which relies on tail estimates
(Theorem~\ref{T:Tail}) and the Borel--Cantelli lemma. While the moment bounds
may not be particularly sharp, we find that the tail estimates are indeed sharp,
at least in the case of super Brownian motion; see Proposition~\ref{P:Alpha}
with $\alpha = 1/2$ and~\cite[Proposition 1.4]{hu.wang.ea:23:moment}. As a
result, we believe Theorem~\ref{T:Asymspc} provides a sharp bound for
super-Brownian motion, specifically, for any $t > 0$ fixed,
\begin{align*}
  \sup_{|x|\leq R} u(t,x) \asymp \sqrt{t} \log (R),
  \quad \text{almost surely, as $R\to \infty$.}
\end{align*}
In Proposition~\ref{P:Alpha}, by letting $\alpha = 0$, the spatial asymptotic
upper bounds coincide with the exact asymptotics of SHEs with the additive
space-time white noise as proved in~\cite[Theorem 1.2]{conus.joseph.ea:13:on},
and with additive spatial colored noises as proved in~\cite[Theorem
2.3]{conus.joseph.ea:13:on*1}. However, Theorem~\ref{T:Tail} provides a tail
estimates obtained by approximating the Legendre-type transform of $H$, as seen
in~\eqref{E:H}. This approximation is effective only when $\rho(u)$ grows at a
``significantly slower'' rate than $u$ as $u \to \infty$, such as $\rho(u) =
u^{\alpha}$ with $\alpha \in [0,1)$. However, if $\rho(u)$ is ``very close'' to
$u$ for large $u$, such as $\rho(u) = u\left[\log(e + u)\right]^{-\beta}$ with
$\beta\in (0, \sfrac{1}{4})$, part (b) of Proposition~\ref{P:Log} below suggests
that the results in Theorem~\ref{T:Tail} and consequently in
Theorem~\ref{T:Asymspc} can be improve.

\paragraph{Interaction with initial conditions} Due to the multiplicative nature
of the noise, the initial condition interacts with the noise, playing an active
role in shaping the solution. This interaction is evident when one writes out
the Picard iteration of the mild solution given by~\eqref{E:Mild}. For the PAM,
this interaction leads to the following moment bound (see~\cite[Theorem
1.7]{chen.huang:19:comparison}):
\begin{align}\label{E:PAM-Mom}
  \Norm{u(t,x)}_p^2 \le C \calJ_0^2(t,x)\, \Upsilon(t), \quad \text{for all $t>0$, $x\in\R^d$, and $p\ge 2$,}
\end{align}
where $\Upsilon(\cdot)$ is a function that represents the contribution of the
driven noise. The \textit{multiplicative interaction} of the initial conditions
and the driven noise, as shown in~\eqref{E:PAM-Mom}, naturally gives rise to the
property of propagation of tall peaks in some space-time cone $\left\{|x| \leq
\kappa\, t\right\}$, which was earlier observed in physical contexts, see, e.g.,
Zeldovich \textit{et al.}~\cite[Section
8.10]{zel-dovich.ruzmauikin.ea:90:almighty} and later rigorously formulated and
proved by Conus and Khoshnevisan~\cite{conus.khoshnevisan:12:on}.  Since then,
additional researches (see~\cite{chen.dalang:15:moments,
chen.kim:19:nonlinear, huang.le.ea:17:large*1}) has expanded upon this cone
property. In essence, the cone property states that there exists a space-time
cone of size $\kappa$ within which the moments of the PAM grow exponentially
fast, while outside of it, they decay exponentially rapidly. The precise size
$\kappa$ of the cone is known as the \textit{intermittency front}.

However, in the present paper, we obtain an \textit{additive interaction} of the
initial data and the noise as shown in Theorem~\ref{T:mup} and
Figure~\ref{F:Components}. This additive interaction arises from the way we
solve the inequality~\eqref{E:MotIneq} (see Figure~\ref{F:Motive}) or from the
application of Lemma~\ref{L:Gmm} in general, where the coefficient $b$
corresponds to the initial condition. By sending $b$ to zero
in~\eqref{E:MotIneq}, the linear equation $x = k x^{\alpha} + b$ transitions
from having one unique nonnegative solution to having two nonnegative solutions,
one of which is zero. Accordingly, assuming $b=0$, the inequality $x\le k
x^{\alpha}$ can only imply that $x\in [0,k^{1/(1-\alpha)}\,]$, but additional
information is needed to determine the exact value of $x$. In the context of
SPDEs, such additional information may be related to the uniqueness and
non-uniqueness of the solution. In fact, when $\rho(u) = |u|^{\alpha}$ with
$\alpha\in (0,3/4)$ and $\dot{W}$ is the one-dimensional space-time white noise,
Mueller \textit{et al}~\cite{mueller.mytnik.ea:14:nonuniqueness} constructed
nontrivial solutions starting from zero initial condition. Therefore, the
propagation of high peaks (of polynomial order in this case) will be much more
subtle and will be left for future research.

If the diffusion coefficient $\rho$ is globally Lipschitz, the moment comparison
theorem may be applied, and thus the moment bounds for the ``dominant PAM'' can
be used to determine an upper bound for the propagation to the solution
to~\eqref{E:SHE}. In case that $\rho$ is not globally Lipschitz, it is known
that solutions to~\eqref{E:SHE}, assuming $\rho(u) = u^{\alpha}$ with $\alpha
\in (0,1)$, is compactly supported provided the initial condition is a finite
measure, as demonstrated
in~\cite{dawson.iscoe.ea:89:super-brownian,mueller.perkins:92:compact}.
Additionally, precise analysis on front propagation (a related but distinct
property) for Fisher-KPP equations has been presented
in~\cite{mueller.mytnik.ea:11:effect, mueller.mytnik.ea:21:speed}. As a result,
it should be possible to obtain results about propagation of tall peaks, even
for non-Lipschitz cases with certain necessary restrictions on initial
conditions. We hope that this question can be resolved in the future.

\paragraph{Proof strategy of Theorem~\ref{T:mup}} The proof of
Theorem~\ref{T:mup} relies on the concavity of $\rho$. To highlight the strategy
of the proof, consider the case of space-time white noise ($f=\delta_0$) and
assume that $M_0 = 0$ in Hypothesis~\ref{H:rho}. Using standard arguments
involving Burkholder-Davis-Gundy's (see, e.g.,
\cite[Theorem~B.1]{khoshnevisan:14:analysis}), Minkowski's and Jensen's
inequalities, one can derive the following inequality:
\begin{align}\label{E:toy}
  X(t,x) \leq k\rho_2(X(t,x)) + b, \quad \text{with} \quad
  X(t,x) = \int_0^t\ud s\int_{\R^d}\ud y\, p_{t-s}^2(x-y) \Norm{u(s,y)}_p^2,
\end{align}
where $\rho_2(\cdot)$ is a sublinear function defined on $\R_+$, $k$ and $b$ are
some constants depending on $p$ and $t$. Since $\rho_2$ is sublinear, any
nonnegative solution to inequality~\eqref{E:toy} has to lie in a compact
interval, e.g., $[0,F]$. In other words, as $X$ satisfies~\eqref{E:toy}, we get
$X\leq F$; see Lemma~\ref{L:Gmm}.

To illustrate the idea of solving inequality~\eqref{E:toy}, consider the case
when $\rho(u) = u^a$, $u\ge 0$, with $a\in [0,1)$ fixed. In this case,
$\rho_2(u) = \rho(u)$ and the inequality in~\eqref{E:toy} becomes
\begin{align}\label{E:MotIneq}
  x \le k x^a + b, \quad \text{for $x\ge 0$ with $a\in (0,1)$, $b>0$ and $k>0$ being fixed}.
\end{align}
By the concavity of the function $x^a$, the corresponding equation $x = k x^a +
b$ has a unique positive solution, which is denoted by $x^*$. Hence,
inequality~\eqref{E:MotIneq} holds provided that $x\in [0,x^*]$, i.e., $x^*$ is
an upper bound estimate for $x$. Moreover, one can apply Newton's method for one
step, properly started, to obtain an upper bound for $x^*$ (see
Figure~\ref{F:Motive} for an intuitive display of this procedure):
\begin{align}\label{E:Xstar}
  x^*\le k^{1/(1-a)} + b/(1-a).
\end{align}

To handle more general cases, including those where $M_0\neq 0$ (i.e., $\rho$ is
only asymptotic concave), the noise is not white in space, and the initial
conditions are more general (not a constant), a more meticulous approach is
required. This ultimately leads to the establishment of an inequality in the
form of~\eqref{E:toy}, as shown in~\eqref{E:nottoy} below. Lemma~\ref{L:Gmm}
outlines the procedure for identifying an upper bound similar to the one
depicted in Figure~\ref{F:Motive}, which in turn yields a bound akin
to~\eqref{E:Xstar} for solutions to~\eqref{E:nottoy}. Overall, this constitutes
the general strategy behind the proof of Theorem~\ref{T:mup}.

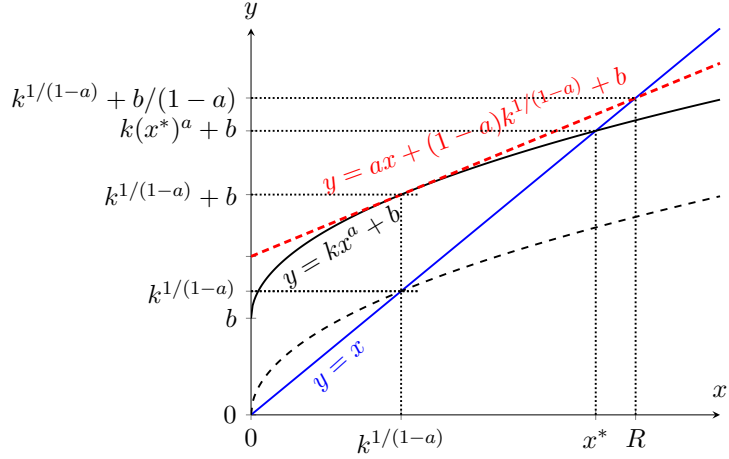
\begin{figure}[htpb]
  \begin{center}
    \begin{tikzpicture}[scale=0.9, transform shape]
      \begin{axis}[
        axis lines = left,
        xtick={0, 0.16, 0.367481, 0.41},
        xticklabels={0, $k^{1/(1-a)}$, $x^*$, $R$},
        ytick={0, 0.125, 0.205, 0.367481, 0.285, 0.41, 0.16},
        yticklabels={0, $b$, , $k (x^*)^a+b$, $k^{1/(1-a)}+b$, $k^{1/(1-a)} + b/(1-a)$, $k^{1/(1-a)}$},
        xlabel=$x$,
        ylabel=$y$,
        x label style={at={(axis description cs:1.0,+0.1)},anchor=north},
        y label style={at={(axis description cs:+0.0,1.0)},rotate=-90,anchor=south},
        ]
        \addplot[blue, domain=0:0.5, samples=100, thick]{x} node [near start, rotate = 42, below, xshift = -2em] {$y=x$};
        \addplot[black,domain=0:0.5, samples=300, thick]{ 0.125 + 0.4 * sqrt(x) } node [near start, below, rotate = 31, xshift = +0em] {$y=k x^a + b$};
        \addplot[black,domain=0:0.5, samples=300, thick, dashed]{ 0.4 * sqrt(x) } node [near start, below, rotate = 31, xshift = +0em] {};
        \addplot[red,domain=0:0.5, samples=100, very thick, densely dashed]{ 0.5 * x + 0.205}node [midway, above, rotate = 20, yshift = 0.3em]
          {$y=a x + (1-a) k^{1/(1-a)} + b$};

        \def\vertline{0.16}
        \def\horizonline{0.285}
        \addplot +[mark=none, black, densely dotted, thick] coordinates {(\vertline, 0) (\vertline, \horizonline)};
        \addplot +[mark=none, black, densely dotted, thick] coordinates {(0, \horizonline) (\vertline+0.02, \horizonline)};

        \def\vertline{0.16}
        \def\horizonline{0.16}
        \addplot +[mark=none, black, densely dotted, thick] coordinates {(0, \horizonline) (\vertline+0.02, \horizonline)};
        \addplot +[mark=none, black, densely dotted, thick] coordinates {(0, \horizonline) (\vertline+0.02, \horizonline)};

        \def\vertline{0.41}
        \def\horizonline{0.41}
        \addplot +[mark=none, black, densely dotted, thick] coordinates {(\vertline, 0) (\vertline, \horizonline)};
        \addplot +[mark=none, black, densely dotted, thick] coordinates {(0, \horizonline) (\vertline, \horizonline)};
        \def\vertline{0.367481}
        \def\horizonline{0.367481}
        \addplot +[mark=none, thick, black, densely dotted] coordinates {(\vertline, 0) (\vertline, \horizonline)};
        \addplot +[mark=none, thick, black, densely dotted] coordinates {(0, \horizonline) (\vertline, \horizonline)};

    \end{axis}
    \end{tikzpicture}
  \end{center}

  \caption{Applying Newton's method for one step starting from the point
  $(k^{1/(1-a)},k^{1/(1-a)}+b)$ on the graph to estimate (find an
upper bound of) the solution $x^*$ to the equation $x = k x^a +b$.}

  \label{F:Motive}
\end{figure}

The procedure outlined in this paper involves establishing
inequality~\eqref{E:nottoy} first and then deriving its upper bounds using
Lemma~\ref{L:Gmm}. This can be seen as a generalization of the one-variable
Bihari--LaSalle inequality~\cite{bihari:56:generalization,
lasalle:49:uniqueness} to multivariate or field cases, as evidenced
by~\eqref{E:utx}. However, there are some subtle differences between the two,
such as the convolutional form of the time variable in our setting, as seen
in~\eqref{E:utx}. While the Bihari--LaSalle inequality has been applied to
moment estimates for nonlinear stochastic differential equations (SDEs), such as
in~\cite{fang.zhang:05:study}, its application to nonlinear SPDEs requires
careful handling of the multivariate setting. One common approach is taking the
supreme norm on the spatial argument and obtaining a one-variable integral
inequality. However, this method is not always applicable, especially for
unbounded initial conditions or when a better understanding of how the initial
condition enters the iterations of the multiplicative type noise is needed. The
challenge of applying the Bihari--LaSalle inequality to nonlinear SPDEs
motivates us to formulate inequality~\eqref{E:nottoy} and establish
Lemma~\ref{L:Gmm}. These two steps constitute some key components of the proof
for Theorem~\ref{T:mup}.

\section{Proof of the main results}\label{S:main}
\subsection{Technical lemmas}\label{SS:lms}
In this part, we provide some technical lemmas that will be used in the
proof of Theorem~\ref{T:mup}. The proofs of these lemmas are postponed to
Section~\ref{SS:lmmpf}.
\begin{lemma}\label{L:rhononde}
  Let $\rho$ be a function satisfying Hypothesis~\ref{H:rho}. Then there exist
  two functions $g^+: [M_0,\infty)\to\R$ and $g^-: (-\infty,-M_0]\to\R$ that
  satisfy the following properties:
  \begin{enumerate}[(i)]
    \item $g^+$ is nonnegative, non-increasing, right-continuous, and it
      satisfies that
      \begin{align*}
        \lim_{x\to \infty} g^+ (x) = 0 \quad \text{and} \quad
        | \rho(x) | - | \rho(M_0) | = \int_{M_0}^x g^+(y) \ud y \quad \text{for all $x\ge M_0$};
      \end{align*}
    \item Similarly, $g^-$ is nonnegative, non-decreasing, left-continuous, and
      it satisfies that
      \begin{align*}
        \lim_{x\to -\infty} g^- (x) = 0 \quad \text{and} \quad
        | \rho(x) | - | \rho(-M_0) | = \int_{x}^{-M_0} g^-(y) \ud y \quad \text{for all $x\le - M_0$}.
      \end{align*}
  \end{enumerate}
\end{lemma}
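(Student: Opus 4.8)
The plan is to take $g^+$ to be (a version of) the right-hand derivative of the concave function $\phi:=|\rho|$ restricted to $[M_0,\infty)$, and to obtain $g^-$ from $g^+$ by the reflection $\tilde\rho(x):=\rho(-x)$: since $|\tilde\rho|$ is concave on $[M_0,\infty)$, one applies the construction for $g^+$ to $\tilde\rho$ and then sets $g^-(x):=\widetilde{g}^{\,+}(-x)$, which turns ``non-increasing, right-continuous'' into ``non-decreasing, left-continuous'' and transports the integral identity by the change of variable $y\mapsto-y$. So I would only write out the argument for $g^+$. Concretely, put
\[
  g^+(x):=\lim_{h\downarrow 0}\frac{\phi(x+h)-\phi(x)}{h},\qquad x>M_0,
\]
which exists and is finite because a function concave on an interval is continuous and locally Lipschitz on the interior of that interval. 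From the one-variable theory of concave functions I would read off, with essentially no work, that $g^+$ is non-increasing and right-continuous on $(M_0,\infty)$, and that $\phi$, being locally Lipschitz there, is absolutely continuous on every compact subinterval of $(M_0,\infty)$, so that $\phi(x)-\phi(x_0)=\int_{x_0}^x g^+(y)\,\ud y$ for $M_0<x_0<x$ (the a.e.\ derivative of $\phi$ agreeing with $g^+$ off the at most countable set where $\phi$ is not differentiable).

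Next I would pin down the three remaining features. \emph{Sign:} nonnegativity of $g^+$ follows from $\phi\ge 0$; indeed a concave function that is bounded below on a half-line is non-decreasing there, so $\phi$ is non-decreasing on $(M_0,\infty)$ and $g^+\ge 0$ (equivalently, $g^+(x_0)<0$ would force $g^+\le g^+(x_0)<0$ on $[x_0,\infty)$ and hence $\phi(x)\le\phi(x_0)+g^+(x_0)(x-x_0)\to-\infty$). \emph{Reaching the endpoint:} to extend the integral identity down to $x_0=M_0$ I would let $x_0\downarrow M_0$; the right-hand side increases to $\int_{M_0}^x g^+(y)\,\ud y$ by monotone convergence (here the integrand $g^+\ge 0$ does not change, only the interval grows), while the left-hand side tends to $\phi(x)-\phi(M_0^{+})$ with $\phi(M_0^{+}):=\lim_{y\downarrow M_0}\phi(y)$ (this limit exists because $\phi$ is monotone near $M_0$), and $\phi(M_0^{+})=\phi(M_0)=|\rho(M_0)|$ as soon as $\phi$ is right-continuous at $M_0$. \emph{Decay at infinity:} since $g^+\ge 0$ is non-increasing, $\ell:=\lim_{x\to\infty}g^+(x)\ge 0$ exists; if $\ell>0$ then $g^+\ge\ell$ everywhere, so $|\rho(x)|\ge|\rho(M_0)|+\ell(x-M_0)$ and $\liminf_{x\to\infty}|\rho(x)|/x\ge\ell>0$, contradicting part (ii) of Hypothesis~\ref{H:rho}; hence $\ell=0$.

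I do not expect a substantial obstacle here: the lemma is essentially the statement that a nonnegative concave function on a half-line is the running integral of its non-increasing, right-continuous right-derivative, combined with the elementary fact that sublinear growth forces that derivative to vanish at infinity. The only point that demands a little care is the behaviour at $\pm M_0$: Hypothesis~\ref{H:rho} does not by itself guarantee that $|\rho|$ is right-continuous at $M_0$, nor that the right derivative is finite there (e.g.\ $|\rho(x)|=\sqrt{x-M_0}$ near $M_0$ is admissible and gives an infinite raw right derivative at $M_0$). I would dispose of this by replacing $M_0$ with $M_0+\varepsilon$ for an arbitrarily small $\varepsilon>0$, after which $M_0$ lies in the interior of the concavity interval and continuity, local Lipschitzness, and finiteness of $g^+$ at the left endpoint are automatic; this enlargement is harmless for everything that follows, since $M_0$ enters Theorem~\ref{T:mup} only through the constants $M\ge M_0$ and $K_M=\sup_{|x|<M}|\rho(x)|$, and increasing it can only weakly enlarge the stated upper bound. (Alternatively one interprets $|\rho(M_0)|$ as $\lim_{y\downarrow M_0}|\rho(y)|$ and assigns $g^+(M_0)$ any finite value no smaller than $\lim_{y\downarrow M_0}g^+(y)$.)
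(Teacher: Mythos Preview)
Your proposal is correct and follows essentially the same approach as the paper: both take $g^+$ to be the right derivative of the concave function $|\rho|$ on $[M_0,\infty)$, use the standard integral representation for concave functions (the paper cites \cite{niculescu.persson:18:convex} rather than spelling it out), deduce nonnegativity from $|\rho|\ge 0$, and obtain $g^+(x)\to 0$ from the sublinear growth assumption. Your treatment of the endpoint $M_0$ is more careful than the paper's, which simply absorbs the issue into the cited reference.
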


Let $\theta_p:\R\to\R_+$ denote the power function $\theta_p(x) = |x|^p$ for
$p\in\R$. When $\rho(u) = |u|^\alpha$ with $u\ge 0$ and $\alpha\in (0,1]$, we
claim that
\begin{align*}
  \Norm{\rho(u)}_p^2 \le \rho\left(\Norm{u}_p^2\right) \quad \text{for all $p>0$}.
\end{align*}
Indeed, it is clear that
\begin{align*}
  \Norm{\rho(u)}_p^2
  & = \left(\theta_{\frac{2}{p}} \circ \E \circ \theta_p \circ |\rho|\right) (u)
    = \left(\theta_{\frac{2}{p}} \circ \E \circ \theta_p \circ \theta_{\alpha}\right) (u)
    = \left(\theta_{\frac{2}{p}} \circ \E \circ \theta_{\alpha} \circ \theta_{p}\right) (u)\\
  &\le\left(\theta_{\frac{2}{p}} \circ \theta_{\alpha} \circ \E \circ \theta_{p}\right) (u)
    = \left(\theta_{\alpha} \circ \theta_{\frac{2}{p}} \circ \E \circ \theta_{p}\right) (u)
    = \rho\left(\Norm{u}_p^2\right),
\end{align*}
where the inequality is due to the concavity of the function $\rho =
\theta_\alpha$ and we used twice the commutative property:
\begin{align*}
    \rho\circ\theta_p
  = \theta_\alpha \circ \theta_p
  = \theta_p \circ \theta_\alpha
  = \theta_p \circ \rho.
\end{align*}
However, for a general $\rho$ that satisfies Hypothesis~\ref{H:rho}, we need to
introduce $\rho_p$ for this purpose.

\begin{definition}\label{D:rhop}
  For any function $\rho:\R \to\R$ and any positive number $p$, let the
  functions $\rho_p^+$, $\rho_p^-$ and $\rho_p:\R_+\to\R_+$ be defined as
  follows: for all $x\in \R_+$,
  \begin{align}\label{E:rhop}
    \rho_p (x)   \coloneqq \rho_p^+ (x) + \rho_p^- (x) \quad\text{with}\quad
    \rho_p^\pm (x) \coloneqq \left|\rho \left(\pm\: x^{1/p}\right)\right|^p.
  \end{align}
  Or in other words, $\rho_p^{\pm}(\cdot)$ with $p>0$ are defined so that
  \begin{align*} 
    \left|\rho \left(\sign(x) |x|^{1/p}\right)\right|^p
    = &  \underbrace{\left(\theta_p \circ |\rho| \circ \theta_{1/p}\right) (x)}_{\displaystyle = \rho_p^+(x)}  1_{\{x \ge 0\}}
        +\underbrace{\left(\theta_p \circ |\rho| \circ r \circ \theta_{1/p} \right) (x)}_{\displaystyle = \rho_p^-(-x)} 1_{\{x  < 0\}} \le \rho_p(|x|),
  \end{align*}
  where $r(x) \coloneqq -x$ is the reflection function.
\end{definition}

The next lemma shows that $\rho_p(\cdot)$ and $\rho_p^\pm(\cdot)$ inherit the
properties from $\rho(\cdot)$.

\begin{lemma}\label{L:ccvtrho}
  Suppose that $\rho(\cdot)$ is a function satisfying Hypothesis~\ref{H:rho}.
  For any $p>0$, let $\rho_p(\cdot)$, $\rho_p^+(\cdot)$ and $\rho_p^-(\cdot)$ be
  given in Definition~\ref{D:rhop}. Then, the following properties hold:
  \begin{enumerate}[(i)]
    \item $\rho_p^+$ and $\rho_p^-$ admit the following representations: for any
      $x \ge M_0^p$,
      \begin{align}\label{E:gp+}
        \rho_p^+ (x) - \rho_p^+ (M_0^p) = \int_{M_0^p}^x g^+_p(y) \ud y \quad\text{with~} g_p^+ (x) \coloneqq \left\lvert\rho \left(+x^{1/p}\right)\right\rvert^{p-1} g^+ \left(+x^{1/p}\right) x^{-(p-1)/p} \shortintertext{and}\label{E:gp-}
        \rho_p^- (x) - \rho_p^- (M_0^p) = \int_{M_0^p}^x g^-_p(y) \ud y \quad\text{with~} g_p^- (x) \coloneqq \left\lvert\rho \left(-x^{1/p}\right)\right\rvert^{p-1} g^- \left(-x^{1/p}\right) x^{-(p-1)/p},
      \end{align}
      respectively, where $g^\pm(\cdot)$ are given in Lemma~\ref{L:rhononde};
    \item Both $\rho_p^+$ and $\rho_p^-$ are non-decreasing on $[M_0^p,\infty)$;
    \item There exists $M \geq M_0$, independent of $p$, such that all functions
      $\rho_p$, $\rho_p^+$, and $\rho_p^-$ are concave on $[M^p,\infty)$;
    \item If $M_0 = 0$ in part~(iii) of Hypothesis~\ref{H:rho}, then all
      functions $\rho_p^+$, $\rho_p^-$ and $\rho_p$ are concave on $\R_+$.
  \end{enumerate}
\end{lemma}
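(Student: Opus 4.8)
The plan is to prove the four parts in order, using the integral representations of $|\rho|$ from Lemma~\ref{L:rhononde} as the central tool. For part~(i), I would simply differentiate: starting from $\rho_p^+(x) = |\rho(x^{1/p})|^p$, apply the chain rule together with the identity $\frac{\ud}{\ud x}|\rho(x)| = g^+(x)$ (valid a.e.\ and in the integral sense on $[M_0,\infty)$ by Lemma~\ref{L:rhononde}(i)), and with $\frac{\ud}{\ud x}x^{1/p} = \frac{1}{p}x^{(1-p)/p}$. This yields $\frac{\ud}{\ud x}\rho_p^+(x) = |\rho(x^{1/p})|^{p-1}\, g^+(x^{1/p})\, x^{-(p-1)/p}$, which is exactly $g_p^+$; integrating back from $M_0^p$ gives the claimed formula. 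One has to be slightly careful because $\rho$ need not be differentiable, so I would phrase this as a change of variables $y = z^p$ inside the integral $\int_{M_0}^{x^{1/p}} (\text{something})\,\ud z$ rather than as pointwise differentiation — but this is routine. The argument for $\rho_p^-$ is the mirror image using $g^-$.

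Part~(ii) is immediate from part~(i): $g_p^+ \ge 0$ since $|\rho|\ge 0$ and $g^+\ge 0$ (and $x^{-(p-1)/p}>0$ on $(0,\infty)$), so $\rho_p^+$ is the integral of a nonnegative function, hence non-decreasing on $[M_0^p,\infty)$; likewise for $\rho_p^-$. Then $\rho_p = \rho_p^+ + \rho_p^-$ is non-decreasing there as well.

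Part~(iii) is the heart of the lemma and the main obstacle. Concavity of $\rho_p^+$ on $[M^p,\infty)$ is equivalent to $g_p^+$ being non-increasing there. Writing $g_p^+(x) = |\rho(x^{1/p})|^{p-1}\, g^+(x^{1/p})\, x^{-(p-1)/p}$ and substituting $u = x^{1/p}$, the question reduces to whether $u \mapsto \left(\tfrac{|\rho(u)|}{u}\right)^{p-1} g^+(u)$ is non-increasing for $u$ large. Here $g^+$ is already non-increasing (Lemma~\ref{L:rhononde}), and $|\rho(u)|/u \to 0$ by Hypothesis~\ref{H:rho}(ii); the subtle point is that for $p \in (0,1)$ the exponent $p-1$ is negative, so $\left(|\rho(u)|/u\right)^{p-1}$ is \emph{increasing} wherever $|\rho(u)|/u$ decreases, and one needs the decay of $g^+$ to dominate. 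The key structural fact to exploit is that $|\rho(u)|/u$ is eventually non-increasing: indeed $\frac{\ud}{\ud u}\frac{|\rho(u)|}{u} = \frac{g^+(u) u - |\rho(u)|}{u^2}$, and since $|\rho(u)| = |\rho(M_0)| + \int_{M_0}^u g^+(y)\ud y \ge (u-M_0)\,g^+(u)$ by monotonicity of $g^+$ under the integral, for $u \ge 2M_0$ we get $g^+(u)u - |\rho(u)| \le g^+(u) u - (u - M_0) g^+(u) = M_0\, g^+(u) \le g^+(u)u/2$ — so this crude bound is not quite enough on its own. Instead I would argue directly that $|\rho(u)|/u$ is non-increasing: for $M_0 \le u < v$, concavity of $|\rho|$ on $[M_0,\infty)$ plus $|\rho(M_0)| \ge 0$ gives $|\rho(u)| \ge \frac{u}{v}|\rho(v)| + (1-\tfrac{u}{v})|\rho(M_0)| \ge \frac{u}{v}|\rho(v)|$, i.e.\ $|\rho(u)|/u \ge |\rho(v)|/v$. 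Hence $|\rho(u)|/u$ is non-increasing on $[M_0,\infty)$, and therefore so is any fixed power of it with \emph{positive} exponent; for $p \ge 1$ we are done with $M = M_0$. For $p \in (0,1)$, I would instead group the factors differently: write $g_p^+(x) = |\rho(x^{1/p})|^{p} \cdot \frac{g^+(x^{1/p})}{|\rho(x^{1/p})|} \cdot x^{-(p-1)/p}$ when $\rho$ is nonzero, note $|\rho(x^{1/p})|^p = \rho_p^+(x)$-type growth is sublinear, and combine $|\rho(u)|^{p-1} g^+(u) = \frac{\ud}{\ud u}\big(\tfrac{1}{p}|\rho(u)|^p\big)$ — so $g_p^+(x) = \frac{1}{p}\,\frac{\ud}{\ud u}|\rho(u)|^p\big|_{u=x^{1/p}} \cdot x^{-(p-1)/p}$; then since $|\rho(u)|^p$ is a concave function of $u^p = x$ when... actually the cleanest route: observe $\rho_p^+$ as a function of $x$ is the composition $\theta_p \circ |\rho| \circ \theta_{1/p}$, and $|\rho|$ concave non-decreasing composed with the concave non-decreasing map $x \mapsto x^{1/p}$ (concave iff $p \ge 1$) need not stay concave for $p<1$, which is precisely why $M_0$ must possibly be enlarged to some $M$ independent of $p$. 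The correct argument, which I would carry out carefully, is that $\frac{\rho_p^+(x)}{x}$ is non-increasing (by the same $|\rho(u)|/u$ computation raised to the $p$-th power), and that $\rho_p^+$ is concave on $[M^p,\infty)$ follows from showing $g_p^+$ non-increasing via the representation $g_p^+(x) = p^{-1}\frac{\ud}{\ud x}\big[\big(|\rho(x^{1/p})| x^{-1/p}\big)^p \cdot x\big]$-type manipulation; the uniform-in-$p$ choice of $M$ comes from requiring $u^2 |\rho''|$-type control or, more robustly, from the observation that once $|\rho(u)|/u \le 1/2$ (which holds for $u \ge M$ with $M$ depending only on $\rho$, by Hypothesis~\ref{H:rho}(ii)), the negative-exponent factor $\big(|\rho(u)|/u\big)^{p-1} \le 2^{1-p} \le 2$ stays bounded and the monotone decay of $g^+$ wins. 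I expect to need to track the interplay of these two monotone factors precisely, and this is where most of the work lies. Finally, part~(iv): if $M_0 = 0$, then $|\rho|$ is concave on all of $\R_+$ and $\R_-$, so $|\rho(u)|/u$ is non-increasing on all of $(0,\infty)$, $g^+$ is non-increasing on all of $[0,\infty)$, and the argument of part~(iii) applies with $M = 0$, giving concavity of $\rho_p^\pm$ and hence $\rho_p$ on all of $\R_+$.
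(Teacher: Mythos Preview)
Your strategy for parts (i), (ii), and (iv) matches the paper and is correct. For part (iii) you also land on the right factorization: with $u=x^{1/p}$ one has $g_p^+(x)=\big(|\rho(u)|/u\big)^{p-1}g^+(u)$, so concavity reduces to eventual monotonicity of $\varphi(u)\coloneqq|\rho(u)|/u$ together with that of $g^+$. This is exactly what the paper does.

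The genuine gap is in your ``direct concavity'' argument that $\varphi$ is non-increasing on all of $[M_0,\infty)$. The inequality you write,
\[
|\rho(u)|\ \ge\ \tfrac{u}{v}\,|\rho(v)|+\Big(1-\tfrac{u}{v}\Big)|\rho(M_0)|,
\]
would require $u=\tfrac{u}{v}\,v+(1-\tfrac{u}{v})M_0$, i.e.\ $M_0=0$; for $M_0>0$ it is simply false. And indeed $\varphi$ need not be monotone on $[M_0,\infty)$: take $|\rho(u)|=\sqrt{u-M_0}$, so that $\varphi(u)=\sqrt{u-M_0}/u$ increases on $(M_0,2M_0)$ and only decreases thereafter. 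Consequently your claim ``for $p\ge1$ we are done with $M=M_0$'' is wrong. The paper's fix is to differentiate: writing $|\rho(x)|=|\rho(M_0)|+\int_{M_0}^x g^+$, one finds
\[
\varphi'(x)=x^{-2}\Big[g^+(x)(x-M_0)-\!\int_{M_0}^x g^+(y)\,\ud y+g^+(x)M_0-|\rho(M_0)|\Big]\ \le\ x^{-2}\big(g^+(x)M_0-|\rho(M_0)|\big),
\]
using that $g^+$ is non-increasing; since $g^+(x)\to0$ this is negative for $x$ large, yielding $\varphi$ non-increasing on $[M,\infty)$ for some $M\ge M_0$ depending only on $\rho$. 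This is precisely why the statement allows $M\ge M_0$ rather than $M=M_0$. Your concern about $p\in(0,1)$ is legitimate (the product of $\varphi^{p-1}$ and $g^+$ is no longer obviously monotone), and the paper in fact only argues the ``it suffices'' step cleanly for $p\ge1$; since every application in the paper uses $p\ge2$, this residual point is immaterial.
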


\begin{lemma}\label{L:Concave}
  For any $U\in L^p(\Omega)$ and $p>0$, it holds that
  \begin{align}\label{E:up3}
    \E \left[\rho_p^+ (|U|^p) \one_{[+M,+\infty)}(U)\right]
    \leq & \rho_p^+ \left(M^p + \Norm{U}_p^p \right) \shortintertext{and}
    \E \left[\rho_p^- (|U|^p) \one_{(-\infty, -M]}(U)\right]
    \leq & \rho_p^- \left(M^p + \Norm{U}_p^p \right),
    \label{E:up4}
  \end{align}
  where the constant $M$ is given in part (iii) of Lemma~\ref{L:ccvtrho}.
\end{lemma}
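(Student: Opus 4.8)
The plan is to establish the first bound~\eqref{E:up3}; the second bound~\eqref{E:up4} then follows verbatim after replacing $\rho_p^+$ by $\rho_p^-$ and the event $\{U\ge M\}$ by $\{U\le -M\}$, using the corresponding monotonicity and concavity statements for $\rho_p^-$. The only inputs needed are: $\rho_p^+\ge 0$ on $\R_+$ (immediate from the definition~\eqref{E:rhop}); $\rho_p^+$ is non-decreasing on $[M_0^p,\infty)$ (part~(ii) of Lemma~\ref{L:ccvtrho}); and $\rho_p^+$ is concave on $[M^p,\infty)$ with $M\ge M_0$ (part~(iii) of Lemma~\ref{L:ccvtrho}), so that both properties hold on $[M^p,\infty)\subseteq[M_0^p,\infty)$.

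First I would reduce the estimate to a pointwise inequality that can be integrated. On $\{U\ge M\}$ one has $|U|^p=U^p\ge M^p$, hence $|U|^p$ and $M^p+|U|^p$ both lie in $[M_0^p,\infty)$, where $\rho_p^+$ is non-decreasing; on the complementary event $\{U<M\}$ the left-hand side below vanishes while $\rho_p^+(M^p)\ge 0$. Treating these two cases separately yields the almost sure bound
\begin{align*}
  \rho_p^+(|U|^p)\,\one_{[M,\infty)}(U)\ \le\ \rho_p^+\!\bigl(M^p+|U|^p\,\one_{[M,\infty)}(U)\bigr).
\end{align*}

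Next I would take expectations and invoke Jensen's inequality. Put $Y:=|U|^p\,\one_{[M,\infty)}(U)$, so that $0\le Y\le|U|^p$ and therefore $\E[Y]\le\E[|U|^p]=\Norm{U}_p^p<\infty$ because $U\in L^p(\Omega)$. The function $\phi(x):=\rho_p^+(M^p+x)$ is nonnegative and concave on $\R_+$, being the restriction of $\rho_p^+$ to $[M^p,\infty)$ translated to the origin. Jensen's inequality for concave functions gives $\E[\phi(Y)]\le\phi(\E[Y])$ (no integrability hypothesis on $\phi(Y)$ is needed since $\phi\ge 0$), and one last use of the monotonicity of $\rho_p^+$ together with $\E[Y]\le\Norm{U}_p^p$ completes the chain
\begin{align*}
  \E\bigl[\rho_p^+(|U|^p)\,\one_{[M,\infty)}(U)\bigr]\ \le\ \E[\phi(Y)]\ \le\ \phi(\E[Y])\ =\ \rho_p^+\!\bigl(M^p+\E[Y]\bigr)\ \le\ \rho_p^+\!\bigl(M^p+\Norm{U}_p^p\bigr),
\end{align*}
which is exactly~\eqref{E:up3}.

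I do not expect a genuine obstacle here, since the real work has already been done in Lemma~\ref{L:ccvtrho}: the whole point is that the translation by $M^p$ pushes the argument of $\rho_p^+$ into its region of concavity (making Jensen applicable) and, together with monotonicity and $\rho_p^+\ge 0$, lets the translated function absorb the indicator $\one_{[M,\infty)}(U)$. The only places asking for a little care are the case analysis yielding the pointwise inequality and the observation that $\phi\ge 0$ renders $\phi(Y)$ quasi-integrable, so that Jensen's inequality applies even when $\E[\phi(Y)]=+\infty$ (in which case the displayed chain forces it to be finite).
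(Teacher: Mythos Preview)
Your proof is correct and uses the same basic ingredients as the paper's argument (Jensen's inequality together with the monotonicity and concavity of $\rho_p^+$ on $[M^p,\infty)$), but the packaging is different and arguably cleaner. The paper applies Jensen \emph{conditionally} on $\{U\ge M\}$, obtaining
\[
  \E\bigl[\rho_p^+(|U|^p)\,\one_{[M,\infty)}(U)\bigr]\le \alpha\,\rho_p^+\!\Bigl(\tfrac{1}{\alpha}\E\bigl[|U|^p\one_{[M,\infty)}(U)\bigr]\Bigr)
\]
with $\alpha=\P(U\ge M)$, and then uses a separate concavity estimate $\alpha\,\rho_p^+(\alpha^{-1}x)\le\rho_p^+(M^p+x)$ to remove the factor $\alpha$. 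You instead first shift pointwise, $\rho_p^+(|U|^p)\one_{[M,\infty)}(U)\le\rho_p^+(M^p+Y)$ with $Y=|U|^p\one_{[M,\infty)}(U)$, so that the translated function $\phi(x)=\rho_p^+(M^p+x)$ is concave on all of $\R_+$, and then apply Jensen \emph{unconditionally}. Your route avoids the case split on $\alpha=0$ and the auxiliary algebraic step; the paper's route, on the other hand, makes explicit the role of the conditional measure and shows directly how the concavity absorbs the weight $\alpha$. Both arrive at~\eqref{E:up3} in one short chain.
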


\begin{lemma}\label{L:rho-p}
  Suppose that $\rho$ satisfies Hypotheses~\ref{H:rho}. Let $M$ be the
  associated constant given in part (iii) of Lemma~\ref{L:ccvtrho}. Then for all
  $p\ge 2$ and any $U\in L^p(\Omega)$, it holds that
  \begin{align}\label{E:rho2-p}
    \Norm{\rho(U)}_p^2 \le K_M^2 + \rho_2\left(M^2 + \Norm{U}_p^2\right),
  \end{align}
  where $K_M$ is defined in~\eqref{E:KM}. In particular, if $M_0=0$ in
  Hypothesis~\ref{H:rho}, then one can take $M = K_M = 0$ in~\eqref{E:rho2-p}.
\end{lemma}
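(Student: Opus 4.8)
The plan is to reduce the estimate on $\Norm{\rho(U)}_p^2$ to the two concave-bound estimates already packaged in Lemma~\ref{L:Concave}, by splitting the range of $U$ into the ``central'' part $(-M,M)$ and the two ``tails'' $[M,\infty)$ and $(-\infty,-M]$, on which $|\rho|$ is controlled by the concave functions $\rho_p^{\pm}$ of Definition~\ref{D:rhop}. Concretely, first I would write, using $\theta_p(x)=|x|^p$ and the definition of the $L^p(\Omega)$-norm,
\begin{align*}
  \Norm{\rho(U)}_p^2
  = \left(\E\left[|\rho(U)|^p\right]\right)^{2/p}
  = \left(\E\left[\left|\rho\!\left(\sign(U)\,|U|\right)\right|^p\right]\right)^{2/p}.
\end{align*}
Then I would decompose the expectation over the three events $\{U\in(-M,M)\}$, $\{U\ge M\}$, $\{U\le -M\}$. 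On the central event, $|\rho(U)|\le K_M$ by the definition~\eqref{E:KM} of $K_M$, so that contribution is at most $K_M^p$. On $\{U\ge M\}$ one has $|U|\ge M\ge M_0$, hence $|\rho(U)|^p = \rho_p^+(|U|^p)$ by~\eqref{E:rhop}, and the bound $\E[\rho_p^+(|U|^p)\one_{[M,\infty)}(U)]\le \rho_p^+(M^p+\Norm{U}_p^p)$ is exactly~\eqref{E:up3}; symmetrically on $\{U\le -M\}$ we use~\eqref{E:up4}. Adding the three pieces gives
\begin{align*}
  \E\left[|\rho(U)|^p\right]
  \le K_M^p + \rho_p^+\!\left(M^p+\Norm{U}_p^p\right) + \rho_p^-\!\left(M^p+\Norm{U}_p^p\right)
  = K_M^p + \rho_p\!\left(M^p+\Norm{U}_p^p\right),
\end{align*}
using $\rho_p=\rho_p^++\rho_p^-$.

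The remaining task is to convert this bound on $\E[|\rho(U)|^p] = \Norm{\rho(U)}_p^p$ (a ``$p$-th power'' statement) into the claimed bound on $\Norm{\rho(U)}_p^2$. Raising to the power $2/p$ and using subadditivity of $x\mapsto x^{2/p}$ when $p\ge 2$ (so $2/p\le 1$), I get
\begin{align*}
  \Norm{\rho(U)}_p^2
  \le K_M^2 + \left(\rho_p\!\left(M^p+\Norm{U}_p^p\right)\right)^{2/p}.
\end{align*}
It then remains to identify $\left(\rho_p(M^p + \Norm{U}_p^p)\right)^{2/p}$ with $\rho_2(M^2+\Norm{U}_p^2)$, i.e.\ to verify the scaling identity $\left(\rho_p(y^{p/2})\right)^{2/p} = \rho_2(y)$ for $y\ge 0$, or more precisely $\left(\rho_p(\cdot)\right)^{2/p}$ evaluated at $M^p+\Norm{U}_p^p$ equals $\rho_2$ evaluated at $M^2+\Norm{U}_p^2$; since $(M^2+\Norm{U}_p^2)^{p/2}\ge M^p+\Norm{U}_p^p$ is false in general, I would instead argue directly at the level of $\rho_p^{\pm}$: by Definition~\ref{D:rhop}, $\left(\rho_p^+(x)\right)^{1/p} = |\rho(x^{1/p})|$, so writing $x = M^p+\Norm{U}_p^p$ and noting $x^{1/p}\le (M^2+\Norm{U}_p^2)^{1/2}$ (this uses $\|a+b\|_{\ell^{p/2}}\le\|a+b\|_{\ell^1}$-type monotonicity of $p$-norms of the two-term vector $(M^2,\Norm{U}_p^2)$, valid since $p/2\ge 1$), and then monotonicity of $\rho_p^\pm$ on $[M_0^p,\infty)$ from part~(ii) of Lemma~\ref{L:ccvtrho}, combined with $\left(\rho_p^+(x)\right)^{2/p} = |\rho(x^{1/p})|^2 = \rho_2^+(x^{2/p})$ — precisely matching $\rho_2^+$ at the argument $(M^2+\Norm{U}_p^2)$ — to conclude $\left(\rho_p^+(M^p+\Norm{U}_p^p)\right)^{2/p}\le \rho_2^+(M^2+\Norm{U}_p^2)$, and likewise for $\rho_p^-$; summing and using $\rho_2=\rho_2^++\rho_2^-$ with subadditivity of $(\cdot)^{2/p}$ over the two terms one more time yields~\eqref{E:rho2-p}.

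The main obstacle is bookkeeping in this last conversion: the inequalities~\eqref{E:up3}--\eqref{E:up4} are naturally stated with the $p$-dependent functions $\rho_p^\pm$ and the argument $M^p+\Norm{U}_p^p$, whereas the target~\eqref{E:rho2-p} is in terms of $\rho_2$ and $M^2+\Norm{U}_p^2$, so I must carefully chain the power-mean inequality $(a^p+b^p)^{1/p}\le(a^2+b^2)^{1/2}$ for $p\ge 2$, the identity $\left(\rho_p^\pm(x)\right)^{2/p}=\rho_2^\pm(x^{2/p})$ coming straight from~\eqref{E:rhop}, the monotonicity from Lemma~\ref{L:ccvtrho}(ii), and the subadditivity of $t\mapsto t^{2/p}$ — all routine but easy to mis-order. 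The special case $M_0=0$ is immediate: then part~(iv) of Lemma~\ref{L:ccvtrho} lets us take $M=0$, the central event is empty so $K_M=0$, and the argument above goes through verbatim with $M=0$.
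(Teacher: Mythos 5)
Your proposal is correct and follows essentially the same route as the paper's proof: split into the central event $\{|U|<M\}$ and the two tails, bound the central part by $K_M$, apply Lemma~\ref{L:Concave} on each tail, then convert $\rho_p^{\pm}$ to $\rho_2^{\pm}$ via the scaling identity $\left(\rho_p^\pm(x)\right)^{2/p}=\rho_2^\pm(x^{2/p})$ together with the inequality $(M^p+\Norm{U}_p^p)^{2/p}\le M^2+\Norm{U}_p^2$ and monotonicity of $\rho_2^\pm$. (The paper applies $\theta_{2/p}$ to each of the three terms separately before invoking Lemma~\ref{L:Concave}, whereas you bound $\E[|\rho(U)|^p]$ and raise to the $2/p$-th power at the end, but these are the same computation.) One small slip in your exposition: the inequality $(M^2+\Norm{U}_p^2)^{p/2}\ge M^p+\Norm{U}_p^p$ is in fact \emph{true} for $p\ge 2$ by super-additivity of $x\mapsto x^{p/2}$ when $p/2\ge 1$; you flag it as ``false in general'' and pivot to an alternative argument, but the bound $x^{1/p}\le (M^2+\Norm{U}_p^2)^{1/2}$ that you then use is precisely that inequality rephrased, so your final chain of inequalities is still valid.
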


\begin{definition}\label{D:F}
  Suppose that $\rho$ satisfies Hypothesis~\ref{H:rho}. Let $\rho_2(\cdot)$ be
  the function given by~\eqref{E:rhop}. Define $F:[M^2,\infty) \to \R_+\cup
  \{\infty\}$ as
  \begin{gather}\label{E:F}
    F(x) \coloneqq \frac{x}{4\rho_2(x)}, \quad x\ge M^2,
     \shortintertext{with}
   \frac{x}{0} \coloneqq \infty \quad \text{if $x>0$;} \quad \text{and}  \quad F(0) \coloneqq \lim_{x\downarrow 0} \frac{x}{4\rho_2(x)}\in \R_+\cup\{\infty\},
    \quad \text{if $M=0$ and $\rho_2(0)=0$},\nonumber
  \end{gather}
  where $M > M_0$ is the same as in part (iii) of Lemma~\ref{L:ccvtrho}. We use
  $F^{-1}$ to denote the (right) inverse of $F$ restricted on $[2M^2,\infty)$ as
  follows,
  \begin{align}\label{E:F^-1}
    F^{-1}(x) \coloneqq \inf \left\{y\in [2M^2,+\infty) \colon F(y) \geq x \right\}.
  \end{align}
\end{definition}
\begin{remark}
  Here are some remarks on the functions $F$ and $F^{-1}$:
  \begin{enumerate}[(i)]
   \item As stated in Definition~\ref{D:F}, we allow $F(x) = \infty$ in case
     $x>0$ and $\rho_2(x) = 0$, see e.g., $\rho(x) = 1$ for $|x|<1$ and $\rho(x)
     = |x-1|^{\alpha}$ for $|x|\geq 1$. If $\rho\equiv 0$ on $[M_0,
     \infty)$, then $F \equiv \infty$ on $[M_0,\infty)$ as well. This implies that
     $F^{-1} \equiv 2M^2$ on $\R_+$. As a result, $F^{-1}(2 p h(t))$ as
     in~\eqref{E:mup} is uniformly bounded in $t$, this coincides with the SHE
     with additive noise.

    \item Under part (ii) of Hypothesis~\ref{H:rho}, the set in~\eqref{E:F^-1}
      is nonempty for any $x > 0$, and thus $F^{-1} (\cdot)$ is a real-valued
      function.

    \item From the definitions, and noticing that $F$ is continuous on
      $[M^2,\infty)$, it is easy to see that
      \begin{align}\label{E:FF-1}
        & F\circ F^{-1}(x) \geq x,   \quad x \geq 0, \shortintertext{and}
        & F^{-1} \circ F (x) \leq x, \quad x\geq 2M^2. \label{E:F-1F}
      \end{align}
  \end{enumerate}
\end{remark}

\begin{lemma}\label{L:Gmm}
  Suppose that the function $\rho(\cdot)$ satisfies Hypothesis~\ref{H:rho}. Let
  $\rho_2(\cdot)$ be defined as in~\eqref{E:rhop} with $p = 2$. Then, thanks to
  part (iii) of Lemma~\ref{L:ccvtrho}, $\rho_2$ is concave on $[M^p,\infty)$
  with some $M > M_0$. For any $k,\: b>0$, suppose $x\ge 0$ such that $x \le k
  \rho_2(x) + b$. Then,
  \begin{align}\label{E:BIalt}
    x\le 2 F^{-1} (k) + 2b < \infty.
  \end{align}
\end{lemma}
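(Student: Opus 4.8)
The plan is to make rigorous the ``one Newton step'' picture of Figure~\ref{F:Motive}: dominate the concave, sublinear function $\rho_2$ by a suitable line anchored at the point $x = F^{-1}(k)$, and then solve the resulting linear inequality $x \le (\text{slope})\,x + (\text{intercept})$. Concretely, I would set $z \coloneqq F^{-1}(k)$; by part~(ii) of Hypothesis~\ref{H:rho} and the remark following Definition~\ref{D:F}, $z$ is a finite real number, by~\eqref{E:F^-1} it satisfies $z \ge 2M^2$, and by~\eqref{E:FF-1} it satisfies $F(z) \ge k$, that is, $\rho_2(z) \le z/(4k)$. The goal then becomes: every $x \ge 0$ with $x \le k\rho_2(x) + b$ obeys $x \le 2z + 2b$, and I would prove this by contradiction, assuming $x > 2z + 2b$.

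From $x > 2z + 2b$ one gets both $x > 2b$ and $x > 2z$. Feeding $x > 2b$ into the hypothesis gives $x/2 < x - b \le k\rho_2(x)$, i.e.\ $\rho_2(x) > x/(2k)$, which is the lower bound to be contradicted. The main step is the matching upper bound. When $M > 0$, the three-point characterization of concavity of $\rho_2$ on $[M^2,\infty)$ applied to $M^2 < z < x$, together with $\rho_2 \ge 0$ and $z - M^2 \ge z/2$, gives
\begin{align*}
  \rho_2(x) \le \frac{x-M^2}{\,z-M^2\,}\,\rho_2(z) - \frac{x-z}{\,z-M^2\,}\,\rho_2(M^2) \le \frac{2x}{z}\,\rho_2(z) \le \frac{2x}{z}\cdot\frac{z}{4k} = \frac{x}{2k},
\end{align*}
contradicting $\rho_2(x) > x/(2k)$; hence $x \le 2z + 2b = 2F^{-1}(k) + 2b < \infty$. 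When $M = 0$, $\rho_2$ is concave on all of $\R_+$ by part~(iv) of Lemma~\ref{L:ccvtrho}, so $t \mapsto \rho_2(t)/t$ is non-increasing on $(0,\infty)$ and $F$ is non-decreasing; if $z > 0$ this yields $\rho_2(x)/x \le \rho_2(z)/z \le 1/(4k)$ for $x > z$, again contradicting the lower bound, while if $z = 0$ it yields $\rho_2(x) \le x/(4k)$ for all $x > 0$, so $x \le k\rho_2(x) + b \le x/4 + b < x$ --- a contradiction in either case.

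The only real obstacle I foresee is the bookkeeping forced by the offset $M^2$ and the possibly positive value $\rho_2(M^2)$: concavity of $\rho_2$ is available only on $[M^2,\infty)$, so the Newton-step line has to be anchored at $M^2$ rather than at the origin; the strict inequality $z > M^2$, which is exactly what $z \ge 2M^2$ provides, must be checked before invoking the concavity estimate; and $\rho_2(M^2)$ has to be discarded harmlessly using $\rho_2 \ge 0$. The genuinely degenerate cases --- $M = 0$ with $\rho_2(0) = 0$, where $F(0)$ is defined as a limit and $z$ may vanish, and $\rho_2 \equiv 0$ on $[M_0,\infty)$, where $F \equiv \infty$ and $F^{-1} \equiv 2M^2$ so that the inequality $x \le b$ follows directly --- reduce to one-line estimates as indicated above.
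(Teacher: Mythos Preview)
Your proof is correct and complete, but it takes a genuinely different route from the paper's. The paper introduces an intermediate quantity
\[
  \gamma_0(k) \coloneqq \inf\Big\{x\in(M^2,\infty):\ \tfrac{\rho_2(x)}{x}\le \tfrac{1}{k}\ \text{and}\ g_2(x)\le \tfrac{1}{2k}\Big\},
\]
where $g_2 = g_2^+ + g_2^-$ is the ``derivative'' of $\rho_2$ coming from the integral representation in Lemma~\ref{L:ccvtrho}~(i). It first shows that $x\le \gamma_0(k)+2b$ by writing $\rho_2(x)=\rho_2(\gamma_0(k))+\int_{\gamma_0(k)}^x g_2$, and then proves $\gamma_0(k)\le 2F^{-1}(k)$ via separate estimates on $g_2$ in the cases $M=0$ and $M>0$. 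By contrast, you bypass $g_2$ and $\gamma_0$ entirely: the single three-point concavity inequality on $[M^2,z,x]$, combined with $z\ge 2M^2$ and $\rho_2(z)\le z/(4k)$, already gives $\rho_2(x)\le x/(2k)$, which clashes with the lower bound $\rho_2(x)>x/(2k)$ forced by $x>2b$. Your argument is shorter and more elementary, needing only concavity itself rather than the differential structure from Lemma~\ref{L:ccvtrho}. The trade-off is that the paper's detour through $g_2$ produces, as a by-product, the bound $g_2(x)\le 1/(2k)$ for $x\ge 2F^{-1}(k)$ (recorded as~\eqref{E:g2}), and this bound is reused later in Step~3 of the proof of Theorem~\ref{T:mup} to simplify the moment estimate; your approach would require that step to be argued separately.
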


\begin{lemma}\label{L:J1d=1}
  Under Hypothesis~\ref{H:corre} and part~(i) of Hypothesis~\ref{H:rough}, for
  all $(t,x) \in \R_+\times \R$, $\calJ_1(t,x)$ defined in~\eqref{E:J1}
  satisfies that
  \begin{gather}\label{E:J1J0}
		\calJ_1(t,x) \leq \int_0^t \ud s\: k(t-s) \int_{\R^{d}} \ud y\: p_{t-s} (x-y) \calJ_0^2(s,y), \shortintertext{where}
    k(t)
    \coloneqq \int_{\R^d} \ud z \: p_{t}(z) f(z)
    = h' \Big(\frac{t}{2}\Big)
    < \infty.
    \label{E:k}
  \end{gather}
  In particular, when $d = 1$, it holds that
	\begin{align}\label{E:J1d=1}
    \calJ_1(t,x)
    \le 2^{3/2}\pi\: h(t)\:\calJ_+ \left(t/2, x\right)^2
    <\infty.
	\end{align}
\end{lemma}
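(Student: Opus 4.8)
The plan is to establish~\eqref{E:J1J0} in arbitrary dimension first, by integrating out the $y'$-variable, and then, when $d=1$, to upgrade it to~\eqref{E:J1d=1} by resolving the remaining spatial integral with exact Gaussian identities and a Beta-type integration in $s$.

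To obtain~\eqref{E:J1J0}, observe that every factor in~\eqref{E:J1} is nonnegative, so Tonelli's theorem lets me carry out the $y'$-integration first; since $f$ is even, $\int_{\R^d}p_{t-s}(x-y')\,f(y-y')\,\ud y'=(p_{t-s}*f)(x-y)$. The key point is that $(p_{t-s}*f)(z)\le(p_{t-s}*f)(0)=k(t-s)$ for every $z$: indeed $p_{t-s}*f$ is the inverse Fourier transform of the measure $e^{-(t-s)|\xi|^2/2}\,\widehat f(\ud\xi)$, which is \emph{nonnegative} and, by Dalang's condition~\eqref{E:Dalang} together with the elementary bound $e^{-c|\xi|^2}\le C_c\,(1+|\xi|^2)^{-1}$, \emph{finite}; the inverse Fourier transform of a nonnegative finite measure attains its maximum at the origin, where the value is $\int_{\R^d}p_{t-s}(z)\,f(z)\,\ud z=k(t-s)$. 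The same identity gives $k(t)<\infty$ for all $t>0$, and differentiating the representation $h(t)=\tfrac{1}{2}\int_0^{2t}k(u)\,\ud u$ from~\eqref{E:h} gives $k(t)=h'(t/2)$. (When $f$ is genuinely a generalized function, such as $f=\delta_0$, one interprets $p_{t-s}*f$ as this inverse Fourier transform throughout, or mollifies $f$ and passes to the limit.) Substituting $(p_{t-s}*f)(x-y)\le k(t-s)$ back in yields~\eqref{E:J1J0}--\eqref{E:k}.

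For~\eqref{E:J1d=1} take $d=1$ and start from~\eqref{E:J1J0}, using $\calJ_0^2(s,y)\le\calJ_+^2(s,y)=\iint_{\R^2}p_s(y-w)\,p_s(y-w')\,|\mu|(\ud w)\,|\mu|(\ud w')$. Interchanging the order of integration (Tonelli again), the computation reduces to the exact one-dimensional Gaussian integral
\begin{align*}
  \int_{\R}p_{t-s}(x-y)\,p_s(y-w)\,p_s(y-w')\,\ud y
  =p_{2s}(w-w')\:p_{t-s/2}\!\Big(x-\tfrac{w+w'}{2}\Big),
\end{align*}
which follows from the product identity $p_s(y-w)p_s(y-w')=p_{s/2}\!\big(y-\tfrac{w+w'}{2}\big)\,p_{2s}(w-w')$ followed by Chapman--Kolmogorov in $y$. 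Since the target quantity admits the parallel representation $\calJ_+(t/2,x)^2=\iint_{\R^2}p_t(w-w')\,p_{t/4}\!\big(x-\tfrac{w+w'}{2}\big)\,|\mu|(\ud w)\,|\mu|(\ud w')$, everything reduces to showing that $\int_0^t k(t-s)\,p_{2s}(w-w')\,p_{t-s/2}\!\big(x-\tfrac{w+w'}{2}\big)\,\ud s$, once integrated against $|\mu|\otimes|\mu|$, is controlled by a constant multiple of $h(t)\,\calJ_+(t/2,x)^2$; finiteness of $\calJ_1(t,x)$ then follows because $h(t)<\infty$ and $\calJ_+(t/2,x)<\infty$ by part~(i) of Hypothesis~\ref{H:rough}.

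I expect this last scalar estimate to be the main obstacle. As $s$ runs over $(0,t)$ the variance $2s$ of the Gaussian in $w-w'$ sweeps all of $(0,2t)$ and the centre variance $t-s/2$ sweeps $(t/2,t)$, so neither sits uniformly inside the target variances $t$ and $t/4$; one therefore has to split the $s$-integral (e.g. at $s=t/2$), apply the scaling comparison $p_a\le(b/a)^{1/2}p_b$ (valid for $a\le b$) in each regime, and balance the resulting $s$-dependent constants against the two competing singularities of the integrand --- the weight $k(t-s)$ as $s\uparrow t$ and an $s^{-1/2}$-type factor as $s\downarrow 0$ --- so that the $s$-integral remains convergent and of Beta type, collapsing to a constant times $h(t)=\tfrac{1}{2}\int_0^{2t}k(u)\,\ud u$. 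For bounded or constant initial data --- the setting of Theorems~\ref{T:Tail} and~\ref{T:Asymspc} --- $\calJ_+(t/2,\cdot)$ is essentially constant and this step is immediate.
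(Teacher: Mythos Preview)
Your argument for~\eqref{E:J1J0} is correct and is essentially the paper's: the paper also shows $\sup_z(p_{t-s}*f)(z)=k(t-s)$ via Plancherel and nonnegativity of $\widehat f$.

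The gap is in the $d=1$ bound~\eqref{E:J1d=1}. After your exact Gaussian computation you need
\[
  \iint|\mu|(\ud w)\,|\mu|(\ud w')\int_0^t k(t-s)\,p_{2s}(w-w')\,p_{t-s/2}\!\Big(x-\tfrac{w+w'}{2}\Big)\ud s
  \;\lesssim\; h(t)\,\calJ_+(t/2,x)^2,
\]
and you propose to reach the target kernels $p_t(w-w')\,p_{t/4}\big(x-\tfrac{w+w'}{2}\big)$ by the scaling comparison $p_a\le(b/a)^{1/2}p_b$, which holds only for $a\le b$. But the ``centre'' variance satisfies $t-s/2\in(t/2,t)$ for \emph{every} $s\in(0,t)$, so $t-s/2>t/4$ on the whole range and $p_{t-s/2}\le C\,p_{t/4}$ is simply false pointwise; no splitting of the $s$-interval repairs this. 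If instead you discard the centre Gaussian via $p_{t-s/2}\le(\pi t)^{-1/2}$, you lose all $x$-dependence and cannot recover $\calJ_+(t/2,x)^2$ for rough $\mu$ (try $\mu=\delta_0$). So the ``main obstacle'' you flag is a genuine obstruction, not just a technicality.

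The paper sidesteps this by applying Cauchy--Schwarz in $y$ \emph{before} integrating against $|\mu|\otimes|\mu|$:
\[
  \int_{\R} p_{t-s}(x-y)\,p_s(y-w)\,p_s(y-w')\,\ud y
  \le \Big(\!\int p_{t-s}(x-y)\,p_s(y-w)^2\,\ud y\Big)^{1/2}
      \Big(\!\int p_{t-s}(x-y)\,p_s(y-w')^2\,\ud y\Big)^{1/2},
\]
so that the $(w,w')$-integral factorises as a perfect square. Then the elementary bound $p_s(z)^2=(4\pi s)^{-1/2}p_{s/2}(z)\le(2\pi s)^{-1/2}p_s(z)$ and Chapman--Kolmogorov give $\int p_{t-s}(x-y)p_s(y-w)^2\,\ud y\le(2\pi s)^{-1/2}p_t(x-w)$, producing $\calJ_+(t/2,x)^2$ directly (via $p_t(x-w)^{1/2}\propto p_{t/2}(x-w)$ up to a $t$-dependent constant). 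The remaining time integral $\int_0^t s^{-1/2}k(t-s)\,\ud s$ is handled by the crude bound $k(t-s)\le\big(t/(t-s)\big)^{1/2}k(t)$ together with $\int_0^t s^{-1/2}(t-s)^{-1/2}\,\ud s=\pi$ and the monotonicity estimate $t\,k(t)\le 2h(t)$.
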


\subsection{Proof of moment growth formulas---Theorem~\ref{T:mup}}\label{SS:prof_mup}
The proof of Theorem~\ref{T:mup} consists of four steps: \bigskip

\noindent\textbf{Step 1.~} In this step, we use the assumption of $\rho$ given
in Hypothesis~\ref{H:rho} to obtain the following nonlinear integral
inequality $\Norm{u(t,x)}_p$:
\begin{align}\label{E:utx}
  \begin{aligned}
    \Norm{ u(t,x)}_p^2
    \leq   2 \calJ_0^2(t,x) + 8 K_M^2 p\, h(t) + 8p
         & \int_0^t \ud s \iint_{\R^{2d}} \ud y \ud y' p_{t-s}(x-y) p_{t-s} (x-y') \\
         & \times f(y-y') \rho_2\left(M^2 + \Norm{u(s,y)}_p^2 \right).
  \end{aligned}
\end{align}

\noindent Indeed, by the Burkholder-Davis-Gundy and Minkowski's inequalities to
the mild form~\eqref{E:Mild},
\begin{align*}
  \MoveEqLeft \Norm{u(t,x)}_p^2 = \left(\E\left[\left(
    \calJ_0 (t,x) + \int_0^t\int_{\R^d} p_{t-s}(x-y) \rho(u(s,y)) W(\ud s,\ud y)
    \right)^p \right]\right)^{2/p} \\
  \leq & 2 \calJ_0^2(t,x)
         + 8p \Norm{\int_0^t \ud s \iint_{\R^{2d}} \ud y\ud y'\:
         p_{t-s}(x-y) p_{t-s} (x-y') f(y-y') \rho(u(s,y)) \rho (u(s,y'))}_{p/2}\\
  \leq & 2 \calJ_0^2(t,x) + 8p \int_0^t \ud s \iint_{\R^{2d}} \ud y\ud y'\:
         p_{t-s}(x-y) p_{t-s}(x-y') f(y-y') \Norm{\rho(u(s,y))\rho(u(s,y'))}_{p/2}.
\end{align*}
Then an application of the Cauchy-Schwarz inequality for the
$L^{p/2}(\Omega)$--norm yields that
\begin{align}\label{E:BDG}
  \begin{aligned}
    \Norm{u(t,x)}_p^2 \leq 2 \calJ_0^2(t,x) + 8p \int_0^t \ud s \iint_{\R^{2d}} \ud y\ud y'\: f(y-y')
    \:     p_{t-s}(x-y)  \Norm{\rho(u(s,y)) }_p &  \\
    \times p_{t-s}(x-y') \Norm{\rho(u(s,y'))}_p & \,.
  \end{aligned}
\end{align}

Taking account of the fact that $ab \le \frac{1}{2}(a^2+b^2)$ for all $a,b\in
\R$, we can further deduce that
\begin{align*}
  \Norm{u(t,x)}_p^2 \leq & 2 \calJ_0^2(t,x) + 8p \int_0^t \ud s \iint_{\R^{2d}} \ud y\ud y'\:
         p_{t-s}(x-y) p_{t-s}(x-y') f(y-y') \Norm{\rho(u(s,y))}_p^2.
\end{align*}
Next, applying Lemma~\ref{L:rho-p}, we get
\begin{align}\label{E:rhou}
  \Norm{\rho(u(s,y))}_p^2
  \le K_M^2 + \rho_2\left(M^2 + \Norm{u(s,y)}_p^2\right).
\end{align}
Plugging~\eqref{E:rhou} into the previous inequality proves~\eqref{E:utx}.

\bigskip\noindent\textbf{Step 2.~} In this step, we will solve the nonlinear
integral inequality~\eqref{E:utx}. Fix $t>0$ and $x\in\R^d$. Using the
function $h(t)$ in~\eqref{E:h} as a normalization constant and thanks to the
concavity of $\rho_2$, we can apply Jensen's inequality to the triple
integrals in~\eqref{E:utx} to write that
\begin{align}\label{E:utx2}
  \Norm{u(t,x)}_p^2
  \leq   2 \calJ_0^2(t,x) + 8 K_M^2 p\, h(t) + 8p h(t) \rho_2\left(X\right),
\end{align}
where
\begin{align*}
  X \coloneqq M^2 + h(t)^{-1} Y \quad \text{with} \quad
  Y \coloneqq \int_0^t \ud s \iint_{\R^{2d}} \ud y\ud y'\: p_{t-s}(x-y) p_{t-s} (x-y') f(y-y') \Norm{u(s,y)}_p^2.
\end{align*}
It reduces to find an upper bound for $X$. By using~\eqref{E:utx}, we deduce that
\begin{gather*}
  Y \leq 2 \calJ_1(t,x)	+ 8 K_M^2 p\, h(t)^2 + \calI
  \shortintertext{where} \notag
  \begin{aligned}
  \calI \coloneqq 8p & \int_0^t \ud r \iint_{\R^{2d}} \ud z \ud z' f(z-z') \rho_2 \big(M^2 + \Norm{u(r,z)}_p^2 \big) \\
       \times        & \int_r^t \ud s \iint_{\R^{2d}}dy \ud y'\:
                        p_{t-s}(x-y) p_{t-s}(x-y') p_{s-r}(y-z)
                        p_{s-r} (y'-z') f(y-y').
  \end{aligned}
\end{gather*}
Using the following formula,
\begin{align}\label{E:Factor}
    p_{t-s}(a)p_s(b)
  = p_{s(t-s)/t}\left(b-\frac{s}{t}\left(a+b\right)\right) p_t(a+b),
  \quad \text{for all $0<s<t$ and $a,b\in\R$,}
\end{align}
we see that
\begin{align}\label{E:2hsck}
 \MoveEqLeft \int_{r}^t \ud s \iint_{\R^{2d}} \ud y\ud y' p_{t-s} (x-y) p_{t-s} (x-y') p_{s-r}(y-z) p_{s-r}(y'-z') f(y-y')	\notag \\
  =    & p_{t-r} (x-z) p_{t-r} (x-z')\int_{r}^t \ud s \iint_{\R^{2d}} \ud y\ud y' f(y-y') \notag \\
       & \times p_{\frac{(s-r)(t-s)}{t-r}}\left(y-z - \frac{s-r}{t-r}(x-z) \right) p_{\frac{(s-r)(t-s)}{t-r}} \left(y'-z' - \frac{s-r}{t-r}(x-z') \right) \notag \\
  \leq & \left(2\pi\right)^{-d} p_{t-r} (x-z) p_{t-r} (x-z') h(t),
\end{align}
where the last inequality follows from~\cite[Lemma 2.6 and  inequalities
(2.21)--(2.23)]{chen.kim:19:nonlinear}. Thus,
\begin{align*}
  \calI \leq & 8 \left(2\pi\right)^{-d} p\, h(t)
  \int_0^t \ud r \iint_{\R^{2d}} \ud z \ud z'	\: p_{t-r} (x-z) p_{t-r} (x-z') f(z-z') \rho_2 \left(M^2 + \Norm{u(r,z)}_p^2\right) \\
        \leq    & 8 \left(2\pi\right)^{-d} p h^2(t) \rho_2(X),
\end{align*}
due to the concavity of $\rho_2(\cdot)$ (see
Lemma~\ref{L:ccvtrho}),~\eqref{E:h}, and Jensen's inequality. Therefore,
\begin{align*}
  Y \leq 2 \calJ_1(t,x)	+ 8 K_M^2 p\, h(t)^2 + 8\left(2\pi\right)^{-d} p h^2(t) \rho_2(X),
\end{align*}
or equivalently,
\begin{align}\label{E:nottoy}
  X \leq
  \underbrace{M^2 + 2h(t)^{-1} \calJ_1(t,x) + 8K_M^2 p\, h(t)}_{ \displaystyle \coloneqq b(p,t,x)} +
  \underbrace{8\left(2\pi\right)^{-d} p\, h(t)}_{ \displaystyle \coloneqq k(p,t)}\rho_2(X).
\end{align}
By Lemma~\ref{L:Gmm}, we have that
\begin{align*}
  X \le 2 F^{-1}(k(p,t)) + 2 b(p,t,x).
\end{align*}
Finally, thanks to the monotonicity of $\rho_2(\cdot)$ when $x\ge M^2$; see
Lemma~\ref{L:ccvtrho}, plugging the above moment bounds back to~\eqref{E:utx2}
proves the moment following bounds
\begin{align}\label{E_:mup}
  \begin{aligned}
    \Norm{u(t,x)}_p^2 \leq
    & 2\: \calJ_0^2(t,x) + 8 K_M^2 p \: h(t)  + 8 p\: h(t) \:\rho_2 \left(2b(p,t,x) + 2F^{-1}\big( k(p,t)\big) \right).
  \end{aligned}
\end{align}

\bigskip\noindent\textbf{Step 3.~} In this step, we will simplify the expression
in~\eqref{E_:mup} and prove inequality~\eqref{E:mup}. Recall the definition of
$F^{-1}$ in~\eqref{E:F^-1}, one can show that for
any $x \geq 2F^{-1}(k)$,
\begin{gather*}
  \frac{\rho_2(x)}{x} = \frac{\rho_2 (x) - \rho_2 (2 F^{-1}(k)) + \rho_2 (2F^{-1}(k))}{ x - 2F^{-1} (k) + 2F^{-1}(k)} \leq \frac{1}{2k},
  \shortintertext{because}
  \frac{\rho_2 \left(2F^{-1}(k)\right)}{2F^{-1}(k)}\leq \frac{1}{4k} \leq \frac{1}{2k} \quad \text{and}\quad \frac{\rho_2 (x) - \rho_2 \left(2F^{-1}(k)\right) }{ x - 2F^{-1} (k)} \leq g_2\left(2F^{-1}(k)\right) \leq \frac{1}{2k},
\end{gather*}
where the last inequality is proved in~\eqref{E:g2} below.
As a result, concerning the fact that $F^{-1} (k)\geq 2 M^2$ for
all $k>0$, and $8(2\pi)^{-d} \leq 2$ for all $d\geq 1$, we can write
\begin{align*}
  8p\,h(t) \:\rho_2 \left(2b(p,t,x)  + 2F^{-1}\big( k(p,t)\big) \right)
  \leq & \frac{4 p\,h(t)}{k(p,t)} \left(b(p,t,x) + F^{-1}\big( k(p,t)\big) \right) \\
  \leq & 2 (2\pi)^{d}\left( h(t)^{-1} \calJ_1(t,x) + 4 K_M^2 p\: h(t) + F^{-1}( 2 p\: h(t)) \right).
\end{align*}
Plugging the above upper bound back to~\eqref{E_:mup} proves~\eqref{E:mup}.

\bigskip\noindent\textbf{Step 4.~} The special case when $d=1$ is an application
of Lemma~\ref{L:J1d=1} and the case when $\rho_2(\cdot)$ is concave separately
on $\R_+$ and $\R_-$ is due to Lemma~\ref{L:rho-p}. This proves both parts~(i)
and~(ii). As for part~(iii), if $M = 0$, inequality~\eqref{E:mup_asy} follows
from part~(ii). Otherwise, if $M>0$, then~\eqref{E:mup_asy} holds provided that
one can verify that there exists $C>0$ such that for all $t \geq 1$,
\begin{align}\label{E:rhoasc}
  4 K_M^2 p \: h(t) +  F^{-1}\left( 2 p\: h(t)\right) \leq C F^{-1}(C ph(t)).
\end{align}
Indeed, the assumption of part~(iii) ensures that $\rho_2(x) \geq c > 0$ with
some uniform constant $c>0$ if $x > M$ is large enough, which implies that
$F^{-1}(k) \geq c k$ for large $k$ (see~\eqref{E:F}). Hence,
inequality~\eqref{E:rhoasc} holds by noticing that $h$ is a non-decreasing
function such that $h(t) > 0$ for all $t > 0$ under Hypothesis~\ref{H:corre}.
This proves~\eqref{E:mup_asy}.

Finally, if $\mu(\cdot)$ is bounded, then $\calJ_0(t,x)$ is bounded on
$\R_+\times\R^d$, and the same is true for $h(t)^{-1}\calJ_1(t,x)$ (see
Lemma~\ref{L:J1d=1}). Then, inequality~\eqref{E:mup_smp} is a consequence of the
fact that $F^{-1}(k)$ is bounded below by a positive constant for all $k$ large
enough because $\rho$ is not identically zero on $(-\infty, -M_0] \cup
[M_0,\infty)$. This completes the whole proof of Theorem~\ref{T:mup}. \myEnd

\subsection{Proof of tail probability---Theorem~\ref{T:Tail}}\label{SS:Tail}

We first prove a lemma:

\begin{lemma}\label{L:tail}
  Let $X$ be a random variable such that for some
  function $\alpha \colon [2,\infty) \to \R$,
  \begin{align*}
    \mathbb{E} \left[|X|^p\right] \leq \exp (\alpha (p)) < \infty,
    \quad \text{for all $p \geq 2$.}
  \end{align*}
  Then, for all $z > 0$, it holds that
  \begin{align}\label{E:lgd}
    \P (|X| \geq z) \leq \exp \big(-\alpha^*(\log(z))\big),
  \end{align}
  where $\alpha^*(\cdot)$ is the Legendre-type transform of $\alpha(\cdot)$ on
  $[2,\infty)$, namely,
  \begin{align}\label{E:Legendre}
    \alpha^* (x) \coloneqq \sup\bigg\{ xp - \alpha(p) \colon p\in [2,\infty) \bigg\} \in \R \cup\{\infty\},\qquad \text{for all $x\in\R$}.
  \end{align}
\end{lemma}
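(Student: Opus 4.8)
The plan is to run a Chernoff-type argument: convert the tail event into a moment estimate via Markov's inequality, and then optimize over the moment order $p$.

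First I would fix $z>0$ and an arbitrary $p\in[2,\infty)$. Since $t\mapsto t^{p}$ is nondecreasing on $\R_{+}$, the event $\{|X|\ge z\}$ equals $\{|X|^{p}\ge z^{p}\}$, so Markov's inequality together with the hypothesis $\E[|X|^{p}]\le\exp(\alpha(p))$ gives
\begin{align*}
  \P(|X|\ge z)\le z^{-p}\,\E\big[|X|^{p}\big]\le z^{-p}\exp(\alpha(p))=\exp\big(\alpha(p)-p\log z\big).
\end{align*}
Since the left-hand side does not depend on $p$, I would then take the infimum over $p\in[2,\infty)$ on the right, using that $u\mapsto e^{u}$ is continuous and increasing to move the infimum inside the exponential, and finally invoke the definition~\eqref{E:Legendre} of $\alpha^{*}$ with $x=\log z$:
\begin{align*}
  \P(|X|\ge z)\le\inf_{p\in[2,\infty)}\exp\big(\alpha(p)-p\log z\big)
  =\exp\!\left(-\sup_{p\in[2,\infty)}\big(p\log z-\alpha(p)\big)\right)
  =\exp\big(-\alpha^{*}(\log z)\big),
\end{align*}
which is exactly~\eqref{E:lgd}. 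Note that no convexity of $\alpha$ is used anywhere; the argument only relies on the monotonicity of the power function and of the exponential.

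I do not expect a genuine obstacle here; the only point requiring a word of care is the degenerate case $\alpha^{*}(\log z)=+\infty$ (which happens, e.g., when $\log z$ exceeds the asymptotic slope of $\alpha$). In that case the displayed chain forces $\P(|X|\ge z)\le\exp(-\infty)=0$, consistent with the stated bound under the usual convention, so no separate treatment is needed. Likewise, the restriction to $p\ge 2$ rather than all $p>0$ is harmless: it only shrinks the supremum defining $\alpha^{*}$, hence weakens the bound, and it matches the range on which the moment hypothesis is available.
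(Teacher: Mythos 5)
Your proposal is correct and follows essentially the same Chernoff-type argument as the paper: apply Markov's inequality to $|X|^p \ge z^p$ to get $\P(|X|\ge z)\le\exp(\alpha(p)-p\log z)$ for each $p\ge 2$, then optimize over $p$ to recover the Legendre-type transform. The extra remarks on the degenerate case $\alpha^*(\log z)=\infty$ and on restricting to $p\ge 2$ are sound and harmless additions.
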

\begin{proof}
  The lemma is demonstrated using Chebyshev's inequality: for any $z > 0$ and
  $p\geq 2$,
  \begin{align*}
       \P \left(|X| \geq z\right)
     = \P \left(|X|^p \geq z^p\right) \leq z^{-p} \exp \left(\alpha(p)\right)
     = \exp \left(- \left[ p\log(z) - \alpha(p)\right ]\right).
  \end{align*}
  Then,~\eqref{E:lgd} follows immediately.
\end{proof}

\begin{proof}[Proof of Theorem~\ref{T:Tail}]
  For all $t \geq 1$ and $p\ge 2$, we apply Lemma~\ref{L:tail} with the moment
  bounds given in~\eqref{E:mup_smp} of Theorem~\ref{T:mup} to see that
  \begin{align}\label{E_:tail1}
    \P(u(t,x) \geq z) \leq \exp \left(- H^* (\log(z)) \right),
  \end{align}
  where $H^*\colon \R \to \R$ is the Legendre-type transform
  (see~\eqref{E:Legendre}) of $H\colon [2,\infty) \to \R$ given by
  \begin{align}\label{E:H}
    H(p) \coloneqq \frac{p}{2}\log \left(C_* F^{-1} (C_* p h(t))\right), \qquad \text{for all } p \geq 2,
  \end{align}
  with the constant $C_*$ given in~\eqref{E:mup_smp}. Next, notice that if
  \begin{align}\label{E_:Lt}
    y \geq \max\left\{ 2^{-1}\log \left(2 C_* M^2\right) + 1, 2^{-1}\log \left(C_* F^{-1} \big(2 C_* h(t) \big)\right) + 1\right\},
  \end{align}
  then, we have $e^{2(y-1)}/C_* \geq 2 M^2$, and (thanks to~\eqref{E:FF-1})
  \begin{align*}
    p^*(y) \coloneqq \left(C_* h(t) \right)^{-1} F \left(e^{2(y-1)} /C_* \right) \geq 2.
  \end{align*}
  Therefore,
  \begin{align*}
    H(p^*(y)) = & \left(2C_* h(t) \right)^{-1} F \left(e^{2(y-1)} /C_* \right) \log \left(C_* F^{-1} \circ F \left(e^{2(y-1)} /C_* \right) \right) \\
    \leq        & \left(2C_* h(t) \right)^{-1} F \left(e^{2(y-1)} /C_* \right) \log \left(e^{2(y-1)} \right) = p^*(y)(y-1),
  \end{align*}
  where we have used~\eqref{E:F-1F} for the inequality. This yields that
  \begin{align}\label{E:H*}
    H^* (y) \geq y p^*(y) - H(p^*(y))
    = y p^*(y) - p^*(y)(y-1)
    =  \left(C_* h(t) \right)^{-1} F \left(e^{2(y-1)} /C_* \right).
  \end{align}
  Therefore,~\eqref{E:tail} is justified by plugging~\eqref{E:H*}
  in~\eqref{E_:tail1} with $y$ replaced by $\log z$. Similarly, the expression
  of $L_t$ in~\eqref{E:Lt} can be obtained by replacing $y$ by $\log z$
  in~\eqref{E_:Lt}.
\end{proof}

\subsection{Proof of H\"older regularity---Theorem~\ref{T:Holder}}\label{SS:Holder}

\begin{proof}[Proof of Theorem~\ref{T:Holder}]
  The proof follows exactly the same arguments as those
  in~\cite[Theorem~1.8]{chen.huang:19:comparison} with the moment bounds
  obtained in Lemma~\ref{L:PAM-BD} below.
\end{proof}

\begin{lemma}\label{L:PAM-BD}
  Assume Hypothesis~\ref{H:corre}, parts (i) and (ii) of Hypothesis~\ref{H:rho},
  and part (i) of Hypothesis~\ref{H:rough}. Then the solution $u(t,x)$ to
  SHE~\eqref{E:SHE} satisfies that
  \begin{align}\label{E:PAM-Mom-UB}
    \Norm{u(t,x)}_p \le  \left(\mu' * p_t\right)(x)\: \left[H\left(t\,;\,32pK^2\right)\right]^{1/2}, \quad
    \text{for all $(t,x,p) \in \R_+\times\R^d \times [2,\infty)$},
  \end{align}
  where $\mu' \coloneqq \sqrt{2} + 2|\mu|$ and $H(t;\lambda)$ is a
  non-decreasing function of $t$ with a nonnegative parameter $\lambda>0$.
\end{lemma}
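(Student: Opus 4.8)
The plan is to reduce Lemma~\ref{L:PAM-BD} to a standard ``dominant PAM'' moment estimate. Parts~(i) and~(ii) of Hypothesis~\ref{H:rho} force $\rho$ to have at most linear growth: since $|\rho|$ is locally bounded and $\rho(x)/x\to0$ as $|x|\to\infty$, there is a constant $K>0$ --- the one appearing in~\eqref{E:PAM-Mom-UB} --- with $|\rho(v)|\le K(1+|v|)$ for all $v\in\R$. I would first rerun Step~1 of the proof of Theorem~\ref{T:mup} verbatim up to inequality~\eqref{E:BDG}, but \emph{without} applying the symmetrization $ab\le\tfrac12(a^2+b^2)$: that step is exactly what destroys the multiplicative structure, so I keep the product $\Norm{\rho(u(s,y))}_p\Norm{\rho(u(s,y'))}_p$ and substitute $\Norm{\rho(u(s,y))}_p\le K(1+\Norm{u(s,y)}_p)$.

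The core of the argument is a Picard iteration that preserves the factor $(\mu'*p_t)(x)$, where $\mu'=\sqrt2+2|\mu|$, so that $(\mu'*p_t)(x)=\sqrt2+2\calJ_+(t,x)\ge\sqrt2$ and $2\calJ_0^2(t,x)\le2\calJ_+^2(t,x)\le\tfrac12(\mu'*p_t)^2(x)$. Writing $u^{(0)}\equiv0$ and $u^{(n+1)}(t,x)=\calJ_0(t,x)+\int_0^t\int_{\R^d}p_{t-s}(x-y)\rho(u^{(n)}(s,y))W(\ud s,\ud y)$, I would prove by induction that $\Norm{u^{(n)}(t,x)}_p^2\le(\mu'*p_t)^2(x)H_n(t)$ with $H_n\ge\tfrac12$ for $n\ge1$. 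Indeed, when $H_n(s)\ge\tfrac12$ one has $1+\Norm{u^{(n)}(s,y)}_p\le2(\mu'*p_s)(y)\sqrt{H_n(s)}$ (using $1\le(\mu'*p_s)(y)/\sqrt2$), so the product of the two $\rho$-norms is at most $4K^2(\mu'*p_s)(y)(\mu'*p_s)(y')H_n(s)$. Substituting $(\mu'*p_s)(y)(\mu'*p_s)(y')=\iint_{\R^{2d}}\mu'(\ud z)\mu'(\ud z')\,p_s(y-z)p_s(y'-z')$, applying the heat-kernel factorization~\eqref{E:Factor} to each pair $p_{t-s}(x-y)p_s(y-z)$, and using the nonnegative-definiteness of $f$ exactly as in the derivation of~\eqref{E:2hsck}, one extracts an $x$-free kernel $\beta_t(\cdot)$ with $\int_0^t\beta_t(s)\,\ud s\le Ch(t)$ and
\begin{align*}
  \iint_{\R^{2d}}\ud y\,\ud y'\, f(y-y')\,p_{t-s}(x-y)p_{t-s}(x-y')(\mu'*p_s)(y)(\mu'*p_s)(y')\le(2\pi)^{-d}(\mu'*p_t)^2(x)\,\beta_t(s).
\end{align*}
Together with $8p\cdot4K^2=32pK^2$ and~\eqref{E:h}, this yields the recursion
\begin{align*}
  H_{n+1}(t)=\tfrac12+32pK^2(2\pi)^{-d}\int_0^t\beta_t(s)H_n(s)\,\ud s
\end{align*}
(with a harmless additive $4pK^2h(t)$ at the very first step when $\rho(0)\ne0$), and since the integral is nonnegative the bound $H_n\ge\tfrac12$ propagates, closing the induction.

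It remains to pass to the limit. Monotonicity gives $H_n\uparrow H_\infty$ solving $H_\infty(t)=\tfrac12+32pK^2(2\pi)^{-d}\int_0^t\beta_t(s)H_\infty(s)\,\ud s$, whose Neumann series is $\tfrac12\sum_{n\ge0}(32pK^2)^n\mathcal L_n(t)$ with $\mathcal L_0\equiv1$ and $\mathcal L_{n+1}(t)=(2\pi)^{-d}\int_0^t\beta_t(s)\mathcal L_n(s)\,\ud s$. Because $\beta_t(\cdot)$ has only an integrable singularity at the endpoints $0$ and $t$ --- which is exactly what Dalang's condition~\eqref{E:Dalang} provides --- the $\mathcal L_n$ decay with a generalized factorial (as one sees explicitly via a Beta-function computation in the space-time white noise case), so that $H(t;\lambda):=\sum_{n\ge0}\lambda^n\mathcal L_n(t)\ge2H_\infty(t)$ converges for every $t>0$ and $\lambda>0$ and is non-decreasing in $t$; this renewal-type estimate is the one already carried out in~\cite{chen.dalang:15:moments,chen.huang:19:comparison}, and I would cite it. Letting $n\to\infty$ --- directly when $\rho$ is globally Lipschitz (so that $u^{(n)}\to u$ in $L^p(\Omega)$), and via Lipschitz mollification of $\rho$ together with Fatou's lemma in the general case allowed by parts~(i)--(ii) of Hypothesis~\ref{H:rho} --- gives $\Norm{u(t,x)}_p^2\le(\mu'*p_t)^2(x)H(t;32pK^2)$, which is~\eqref{E:PAM-Mom-UB}.

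I expect the main obstacle to be the factorization step: verifying that the double space integral of the iterated kernel genuinely factors through $(\mu'*p_t)^2(x)$ with an $x$-independent, $L^1$-in-time kernel $\beta_t$, and then that the resulting Neumann series converges for \emph{all} $t$ and $\lambda$ rather than only when $\lambda h(t)$ is small --- the latter is the technical heart of the dominant-PAM bound and the place where Dalang's condition is indispensable. A secondary subtlety is the limiting argument in the non-Lipschitz case, where no uniqueness is available: one must route through Lipschitz mollifications chosen with a common linear-growth constant $K$ and apply Fatou, rather than a genuine Picard fixed point.
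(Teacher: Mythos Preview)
Your approach is correct and follows the same underlying strategy as the paper: establish the linear-growth bound $|\rho(v)|\le K(1+|v|)$, keep the product form in the BDG inequality~\eqref{E:BDG}, and then reduce to a dominant-PAM estimate from~\cite{chen.huang:19:comparison}. The difference is in how you close the argument. The paper absorbs the additive $+1$ by setting $g(t,x)\coloneqq 1+\Norm{u(t,x)}_p$, observes that $g(t,x)^2\le 2+2\Norm{u(t,x)}_p^2\le\big(\sqrt{2}+2\calJ_+(t,x)\big)^2+16pK_\rho^2 I(t,x)$, and thereby obtains a closed integral inequality for $g^2$ of exactly the form treated by~\cite[Lemma~2.2]{chen.huang:19:comparison}; that lemma is then cited directly, which both defines $H(t;\lambda)$ and delivers the bound in one stroke. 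You instead run a Picard iteration on $u^{(n)}$, carry the factor $(\mu'*p_t)(x)$ through by hand using $(\mu'*p_s)(y)\ge\sqrt{2}$ to control the $+1$, and then invoke the same references for the convergence of the resulting Neumann series.

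Both routes work, but the paper's is more economical in one respect you yourself flagged: by working directly with the solution $u$ rather than Picard iterates, the paper never needs to pass to the limit $u^{(n)}\to u$, so the non-Lipschitz case requires no mollification or Fatou argument --- the integral inequality for $g$ holds for any solution satisfying the mild form. Your factorization concern (extracting an $x$-independent kernel $\beta_t$ and showing the Neumann series converges for all $t,\lambda$) is precisely the content of~\cite[Lemma~2.2]{chen.huang:19:comparison}, which the paper simply cites rather than re-deriving.
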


Referring to the precise definition of $H(t;\lambda)$, one can consult
~\cite[Formula (2.4)]{chen.huang:19:comparison}. It should be noted that
$H(0;\lambda)>0$, which means the function $H(t;\lambda)$ does not introduce any
singularity at $t=0$. In general, $H(t;\lambda)$ exhibits exponential growth
rate in $t$; see~\cite[Lemma~2.5]{chen.kim:19:nonlinear}. Certainly, the moment
bounds~\eqref{E:PAM-Mom-UB} are considerably less accurate compared to those as
in~\eqref{E:mup}, especially for large $p$ or $t$. This is a worthy trade-off
for removing part (ii) of Hypothesis~\ref{H:rough} in Lemma~\ref{L:PAM-BD},
which is sufficient for deducing the H\"{o}lder continuity of the solutions in
Theorem~\ref{T:Holder}. Notably, to achieve more precise tail estimates in
Theorem~\ref{T:Tail}, additional improved estimates for the moment increments,
as illustrated in Lemma~\ref{L:holder} below, are required.

\begin{proof}[Proof of Lemma~\ref{L:PAM-BD}]
  Parts (i) and (ii) of Hypothesis~\ref{H:rho}, namely, $\rho$ is locally
  bounded and asymptotically sublinear $ \displaystyle \lim_{x \to
  \pm\infty}x^{-1}\rho(x) = 0$, imply that $|\rho(x)| \leq
  K_{\rho} (1 + |x|)$ with some universal constant $K_\rho > 0$ for
  all $x\in \R$. Hence, from~\eqref{E:BDG}, we see that for all $(t,x,p)\in
  \R_+\times\R^d\times[2,\infty)$,
  \begin{align*}
    \Norm{u(t,x)}_p^2 \leq 2 \calJ_0^2(t,x) + 8p K_\rho^2 \int_0^t \ud s \iint_{\R^{2d}} \ud y\ud y'\: f(y-y')
    \:     p_{t-s}(x-y)  \left(1+\Norm{u(s,y) }_p\right) &  \\
    \times p_{t-s}(x-y') \left(1+\Norm{u(s,y')}_p\right) & \,.
  \end{align*}
  By setting $g(t,x)\coloneqq 1 + \Norm{u(t,x)}_p$ and denoting the above triple
  integral by $I(t,x)$, the above inequality implies that
  \begin{align*}
    g(t,x)^2
    \le 2 + 2 \Norm{u(t,x)}_p^2
    \le \left(\sqrt{2} + 2 \calJ_+(t,x)\right)^2 + 16 p K_\rho^2 I(t,x).
  \end{align*}
  Therefore, for $\mu' = \sqrt{2} K_\rho + |\mu|$, $g(t,x)$ satisfies the following
  integral inequality:
  \begin{align*}
    g(t,x)^2 \le \left[\left(\mu'*p_t\right)(x)\right]^2 + 16pK_\rho^2 \int_0^t \ud s \iint_{\R^{2d}} \ud y\ud y'\: f(y-y')
    \:     p_{t-s}(x-y)  g(s,y)  &  \\
    \times p_{t-s}(x-y') g(s,y') & \,.
  \end{align*}
  It is easy to see that $\mu'$ also satisifes part (i) of
  Hypothesis~\ref{H:rough}. Then, an application
  of~\cite[Lemma~2.2]{chen.huang:19:comparison} with the above $\mu'$ and
  $\lambda = 16 p K_\rho^2$ implies the moment bound~\eqref{E:PAM-Mom-UB}. The
  property of $H(t;\lambda)$ can be found
  in~\cite[Lemma~2.5]{chen.kim:19:nonlinear}.
\end{proof}

\subsection{Proof of spatial asymptotics---Theorem~\ref{T:Asymspc}}\label{SS:asymspc}

The proof of Theorem~\ref{T:Asymspc} is based on a tail estimate for the
solution to SHE~\eqref{E:SHE} given in Theorem~\ref{T:Tail}. We also need moment
increments in the space variable, in case of the constant initial condition,
with a sharper constant than the one implicitly appearing in Theorem~\ref{T:Holder}.

\begin{lemma}\label{L:holder}
  Assume Hypotheses~\ref{H:rho} and~\ref{H:efcdalang}. Let $u(t,x)$ be a
  solution to SHE~\eqref{E:SHE} with initial condition $u_0 \equiv 1$. Let $C_*
  > 0$ be the constant in~\eqref{E:mup_smp}. Then, the following statements are
  satisfied.
  \begin{enumerate}[(i)]
    \item For all $x, y \in \R^d$, $t \geq 1$, and $p \geq 2$, there exists a
      constant $C > 0$ such that
      \begin{align*}
        \Norm{u(t,x) - u(t,y)}_p^2 \leq C F^{-1}\left(C_* p h(t)\right) |x-y|^{2\eta};
      \end{align*}

    \item For any $x\in\R^d$, with $B_x$ denoting the unit ball centered at $x$,
      it holds that
      \begin{align*}
        \Norm{\sup_{y_1,y_2 \in B_x} \left|u(t,y_1) - u(t,y_2)\right| }_p^2  \leq C' F^{-1}\left(C_* p h(t)\right),
      \end{align*}
      where $C' > 0$ is another universal constant.
  \end{enumerate}
\end{lemma}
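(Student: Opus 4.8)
The plan is to prove (i) first and deduce (ii) from it by a chaining argument; essentially all the work is in (i).

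\emph{Reduction for (i).} Since $u_0\equiv 1$ we have $\calJ_0(t,\cdot)\equiv 1$, so~\eqref{E:Mild} gives $u(t,x)-u(t,y)=\int_0^t\int_{\R^d}g_s(z)\,\rho(u(s,z))\,W(\ud s,\ud z)$ with $g_s(z):=p_{t-s}(x-z)-p_{t-s}(y-z)$, and the spatial homogeneity of the noise together with the constant initial datum make $u(s,\cdot)$ spatially stationary, so that $\mathcal{N}_p(s):=\Norm{\rho(u(s,z))}_p$ is independent of $z$. Running the Burkholder--Davis--Gundy and Minkowski estimates exactly as in the derivation of~\eqref{E:BDG}, applying the Cauchy--Schwarz inequality to the $L^{p/2}(\Omega)$--norm, and using stationarity to pull $\mathcal{N}_p(s)^2$ out of the space integral, one obtains
\[
  \Norm{u(t,x)-u(t,y)}_p^2\ \le\ C\,p\int_0^t \mathcal{N}_p(s)^2 \left(\iint_{\R^{2d}}g_s(z)\,g_s(z')\,f(z-z')\,\ud z\,\ud z'\right)\ud s .
\]
It suffices to treat $|x-y|\le 1$: for $|x-y|>1$ the claim follows from $\Norm{u(t,x)-u(t,y)}_p^2\le 4C_*F^{-1}(C_*ph(t))\le 4C_*F^{-1}(C_*ph(t))|x-y|^{2\eta}$ via~\eqref{E:mup_smp}.

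\emph{The two estimates.} First, passing to Fourier variables, $\iint g_s(z)g_s(z')f(z-z')\,\ud z\,\ud z'=(2\pi)^{-d}\int e^{-(t-s)|\xi|^2}|1-e^{i\xi\cdot(x-y)}|^2\widehat f(\ud\xi)$; using $|1-e^{i\theta}|^2\le 4^{1-\eta}|\theta|^{2\eta}$ and $|\xi|^{2\eta}e^{-(t-s)|\xi|^2/2}\le C_\eta(t-s)^{-\eta}$, this is $\le C_\eta\,|x-y|^{2\eta}(t-s)^{-\eta}k(t-s)$ with $k$ as in~\eqref{E:k}; integrating, and splitting $\int_0^t u^{-\eta}k(u)\,\ud u$ at $u=1$ --- the part over $(0,1)$ being finite \emph{precisely} by the improved Dalang condition~\eqref{E:ED}, the part over $(1,t)$ being at most $2h(t)$ --- gives $\int_0^t(t-s)^{-\eta}k(t-s)\,\ud s\le C_1\,h(t)$ for $t\ge 1$. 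Second, by Lemma~\ref{L:rho-p}, $\mathcal{N}_p(s)^2\le K_M^2+\rho_2\bigl(M^2+\Norm{u(s,z)}_p^2\bigr)$; since $\mu\equiv 1$ forces $h(s)^{-1}\calJ_1(s,z)\equiv 1$, the bound~\eqref{E:mup} holds uniformly in $s\in(0,t]$ and gives $\Norm{u(s,z)}_p^2\le C_*F^{-1}(C_*ph(t))$ there, so the monotonicity and sublinearity of $\rho_2$ (in particular $\rho_2(F^{-1}(k))=F^{-1}(k)/\bigl(4F(F^{-1}(k))\bigr)\le F^{-1}(k)/(4k)$), together with the fact that $F^{-1}(k)/k$ is bounded below for $k\ge 2C_*h(1)$ --- which absorbs $K_M^2$, using that $\rho$ is not identically zero on $(-\infty,-M_0]\cup[M_0,\infty)$, cf.\ Theorem~\ref{T:mup}(iii) --- yield $\sup_{s\le t}\mathcal{N}_p(s)^2\le C\,F^{-1}(C_*ph(t))/\bigl(p\,h(t)\bigr)$. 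Multiplying the three displays, the factor $p$ cancels $1/p$ and $h(t)$ cancels $1/h(t)$, leaving $\Norm{u(t,x)-u(t,y)}_p^2\le C\,F^{-1}(C_*ph(t))\,|x-y|^{2\eta}$, i.e.\ (i).

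\emph{Part (ii).} Set $K_{t,p}^2:=C\,F^{-1}(C_*ph(t))$, so that (i) reads $\Norm{u(t,y_1)-u(t,y_2)}_p\le K_{t,p}\,|y_1-y_2|^{\eta}$ for all $y_1,y_2$, with $K_{t,p}$ independent of the base point. A Garsia--Rodemich--Rumsey argument on the unit ball $B_x$ gives, for any fixed $q>2d/\eta$, $\sup_{y_1,y_2\in B_x}|u(t,y_1)-u(t,y_2)|\le C_q\,\mathcal{Z}^{1/q}$ with $\mathcal{Z}:=\iint_{B_x\times B_x}|u(t,a)-u(t,b)|^{q}|a-b|^{-q\eta}\,\ud a\,\ud b$; for $p\ge q$, Minkowski's inequality and (i) then give $\Norm{\sup_{y_1,y_2\in B_x}|u(t,y_1)-u(t,y_2)|}_p^2\le C'\,\Norm{\mathcal{Z}}_{p/q}^{2/q}\le C''\,K_{t,p}^2$, while the range $2\le p<q$ follows from $\Norm{\cdot}_p\le\Norm{\cdot}_q$ together with the boundedness of $F^{-1}(C_*qh(t))/F^{-1}(C_*ph(t))$ for $t\ge 1$ (as $q$ is a fixed constant).

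\emph{Main obstacle.} The delicate point is the stochastic-integral estimate in the reduction: one must keep the \emph{signed} kernel $g_s$ --- discarding its cancellation via $|g_s|$ would only give the exponent $|x-y|^{\min(2\eta,1)}$, which is too weak when $\eta>1/2$ --- and therefore control the random integrand $\rho(u(s,z))$ against the spatially correlated noise, which is exactly where the spatial stationarity coming from the constant initial condition is used; here we follow the methods of~\cite{sanz-sole.sarra:02:holder, chen.dalang:14:holder-continuity, chen.huang:19:comparison}. The remainder is careful bookkeeping --- extracting $|x-y|^{2\eta}$ with the sharp weight $(t-s)^{-\eta}k(t-s)$, bounding $\int_0^t u^{-\eta}k(u)\,\ud u$ by $h(t)$, and tracking constants so that the final bound is uniform in $p\ge 2$ and $t\ge 1$.
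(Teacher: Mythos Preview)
Your overall strategy---bound $\sup_{s\le t,z}\Norm{\rho(u(s,z))}_p^2$ by $C\,F^{-1}(C_*ph(t))/(ph(t))$ via Lemma~\ref{L:rho-p} and~\eqref{E:mup_smp}, then cancel the $p$ (and $h(t)$) against the BDG constant and the kernel integral---is exactly the paper's. The treatment of part~(ii) by a Kolmogorov/GRR argument also matches.

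There is, however, a genuine gap in your reduction. After BDG and Minkowski in~$s$ you must control $\Norm{A(s)}_{p/2}$ with $A(s)=\iint g_s(z)g_s(z')f(z-z')\rho(u(s,z))\rho(u(s,z'))\,\ud z\,\ud z'$. Passing the $L^{p/2}(\Omega)$-norm through the $z,z'$ integral (whether directly or via the factorization $f=\phi*\tilde\phi$) forces absolute values: one obtains
\[
\Norm{A(s)}_{p/2}\le\iint |g_s(z)|\,|g_s(z')|\,f(z-z')\,\Norm{\rho(u(s,z))}_p\Norm{\rho(u(s,z'))}_p\,\ud z\,\ud z',
\]
and stationarity only lets you pull out $\mathcal N_p(s)^2$---it does \emph{not} restore the sign of $g_s$. (Indeed, for $f\equiv1$ one has $\iint g_sg_s\,f=(\int g_s)^2=0$, while $A(s)$ need not vanish, so no inequality of the form $A(s)\le\mathcal N_p(s)^2\iint g_sg_s\,f$ can hold.) Consequently your Plancherel identity is not available as written.

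Your ``Main obstacle'' diagnosis is also off: working with $|g_s|$ does \emph{not} cap the exponent at $\min(2\eta,1)$. Using the pointwise interpolation $|p_u(a)-p_u(b)|\le C\bigl(|a-b|/\sqrt{u}\bigr)^{\eta}\bigl(p_{cu}(a)+p_{cu}(b)\bigr)$, valid for every $\eta\in(0,1]$, one gets
\[
\iint |g_s(z)|\,|g_s(z')|\,f(z-z')\,\ud z\,\ud z'\ \le\ C\,|x-y|^{2\eta}(t-s)^{-\eta}k\bigl(c(t-s)\bigr),
\]
i.e.\ precisely the bound you derived by Fourier means; the rest of your argument (the $\int_0^t u^{-\eta}k(u)\,\ud u\le C\,h(t)$ estimate and the cancellation) then goes through unchanged. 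This is exactly the route the paper takes, deferring the kernel estimate to Step~1 of \cite[Theorem~1.8]{chen.huang:19:comparison}. So the fix is simply to replace your Plancherel step by the pointwise heat-kernel bound above.
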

\begin{proof}
  Following the same lines as in the proof of~\cite[Theorem
  1.8]{chen.huang:19:comparison}, one can show that
  \begin{align*}
     \Norm{u(t,x) - u(t,y)}_p^2
     \leq & 4p \int_0^t \iint_{\R^{2d}} |p_{t-s} (x - z) - p_{t-s} (y - z)| |p_{t-s} (x - z') - p_{t-s} (y - z')| \\
          & \times  \Norm{\rho(u(s,z))}_p \Norm{\rho(u(s,z))}_p f(z - z').
  \end{align*}
  Thanks to~\eqref{E:rho2-p} and part~(iii) of Theorem~\ref{T:mup}, one can
  write
  \begin{align*}
    \sup_{s\in [0,t]}\sup_{z\in \R^d} \Norm{\rho(u(s,z))}_p^2
    \leq & K_M^2 + \rho_2 \left(M^2 + \sup_{s\in [0,t]}\sup_{x\in \R^d}\Norm{u(s,x)}_p^2\right) \\
    \leq & K_M^2 + \rho_2 \left(M^2 + C_* F^{-1} (C_* p h(t))\right),
  \end{align*}
  for all $t \ge 1$, where $C_*>0$ is the same as in~\eqref{E:mup_smp}. If
  $F^{-1} (C_* p h(t)) < 2 M^2$, one can apply part (ii) of
  Lemma~\ref{L:ccvtrho} to see that
  \begin{align*}
    I \coloneqq \rho_2 \left(M^2 + C_* F^{-1} (C_* p h(t))\right)
      \leq      \rho_2 \left( (1+2C_*) M^2 \right).
  \end{align*}
  Otherwise, with $g_2 (\cdot) \coloneqq g_2^+ (\cdot) + g_2^-(\cdot)$ defined
  as in~\eqref{E:gp+} and~\eqref{E:gp-}, respectively, we have,
  \begin{align*}
    I & =  \rho_2(M^2) + \int_{M^2}^{M^2 + C_* F^{-1} (C_* p h(t))}  g_2(x) \ud x \\
      & \leq  \rho_2(M^2) + \frac{C_* F^{-1}(C_* p h(t))}{ F^{-1}(C_* p h(t))  - M^2} \int_{M^2}^{ F^{-1} (C_* p h(t))}  g_2(x) \ud x
        \leq  (2C_* + 1) \rho_2 \left(F^{-1} (C_* p h(t))\right).
  \end{align*}
  Combining the above two cases, we have that
  \begin{align*}
    I \leq & K_1 + C_1 \rho_2 \left(F^{-1} (C_* p h(t))\right)
      =      K_1 + C_1 \frac{F^{-1} (C_* p h(t))}{F \circ F^{-1} (C p h(t))} \nonumber\\
      \leq & K_1 + C_1 \frac{F^{-1} (C_* p h(t))}{C_* p h(t)}.
  \end{align*}
  with some universal positive constants $C_1$ and $K_1$. Additionally,
  following the same idea as in the proof of~\eqref{E:rhoasc}, and noting
  $h(\cdot)$ is a non-decreasing function with $h(t)>0$ for $t>0$, we can
  further simplify above inequality as follows:
  \begin{align*}
    I \leq C_2 p^{-1} F^{-1} \left(C_* ph(t)\right),
  \end{align*}
  where $C_2>0$ is a universal constant. Hence,
  \begin{align*}
    \Norm{u(t,x) - u(t,y)}_p^2 \leq  4C_2 F^{-1} (C_* p h(t)) \int_0^t \ud s \iint_{\R^{2d}} \ud z \ud z'\: f(z - z')
            \left|p_{t-s} (x - z ) - p_{t-s} (y - z )\right| &  \\
    \times  \left|p_{t-s} (x - z') - p_{t-s} (y - z')\right| & .
  \end{align*}
  The rest proof of part (i) is the same as Step 1 in the proof of~\cite[Theorem
  1.8]{chen.huang:19:comparison}. The proof of part (ii) also follows from a
  classical argument in the proof of Kolmogorov's continuity criterion. Thus, we
  omit it and refer interested readers to, e.g.,~\cite[Theorem 4.3 on page
  10]{dalang.khoshnevisan.ea:09:minicourse} for more references.
\end{proof}

Now we are ready to present the proof of Theorem~\ref{T:Asymspc}.

\begin{proof}[Proof of Theorem~\ref{T:Asymspc}]
  Let $C_1$ and $C_2 > 0$ be two generic constants that will be fixed later.
  Denote
  \begin{align*}
    Q(R) \coloneqq C_1 \sqrt{ F^{-1} \Big( C_2 h(t) \log(R)\Big)}, \quad  R>0.
  \end{align*}
  It is clear that for any $R >0$ fixed, $Q(R)$ is increasing in both $C_1$ and
  $C_2$. Following the idea as in~\cite{conus.joseph.ea:13:on}, to apply the
  Borel--Cantelli lemma, we need to estimate
  \begin{gather*}
    \mathcal{T}_1 (R) \coloneqq \P\left\{ \max_{x\in\left\{y\in \mathbb{Z}^d \colon |y|\leq R\right\}} |u(t,x)| \geq Q(R) \right\} \shortintertext{and}
    \mathcal{T}_2 (R) \coloneqq \P\left\{ \max_{x\in\left\{y\in \mathbb{Z}^d \colon |y|\leq R\right\}} \sup_{y\in B_x} |u(t,y) - u(t,x)| \geq Q(R) \right\},
  \end{gather*}
  which come from the following inequality:
  \begin{gather*}
    \P\left\{ \sup_{|x|\leq R} |u(t,x)| \geq 2 Q(R)  \right\} \leq  \mathcal{T}_1 (R) + \mathcal{T}_2 (R),
  \end{gather*}
  for all positive integer $R$ such that $Q(R) \geq L_t$; see
  Theorem~\ref{T:Tail}.

  Let $C_*$ be the constant in~\eqref{E:mup_smp}. Assume $C_1 = \sqrt{C_*} e $ and
  $C_2 = (2 + 2d) C_*$. Then, due to Theorem~\ref{T:Tail}, we have
  \begin{align*}
    \mathcal{T}_1 (R)
    \leq & \sum_{x\in\{y\in \mathbb{Z}^d \colon |y|\leq R\}} \P\left\{  |u(t,x)| \geq Q(R)  \right\} \\
    \leq & (2R)^d \exp \left(- (C_* h(t))^{-1} F \left(Q(R)^2\big/ \big(C_* e^2\big)\right)\right) = R^{-2}.
  \end{align*}

  The estimate for $\mathcal{T}_2$ is quite similar. By using the same argument
  as in Theorem~\ref{T:Tail} and taking account of part (ii) of
  Lemma~\ref{L:holder}, one can show that with some $L_t' > 0$ and the same
  constants $C_*$ and $C'$ as in part (ii) of Lemma~\ref{L:holder}, for all $z
  \geq L_t'$,
  \begin{align*}
         \P \left(\sup_{y_1, y_2 \in B_x}|u(t,y_1) - u(t,y_2)| \geq z\right)
    \leq \exp \left( - (C_* h(t))^{-1} F\left(z^2 \big/ (C'e^2)\right) \right).
  \end{align*}
  Then, with $C_1 = \sqrt{C'} e $ and $C_2 = (2 + 2d) C_*$, we have
  $\mathcal{T}_1 (R) \leq R^{-2}$ as well. Therefore, with appropriate $C_1 ,C_2
  >0$ and $L_t > 0$ the same as in~\eqref{E:Lt}, the next inequality holds,
  \begin{align*}
    \sum_{R = 1}^{\infty} \P\left\{ \sup_{|x|\leq R} |u(t,x)| \geq 2 Q(R)  \right\}
    \leq & \sum_{ R = 1}^{\lfloor L_t \vee L_t'\rfloor } \P\left\{ \sup_{|x|\leq R} |u(t,x)| \geq 2 Q(R)  \right\} \\
         & + \sum_{R = \lfloor L_t \vee L_t'\rfloor + 1 }^{\infty}\left(\mathcal{T}_1 (R) + \mathcal{T}_2 (R) \right)
         < \infty.
  \end{align*}
  An application of the first Borel--Cantelli lemma completes the proof of
  Theorem~\ref{T:Asymspc}.
\end{proof}

\subsection{Proofs of technical lemmas}\label{SS:lmmpf}

\begin{proof}[Proof of Lemma~\ref{L:rhononde}]
  Since $|\rho|$ is concave on $[M_0,\infty)$, we can find a non-increasing and
  right-continuous function $g^+$ on $[M_0,\infty)$ such that $|\rho(x)| -|\rho
  (M_0)| = \int_{M_0}^x g^+ (y) \ud y$ holds for all $x\in [M_0,\infty)$;
  see~\cite[Theorems 1.4.2 and 1.5.2]{niculescu.persson:18:convex}. We claim
  that $g^+\geq 0$. Because otherwise, due to part~(i) of Hypothesis~\ref{H:rho}
  and the monotonicity of $g^+$, we have $\lim_{x\to\infty} \int_M^x g^+ (y) =
  -\infty$, and thus $\lim_{x\to\infty} |\rho(x)| = -\infty$, which is
  impossible. Therefore, $g^+\geq 0$. Regarding the limit $\lim_{x\to \infty}
  g^+ (x)=0$, if it is not true, then $\lim_{x\to\infty}g^+ (x)>0$,
  which implies that $\lim_{x\to\infty} \rho(x) = \infty$. Thus, by
  L'H\^{o}pital's rule, $\lim_{x\to\infty}\frac{\rho(x)}{x} = \lim_{x\to\infty}
  g^+ (x) > 0$, which contradicts part~(ii) of Hypothesis~\ref{H:rho}. Hence,
  $\lim_{x\to \infty} g^+ (x)=0$. This proves all properties related to $g^+$.
  The case for $g^-(\cdot)$ can be proved in the same way.
\end{proof}

\begin{proof}[Proof of Lemma~\ref{L:ccvtrho}]
  The representations in both~\eqref{E:gp+} and~\eqref{E:gp-} are direct
  consequences of the definitions of $\rho_p^\pm$ in~\eqref{E:rhop} and
  Lemma~\ref{L:rhononde}. Part (ii) is an immediate consequence of part~(i). Now
  we prove part~(iii). It suffices to show the case for $\rho_p^+(\cdot)$ since
  the case for $\rho_p^-(\cdot)$ can be proved in the same way and the case for
  $\rho_p$ follows from those two cases. Let $g^+$ be given in~\eqref{E:gp+}. We
  need to show that $g_p^+$ is non-increasing on $[M^p,\infty)$ with some $M
  \geq M_0$. We know that $g^+$ is non-increasing on $(M_0,\infty)$, it suffices
  to show that $\varphi(x) = \frac{|\rho(x)|}{x}$ is non-increasing for $x$ large
  enough. To show this property, we write
  \begin{align*}
    |\rho(x)| = \int_{M_0}^x g^+ (y)dy + |\rho (M_0)|.
  \end{align*}
  Thus for almost every $x\in (M_0,\infty)$,
  \begin{align*}
    \varphi'(x) = x^{-2} \left[g^+ (x) \left(x - M_0\right) - \int_{M_0}^x g^+ (y) \ud y + g^+ (x) M_0 - |\rho(M_0)| \right]
    \leq g^+ (x) M_0 - |\rho(M_0)|,
  \end{align*}
  where the last inequality follows from the fact that $g^+$ is non-increasing
  on $(M_0,\infty)$. Since $g^+(x)\to 0$ as $x\to\infty$ (see
  Lemma~\ref{L:rhononde}), we conclude that $g^+(x) M_0 - |\rho (M_0)| < 0$ for
  $x$ large enough. In other words, there exists $M\geq M_0$, such that
  $\varphi$ is non-increasing on $[M, \infty)$. Therefore, $\rho_p^+$ is concave
  on $[M^p, \infty)$. For the same reason, we can show that $\rho_p^-$, and thus
  $\rho_p$, are also concave on $[M^p,\infty)$ with a possibly different $M\geq
  M_0$. This proves part~(ii). Finally, from the above argument, we see that
  when $M_0=0$, then $g^+(x) M_0 - |\rho(M_0)| = -|\rho(0)|\le 0$ for all $x\ge
  0$. Hence, part~(iv) follows. This completes the proof of
  Lemma~\ref{L:ccvtrho}.
\end{proof}

\begin{proof}[Proof of Lemma~\ref{L:Concave}]
   In the following, we will prove~\eqref{E:up3} only. The proof
   of~\eqref{E:up4} is similar. Let $p>0$ and $U\in L^p(\Omega)$.  Set $\alpha
   \coloneqq \P(U \geq M)$. It is clear that when $\alpha =0$, the
   inequality~\eqref{E:up3} is trivially true. So, we may assume that
   $\alpha>0$. Since $\rho_p^+(\cdot)$ is concave on $[M^p,\infty)$ (see
   Lemma~\ref{L:ccvtrho}), we can apply Jensen's inequality to see that
  \begin{align*}
    \E \left[\rho_p^+ (|U|^p) \one_{[M,\infty)}(U)  \right]
    \leq \alpha  \rho_p^+ \left(\frac{1}{\alpha} \E\left[|U|^p\one_{[M,\infty)}(U)\right] \right)
    \leq \alpha  \rho_p^+ \left(\frac{1}{\alpha} \E\left[|U|^p\right] \right),
  \end{align*}
  where the second inequality is due to the monotonicity of $\rho_p^+(\cdot)$;
  see Lemma~\ref{L:ccvtrho}. Denote $x = \E[|U|^p]$. Since $x/\alpha\ge M^p$,
  another application of the monotonicity and the concavity of
  $\rho_p^+(\cdot)$ shows that
  \begin{align*}
    \rho_p^+ (\alpha^{-1}x) \leq & \rho_p^+ (M^p +
    \alpha^{-1}x)
    \leq  \rho_p^+ (M^p) + \alpha^{-1} \big(
    \rho_p^+ (M^p + x) - \rho_p^+ (M^p)
  \big) \leq \alpha^{-1} \rho_p^+ (M^p + x).
  \end{align*}
  This implies that
  \begin{align*}
    \E \big[\rho_p^+ (|U|^p) \one_{[M,\infty)}(U)\big]
    \leq & \rho_p^+ \left(M^p + \mathbb{E} \left[|U|^p \right] \right)
    = \rho_p^+ \left(M^p + \Norm{U}_p^p \right),
  \end{align*}
  which completes the proof of Lemma~\ref{L:Concave}.
\end{proof}

\begin{proof}[Proof of Lemma~\ref{L:rho-p}]
  Fix an arbitrary $p\ge 2$. By the subadditivity of $\theta_{2/p}(\cdot)$,
  \begin{align*}
    \Norm{\rho(U)}_p^2
    \le & \left(\E\left[|\rho(U)|^p 1_{\{|U|\le  M\}}\right]\right)^{2/p}
        + \left(\E\left[|\rho(U)|^p 1_{\{ u \ge  M\}}\right]\right)^{2/p}
        + \left(\E\left[|\rho(U)|^p 1_{\{ u \le -M\}}\right]\right)^{2/p}\\
    \le & K_M^2
        + \left(\theta_{2/p} \circ \E_{\ge} \circ \theta_p \circ |\rho|\right) (U)
        + \left(\theta_{2/p} \circ \E_{\le} \circ \theta_p \circ |\rho|\right) (U)\\
    =   & K_M^2
        + \left(\theta_{2/p} \circ \E_{\ge} \circ \theta_p \circ |\rho| \circ \theta_{1/p} \circ \theta_p \right) (U)
        + \left(\theta_{2/p} \circ \E_{\le} \circ \theta_p \circ |\rho| \circ r \circ \theta_{1/p} \circ \theta_p \right) (U)\\
    =   & K_M^2
        + \left(\theta_{2/p} \circ \E_{\ge} \circ \rho_p^+ \circ \theta_p \right) (U)
        + \left(\theta_{2/p} \circ \E_{\le} \circ \rho_p^- \circ \theta_p \right) (U),
  \end{align*}
  where $\E_\ge$ denote the expectation under $\{u \ge M\}$, namely,
  $\E_\ge\left(f(U)\right) = \E\left(f(U)1_{\{ u\ge M\} }\right)$, and $\E_\le$
  is defined in the same way. Set $y\coloneqq M^p+\Norm{U}_p^p$. By
  Lemma~\ref{L:Concave},
  \begin{align*}
    \Norm{\rho(U)}_p^2
    \le & K_M^2
        + \left(\theta_{2/p} \circ \rho_p^+ \right) (y)
        + \left(\theta_{2/p} \circ \rho_p^- \right) (y)\\
    =   & K_M^2
        + \left(\theta_{2/p} \circ \theta_p \circ |\rho| \circ \theta_{1/p} \right) (y)
        + \left(\theta_{2/p} \circ \theta_p \circ |\rho| \circ r \circ \theta_{1/p} \right) (y)\\
    =   & K_M^2
        + \left(\theta_2 \circ |\rho| \circ \theta_{1/2} \circ \theta_{2/p} \right) (y)
        + \left(\theta_2 \circ |\rho| \circ r \circ \theta_{1/2} \circ \theta_{2/p} \right) (y)\\
    =   & K_M^2
        + \left(\rho_2^+ \circ \theta_{2/p} \right) (y)
        + \left(\rho_2^- \circ \theta_{2/p} \right) (y)\\
    \le & K_M^2
        + \rho_2^+ \left(M^2 + \Norm{U}_p^2\right)
        + \rho_2^- \left(M^2 + \Norm{U}_p^2\right),
  \end{align*}
  where the last step is due to the subadditivity of $\theta_{2/p}(\cdot)$ and
  the monotonicity of $\rho_2^\pm(\cdot)$.
\end{proof}

\begin{proof}[Proof of Lemma~\ref{L:Gmm}]
  The finiteness of $F^{-1}(k) + b$ is a consequence of part~(ii) of
  Hypothesis~\ref{H:rho}. Thus it suffices to show the first inequality
  in~\eqref{E:BIalt}. To this end, we first prove it by verifying $x \leq
  \gamma_0 (k) + 2 b$, with
  \begin{align*}
    \gamma_0 (k)  \coloneqq \inf \left\{ x\in (M^2,\infty)  \colon \frac{\rho_2 (x)}{x} \leq \frac{1}{k}\ \mathrm{and}\ g_2(x) \leq \frac{1}{2k} \right\}.
  \end{align*}
   Here, $g_2(\cdot)\coloneqq g_2^+(\cdot) + g_2^-(\cdot)$ with $g_2^+(\cdot)$
   and $g_2^-(\cdot)$ defined in~\eqref{E:gp+} and~\eqref{E:gp-}, respectively.
   Note that $\gamma_0(k)\ge M^2$ and $\rho_2(\cdot)$ is concave on
   $[M^2,\infty)$. Thus, if $x \ge \gamma_0(k) + 2b$, we have
  \begin{align*}
    k\rho_2(x) + b
    =   & k \rho_2 (\gamma_0(k)) + k\int_{\gamma_0(k)}^{x} g_2(y) \ud y + b \\
    \le & \gamma_0(k) + \frac{1}{2} \left(x - \gamma_0(k)\right) + b        \\
    \le & \gamma_0(k) + \frac{1}{2} \left(x - \gamma_0(k)\right) + \frac{1}{2} (x-\gamma_0(k))
    = x.
  \end{align*}
  In the next step, we can show that
  \begin{align}\label{E:gmmgmm0}
    \gamma_0(k) \leq 2 F^{-1}(k),\qquad  \text{for all } k > 0.
  \end{align}
  First if $M = 0$, then
  \begin{align*}
   \frac{2}{F(x)} = \frac{x}{2\rho_2(x)} \leq  \frac{x}{\rho_2(x)} = x \left(\int_0^x\ud y g_2(y)\right)^{-1} \leq \frac{1}{g_2(x)},
  \end{align*}
  by the non-increasing property of $g_2 (\cdot)$, and~\eqref{E:gmmgmm0} follows
  immediately from the definition of $F^{-1}$; see~\eqref{E:F^-1}. On the other
  hand, if $M > 0$. Then, for any $x \geq 2 F^{-1}(k) \geq 3 F^{-1}(k)/2$, it
  holds that
  \begin{align*}
    g_2(x) \leq \frac{\rho_2(x) - \rho_2(M^2)}{x - M^2} \leq \frac{\rho_2 (3F^{-1}(k)/2) -  \rho_2(M^2)}{3 F^{-1}(k)/2 - M^2} .
  \end{align*}
  Notice that $F^{-1}(k) \geq 2M^2$ implies $3F^{-1}(k)/2 - M^2 \geq F^{-1}(k)$
  and
  \begin{align*}
    \rho_2 (3F^{-1}(k)/2) -  \rho_2(M^2) = \int_{M^2}^{3F^{-1}(k)/2} \ud y g_2(y)
    \leq & \frac{3F^{-1}(k)/2- M^2}{F^{-1}(k) - M^2} \int_{M^2}^{F^{-1}(k)} \ud y g_2(y) \\
    \leq & 2\rho_2 \left(F^{-1}(k)\right).
  \end{align*}
  Combining above inequalities and~\eqref{E:FF-1}, we get
  \begin{align}\label{E:g2}
    g_2(x)
    \leq \frac{2\rho_2 \left(F^{-1}(k)\right)}{F^{-1}(k)}
    =    \frac{1}{2 F\circ F^{-1}(k)}
    \leq \frac{1}{2k}.
  \end{align}
  This implies that~\eqref{E:gmmgmm0}, and thus completes the proof of
  Lemma~\ref{L:Gmm}.
\end{proof}

\begin{proof}[Proof of Lemma~\ref{L:J1d=1}]
  Without loss generality, we assume that $\mu$ is nonnegative. Otherwise, one
  simply replaces $\mu$ by $|\mu|$. By the Plancherel theorem and the fact that
  $f$ is nonnegative-definite, we can write
  \begin{align*}
    \sup_{x\in \R^d} \int_{\R^d} \ud y\: p_{t}  (x-y) f(y)
    = & \sup_{x\in \R^d} (2\pi)^{-d} \int_{\R^d} \ud\xi\: e^{- ix \cdot \xi - \frac{t |\xi|^2}{2}} \widehat{f}(\xi) \\
    = & (2\pi)^{-d} \int_{\R^d} \ud\xi\: e^{- \frac{t |\xi|^2}{2}} \widehat{f}(\xi)
    = \int_{\R^d} \ud z\: p_t (z) f(z)
    = k(t),
  \end{align*}
  from which one proves~\eqref{E:J1J0}. As for~\eqref{E:J1d=1}, we proceed with
  a general $d\ge 1$ and make the restriction to $d = 1$ when some integrability
  issue comes up. By the Cauchy-Schwarz inequality, we see that
  \begin{align*}
    \MoveEqLeft \int_{\R^d} \ud y\: p_{t-s} (x-y)   \calJ_0^2(s,y)        \\
    & = \int_{\R^d} \ud y \iint_{\R^{2d}} \mu( \ud z) \mu( \ud z')\: p_{t-s} (x-y) p_s(y-z) p_s(y-z') \\
    & \leq \int_{\R^{2d}} |\mu|( \ud z) |\mu|( \ud z')
      \left(\int_{\R^{d}} \ud y\: p_{t-s} (x-y) p_s(y-z)^2 \right)^{1/2}
      \left(\int_{\R^{d}} \ud y\: p_{t-s} (x-y) p_s(y-z')^2 \right)^{1/2} \\
    & = \left[\int_{\R^{d}} |\mu|( \ud z) \left(\int_{\R^{d}} \ud y\: p_{t-s} (x-y) p_s(y-z)^2 \right)^{1/2}\right]^2.
  \end{align*}
  Because $p_t^2 (x) = (4\pi t)^{-d/2} p_{t /2}(x)\leq (2\pi t)^{-d/2} p_t(x)$
  for all $(t,x) \in \R_+\times \R^d$, we can write
  \begin{align*}
    \int_{\R^{d}} \ud y\: p_{t-s} (x-y) p_s(y-z)^2 \leq (2\pi s)^{-d/2}
    \int_{\R^{d}} \ud y\: p_{t-s} (x-y) p_s(y-z) = (2\pi s)^{-d/2} p_t(x-z),
  \end{align*}
  which implies that
  \begin{align*}
    \calJ_1(t,x) \leq & (2t)^{d/2}
    \int_0^t \ud s\: s^{-d/2} k(t-s)
    \left(\int_{\R^{d}} |\mu| ( \ud z) p_{t/2}(x-z)\right)^2 \\
    = & (2t)^{d/2}
     \calJ_+ \left(t/2, x \right)^2
     \int_0^t \ud s\: s^{-d/2}\: k(t-s).
  \end{align*}
  Notice that the integral against the time argument in the last expression is
  finite for all $t>0$, if and only if $d = 1$. Moreover, in case $d = 1$, we
  can deduce that
  \begin{align*}
    \int_0^t \ud s\: s^{-1/2} k(t-s)
    \leq (2\pi)^{-1/2} \int_0^t\ud s\: s^{-1/2} (t-s)^{-1/2}
    \int_{\R} \ud z\: e^{-\frac{x^2}{2 t}} f(z)
    = \pi t^{1/2} k(t).
  \end{align*}
  Therefore,
  \begin{align} \label{E_:J1d=1}
    \calJ_1(t,x)
    \le \sqrt{2}\pi\: t \:  k(t)\:\calJ_+ \left(t/2, x\right)^2
    <\infty.
  \end{align}
  Finally, notice that $k(t)$ is a nonincreasing function of $t$ because $k(t) =
  (2\pi)^{-d}\int_{\R^d} \widehat{f}(\ud\xi) e^{-t|\xi|^2/2}$. From~\eqref{E:h},
  we see that
  \begin{align*}
    2 h(t) \ge 2 h(t/2) = \int_0^t \ud s \, k(s)
    \ge \int_0^{t} \ud s \,k(t)  = t k(t).
  \end{align*}
  Plugging this inequality back to~\eqref{E_:J1d=1} proves~\eqref{E:J1d=1}. This
  completes the proof of Lemma~\ref{L:J1d=1}.
\end{proof}

\section{Examples on various sublinear diffusion coefficients \texorpdfstring{$\rho$}{}}\label{S:Rho}

In this section, we derive the explicit expressions for $F(\cdot)$ and its
inverse $F^{-1}(\cdot)$ for the sublinear diffusion coefficients $\rho$ given in
examples~\eqref{E:Ex-LipRho}--\eqref{E:Ex-VSV}. The results are summarized in
Figure~\ref{F:F-Inv} at the end of this session.

\begin{proposition}\label{P:Rho-Alpha}
  The following diffusion coefficient
  \begin{align}\label{E:Rho-Alpha-R}
    \rho(u) =  \frac{|u|}{ (r + |u|)^{1-\alpha}} \qquad
    \text{for $u\in\R$, with $r\ge 0$ and $\alpha\in [0,1)$,}
  \end{align}
  which reduces to
  \begin{align*}
    \rho(u) =
    \begin{dcases}
      \quad |u|^{\alpha} \quad \text{with $\alpha\in [0,1)$} & \text{if $r=0$,}        \\
      \frac{|u|}{1+|u|}                                      & \text{if $\alpha = 0$,} \\
    \end{dcases}
  \end{align*}
  satisfies the following properties:
  \begin{enumerate}
    \item $\rho$ is globally Lipschitz provided that $r>0$, in which case,
      $\rho'(0) = r^{-(1-\alpha)}<\infty$;
    \item $\rho$ satisfies Hypothesis~\ref{H:rho} with $M_0=0$;
    \item the corresponding $F$ and $F^{-1}$ take the following explicit forms:
      \begin{align}
        F(u)      = \frac{1}{8}\left(r + u^{1/2}\right)^{2(1-\alpha)}, \quad \text{for all $u\ge 0$,}\nonumber \shortintertext{and}
        F^{-1}(x) = \left((8x)^{1/(2(1-\alpha))}-r\right)^{2} \one_{\left\{x\ge 8^{-1}r^{2(1-\alpha)}\right\}}.
        \label{E:F-Inv-Alpha}
      \end{align}
      In particular, $F^{-1}(x)\le (8x)^{1/(1-\alpha)}$ for all $x\ge 0$.
  \end{enumerate}
\end{proposition}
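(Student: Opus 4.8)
The plan is to handle the three items in sequence, since part~(2) reuses the derivative computation from part~(1), and part~(3) uses the reduction $M=0$ coming from part~(2). For part~(1), I would work with the restriction of $\rho$ to $[0,\infty)$, where $\rho(u)=u(r+u)^{-(1-\alpha)}$. Differentiating gives $\rho'(u)=(r+\alpha u)(r+u)^{-(2-\alpha)}$, so the one-sided derivative at $0$ is $\rho'(0^+)=r^{-(1-\alpha)}$, finite precisely because $r>0$. A second differentiation yields $\rho''(u)=-(1-\alpha)(2r+\alpha u)(r+u)^{-(3-\alpha)}$, which is strictly negative on $(0,\infty)$; hence $\rho'$ is strictly decreasing there and $0<\rho'(u)\le r^{-(1-\alpha)}$ for all $u>0$. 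By the mean value theorem $\rho$ is $r^{-(1-\alpha)}$--Lipschitz on $[0,\infty)$, and since $\rho$ is even and $\bigl|\,|u|-|v|\,\bigr|\le|u-v|$, it is $r^{-(1-\alpha)}$--Lipschitz on all of $\R$.

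For part~(2), local boundedness (property~(i) of Hypothesis~\ref{H:rho}) follows from continuity; property~(ii) holds because $\rho(x)/x=(r+|x|)^{-(1-\alpha)}\to 0$ as $|x|\to\infty$, using $1-\alpha>0$; and property~(iii) with $M_0=0$ follows from the sign of $\rho''$ established above when $r>0$, while for $r=0$ one has $\rho(u)=u^\alpha$ on $[0,\infty)$, which is concave for every $\alpha\in[0,1)$ (constant when $\alpha=0$). Evenness transfers concavity to $(-\infty,0]$. Since $M_0=0$, Lemma~\ref{L:ccvtrho}(iv) lets me take $M=0$ in Definition~\ref{D:F}, so that $F$ is defined on $[0,\infty)$ and $F^{-1}$ is its right inverse on $[0,\infty)$.

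For part~(3), evenness of $\rho$ gives $\rho_2^+=\rho_2^-$ in~\eqref{E:rhop}, hence $\rho_2(x)=2\,|\rho(x^{1/2})|^2=2x\,(r+x^{1/2})^{-2(1-\alpha)}$, and therefore $F(x)=x/(4\rho_2(x))=\tfrac18(r+x^{1/2})^{2(1-\alpha)}$ on $[0,\infty)$. To invert, I note that $F(y)\ge x$ is equivalent, after raising both sides to the positive power $1/(2(1-\alpha))$, to $y^{1/2}\ge (8x)^{1/(2(1-\alpha))}-r$: if the right-hand side is $\le 0$, i.e. $x\le 8^{-1}r^{2(1-\alpha)}$, this holds for every $y\ge 0$ and so $F^{-1}(x)=0$; otherwise the infimum in~\eqref{E:F^-1} equals $\bigl((8x)^{1/(2(1-\alpha))}-r\bigr)^2$, which gives~\eqref{E:F-Inv-Alpha}. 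The bound $F^{-1}(x)\le(8x)^{1/(1-\alpha)}$ is then immediate: it is trivial where $F^{-1}(x)=0$, and on the other range $0\le (8x)^{1/(2(1-\alpha))}-r\le(8x)^{1/(2(1-\alpha))}$, so squaring preserves the inequality.

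All the steps are elementary calculus, so there is no genuine obstacle; the only points that need care are getting the \emph{exact} Lipschitz constant $r^{-(1-\alpha)}$ (which requires the monotonicity of $\rho'$ via $\rho''<0$, not just a crude bound), checking the degenerate values $r=0$ and $\alpha=0$ so that the $M=0$ reduction and the $F(0)$--convention of Definition~\ref{D:F} genuinely apply, and the boundary case $x=8^{-1}r^{2(1-\alpha)}$ where the two branches of $F^{-1}$ meet (they agree, both giving $0$, so the indicator in~\eqref{E:F-Inv-Alpha} is unambiguous).
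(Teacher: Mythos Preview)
Your proposal is correct and follows essentially the same route as the paper: both compute $\rho'(u)=(r+\alpha u)(r+u)^{\alpha-2}$ and $\rho''(u)=-(1-\alpha)(2r+\alpha u)(r+u)^{\alpha-3}\le 0$ to verify the Lipschitz property and Hypothesis~\ref{H:rho} with $M_0=0$, then compute $\rho_2$ and $F$ directly from the definitions. Your write-up is in fact more thorough than the paper's in justifying the exact Lipschitz constant via the monotonicity of $\rho'$, in separating the $r=0$ case, and in carrying out the inversion of $F$ with the indicator explicitly, whereas the paper simply states that ``the expression of $F^{-1}$ is readily to be derived.''
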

\begin{proof}
  The diffusion coefficient $\rho$ given in~\eqref{E:Rho-Alpha-R} is
  clearly locally bounded, and has sublinear growth:
  \begin{align*}
    \rho(u) \asymp |u|^{\alpha} \quad \text{as $|u|\to \infty$.}
  \end{align*}
  Notice that for $u\ge 0$,
  \begin{align*}
    \rho'(u) = \left(r+u\right)^{\alpha-2} \left(r+\alpha  u \right)
    \to r^{-(1-\alpha)} \quad \text{as $u\to 0$.}
  \end{align*}
  $\rho(\cdot)$ is concave separately on $\R_+$ and $\R_-$ since
  \begin{align*}
    \rho''(u)
    = -(1-\alpha)
      \left(r+|u|\right)^{\alpha-3}
      \left( 2r + \alpha |u| \right)\le 0, \quad \text{for all $u\in\R$.}
  \end{align*}
  Therefore, $\rho$ satisfies Hypothesis~\ref{H:rho} with $M_0=0$.

  The form of $\rho$ (see also~\eqref{E:Ex-LipRho}) makes it easy to compute
  $\rho_2$, $F$ and $F^{-1}$. Indeed, in this case, for $u\ge 0$,
  \begin{align*}
    \rho_2(u) = \frac{2u}{ \left(r + u^{1/2}\right)^{2(1-\alpha)}}  \quad \text{and} \quad
    F(u)      = \frac{1}{8}\left(r + u^{1/2}\right)^{2(1-\alpha)}, \quad \text{for all $u\ge 0$.}
  \end{align*}
  The expression of $F^{-1}$ is readily to be derived from the above formula for
  $F$.
\end{proof}

The diffusion term $\rho$ in the following examples does not exhibit global
concavity, which motivates us to propose the asymptotic concave condition in
Hypothesis~\ref{H:rho}.

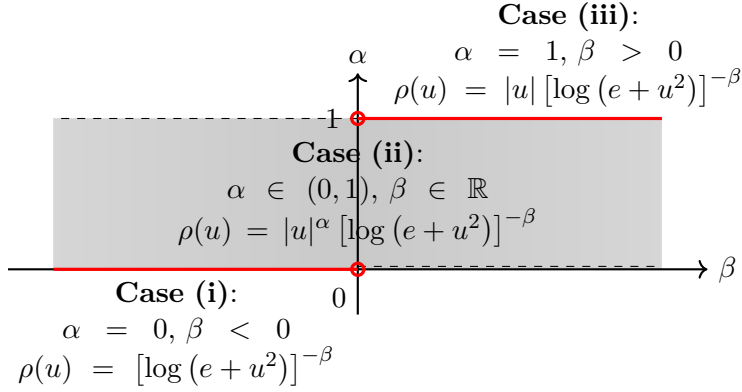
\begin{figure}[htb]
  \centering
  \begin{tikzpicture}[x=2cm,y=2cm]

    \filldraw [shading=radial, left color=gray!50!white,  right color=white, draw = white, opacity =0.2] (0,0) rectangle (2,1);
    \filldraw [shading=radial, left color=white, right color=gray!50!white,  draw = white, opacity =0.2] (0,0) rectangle (-2,1);
    \draw[dashed] (0,1) --++(-2,0);
    \draw[dashed,yshift=+0.1em] (0,0) --++(+2,0);

    \draw[->,thick] (-2.3,0) -- (2.3,0) node[right] {$\beta$};
    \draw[->,thick] (0,-0.3) -- (0,1.3) node[above] {$\alpha$};
    \draw (+0.05,1) --++(-0.1,0) node[left] {$1$};

    \draw[very thick, red] (0,0) node[left, yshift = -1em,  black] {$0$} -- (-2,0)
      node[midway, black, yshift = -2.2em, text width = 16em, xshift = -1em, align = center]
      { \textbf{Case (i)}: \\
        $\alpha=0$,  $\beta<0$ \\
        $\rho(u) = \left[\log\left(e+u^2\right)\right]^{-\beta}$
      };
    \draw[very thick, red] (0,0) circle (0.04);

    \draw[very thick, red] (0,1) -- (+2,1)
      node[midway, black, yshift = +2.2em, xshift = +2em, text width = 16em, align = center]
      { \textbf{Case (iii)}: \\
        $\alpha=1$, $\beta>0$ \\
        $\rho(u) = |u| \left[\log\left(e+u^2\right)\right]^{-\beta}$
      };
    \draw[very thick, red] (0,1) circle (0.04);

    \node [text width = 15em, align = center] at (-0.0,0.5)
      { \textbf{Case (ii)}:\\
        $\alpha\in(0,1)$, $\beta\in\R$ \\
        $\rho (u) = |u|^{\alpha} \left[ \log\left(e+u^2\right)\right]^{-\beta}$
      };

  \end{tikzpicture}

  \caption{Three cases for $\rho$ given in~\eqref{E:Ex-LogRho} and
  Proposition~\ref{P:Rho-Log}.}

  \label{F:Log}
\end{figure}

\begin{proposition}\label{P:Rho-Log}
  Suppose that
  \begin{align*}
    \rho (u) = |u|^{\alpha} \left[ \log\left(e+u^2\right)\right]^{-\beta},
  \end{align*}
  with $\alpha, \beta$ fulfilling one of the three conditions given as
  in~\eqref{E:Ex-LogRho}; see Figure~\ref{F:Log}. Then we have
  \begin{enumerate}
    \item $\rho$ is globally Lipschitz only when $\alpha = 1$ (for all
    $\beta\in\R)$, in which case, $\rho'(0) = 1$;
    \item $\rho$ satisfies Hypothesis~\ref{H:rho} with some $M_0>0$;
    \item the corresponding $F$ and $F^{-1}$ are given below:
      \begin{align}\label{E:F-Log}
        F(u) = \frac{1}{8} u^{1 - \alpha} \left[\log(e + u)\right]^{2\beta}, \quad u>0,
      \end{align}
      and
      \begin{align}\label{E:F-Inv-Log}
        \begin{dcases}
          F^{-1}(x) \asymp (8x)^{1/(1-\alpha)} \left[\frac{1}{1-\alpha}\log\left(e+x\right)\right]^{2\beta/(1-\alpha)}, \quad x\to\infty, & \text{Cases (i) and (ii)}, \\
          F^{-1}(x) = \left(\exp\left(2^{3/(2\beta)} x^{1/(2\beta)} \right) - e\right)\one_{\{x>8^{-1}\}},                                & \text{Case (iii).}
        \end{dcases}
      \end{align}
      In particular, in Cases (i) and (ii),
      \begin{align*}
        F^{-1}(x) \lesssim \left[x \left(\log x\right)^{-2\beta}\right]^{1/(1-\alpha)},
        \quad \text{as $x\to\infty$.}
      \end{align*}
  \end{enumerate}
\end{proposition}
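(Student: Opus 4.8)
The plan is to settle the three assertions in turn. Since $\rho$ is even and nonnegative, one has $|\rho(u)| = \rho(|u|)$ everywhere, so it suffices to work on $(0,\infty)$, where I write $\ell(u)\coloneqq[\log(e+u^2)]^{-\beta}$, so that $\rho(u)=u^\alpha\ell(u)$ and $\ell'(u) = -\tfrac{2\beta u}{e+u^2}[\log(e+u^2)]^{-\beta-1}$. For part~(1) I would simply differentiate: $\rho'(u) = \alpha u^{\alpha-1}\ell(u) + u^\alpha\ell'(u)$. When $\alpha\in(0,1)$ the first term behaves like $\alpha u^{\alpha-1}\to\infty$ as $u\downarrow 0$, so $\rho'$ is unbounded near the origin and $\rho$ fails to be globally Lipschitz; when $\alpha=1$ one reads off $\rho'(0^+)=\ell(0)=[\log e]^{-\beta}=1$, and since $\rho$ is sublinear (part~(2)) its derivative is bounded on $(0,\infty)$, so by evenness $\rho$ is globally Lipschitz on $\R$ with $\rho'(0)=1$.

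For part~(2), conditions~(i) and~(ii) of Hypothesis~\ref{H:rho} are immediate: $\rho$ is continuous, hence locally bounded, and $\rho(u)/u = u^{\alpha-1}[\log(e+u^2)]^{-\beta}\to 0$ as $u\to\pm\infty$ in all three cases (using $\beta>0$ when $\alpha=1$). For condition~(iii) it suffices to show $\rho''(u)<0$ for all $u$ large enough, which then gives Hypothesis~\ref{H:rho}(iii) with the corresponding $M_0>0$ and lets Lemma~\ref{L:ccvtrho} apply. Differentiating once more,
\begin{align*}
  \rho''(u) = \alpha(\alpha-1)\,u^{\alpha-2}\ell(u) + 2\alpha\, u^{\alpha-1}\ell'(u) + u^\alpha\ell''(u),
\end{align*}
and from $\ell(u)\sim(2\log u)^{-\beta}$, $\ell'(u)=O\!\big(u^{-1}(\log u)^{-\beta-1}\big)$ and $\ell''(u)=O\!\big(u^{-2}(\log u)^{-\beta-1}\big)$ one sees that for $\alpha\in(0,1)$ the term $\alpha(\alpha-1)u^{\alpha-2}\ell(u)<0$ strictly dominates; for $\alpha=0$ one checks directly that $\ell''(u)<0$ for large $u$; and for $\alpha=1$ the expansion $\rho'(u)\sim(2\log u)^{-\beta}\big(1-\beta/\log u\big)$ shows $\rho'$ is eventually strictly decreasing. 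In each case $\rho''(u)<0$ for $u$ large, as required.

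For part~(3), Definition~\ref{D:rhop} and the evenness of $\rho$ give $\rho_2^+(x)=\rho_2^-(x)=|\rho(x^{1/2})|^2 = x^\alpha[\log(e+x)]^{-2\beta}$, hence $\rho_2(x)=2x^\alpha[\log(e+x)]^{-2\beta}$; Definition~\ref{D:F} then yields $F(x)=x/(4\rho_2(x)) = \tfrac18 x^{1-\alpha}[\log(e+x)]^{2\beta}$, which is~\eqref{E:F-Log}. In Case~(iii) ($\alpha=1$) the power of $x$ disappears, $F(x)=\tfrac18[\log(e+x)]^{2\beta}$ is strictly increasing with $F(0)=\tfrac18$, and solving $F(u)=x$ in closed form gives $F^{-1}(x)=\big(\exp(2^{3/(2\beta)}x^{1/(2\beta)})-e\big)\one_{\{x>1/8\}}$. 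In Cases~(i) and~(ii) ($\alpha\in[0,1)$) there is no closed form, and I would invert $F$ asymptotically: writing $u=F^{-1}(x)$, one has $8x=u^{1-\alpha}[\log(e+u)]^{2\beta}$, so $(1-\alpha)\log u + 2\beta\log\log(e+u) = \log(8x)$ and hence $\log u\sim\tfrac{1}{1-\alpha}\log(e+x)$ as $x\to\infty$; solving the first relation for $u$ with $\log(e+u)$ replaced, up to a constant factor, by $\log(e+x)$ produces the stated two-sided estimate for $F^{-1}(x)$, and in particular the bound $F^{-1}(x)\lesssim[x(\log x)^{-2\beta}]^{1/(1-\alpha)}$.

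The only non-mechanical step is the asymptotic inversion of $F$ in Cases~(i) and~(ii): upgrading the heuristic $\log u\sim\tfrac{1}{1-\alpha}\log(e+x)$ to rigorous matching upper and lower bounds for $F^{-1}(x)$. Since $F$ is eventually strictly increasing, this can be carried out by a monotone bootstrap — substitute a first guess for $u$ into $F$, read off a sharper estimate for $\log u$, and iterate — while tracking the error committed when $\log(e+u)$ is replaced by a constant multiple of $\log(e+x)$; the doubly-iterated logarithm in $F$ keeps this harmless. Everything else, including the lower-order bookkeeping of $\ell'$ and $\ell''$ in the concavity estimate of part~(2), is routine.
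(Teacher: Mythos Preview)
Your proposal is correct and follows essentially the same line as the paper's proof: differentiate to settle Lipschitz continuity, compute $\rho''$ and check its sign for large $u$ to get asymptotic concavity, and read off $\rho_2$ and $F$ directly from the definitions. The only place where your execution differs is the asymptotic inversion of $F$ in Cases~(i) and~(ii): you take logarithms of $8x=u^{1-\alpha}[\log(e+u)]^{2\beta}$ and bootstrap, whereas the paper writes down the candidate $\Theta(x)=x^{1/(1-\alpha)}[\log(e+x)]^{-2\beta/(1-\alpha)}$, computes $F(\lambda\Theta(x))$, and shows it is $\asymp x$ for the right choice of $\lambda$. Both are standard and equally rigorous once the ``guess'' is in hand; the paper's substitute-and-verify avoids the iterated bootstrap you flag as the one non-mechanical step, so you may find it cleaner to adopt that phrasing when you write up the details.
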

\begin{proof}
  It is clear that $\rho$ is locally bounded and has sublinear growth at
  infinity. For the asymptotic concavity, it is elementary, though tedious, to
  show that for $u\ge 0$,
  \begin{align*}
    \rho'(u)
    = u^{\alpha-1} \left[\log\left(e+u^2\right)\right]^{-\beta} \left(\alpha -\frac{2 \beta u^2}{\left(e+u^2\right) \log \left(e+u^2\right)}\right)
    \quad \text{and} \quad
    \rho''(u) = q_1(u) q_2(u)
  \end{align*}
  with
  \begin{align*}
    q_1(u) \coloneqq & (e+u^2)^{-2} u^{\alpha - 2} \left[\log\left(e+u^2\right)\right]^{-\beta -2}, \\
    q_2(u) \coloneqq & 4\beta (\beta + 1) u^4 - 2\beta u^2 \big((2\alpha - 1) u^2 +2\alpha e + e\big)\log(e+u^2) \\
                     & + \alpha (\alpha - 1) (e+u^2)^2 \left[\log\left(e+u^2\right)\right]^2.
  \end{align*}
  From the expression of $\rho'(u)$, we see that $\rho$ is globally Lipschitz
  only when $\alpha = 1$ and in that case $\rho'(0) = 1$. From the expression of
  $\rho''(u)$, we see that $q_1(u)\geq 0$ for $u > 0$, but $q_2(u)$ may take
  negative values when $u$ is small. By considering the three cases given
  in~\eqref{E:Ex-LogRho} (see Figure~\ref{F:Log}), we see that
  \begin{align*}
    q_2(u) \asymp \overline{q}_2(u) < 0, \quad |u|\to\infty,
    \quad \text{with~~}
    \overline{q}_2 (u) \coloneqq
    \begin{dcases}
      2\beta u^4\log(e+u^2),                        & \text{Case (i)},  \\
      \alpha (\alpha - 1) (e+u^2)^2 \log^2 (e+u^2), & \text{Case (ii)}, \\
      -2\beta u^4\log(e+u^2),                       & \text{Case (iii)}.
    \end{dcases}
  \end{align*}
  Hence, $\rho$ is asymptotically concave separately on $[M_0,\infty)$ and
  $(-\infty,-M_0]$ for some constant $M_0>0$. This proves that $\rho$ satisfies
  Hypothesis~\ref{H:rho}. \bigskip

  It is straightforward to derive the expression for $F$:
  \begin{align*}
    \rho_2(u) = 2 u^\alpha \left[\log\left(e+u\right)\right]^{-2\beta} \quad \text{and} \quad
    F(u) = \frac{u}{4\rho_2(u)}
         = \frac{1}{8} u^{1 - \alpha} \left[\log(e + u)\right]^{2\beta}, \quad u>0,
  \end{align*}
  It remains to prove~\eqref{E:F-Inv-Log}. In Case (iii), namely, $\alpha = 1$
  and $\beta>0$, $F^{-1}(x)$ can be obtained explicitly as given in the
  statement of the proposition. However, in Cases (i) and (ii), we have $\alpha
  \in [0,1)$, and it seems impossible to find the explicit formula for
  $F^{-1}(x)$. Instead, one can find the exact asymptotics. Set
  \begin{align*}
    \Theta(x) \coloneqq x^{1/(1-\alpha)} \left[\log(e +x)\right]^{-2\beta/(1-\alpha)}, \quad (x>0).
  \end{align*}
  From~\eqref{E:F-Log}, we see that, for $\lambda>0$ and $x>0$,
  \begin{align*}
    F\left(\lambda \Theta(x)\right)
    = \frac{\lambda \Theta(x)}{4 \rho_2(\lambda \Theta(x))}
    = & \frac{\lambda^{1-\alpha} x}{8} \left[\frac{\log\left(e+x\right)}{\log\left( e + \lambda x^{1/(1-\alpha)} \left[\log(e+x)\right ]^{-2\beta/(1-\alpha)} \right)}\right]^{-2\beta} \\
    \asymp & 8^{-1} \lambda^{1-\alpha} (1-\alpha)^{-2\beta} x,  \quad \text{as $x\to\infty$.}
  \end{align*}
  Hence, by choosing $\lambda_0 \coloneqq 8^{1/(1-\alpha)}
  (1-\alpha)^{2\beta/(1-\alpha)}$, we see that
  \begin{align*}
    F\left(\lambda_0 \Theta(x)\right) \asymp x, \quad \text{or equivalently} \quad
    F^{-1}(x) \asymp \lambda_0 \Theta(x), \quad \text{as $x\to\infty$}.
  \end{align*}
  This completes the proof of Proposition~\ref{P:Rho-Log}.
\end{proof}

\begin{proposition}\label{P:Rho-VSV}
  The diffusion coefficient
  \begin{align*}
    \rho (u) = |u|\exp\left( -\beta \Big(\log \log \left(e+u^2\right)\Big)^{\kappa} \right)
    \quad \text{with $\kappa > 0$ and $\beta>0$}
  \end{align*}
  have the following properties:
  \begin{enumerate}
    \item $\rho$ is globally Lipschitz continuous with $\rho'(0) = 1$;
    \item $\rho$ satisfies Hypothesis~\ref{H:rho} with some $M_0>0$;
    \item the corresponding $F$ and $F^{-1}$ take the following explicit forms:
      \begin{gather}
        F(u)      = \frac{1}{8} \exp\left(2\beta \left(\log\log(e+u)\right)^\kappa\right), \quad \text{for all $u>0$,}\nonumber \shortintertext{and}
        F^{-1}(x) = \left(\exp\left\{\exp\left(\left((2\beta)^{-1} \log(8x) \right)^{1/\kappa}\right)\right\} - e \right)\one_{\{x>1/8\}}.\label{E:F-Inv-VSV}
      \end{gather}
  \end{enumerate}
\end{proposition}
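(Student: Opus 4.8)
The plan is to mirror the proofs of Propositions~\ref{P:Rho-Alpha} and~\ref{P:Rho-Log}: establish the regularity and growth properties of $\rho$ by direct differentiation, and then read off $\rho_2$, $F$ and $F^{-1}$ algebraically. By evenness of $\rho$ it suffices to work on $(0,\infty)$; set $L(u):=\log\log(e+u^2)$, so that $\rho(u)=u\,e^{-\beta L(u)^\kappa}$ there. The function $L$ is smooth and nonnegative on $(0,\infty)$, with $L(u)\asymp u^2$ as $u\to0^+$, $L(0^+)=\log\log e=0$, and $L(u)\to\infty$ as $u\to\infty$; hence local boundedness and sublinear growth, $\rho(u)/u=e^{-\beta L(u)^\kappa}\to0$, are immediate.

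For part (1) I would compute
\[
  \rho'(u)=e^{-\beta L(u)^\kappa}\bigl(1-\phi(u)\bigr),\qquad
  \phi(u):=\beta\kappa\,L(u)^{\kappa-1}\,\frac{2u^2}{(e+u^2)\log(e+u^2)},
\]
and check that $\phi$ is bounded on $(0,\infty)$ with $\phi(u)\to0$ as $u\to0^+$: near $0$, $L(u)^{\kappa-1}\asymp u^{2\kappa-2}$ while the last factor is $\asymp u^2$, so $\phi(u)\asymp u^{2\kappa}\to0$; for large $u$, $e^{-\beta L^\kappa}\le1$ and $L(u)=o(\log(e+u^2))$. Thus $\sup_{u>0}|\rho'(u)|<\infty$, so $\rho$ is globally Lipschitz (being Lipschitz with this common constant on $(-\infty,0]$ and on $[0,\infty)$, as in Proposition~\ref{P:Rho-Alpha}), and $\rho'(u)\to1$ as $u\to0^+$, i.e.\ $\rho'(0)=1$.

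Part (2) is the main obstacle: $\rho$ is only \emph{asymptotically} concave, because the correction $e^{-\beta L^\kappa}$ is very slowly varying, so the relevant orders must be tracked with care. Differentiating once more and using the identity $\beta\kappa L^{\kappa-1}L'=\phi/u$, one gets
\[
  \rho''(u)=-\,e^{-\beta L(u)^\kappa}\left(\frac{\phi(u)\bigl(1-\phi(u)\bigr)}{u}+\phi'(u)\right),
\]
so it remains to prove $\frac{\phi(1-\phi)}{u}+\phi'\ge0$ for all large $u$. As $u\to\infty$ one has $\phi(u)\to0$, hence $1-\phi\to1$ and $\frac{\phi(1-\phi)}{u}\asymp\phi(u)/u>0$; a routine but tedious asymptotic estimate (the dominant term of $\phi'$ coming from differentiating the slowly varying factor $1/\log(e+u^2)$) yields $|\phi'(u)|=o\bigl(\phi(u)/u\bigr)$. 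Therefore $\frac{\phi(1-\phi)}{u}+\phi'=\frac{\phi}{u}\bigl(1+o(1)\bigr)>0$ for $u\ge M_0$ with $M_0>0$ large, so $|\rho|$ is concave on $[M_0,\infty)$ and, by evenness, on $(-\infty,-M_0]$; hence Hypothesis~\ref{H:rho} holds with some $M_0>0$.

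For part (3), evenness gives $\rho_2^+(x)=\rho_2^-(x)=\rho(\sqrt x)^2=x\,e^{-2\beta(\log\log(e+x))^\kappa}$, whence $\rho_2(x)=2x\,e^{-2\beta(\log\log(e+x))^\kappa}$ and
\[
  F(x)=\frac{x}{4\rho_2(x)}=\frac18\exp\!\Bigl(2\beta\bigl(\log\log(e+x)\bigr)^\kappa\Bigr),\qquad x>0.
\]
Since $F$ is strictly increasing, solving $F(y)=x$ successively for $(\log\log(e+y))^\kappa$, then $\log\log(e+y)$, $\log(e+y)$, and finally $y$ gives $y=\exp\!\bigl\{\exp\bigl(((2\beta)^{-1}\log(8x))^{1/\kappa}\bigr)\bigr\}-e$, which is $\ge0$ exactly when $\log(8x)\ge0$, i.e.\ $x>1/8$; this yields~\eqref{E:F-Inv-VSV}, the value on $\{x\le1/8\}$ being the (irrelevant) boundary constant, as in Propositions~\ref{P:Rho-Alpha} and~\ref{P:Rho-Log}.
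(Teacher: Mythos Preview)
Your proposal is correct and follows essentially the same approach as the paper. The only cosmetic difference is in part~(2): the paper factors $\rho''=q_1(u)q_2(u)$ explicitly and identifies the dominant term $-u^2\log(e+u^2)\log\log(e+u^2)$ of $q_2$, whereas you use the equivalent identity $\rho''=-e^{-\beta L^\kappa}\bigl(\phi(1-\phi)/u+\phi'\bigr)$ together with the asymptotic $|\phi'|=o(\phi/u)$; both routes yield the same conclusion.
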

\begin{proof}
  It is easy to see that $\rho$ is locally bounded. Since $\beta>0$, we see that
  $\rho$ has sublinear growth at infinity. Thus, $\rho$ satisfies both parts
  (i) and (ii) of Hypothesis~\ref{H:rho}. Notice that
  \begin{align*}
    \rho'(u) = e^{-\beta \log ^{\kappa}\left(\log\left(e+u^2\right)\right)}\left(1-\frac{2 \kappa\beta  u^2 \log ^{\kappa-1}\left(\log\left(e+u^2\right)\right)}{\left(e+u^2\right) \log\left(e+u^2\right)}\right).
  \end{align*}
  Using the fact that $\log(a+u) \approx \log(a) + u/a$ as $u\to 0_+$ with
  $a>0$, we see that
  \begin{align*}
    \rho'(u) \approx e^{-\beta e^{-\kappa} u^{2\kappa}} \left(1 - 2\kappa\beta e^{-\kappa} u^{2\kappa}\right)
    \to 1 \quad \text{as $u\to 0_+$, provided that $\kappa>0$.}
  \end{align*}
  Hence, $\rho$ is globally Lipschitz with $\rho'(0)=1$. As for the asymptotic
  concavity, we have that $\rho''(u) = q_1(u) q_2(u)$ for $u > 0$ with
  \begin{align*}
     q_1(u)
     =       & 2 \kappa \beta u e^{-\beta\log^{\kappa} \left(\log\left(e+u^2\right)\right)}
               \frac{\log ^{\kappa -2}\left(\log\left(e+u^2\right)\right)}{\left(e+u^2\right)^2 \log ^2\left(e+u^2\right)}
    \ge        0, \shortintertext{and}
     q_2(u)
     =       & \quad  2 \kappa  \beta  u^2 \log ^{\kappa }\left(\log\left(e+u^2\right)\right)-2 (\kappa -1) u^2         \\
             & -\left(\left(u^2+3 e\right) \log\left(e+u^2\right)-2 u^2\right) \log \left(\log\left(e+u^2\right)\right) \\
     \asymp  & - u^2 \log\left(e+u^2\right)\log\left(\log\left(e+u^2\right)\right) \le 0,  \quad \text{as $u\to\infty$.}
  \end{align*}
  Therefore, $\rho$ is asymptotically concave on $\R_+$. This proves that $\rho$
  satisfies Hypothesis~\ref{H:rho}. Finally, it is routine to compute $F$ and
  $F^{-1}$. This completes the proof of Proposition~\ref{P:Rho-VSV}.
\end{proof}

\begin{remark}
   When $M_0\ne 0$, $\rho$ is only asymptotically concave (separately) on $\R_+$
   and $\R_-$. Hence, when using the formulas in~\eqref{E:F-Inv-Log}
   and~\eqref{E:F-Inv-VSV}, we require the argument of $F^{-1}$ to be
   sufficiently large.
\end{remark}

\begin{figure}[htpb!]
  \centering
  \begin{center}
    \begin{tikzpicture}[scale=0.9, transform shape]
      \tikzset{>=latex}
      \def\vd{0.2}
      \def\vx{1}
      \def\mag{1}
      \def\vy{9.0}
      \def\magbar{5.5}
      \newcommand{\MyColorH}{white}
      \newcommand{\MyColorG}{red}
      \newcommand{\MyColorF}{red!80!green}
      \newcommand{\MyColorE}{red!30!green}
      \newcommand{\MyColorD}{green}
      \newcommand{\MyColorC}{blue}
      \newcommand{\MyColorB}{green}
      \newcommand{\MyColorA}{yellow}

      \coordinate (o0) at (0,0);
      \coordinate (o1) at (0,1);           \coordinate (M0) at (\vx+\mag,0);
      \coordinate (R1) at (\vx,1.0);       \coordinate (M1) at (\vx+\mag,1);
      \coordinate (R2) at (\vx,1.0+1*\vd); \coordinate (M2) at (\vx+\mag,1+1*\vy/7);
      \coordinate (R3) at (\vx,8.5-5*\vd); \coordinate (M3) at (\vx+\mag,1+2*\vy/7);
      \coordinate (R4) at (\vx,8.5-4*\vd); \coordinate (M4) at (\vx+\mag,1+3*\vy/7);
      \coordinate (R5) at (\vx,8.5-3*\vd); \coordinate (M5) at (\vx+\mag,1+4*\vy/7);
      \coordinate (R6) at (\vx,8.5-2*\vd); \coordinate (M6) at (\vx+\mag,1+5*\vy/7);
      \coordinate (R7) at (\vx,8.5-1*\vd); \coordinate (M7) at (\vx+\mag,1+6*\vy/7);
      \coordinate (R8) at (\vx,8.5-0*\vd); \coordinate (M8) at (\vx+\mag,1+7*\vy/7);

      \begin{scope}[very thick]
        \draw [name path = Add] (0,1) -- (R1);
        \draw [name path = PAM] (0,0) -- (R7);
        \draw (R7) -- (M7);
        \draw (R1) -- (M1);
        \path[name intersections={of=Add and PAM}]
          (intersection-1) coordinate (o2);

        \draw (M0) --++ (0.1,0) --++(-0.2,0);
        \draw (M1) --++ (0.1,0) --++(-0.2,0);
        \draw (M2) --++ (0.1,0) --++(-0.2,0);
        \draw (M3) --++ (0.1,0) --++(-0.2,0);
        \draw (M4) --++ (0.1,0) --++(-0.2,0);
        \draw (M5) --++ (0.1,0) --++(-0.2,0);
        \draw (M6) --++ (0.1,0) --++(-0.2,0);
        \draw (M7) --++ (0.1,0) --++(-0.2,0);
        \draw (M8) --++ (0.1,0) --++(-0.2,0);

      \end{scope}

      \begin{scope}[dotted]
        \def\myShift{4}
        \draw         (M0) --++ (\myShift,0);
        \draw[dashed] (M1) --++ (\myShift,0);
        \draw         (M2) --++ (\myShift,0);
        \draw         (M3) --++ (\myShift,0);
        \draw         (M4) --++ (\myShift,0);
        \draw         (M5) --++ (\myShift,0);
        \draw         (M6) --++ (\myShift,0);
        \draw[dashed] (M7) --++ (\myShift,0);
        \draw         (M8) --++ (\myShift,0);
      \end{scope}

      \begin{scope}[thick]
        \def\myShift{8em}

        \begin{scope}[shorten >=0.3em, shorten <=0.3em]
          \draw[->] ([xshift = \myShift]M6) -- ([xshift = \myShift]M4) node [right, yshift = 0.7em] {$\beta$};
          \draw[->] ([xshift = \myShift]M2) -- ([xshift = \myShift]M0) node [right, yshift = 0.7em] {$\beta$};
          \draw[->] ([xshift = \myShift]M4) -- ([xshift = \myShift]M3) node [right, yshift = 0.7em] {$\beta$};
          \draw[->] ([xshift = \myShift]M7) -- ([xshift = \myShift]M6) node [right, yshift = 0.7em] {$\beta$};
        \end{scope}

        \draw ([xshift = \myShift, yshift = -0.4em]M7) --++(-0.1,0) --++(0.2,0) node [right] {$0$};
        \draw ([xshift = \myShift, yshift = -0.4em]M6) --++(-0.1,0) --++(0.2,0) node [right] {$0$};
        \draw ([xshift = \myShift, yshift = -0.4em]M4) --++(-0.1,0) --++(0.2,0) node [right] {$0$};
        \draw ([xshift = \myShift]M5) --++(-0.1,0) --++(0.2,0) node [right] {$\sfrac{1}{4}$};
        \draw ([xshift = \myShift]M1) --++(-0.1,0) --++(0.2,0) node [right] {$0$};
      \end{scope}

      \begin{scope}[thick, every node/.style={right, rotate=90, black, opacity = 1, align = center}]
        \def\myShift{1.0em}
        \def\myShiftPlus{3.5em}
        \draw[top color = \MyColorG, bottom color = \MyColorD,opacity =0.3] ([xshift = \myShift]M3) rectangle node [xshift = -2.0em] {$\alpha=1$} ([xshift = \myShiftPlus]M7);
        \draw[fill = \MyColorC, opacity =0.3]                               ([xshift = \myShift]M2) rectangle node [xshift = -2.0em, text width =3em, ] {$\alpha\in$\\$(0,1)$} ([xshift = \myShiftPlus]M3);
        \draw[fill = \MyColorB, opacity =0.3]                               ([xshift = \myShift]M1) rectangle node [xshift = -1.7em] {$\alpha=0$} ([xshift = \myShiftPlus]M2);
      \end{scope}

      \begin{scope}[thick, every node/.style={right, rotate=90, black, opacity = 1, xshift = -1.7em}]
        \def\myShift{4em}
        \def\myShiftPlus{6em}
        \draw[fill = \MyColorG,opacity =0.4]                                 ([xshift = \myShift]M6) rectangle node {$\kappa<1$} ([xshift = \myShiftPlus]M7);
        \draw[top color = \MyColorF, bottom color = \MyColorE, opacity =0.3] ([xshift = \myShift]M4) rectangle node {$\kappa=1$} ([xshift = \myShiftPlus]M6);
        \draw[fill = \MyColorD,opacity =0.4]                                 ([xshift = \myShift]M3) rectangle node {$1<\kappa$} ([xshift = \myShiftPlus]M4);
      \end{scope}

      \begin{scope}[left color = white, opacity = 0.4, draw = none]
        \filldraw [right color = \MyColorA] (o0) -- (o1) -- (R1) -- (M1) -- (M0) -- (o0);
        \filldraw [right color = \MyColorB] (o1) -- (R1) -- (M1) -- (M2) -- (R2) -- (o1);
        \filldraw [right color = \MyColorC] (o2) -- (R2) -- (M2) -- (M3) -- (R3) -- (o2);
        \filldraw [right color = \MyColorD] (o2) -- (R3) -- (M3) -- (M4) -- (R4) -- (o2);
        \filldraw [right color = \MyColorE] (o2) -- (R4) -- (M4) -- (M5) -- (R5) -- (o2);
        \filldraw [right color = \MyColorF] (o2) -- (R5) -- (M5) -- (M6) -- (R6) -- (o2);
        \filldraw [right color = \MyColorG] (o2) -- (R6) -- (M6) -- (M7) -- (R7) -- (o2);
        \filldraw [right color = \MyColorH] (o2) -- (R7) -- (M7) -- (M8) -- (R8) -- (o2);
      \end{scope}

      \filldraw[white, line width = 2pt] (\vx,0) -- (\vx, 10);
      \draw [->] (-0.5,0) -- (\vx,0) node [below, xshift =-0.5em] {$u$};
      \draw [->] (0,-0.5) -- (0,4) node [above] {$\rho(u)$};

      \begin{scope}[xshift =29em]
        \path (M0) -- (M1) node [midway, xshift = +3.2em] (A) {bounded};      \draw[->] (A) --++(-1.7,0);
        \path (M8) -- (M7) node [midway, xshift = +3.5em] (A) {super linear}; \draw[->] (A) --++(-1.6,0);
      \end{scope}

      \begin{scope}[every node/.style={opacity = 1, black}, rounded corners = 10pt]
        \def\myShift{11em}
        \def\myShiftPlus{9em}
        \draw[top color = \MyColorG, bottom color = \MyColorD, opacity =0.2] ([xshift = \myShift]M3) rectangle node {$u e^{-\beta \left(\log\log\left(e+u^2\right)\right)^\kappa}$} ([xshift = \myShift + \myShiftPlus]M7);
        \draw[fill = \MyColorC, opacity =0.2] ([xshift = \myShift]M2) rectangle node {$\dfrac{u}{(r+u)^{1-\alpha}}$} ([xshift = \myShift + \myShiftPlus]M3);
        \draw[fill = \MyColorB, opacity =0.2] ([xshift = \myShift]M1) rectangle node {$\left[\log\left(e+u^2\right)\right]^{-\beta}$} ([xshift = \myShift + \myShiftPlus]M2);
        \path ([xshift = \myShift]M8) -- ([xshift = \myShift]M8) node [yshift = -2em, xshift = 4em] {$\rho(u) =$};

        \def\myShift{22em}
        \def\myShiftPlus{14em}
        \draw[fill = \MyColorG, opacity =0.2] ([xshift = \myShift]M6) rectangle node {$\exp\left(\exp\left(\left[\frac{1}{2\beta}\log(8x)\right]^{1/\kappa}\right)\right)$} ([xshift = \myShift + \myShiftPlus]M7);
        \draw[top color = \MyColorF, bottom color = \MyColorE, opacity =0.2] ([xshift = \myShift]M4) rectangle node {$\exp\left((8x)^{1/(2\beta)}\right)$} ([xshift = \myShift + \myShiftPlus]M6);
        \draw[fill = \MyColorD, opacity =0.2] ([xshift = \myShift]M3) rectangle node {$\exp\left(\exp\left(\left[\frac{1}{2\beta}\log(8x)\right]^{1/\kappa}\right)\right)$} ([xshift = \myShift + \myShiftPlus]M4);
        \draw[fill = \MyColorC, opacity =0.2] ([xshift = \myShift]M2) rectangle node {$(8x)^{1/(2(1-\alpha))}$} ([xshift = \myShift + \myShiftPlus]M3);
        \draw[fill = \MyColorB, opacity =0.2] ([xshift = \myShift]M1) rectangle node {$(8x)\left[\log x\right]^{2\beta}$} ([xshift = \myShift + \myShiftPlus]M2);
        \path ([xshift = \myShift]M8) -- ([xshift = \myShift]M8) node [yshift = -2em, xshift = 7em] {$F^{-1}(x) \asymp$};

      \end{scope}
    \end{tikzpicture}
  \end{center}

  \caption{Summary of the asymptotics of $F^{-1}$ for $\rho$ in
  Propositions~\ref{P:Rho-Alpha},~\ref{P:Rho-Log} and~\ref{P:Rho-VSV}.}

  \label{F:F-Inv}
\end{figure}
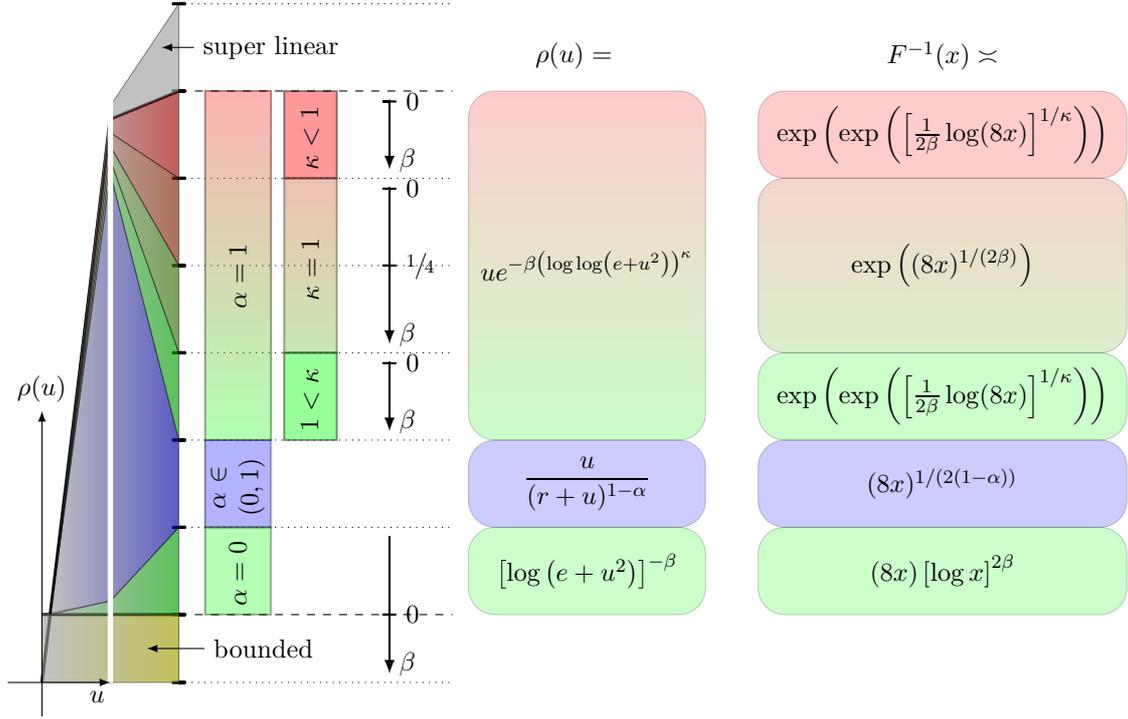

\section{Moment growth for other SPDEs in the sublinear regime}\label{S:Others}

In this section, we will explore the moment upper bounds for other SPDEs in the
sublinear regime.

\subsection{Nonlinear stochastic space-time fractional diffusion
equations}\label{SS:frac}

In this part, we explore the moment upper bounds for the following nonlinear
fractional stochastic partial differential equations (\textit{fractional SPDE})
driven by the multiplicative space-time white noise:
\begin{align}\label{E:fracdiff}
  \left[ \left(\frac{\partial}{\partial t}\right)^{b} + \frac{1}{2} (-\Delta)^{a/2} \right] u(t,x)
  = I_t^{\gamma} \big[\rho(u(t,x)) \dot{W} (t,x)\big],
\end{align}
where $\left(\frac{\partial}{\partial t}\right)^{b}$,
$(-\Delta)^{a/2}$, and $I^{\gamma}_t$ denote the \textit{Caputo fractional
differential operator} with $b\in(0,2)$, fractional Laplacian operator with
$a \in (0, 2]$, and the \textit{Riemann-Liouville operator} with $\gamma
>0$, respectively. We assume that equation~\eqref{E:fracdiff} starts from the
following initial condition(s):
\begin{align}\label{E:InitFrac}
  \begin{dcases}
    u (0,\cdot) = \mu_0,                                                                  & b \in ( 1/2, 1 ], \\
    u (0,\cdot) = \mu_0,\quad \frac{\partial}{\partial t} u(t,\cdot)\bigg|_{t=0} = \mu_1, & b\in (1,2),
  \end{dcases}
\end{align}
where both $\mu_0$ and $\mu_1$ are nonnegative measures. For a more detailed
account of this equation, we refer interested readers
to~\cite{chen.hu.ea:19:nonlinear,chen.guo.ea:22:moments,guo.song.ea:23:stochastic}.

\begin{definition}\label{D:fracdiff}
  A random field $u = \{u(t,x) \colon (t,x) \in \R_+ \times \R^d \}$ is a
  solution to equation~\eqref{E:fracdiff}, if it satisfies the following mild
  formulation:
  \begin{align*}
    u(t,x) = \calJ_0 (t,x) + \int_0^t \int_{\R^d} \: Y (t-s, x-y) \rho \left(u(s,y)\right) W(\ud s,\ud y),
  \end{align*}
  where
  \begin{align}\label{E:J0fracdiff}
    \calJ_0 (t,x) =\begin{dcases}
     \int_{\R^d} \mu_0(\ud y) Z (t,x-y),                                          & b\in(0,1], \\
     \int_{\R^d} \mu_0(\ud y) Z^* (t,x-y) + \int_{\R^d} \mu_1(\ud y)\, Z (t,x-y), & b\in(1,2),
   \end{dcases}
   \end{align}
    $\{(Z(t,x),Z^*(t,x),Y(t,x)) \colon (t,x) \in \R_+ \times \R^d \}$ denote the
    fundamental functions of equation~\eqref{E:fracdiff}, which can be
    formulated explicitly in terms of some special functions; see~\cite[Section
    4]{chen.hu.ea:19:nonlinear}.
\end{definition}

\begin{hypothesis}\label{H:subsg}
  \begin{enumerate}[(i)]
    \item Dalang's condition holds for~\eqref{E:fracdiff}, namely, $b +
      \gamma > \frac{1}{2} (1 + \frac{d b}{a})$ and $2a > d$.
    \item Either $\gamma = 0$ or $a > d = 1$.
  \end{enumerate}
\end{hypothesis}

\begin{definition}\label{D:SubSemigroup}
  Let $G: \R_+\times \R^d \to \R$ be a Borel function. We say that
  $G^2(\cdot,\circ)$ satisfies a \textit{sub-semigroup property} if there exist
  a positive constant $C^*$, a reference kernel $\calG: \R_+\times \R^d \to
  \R_+$, and a locally integrable function $\ell: \R_+\mapsto \R_+$, such that
  for all $(s,t,x) \in \R_+^2 \times \R^d$,
  \begin{align*}
    \int_{\R^d}\calG(t,y) \ud y = 1, \quad
    G(t,x)^2 \leq \ell(t)\, \calG(t,x) \quad \text{and} \quad
    \int_{\R^d}\calG(t,x-y) \calG(s,y) \ud y \leq C^*\, \calG(t+s, x).
  \end{align*}
\end{definition}

\begin{lemma}[Proposition~5.10 of~\cite{chen.hu.ea:19:nonlinear}]\label{L:subsg}
  Let $Y(\cdot,\circ)$ be as in Definition~\ref{D:fracdiff}. Under
  Hypotheses~\ref{H:subsg}, for some constant $C_0>0$, $Y^2(\cdot,\circ)$
  satisfies the sub-semigroup property (see Definition~\ref{D:SubSemigroup})
  with
  \begin{gather}\label{E:subsg}
    \ell(t) = C_0 t^{-\sigma}
    \quad \text{and} \quad
    \calG (t,x) = \begin{dcases}
      k_{ b} t^{-\frac{b d}{2}} \exp \Big(
      -\frac{1}{2}(t^{-\frac{b}{2}} |x|)^{\lfloor b \rfloor +1 } \Big),
      & a =2, \\
      \frac{ k_{a, b} t^{ b/a}}{
      \left( t^{\frac{2b}{a}  } + |x|^2 \right)^{(d+1)/2}},
      & a \in (0, 2),
    \end{dcases}
  \end{gather}
  where $k_{b}$ (resp. $k_{a,b}$) is a uniform constant, depending on $b$ (resp.
  $(a,b)$) and
  \begin{align}\label{E:sigma}
    \sigma \coloneqq 2(1 - b - \gamma) + (b d)/a < 1.
  \end{align}
  (Note that $\sigma<1$ in~\eqref{E:sigma} is due to part~(i) of
  Hypothesis~\ref{H:subsg}.)
\end{lemma}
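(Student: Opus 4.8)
The plan is to follow the argument of Proposition~5.10 in \cite{chen.hu.ea:19:nonlinear}, which I sketch here. The starting point is the explicit representation of the fundamental solution $Y(t,x)$ of \eqref{E:fracdiff} recorded in \cite[Section~4]{chen.hu.ea:19:nonlinear}: in the case $a=2$ it is obtained by subordinating the Gaussian heat kernel against the density of a stable-type subordinator (respectively, a Wright/Mainardi-type function when $b\in(1,2)$), while for $a\in(0,2)$ one additionally composes with the density of an isotropic $a$-stable law. From these formulas one reads off sharp upper bounds on $Y(t,x)$ in the two regimes $|x|\lesssim t^{b/a}$ and $|x|\gtrsim t^{b/a}$, using the known small- and large-argument asymptotics of the special functions involved. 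Matching these bounds with the stated profile $\calG$ (a stretched exponential of order $\lfloor b\rfloor+1$ when $a=2$, an algebraic tail of order $d+1$ when $a\in(0,2)$) yields the pointwise estimate $Y(t,x)^2\le C_0\,t^{-\sigma}\calG(t,x)$ with $\sigma$ as in \eqref{E:sigma}; here $\sigma<1$ is exactly what part~(i) of Hypothesis~\ref{H:subsg} guarantees, so $\ell(t)=C_0t^{-\sigma}$ is locally integrable.

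Next I would check the normalization $\int_{\R^d}\calG(t,y)\,\ud y=1$. This is a one-line scaling computation: substituting $y=t^{b/a}z$ reduces it to a fixed integral in $z$, and the constants $k_b$ and $k_{a,b}$ in \eqref{E:subsg} are defined precisely so that this integral equals $1$ (a Gamma-function evaluation when $a=2$; the normalization of the generalized Cauchy density $(1+|z|^2)^{-(d+1)/2}$ when $a\in(0,2)$).

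The main work, and the step I expect to be the real obstacle, is the sub-semigroup inequality $\int_{\R^d}\calG(t,x-y)\calG(s,y)\,\ud y\le C^*\,\calG(t+s,x)$. After rescaling $x\mapsto (t+s)^{b/a}\xi$ and $y\mapsto(t+s)^{b/a}\eta$, it suffices to prove, uniformly in $u\in(0,1)$, that $\bigl(\calG(u,\cdot)*\calG(1-u,\cdot)\bigr)(\xi)\le C^*\calG(1,\xi)$. In the Gaussian case $b=1$, $a=2$ this is the semigroup identity, up to tracking the constant factors absorbed into the normalization. For $b\in(0,1)$ (exponent $1$) one uses $\tfrac{1}{2}|x|\le\tfrac{1}{2}|x-y|+\tfrac{1}{2}|y|$ and notes that after rescaling one of the two scales dominates, so the Gaussian-type integration in the dominant variable is harmless; for $b\in(1,2)$ (exponent $2$) the same idea applies via the convexity of $|\cdot|^2$. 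In the algebraic case $a\in(0,2)$ one splits the integral according to whether $|y|\le\tfrac{1}{2}|x|$, or $|x-y|\le\tfrac{1}{2}|x|$, or both $|y|,|x-y|\gtrsim|x|$, and in each region bounds one factor by its value at the near point and integrates the other; the tail order $d+1$ is chosen so that all resulting integrals converge and reproduce the correct power $(t^{2b/a}+|x|^2)^{-(d+1)/2}$. Assembling the pointwise bound, the normalization, and this convolution estimate gives the sub-semigroup property with the kernel $\calG$ and constant $C^*$ of the statement. Since all of this is carried out in detail in \cite[Proposition~5.10]{chen.hu.ea:19:nonlinear}, one may simply invoke that reference.
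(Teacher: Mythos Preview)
Your proposal is correct and aligns with the paper's own treatment: the paper does not give an independent proof of this lemma but simply states it as Proposition~5.10 of \cite{chen.hu.ea:19:nonlinear}, so your concluding sentence (``one may simply invoke that reference'') already matches the paper exactly. The sketch you provide of the pointwise bounds, normalization, and convolution estimate is additional detail beyond what the paper offers, but it is consistent with the argument in the cited reference.
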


Let $\calJ_0(t,x)$ be defined as in~\eqref{E:J0fracdiff} and let $\ell(t)$ and
$\calG(t,x)$ be given as in~\eqref{E:subsg}. We define the next expression
\begin{align}\label{E:J1fracdiff}
  \calJ_1(t,x) \coloneqq \int_0^t\ud s\, \ell(t - s) \int_{\R^d} \ud y\, \calG(t-s,x-y)
      \calJ_0^2(s,y),\quad \text{for all } (t,x) \in \R_+\times \R^d.
\end{align}
Then, thanks to~\cite[Lemma 5.12]{chen.hu.ea:19:nonlinear}, it holds that
$\calJ_1(t,x) <\infty$ for all $t>0$ and $x\in\R^d$. As a result of
Lemma~\ref{L:subsg}, we can follow the same idea as in Section~\ref{S:main} to
obtain the following result:

\begin{theorem}\label{T:mupfracdiff}
  Let $u(t,x)$ be a solution to the fractional SPDE~\eqref{E:fracdiff} driven by
  the space-time white noise. Assume that
  \begin{enumerate}
    \item the initial conditions given in~\eqref{E:InitFrac} satisfy
      $\mathcal{J}_0(t,x)<\infty$ for all $t>0$ and $x\in\R^d$;
    \item the diffusion coefficient $\rho$ satisfies Hypothesis~\ref{H:rho};
    \item the parameters $(d, a, b, \gamma)$ satisfy Hypothesis~\ref{H:subsg}.
  \end{enumerate}
  Then for all $(t,x,p) \in \R_+\times \R^d \times[2,\infty)$, it holds that
  \begin{align}\label{E:mupfracdiff}
    \begin{aligned}
      \Norm{u(t,x)}_p^2 \leq\:
      & 2 \calJ_0^2(t,x) + K \bigg( t^{-(1-\sigma)}\calJ_1(t, x) + K p\, t^{1-\sigma} + F^{-1}\left(K p\, t^{1-\sigma}\right) \bigg),
    \end{aligned}
  \end{align}
  where $F^{-1}$, $\sigma$, $\calJ_0$, and $\calJ_1$ are defined as
  in~\eqref{E:F^-1},~\eqref{E:sigma},~\eqref{E:J0fracdiff}
  and~\eqref{E:J1fracdiff}, respectively, and $K$ is a positive constant not
  depending on $p$ and $t$. In particular,
  \begin{enumerate}[(i)]
    \item if $|\rho(\cdot)|$ is concave separately on $\R_+$ and $\R_-$, then
      one can take $K_2=0$ in~\eqref{E:mupfracdiff} to obtain
      \begin{align*} 
        \Norm{u(t,x)}_p^2 \leq 2 \calJ_0^2(t,x) + C \bigg( t^{-(1-\sigma)}\calJ_1(t, x) + F^{-1} \left(C p\, t^{1-\sigma} \right)\bigg);
      \end{align*}
    \item if the initial conditions $\mu_0$ and $\mu_1$ are such that
      \begin{align*}
        \sup_{(t,x)\in \R_+\times\R^d} \calJ_0(t,x) <\infty,
      \end{align*}
      then there exist constant $C_*>0$, such that for all $(t,x,p)\in
      [1,\infty) \times\R^d\times[2,\infty)$,
      \begin{align}\label{E:mupfracdiff_smp}
        \Norm{u(t,x)}_p^2 \leq C_* F^{-1} \left(C_* p\, t^{1-\sigma} \right) ;
      \end{align}

    \item if $\mu_0 \equiv 1$ and $\mu_1 \equiv 0$, then the solution $u$ is
      a.s. $\eta_1$--H\"older continuous in time and $\eta_2$--H\"older
      continuous in space on $(0,\infty)\times\R^d$ for all
      \begin{align*}
        \eta_1 \in \left(0, \frac{1 - \sigma}{2}\right) \quad \text{and} \quad
        \eta_2 \in \left(0, 1 \wedge \frac{\theta}{2}\right),
      \end{align*}
      where $\theta \coloneqq 2a + \frac{a}{b} \min\{2\gamma - 1,0\}$;
    \item if $\mu_0 \equiv 1$ and $\mu_1 \equiv 0$, then with the same $C_*$ as
      in part (ii), and $L_t$ as in~\eqref{E:Lt} with $h(t) = t^{1 - \sigma}$,
      for all $(t,x) \in [1,\infty) \times \R^d$ and $z \geq L_t$,
        \begin{gather*}
          \P\left(|u(t,x)| \geq z\right) \leq \exp \left(- C_*^{-1} t^{- (1 - \sigma)} F \left(z^2\big/ \big(C_* e^2\big)\right)\right), \shortintertext{and with some universal constant $C' > 0$,}
          \sup_{|x|\le R} u(t,x) \lesssim \sqrt{F^{-1} \Big(C'\: t^{1 - \sigma} \log R\Big)}\:, \quad \text{a.s., as $R \to \infty$.}
        \end{gather*}
  \end{enumerate}
\end{theorem}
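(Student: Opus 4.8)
The plan is to mirror the proof of Theorem~\ref{T:mup}, replacing the heat-kernel estimates by the sub-semigroup property of $Y^2(\cdot,\circ)$ from Lemma~\ref{L:subsg}, and then to deduce the four itemized consequences exactly as in Sections~\ref{SS:prof_mup}--\ref{SS:asymspc}, with $h(t)$ systematically replaced by $t^{1-\sigma}$.

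\paragraph{Main moment bound \eqref{E:mupfracdiff}.} First I would run the BDG + Minkowski + Cauchy--Schwarz argument of Step~1 of the proof of Theorem~\ref{T:mup} on the mild formulation in Definition~\ref{D:fracdiff}; since the noise is space-time white, the spatial double integral collapses to a single integral against $Y(t-s,x-y)^2$, and Lemma~\ref{L:rho-p} gives
\begin{align*}
  \Norm{u(t,x)}_p^2 \le 2\calJ_0^2(t,x) + 8p K_M^2 \int_0^t Y(t-s,\cdot)^2\text{-mass} + 8p\int_0^t\ud s\int_{\R^d}\ud y\, Y(t-s,x-y)^2 \rho_2\big(M^2+\Norm{u(s,y)}_p^2\big).
\end{align*}
By Definition~\ref{D:SubSemigroup}, $Y(t-s,x-y)^2 \le \ell(t-s)\calG(t-s,x-y)$ with $\int\calG = 1$, so $\int_0^t Y(t-s,\cdot)^2$-mass $\le \int_0^t \ell(r)\ud r = C_0 t^{1-\sigma}/(1-\sigma)$, the analogue of $h(t)$; this is where $\sigma<1$ from part~(i) of Hypothesis~\ref{H:subsg} is used. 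Then I would use $\ell(t-s)\calG(t-s,x-y)/\big(C_0 t^{1-\sigma}/(1-\sigma)\big)$ as a probability density in $(s,y)$ and apply Jensen's inequality to the concave $\rho_2$, exactly as in Step~2 of Theorem~\ref{T:mup}. Iterating once more — using the sub-semigroup inequality $\int\calG(t-r,\cdot-z)\calG(r-s,z-\circ) \le C^*\calG(t-s,\cdot-\circ)$ in place of the heat-kernel factorization \eqref{E:Factor}--\eqref{E:2hsck} — produces an inequality of the form $X \le b(p,t,x) + k(p,t)\rho_2(X)$ with $k(p,t) \asymp p\, t^{1-\sigma}$ and $b(p,t,x) \asymp M^2 + t^{-(1-\sigma)}\calJ_1(t,x) + K_M^2 p\, t^{1-\sigma}$, where $\calJ_1$ is the one in \eqref{E:J1fracdiff} and its finiteness is guaranteed by \cite[Lemma 5.12]{chen.hu.ea:19:nonlinear}. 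Lemma~\ref{L:Gmm} then yields $X \le 2F^{-1}(k(p,t)) + 2b(p,t,x)$, and the Step~3 simplification (using $\rho_2(x)/x \le 1/(2k)$ for $x\ge 2F^{-1}(k)$, via \eqref{E:g2}) gives \eqref{E:mupfracdiff}. Part~(i) follows because when $|\rho|$ is concave separately on $\R_\pm$ one takes $M=K_M=0$ in Lemma~\ref{L:rho-p}; part~(ii) follows because bounded initial data make $\calJ_0$ and $t^{-(1-\sigma)}\calJ_1(t,x)$ bounded (the latter by the same kernel-mass computation), and $F^{-1}(k)$ is bounded below by a positive constant for large $k$ when $\rho\not\equiv 0$ on $[M_0,\infty)$.

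\paragraph{Items (iii) and (iv).} For the H\"older continuity in part~(iii), I would reproduce the increment estimate of Lemma~\ref{L:holder}(i): bound $\Norm{u(t,x)-u(t,y)}_p^2$ by $4p\int_0^t\int (Y(t-s,x-z)-Y(t-s,y-z))^2 \Norm{\rho(u(s,z))}_p^2\ud z$, insert the uniform bound on $\sup_{s\le t,z}\Norm{\rho(u(s,z))}_p^2 \le K_1 + C_1 F^{-1}(C_* p\,t^{1-\sigma})/(C_* p\, t^{1-\sigma})$ obtained from \eqref{E:rho2-p} and part~(ii), and then estimate the space-increment integral of $Y$; the spatial regularity exponent $\eta_2 \in (0, 1\wedge \theta/2)$ with $\theta = 2a + (a/b)\min\{2\gamma-1,0\}$ and the temporal exponent $\eta_1\in(0,(1-\sigma)/2)$ come from the known regularity estimates for $Y$ in \cite[Section~4]{chen.hu.ea:19:nonlinear} (equivalently the fractional-kernel analogue of the bounds used in \cite{chen.huang:19:comparison}). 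For part~(iv), the tail bound is Lemma~\ref{L:tail} applied to \eqref{E:mupfracdiff_smp} with the Legendre-type transform computed exactly as in the proof of Theorem~\ref{T:Tail} — this is purely formal once $h(t)$ is read as $t^{1-\sigma}$ — giving the stated tail probability and the expression for $L_t$; and the spatial asymptotic $\sup_{|x|\le R}u(t,x) \lesssim \sqrt{F^{-1}(C' t^{1-\sigma}\log R)}$ follows from that tail bound plus the modulus-of-continuity estimate (the analogue of Lemma~\ref{L:holder}(ii)) and the Borel--Cantelli argument of the proof of Theorem~\ref{T:Asymspc}, verbatim.

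\paragraph{Main obstacle.} The routine parts are the BDG/Jensen/Bihari machinery, which transfers with no genuinely new idea. The one place that requires care, and which I expect to be the real obstacle, is verifying that the fundamental solution $Y$ genuinely satisfies all the hypotheses needed: the sub-semigroup property (Lemma~\ref{L:subsg}, which needs part~(i) of Hypothesis~\ref{H:subsg}), the finiteness $\calJ_1(t,x)<\infty$, and — crucially for part~(iii) — the correct space/time increment bounds yielding the exponents $\theta$ and $(1-\sigma)/2$; these rely on the delicate special-function estimates for $Z$, $Z^*$, $Y$ in \cite{chen.hu.ea:19:nonlinear}, and part~(ii) of Hypothesis~\ref{H:subsg} ($\gamma=0$ or $a>d=1$) is exactly what guarantees $Y$ is square-integrable near the temporal singularity so that $\ell(t) = C_0 t^{-\sigma}$ is locally integrable. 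Once those kernel inputs are in hand, everything else is a transcription of Sections~\ref{SS:prof_mup}--\ref{SS:asymspc} with $h(t)\rightsquigarrow t^{1-\sigma}$.
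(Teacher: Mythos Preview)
Your proposal is correct and follows essentially the same route as the paper: BDG/Minkowski plus Lemma~\ref{L:rho-p} to get the analogue of~\eqref{E:utx}, then the sub-semigroup bound $Y^2\le \ell\,\calG$ from Lemma~\ref{L:subsg} and one convolution with $\ell(\cdot)\calG(\cdot,\circ)$ (your ``iterating once more'') in place of the heat-kernel factorization~\eqref{E:Factor}--\eqref{E:2hsck} to reach an inequality $X\le b+k\,\rho_2(X)$ solvable by Lemma~\ref{L:Gmm}; parts~(iii)--(iv) are then exactly the transcription of Lemma~\ref{L:holder} and Theorems~\ref{T:Tail}--\ref{T:Asymspc} you describe, with the paper citing \cite[Proposition~5.4]{chen.hu.ea:19:nonlinear} for the $Y$-increment integral that yields the exponents $\theta$ and $(1-\sigma)/2$. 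One small correction to your side remark: local integrability of $\ell(t)=C_0 t^{-\sigma}$ is $\sigma<1$, i.e.\ part~(i) of Hypothesis~\ref{H:subsg}; part~(ii) is what makes the pointwise spatial bound $Y^2\le \ell\,\calG$ go through.
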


\begin{proof}
  The proof of parts (i) and (ii) is similar to the proof of
  Theorem~\ref{T:mup}. We only need to point out some differences.
  In this case, the function $h(t)$ should be computed as follows (thanks to
  Lemma~\ref{L:subsg}):
  \begin{align*}
    h (t) \coloneqq \int_0^t \ud s \int_{\R^d} \ud y \, Y^2(s,y)
    \le \int_0^t \ud s\: \ell(s) = C t^{1-\sigma}.
  \end{align*}
  Now, we apply the same argument as in the proof of Theorem~\ref{T:mup} to
  obtain the following inequality analogous to~\eqref{E:utx}:
  \begin{align*}
    \Norm{u(t,x)}_p^2
      & \leq 2 \calJ_0^2(t,x) + 8p K_M^2 \int_0^t \ud s \int_{\R^d} \ud y\, Y (t-s, x-y)^2 \\
      & \quad + 8 p \int_0^t \ud s \int_{\R^d} \ud y\, Y (t-s, x-y)^2 \rho_2 \left(M^2 + \Norm{u(s,y)}_p^2\right).
  \end{align*}
  where $K_M$ is defined in~\eqref{E:KM}. Using Lemma~\ref{L:subsg}, we can
  further deduce that
  \begin{align}\label{E:utxfracdiff}
    \begin{aligned}
      \Norm{ u(t,x)}_p^2
      & \leq 2 \calJ_0^2(t,x) + \frac{8 C_0 p K_M^2 t^{1-\sigma}}{1-\sigma} \\
      & + 8 C_0 p \int_0^t\ud s\, (t-s)^{-\sigma} \int_{\R^d} \ud y\, \calG (t-s, x-y) \rho_2 \left(M^2 + \Norm{u(s,y)}_p^2 \right).
    \end{aligned}
  \end{align}
  Performing the convolution on both sides of~\eqref{E:utxfracdiff} with $
  s^{-\sigma}\calG(s,z)$ on $[0,t]\times \R^d$, and using the sub-semigroup
  property of $\calG$, we deduce that
  \begin{align*}
    \MoveEqLeft \int_0^t \ud s\, (t-s)^{-\sigma}\int_{\R^d} \ud z\, \calG(t-s, x-z) \Norm{u(s,z)}_p^2 \\
    \leq & 2 \int_0^t \ud s\, (t-s)^{-\sigma}
             \int_{\R^d} \ud z\, \calG(t-s,x-z)
             \calJ_0 (s,z)^2 + \frac{8 C_0 K_M^2 B(1-\sigma,2-\sigma) p\, t^{2-2\sigma}}{1-\sigma} \\
         & + 8 C_0 C_1 p\, t^{1-\sigma}
             \int_0^t \ud s\: (t-s)^{-\sigma}
             \int_{\R^d} \ud z\, \calG (t-s,x-z) \rho_2\left(M^2 + \Norm{u(v,z)}_p^2\right),
  \end{align*}
  where $B(\cdot,\cdot)$ is the beta function. As a result, we can write the
  next inequality analogous to~\eqref{E:nottoy},
  \begin{gather*}
    X \leq M^2 + 2 t^{-(1-\sigma)} \mathcal{J}_1 (t,x) + \frac{8 C_0 K_M^2 B(1-\sigma,2-\sigma)}{1-\sigma} p\, t^{1-\sigma} + 8 C_0 C_1 p\, t^{1-\sigma} \rho_2 (X), \shortintertext{with}
    X \coloneqq t^{-(1-\sigma)} \int_0^t \ud s\, (t-s)^{-\sigma}\int_{\R^d} \ud z\, \calG(t-s, x-z) \Norm{u(s,z)}_p^2.
  \end{gather*}
  The rest proof for parts (i) and (ii) of the theorem employs the same arguments as in Theorem~\ref{T:mup}.

  Suppose that $\mu_0 \equiv 1$ and $\mu_1 \equiv 0$, then for any $x,y \in
  \R^d$, and $p \geq 2$,
  \begin{align}\label{E:spcincfrac}
    \Norm{u(t,x) - u(t,y)}_p^2 \leq &
     8 p \int_0^t \ud s \int_{\R^d} \ud z\, |Y (t-s, x-z) - Y(t-s, y-z)|^2 \rho_2 \left(M^2 + \Norm{u(s,z)}_p^2\right)\nonumber\\
      \leq & C p \int_0^t \ud s \int_{\R^d} \ud z\, |Y (t-s, x-z) - Y(t-s, y-z)|^2 \left(1 + F^{-1}(C_* p s^{1-\sigma}) \right)\nonumber\\
      \leq & C_{p,t} \left(1 + F^{-1}(C_* p t^{1-\sigma}) \right) |x-y|^{\theta},
  \end{align}
  where the last two inequalities are consequence of~\eqref{E:mupfracdiff},
  parts (i) and (ii) of Hypothesis~\ref{H:rho}, and~\cite[Proposition
  5.4]{chen.hu.ea:19:nonlinear}, and $C_{t,p} > 0$ depending on $p$ and $t$.
  This proves the H\"{o}lder continuity of $u$ in space. The H\"{o}lder
  continuity in time can be shown similarly, which is skipped for conciseness.
  This completes the proof of part (iii).

  Finally, with~\eqref{E:mupfracdiff_smp} and~\eqref{E:spcincfrac} and
  observation that $1 + F^{-1}(C_* p t^{1-\sigma}) \le C F^{-1}(C_* p
  t^{1-\sigma})$ for all $t \geq 1$ with some universal constant $C > 0$, the
  proof for part (iv) follows the same lines presented as in the proofs of
  Theorems~\ref{T:Tail} and~\ref{T:Asymspc}.
  The proof of Theorem~\ref{T:mupfracdiff} is complete.
\end{proof}

\begin{remark}\label{R:mupfracdiff}
  Here are two comments: (1) From the proof of Theorem~\ref{T:mupfracdiff}, it
  is evident that the key to the conclusion lies in the fact that the
  fundamental solution $Y$ satisfies Assumption~\ref{D:SubSemigroup}, which is
  ensured by Lemma~\ref{L:subsg}. There may be other equations whose mild
  formulation can be written as in Definition~\ref{D:fracdiff}, with $Y$
  fulfilling Assumption~\ref{D:SubSemigroup}, and for which a moment bound can
  be deduced using the same approach. (2) For spatially colored noise, it is not
  clear to us how to formulate a similar factorization formula for the reference
  kernel. Thus, directly applying our proof becomes non-trivial. We conjecture
  that the moment upper bound can also be established and leave it as an open
  problem for interested readers to explore.
\end{remark}

\subsection{One-dimensional stochastic wave equations}\label{SS:SWEd}

In this part, we consider the one-dimensional \textit{stochastic wave equation}
(SWE) driven by the spatial homogeneous noise that is white in time:
\begin{align}\label{E:SWE}
  \begin{dcases}
    \frac{\partial^2}{\partial t^2} u(t,x) = \Delta u(t,x) + \rho(u(t,x)) \dot{W} (t,x), & (t,x)\in \R_+\times \R, \\
    u(0,\cdot) = \mu_0, \quad  \left.\frac{\partial}{\partial t} u(t,x)\right|_{t = 0} = \mu_1;
  \end{dcases}
\end{align}
see, e.g.,~\cite{dalang.mueller:09:intermittency, chen.dalang:15:moment,
balan.conus:16:intermittency, hu.wang:21:intermittency}, for related results
about solutions to SWEs in the linear regime. The solution to~\eqref{E:SWE} is
formulated in the mild form:
\begin{gather}
  u(t,x) = \mathcal{J}_0 (t,x) + \int_0^t\int_{\R} G(t-s,x-y) \rho(u(s,y)) W(\ud s,\ud y), \notag  \shortintertext{with}
  \mathcal{J}_0 (t,x) \coloneqq \frac{1}{2} \left[ \mu_0(x+t) + \mu_0 (x-t) \right] + \int_{\R} \mu_1(\ud y) G(t,x-y),
  \label{E:J0-SWE}
\end{gather}
where, throughout the rest part of this section, $G(t,x) = \frac{1}{2}
\one_{[-t,t]}(x)$ refers to the wave kernel on $\R$. Our arguments for the main
results Theorem~\ref{T:mup} can be extended to this case too:

\begin{theorem}\label{T:mupwav}
  Let $u(t,x)$ be a solution to the SWE~\eqref{E:SWE} with initial position
  $\mu_0\in L_{loc}^2(\R)$ and initial velocity $\mu_1$ which is a locally
  finite Borel measure on $\R$. Under Hypothesis~\ref{H:corre} (with $d=1$) and
  Hypothesis~\ref{H:rho}, it holds that
  \begin{align}\label{E:mupwav}
    \begin{aligned}
      \Norm{u(t,x)}_p^2 \leq \:
      & 2 \calJ_0^2(t,x) + K_1 \left( h(t)^{-1} \calJ_1(t,x) + K_2 p\: h(t)  + F^{-1}( 2 p\: h(t)) \right),
    \end{aligned}
  \end{align}
   for all $(t,x,p)\in\R_+\times \R \times[2,\infty)$, where $\calJ_0$ and
   $F^{-1}$ are given in~\eqref{E:J0-SWE} and~\eqref{E:F^-1}, respectively,
  \begin{gather}\label{E:J1-SWE}
    \calJ_1(t,x) \coloneqq \int_0^t \ud s \iint_{\R^2} \ud y \ud y'\: G\left(t-s,x-y \right) G\left(t-s,x-y'\right) f(y - y') \calJ_0^2(s,y),
    \shortintertext{and}
    h (t) \coloneqq \int_0^t \ud s \iint_{\R^2} \ud y \ud y'\, G(s,y) G(s,y') f(y-y').
    \label{E:SWE-h}
  \end{gather}
  In~\eqref{E:mupwav}, $K_1$ and $K_2$ are some positive constants not depending
  on $t$ and $p$. In particular,
  \begin{enumerate}[(i)]
    \item if $|\rho(\cdot)|$ is concave separately on $\R_+$ and $\R_-$, then
      one can take $K_2=0$ in~\eqref{E:mupwav};
    \item there exist some constants $C > 0$ such that the moment
      bound~\eqref{E:mup_asy} holds true, where $\mathcal{J}_0$, $\calJ_1$, and
      $h(t)$ are given in~\eqref{E:J0-SWE},~\eqref{E:J1-SWE},
      and~\eqref{E:SWE-h}, respectively. Moreover, if $\mu_0$ is a bounded
      function and $|\mu_1|(\R)<\infty$, then the moment bound
      in~\eqref{E:mup_asy} can be simplified to~\eqref{E:mup_smp} with $h(t)$
      given in~\eqref{E:SWE-h};
   \item assuming that $\mu_0 \equiv 1$ and $\mu_1 \equiv 0$, the solution
     $u(t,x)$ to~\eqref{E:SWE} has a version which is a.s. $\eta_1$--H\"older
     continuous in time and $\eta_2$--H\"older continuous in space on
     $(0,\infty) \times \R$ for all $\eta_1,\eta_2\in (0,\eta)$, where $\eta$ is
     given in~\eqref{E:ED};
  \item assuming that $\mu_0 \equiv 1$ and $|\mu_1|\equiv 0$, with the same
    $C_*$ as in part (ii) and $L_t$ given in ~\eqref{E:Lt} but with $h(t)$
    replaced by~\eqref{E:SWE-h}, then for all $(t,x) \in [T,\infty) \times \R^d$
    and $z \geq L_t$,
    \begin{gather*}
      \P\left(|u(t,x)| \geq z\right) \leq \exp \left(- (C_* h(t))^{-1} F \left(z^2\big/ \big(C_* e^2\big)\right)\right), \shortintertext{and under Hypothesis~\ref{H:efcdalang}, with some universal constant $C' > 0$,}
      \sup_{|x|\le R} u(t,x) \lesssim \sqrt{F^{-1} \Big(C'\: h(t) \log R\Big)}\:, \quad \text{a.s., as $R \to \infty$.}
    \end{gather*}
  \end{enumerate}
\end{theorem}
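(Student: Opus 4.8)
The plan is to transplant the four-step proof of Theorem~\ref{T:mup} from Section~\ref{SS:prof_mup} to the wave setting, replacing the heat kernel $p_{t-s}(x-y)$ by the one-dimensional wave kernel $G(t-s,x-y)=\frac12\one_{[-(t-s),t-s]}(x-y)$ throughout. The only structural facts that the SHE proof actually uses are: the $L^p(\Omega)$ estimate coming from the Burkholder--Davis--Gundy and Minkowski inequalities applied to the mild formulation; Lemma~\ref{L:rho-p} together with the asymptotic concavity of $\rho_2$ (Lemma~\ref{L:ccvtrho}); Lemma~\ref{L:Gmm}; and a factorization/sub-convolution estimate for the propagator playing the role of~\eqref{E:Factor}--\eqref{E:2hsck}. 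Only the last one depends on the kernel, so the bulk of the new work is to produce its wave analogue; everything else carries over essentially verbatim. Throughout one uses that $\mu_0\in L^2_{loc}(\R)$ and $\mu_1$ a locally finite measure make the mild form~\eqref{E:J0-SWE} and the quantity $\calJ_1$ in~\eqref{E:J1-SWE} meaningful and finite.

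\textbf{Steps 1--2.} Applying Burkholder--Davis--Gundy, Minkowski, the Cauchy--Schwarz inequality for the $L^{p/2}(\Omega)$-norm, and $ab\le\frac12(a^2+b^2)$ to the stochastic integral in~\eqref{E:J0-SWE}, then invoking Lemma~\ref{L:rho-p}, gives the wave analogue of~\eqref{E:utx}: $\Norm{u(t,x)}_p^2$ is bounded by $2\calJ_0^2(t,x)+8K_M^2p\,h(t)$ plus $8p$ times the triple integral $\int_0^t\!\ud s\iint_{\R^2}\!\ud y\,\ud y'\,G(t-s,x-y)G(t-s,x-y')f(y-y')\,\rho_2\big(M^2+\Norm{u(s,y)}_p^2\big)$, with $\calJ_0$, $\calJ_1$, $h$ as in~\eqref{E:J0-SWE},~\eqref{E:J1-SWE},~\eqref{E:SWE-h}; here $h(t)<\infty$ for all $t>0$ under Hypothesis~\ref{H:corre} with $d=1$ by the Plancherel argument behind~\eqref{E:h} (since $\widehat{G(s,\cdot)}(\xi)=\sin(s\xi)/\xi$ decays like $1/|\xi|$), and $h$ is increasing because $f\ge0$. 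For Step~2 I normalize the triple integral by $h(t)$, apply Jensen's inequality in the concavity variable of $\rho_2$, iterate once, and reduce to an inequality of the form~\eqref{E:nottoy}, $X\le b(p,t,x)+k(p,t)\rho_2(X)$, with $b$ collecting $M^2$, $h(t)^{-1}\calJ_1(t,x)$ and the $K_M^2p\,h(t)$ term and $k(p,t)\asymp p\,h(t)$. The new input is the replacement for~\eqref{E:Factor}--\eqref{E:2hsck}: from the elementary geometric bound $\big(G(t-s,\cdot)*G(s-r,\cdot)\big)(x-z)\le C\,(t-r)\,G(t-r,x-z)$ (the intersection of two symmetric intervals of radii $t-s$ and $s-r$ has length at most $2\min(t-s,s-r)\le t-r$ and is empty unless $|x-z|\le t-r$) combined with $\int_0^t\!\iint_{\R^2} G(s,y)G(s,y')f(y-y')\,\ud y\,\ud y'\,\ud s=h(t)$, one gets the wave version of the triple-integral estimate~\eqref{E:2hsck}, exactly as in the SWE moment computations of, e.g.,~\cite{dalang.mueller:09:intermittency, chen.dalang:15:moment}. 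Lemma~\ref{L:Gmm} then gives $X\le 2F^{-1}(k(p,t))+2b(p,t,x)$, and after Step~3 (verbatim from Section~\ref{SS:prof_mup}, using $F\circ F^{-1}(x)\ge x$ from~\eqref{E:FF-1}, $F^{-1}(k)\ge 2M^2$, and the slope bound~\eqref{E:g2}) one recovers~\eqref{E:mupwav}, with $K_1,K_2$ absorbing the wave-kernel constant $C$.

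\textbf{Special cases and a.s. asymptotics.} Part~(i) is Lemma~\ref{L:rho-p} with $M=K_M=0$ when $|\rho|$ is concave separately on $\R_+$ and $\R_-$, as in part~(ii) of Theorem~\ref{T:mup}. Part~(ii) repeats the argument for part~(iii) of Theorem~\ref{T:mup}: since $\rho$ is not eventually zero, $\rho_2\ge c>0$ for large argument, hence $F^{-1}(k)\gtrsim k$ for large $k$, and as $h$ is nondecreasing and positive, $K_2p\,h(t)+F^{-1}(2p\,h(t))\le C F^{-1}(Cp\,h(t))$ for $t\ge1$, giving~\eqref{E:mup_asy}; if moreover $\mu_0$ is bounded and $|\mu_1|(\R)<\infty$, then $\calJ_0$ is bounded on $\R_+\times\R$, whence $\calJ_1(t,x)\le\Norm{\calJ_0}_\infty^2 h(t)$ directly from~\eqref{E:J1-SWE}, so $h(t)^{-1}\calJ_1(t,x)$ is bounded and~\eqref{E:mup_smp} follows. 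For~(iii), with $\mu_0\equiv1$, $\mu_1\equiv0$ and Hypothesis~\ref{H:efcdalang}, I establish wave moment-increment estimates $\Norm{u(t,x)-u(t,y)}_p^2\le C F^{-1}(C_*p\,h(t))|x-y|^{2\eta}$, and its $\sup_{y_1,y_2\in B_x}$ version, mirroring Lemma~\ref{L:holder}: bound the increment by the $\rho_2$-weighted integral of $|G(t-s,x-z)-G(t-s,y-z)|^2$, control $\sup_{s\le t}\sup_z\Norm{\rho(u(s,z))}_p^2$ via~\eqref{E:rho2-p} and part~(ii), and use the known space/time regularity of the wave kernel under the improved Dalang condition (as in the SWE H\"older results, e.g.~\cite{chen.dalang:15:moment}); Kolmogorov--Chentsov then gives $\eta_1,\eta_2\in(0,\eta)$, the two ranges coinciding because for the wave propagator time and space regularity are of the same order. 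For~(iv), Lemma~\ref{L:tail} applied to~\eqref{E:mup_smp} (with $h$ as in~\eqref{E:SWE-h}) yields the tail bound after the Legendre-transform computation of the proof of Theorem~\ref{T:Tail}, and the Borel--Cantelli argument of the proof of Theorem~\ref{T:Asymspc}, fed with the moment-increment bounds just obtained, gives the a.s.\ spatial asymptotic upper bound.

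\textbf{Main obstacle.} The only genuinely non-routine point is Step~2: because $\{G(t,\cdot)\}$ is not a convolution semigroup, the Gaussian identity~\eqref{E:Factor} and the bound~\eqref{E:2hsck} must be replaced by a hands-on analysis of the compactly supported wave kernel interacting with the spatial correlation $f$. Once the sub-convolution bound and the $\iint GGf$ estimate above are in hand, every remaining step is identical to the proof of Theorem~\ref{T:mup}, so the robustness highlighted there is what makes the wave case go through; analogous SWE moment estimates in~\cite{balan.conus:16:intermittency, hu.wang:21:intermittency} provide further technical support for step~(d).
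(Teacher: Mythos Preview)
Your overall strategy is correct and matches the paper's: transplant Steps~1--3 of Section~\ref{SS:prof_mup}, with the only new work being a wave replacement for~\eqref{E:2hsck}, then read off parts~(i)--(iv) exactly as you outline.

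One imprecision worth fixing in Step~2: the convolution bound $(G(t-s,\cdot)*G(s-r,\cdot))(x-z)\le C(t-r)G(t-r,x-z)$ that you invoke does \emph{not} by itself deliver the wave analogue of~\eqref{E:2hsck}, because the integrand there carries the coupling factor $f(y-y')$, so you cannot integrate $y$ and $y'$ separately. What the paper actually uses (Lemma~\ref{L:WaveKernel}\,(i)) is the \emph{pointwise} product bound $G(t-s,x-y)\,G(s-r,y-z)\le G(s-r,y-z)\,G(t-r,x-z)$, which is precisely the support-inclusion observation in your parenthetical upgraded from ``empty unless $|x-z|\le t-r$'' to a full product inequality (using also $G\le\frac12$). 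With this pointwise bound one factors out $G(t-r,x-z)\,G(t-r,x-z')$ \emph{before} integrating over $s,y,y'$, and the remaining integral is exactly $h(t-r)\le h(t)$; no extra constant $C$ appears. Everything else---Lemma~\ref{L:Gmm}, the simplification in Step~3, parts~(i)--(iv)---goes through as you describe, and the finiteness of $\calJ_1$ is handled in the paper by Lemma~\ref{L:J1d=1wav}.
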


We first prove two lemmas.

\begin{lemma}\label{L:WaveKernel}
  The wave kernel function $G(\cdot,\circ)$ satisfies the following properties:
  \begin{enumerate}[(i)]
    \item For all $t, s \in \R_+$ and $x, y\in\R$,
      \begin{align}\label{E:WaveFactor}
        G(t,x-y) G(s,y) = 2 G(t,x-y) G(s,y) G(t+s,x)\le  G(s,y) G(t+s,x);
      \end{align}
    \item For all $t>0$ and $x\in\R$,
      \begin{align}\label{E:wavcvl1}
        t G(t,x) \leq G(2t,x) (2t-|x|) = 2
        \left(G(t,\cdot) * G(t,\cdot)\right)(x)
        \leq 2t G(2t,x);
      \end{align}
    \item If the correlation function $f:\R\to\R_+$ satisfies
      Hypothesis~\ref{H:corre}, then for all $t>0$,
      \begin{align}\label{E:wavcvl}
        \sup_{(s,x)\in [0,t]\times \R} \int_{\R} \ud z \: G\left(s,x + z\right) f(z)
        \leq 2 k(2t) < \infty,
      \end{align}
      where $k(t)$ is defined as in~\eqref{E:k} with $p_t(x)$ replaced by
      $G(t,x)$;
    \item $G(\cdot, \circ)^2$ satisfies the sub-semigroup property (see
      Definition~\ref{D:SubSemigroup}) with
      \begin{align*}
        \mathcal{G} (t,x) = \frac{1}{2 t} \one_{[-t,t]} (x) = \frac{1}{t} G(t,x), \quad
        \ell(t) = \frac{t}{2}, \quad \text{and} \quad
        C^*= 2.
      \end{align*}
  \end{enumerate}
\end{lemma}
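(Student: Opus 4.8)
The plan is to dispatch the four parts separately: (i), (ii) and (iv) are direct computations with the explicit kernel $G(t,x)=\tfrac12\one_{[-t,t]}(x)$, and only (iii) carries a real idea. For~\eqref{E:WaveFactor}, observe that $G(t,x-y)G(s,y)$ is supported where $|x-y|\le t$ and $|y|\le s$, and there $|x|\le|x-y|+|y|\le t+s$, so $2G(t+s,x)=\one_{[-t-s,t+s]}(x)=1$ on that set; inserting this factor of $1$ gives the first equality, and the inequality follows from $2G(t,x-y)=\one_{[-t,t]}(x-y)\le1$. For~\eqref{E:wavcvl1}, I would compute the self-convolution, $(G(t,\cdot)*G(t,\cdot))(x)=\tfrac14\bigl|[-t,t]\cap[x-t,x+t]\bigr|=\tfrac14(2t-|x|)_{+}=\tfrac12\,G(2t,x)(2t-|x|)$, which is the middle identity; the first inequality then follows since $2t-|x|\ge t$ on $[-t,t]$, the last since $2t-|x|\le2t$ on the support $[-2t,2t]$. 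For (iv), the normalisation $\int_{\R}\mathcal G(t,y)\,\ud y=1$ and $G(t,x)^{2}=\tfrac14\one_{[-t,t]}(x)=\tfrac t2\cdot\tfrac1{2t}\one_{[-t,t]}(x)=\ell(t)\mathcal G(t,x)$ are immediate, and for the sub-semigroup inequality I would integrate~\eqref{E:WaveFactor} in $y$ to get $(G(t,\cdot)*G(s,\cdot))(x)\le G(t+s,x)\int_{\R}G(s,y)\,\ud y=s\,G(t+s,x)$, hence by symmetry $(G(t,\cdot)*G(s,\cdot))(x)\le\min(s,t)\,G(t+s,x)$, so that dividing by $st$ yields $(\mathcal G(t,\cdot)*\mathcal G(s,\cdot))(x)\le\tfrac{1}{\max(s,t)}G(t+s,x)=\tfrac{t+s}{\max(s,t)}\mathcal G(t+s,x)\le2\,\mathcal G(t+s,x)$, i.e.\ $C^{*}=2$.

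The delicate point is~\eqref{E:wavcvl}. The naive computation gives $\int_{\R}G(s,x+z)f(z)\,\ud z=\tfrac12\int_{-s}^{s}f(w-x)\,\ud w$, the integral of $f$ over a length-$2s$ interval shifted off the origin, which for a general nonnegative $f$ cannot be controlled by $k(2t)$. The fix mirrors the proof of Lemma~\ref{L:J1d=1}, where $\sup_{x}\int_{\R}p_{t}(x-y)f(y)\,\ud y=\int_{\R}p_{t}(z)f(z)\,\ud z$ relied on $\widehat{p_{t}}\ge0$; here $\widehat{G(s,\cdot)}(\xi)=\sin(s\xi)/\xi$ changes sign, so I would first pass to its self-convolution. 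Concretely: (a) use~\eqref{E:wavcvl1} in the form $sG(s,y)\le2(G(s,\cdot)*G(s,\cdot))(y)$ together with Fubini to get, writing $\eta_s:=G(s,\cdot)*G(s,\cdot)$,
\[
\int_{\R}G(s,x+z)f(z)\,\ud z\le\frac2s\iint_{\R^{2}}G(s,x+z-w)G(s,w)f(z)\,\ud w\,\ud z=\frac2s\,(\eta_s*f)(x),
\]
the last identity coming from the substitutions $z'=x+z-w$, then $u=z'+w$, and the evenness of $f$; (b) observe $\widehat{\eta_s}=(\widehat{G(s,\cdot)})^{2}\ge0$, so $\eta_s*f$ is nonnegative-definite and hence attains its supremum at $0$, whence $(\eta_s*f)(x)\le(\eta_s*f)(0)=\int_{\R}\eta_s(w)f(w)\,\ud w$; (c) use $\eta_s(w)=\tfrac12G(2s,w)(2s-|w|)\le\tfrac s2\one_{[-2s,2s]}(w)$, then $s\le t$ and $f\ge0$, to conclude $\int_{\R}G(s,x+z)f(z)\,\ud z\le\int_{-2s}^{2s}f(w)\,\ud w\le\int_{-2t}^{2t}f(w)\,\ud w=2k(2t)$. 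Finiteness of $k(2t)$ then follows from Hypothesis~\ref{H:corre}, since $\one_{[-2t,2t]}(z)\le C_{t}\,p_{4t^{2}}(z)$ for a suitable $C_{t}$ reduces the integral to the heat-kernel quantity $\int_{\R}p_{4t^{2}}(z)f(z)\,\ud z$, which is finite by~\eqref{E:h}.

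I expect step (b) of part (iii) to be the main obstacle: one must resist estimating the off-centre integral of $f$ directly and instead route the argument through $\eta_s$, whose Fourier transform is a square — this is exactly what forces the supremum of $\eta_s*f$ to the origin and converts the estimate into the centred integral $\int_{-2s}^{2s}f$ up to the harmless factors supplied by~\eqref{E:wavcvl1}. Everything else is routine bookkeeping with indicator functions.
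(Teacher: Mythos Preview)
Your proposal is correct and follows essentially the same route as the paper: parts (i), (ii), (iv) are direct indicator-function computations, and for (iii) both you and the paper pass from $G(s,\cdot)$ to its self-convolution $\eta_s$ so as to have a nonnegative Fourier transform, then use Fourier-side positivity (you phrase it as ``nonnegative-definite, so the sup is at $0$''; the paper writes out Plancherel and bounds $|e^{ix\xi}|\le 1$) to centre the integral and recover $2k(2t)$. The only cosmetic differences are that in (iv) you recycle~\eqref{E:WaveFactor} to get $(G(t,\cdot)*G(s,\cdot))(x)\le \min(s,t)\,G(t+s,x)$ whereas the paper computes $\mathrm{Leb}([x-t,x+t]\cap[-s,s])$ directly, and your finiteness argument for $k(2t)$ dominates $\one_{[-2t,2t]}$ by a heat kernel rather than bounding $|\sin(t\xi)/\xi|^2$ by $C_t/(1+\xi^2)$ on the Fourier side.
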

\begin{proof}
  Part (i) is due to the following identity: for any $s,t\in \R_+$ and $x,y\in
  \R$,
  \begin{align*}
    \one_{[-t,t]} (x-y) \one_{[-s,s]} (y)
    = \one_{[-t,t]} (x-y) \one_{[-s,s]} (y)
    \one_{[-(s+t),t+s]} (x). 
  \end{align*}
  Part (ii) can be obtained by direct computation. As for part~(iii), by the
  nonnegativity of $f$ and non-decreasing property of $t\to G(t, x)$ with
  $x\in\R$ fixed, we see that
  \begin{align*}
    \sup_{(s,x)\in [0,t]\times \R} \int_{\R} \ud z \: G\left(s,x + z\right) f(z) \leq \sup_{x \in\R} \int_{\R} \ud z \: G\left(t,x + z\right) f(z)
  \end{align*}
  Next, thanks to the first inequality in~\eqref{E:wavcvl1}, for any $(t,x)\in
  \R_+\times \R$, we have that
  \begin{align*}
    \int_{\R} \ud z \: G \left(t,x + z\right) f(z)
    \leq & t^{-1} \int_{\R} \ud z \: G(2t,x + z) (2t-|x +z|) f(z)                           \\
    =    & \frac{2}{t}\iint_{\R^2}\ud y \ud z\: G(t, x + y) G (t, y - z) f(z),              \\
    =    & \frac{1}{\pi t} \int_{\R} e^{ix}\left|\frac{\sin (t\xi)}{\xi}\right|^2 \hat{f}(\ud\xi)
    \leq   \frac{1}{\pi t} \int_{\R} \left|\frac{\sin (t\xi)}{\xi}\right|^2 \hat{f}(\ud\xi) \\
    =    & \frac{2}{t} \iint_{\R^2}\ud y \ud y'\: G(t, y) G (t, y') f(y - y').
  \end{align*}
  Then apply the second inequality in~\eqref{E:wavcvl1} to the above upper bound
  to see that
  \begin{align*}
    \int_{\R} \ud z \: G \left(t,x + z\right) f(z)
    \le & 2 \int_{\R} \ud z \: G(2t, z) f(z) = 2 k(2t).
  \end{align*}
  Notice that thanks to Dalang's condition~\eqref{E:Dalang},
  \begin{align*}
    \frac{1}{\pi t} \int_{\R} \left|\frac{\sin (t\xi)}{\xi}\right|^2 \hat{f}(\ud\xi)
    \le C_t \int_\R \frac{\widehat{f}(\ud\xi)}{1+|\xi|^2}<\infty, \quad \text{for all $t>0$.}
  \end{align*}
  This proves part~(iii).

  Finally, as for part~(iv), it is straightforward to verify that
  $\int_{\R}\mathcal{G} (t,x)\ud x = 1$ and $G(t,x)^2 = \ell(t) \mathcal{G}
  (t,x)$. Moreover,
  \begin{align*}
    \MoveEqLeft
    \int_{\R} \mathcal{G} (t,x-y) \mathcal{G} (s,y) \ud y
    = \frac{\one_{[-(t+s), t+s]} (x)}{4st} \int_{\R} \one_{[-t,t]} (x - y) \one_{[-s,s]} (y) \ud y      \\
    =    & \frac{\one_{[-(t+s), t+s]} (x)}{4st} \times \mathrm{Leb} \left([-t+x, t+x]\cap [-s,s]\right) \\
    \leq & \frac{2 (s\wedge t) \one_{[-(t+s), t+s]} (x)}{4st}
    = \frac{\one_{[-(t+s), t+s]} (x)}{2 (s\vee t)}
    \leq \frac{\one_{[-(t+s), t+s]} (x)}{s + t}
    = 2 \mathcal{G}(t+s, x).
  \end{align*}
  Thus, the sub-semigroup relation in~\eqref{E:subsg} holds with $C^*=2$. This
  proves part~(iii).
\end{proof}

Thanks to part~(iv) of Lemma~\ref{L:WaveKernel}, we can obtain the moment bounds
using the same arguments as those in Theorem~\ref{T:mupfracdiff}. It is
important to note, however, that as discussed in Remark~\ref{R:mupfracdiff}, the
method in Theorem~\ref{T:mupfracdiff} is limited to the space-time white noise.
To get the desired result for the spatial colored noises, we need the following
result, which serves as the analog of Lemma~\ref{L:J1d=1}.

\begin{lemma}\label{L:J1d=1wav}
  Under the same setting as Theorem~\ref{T:mupwav}, $\mathcal{J}_1(t,x)$ defined
  in~\eqref{E:J1-SWE} is finite for all $(t,x)\in \R_+\times\R$. In particular,
  for every $(t,x) \in \R_+\times \R$,
  \begin{align*}
    \mathcal{J}_1 (t,x)
    \le 4 t k(2t) \left(t\int_{\R} G(t,x - y) \mu_0^2 (y)\ud y + 2 \left(t \int_{\R} |\mu_1|(\ud y) G(t,x - y)\right)^2\right)
    < \infty,
  \end{align*}
  where $k(t)$ is defined in~\eqref{E:k} with $p_t(x)$ replaced by $G(t,x)$.
\end{lemma}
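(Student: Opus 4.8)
The plan is to follow the template of the proof of Lemma~\ref{L:J1d=1}, substituting the wave-kernel estimates of Lemma~\ref{L:WaveKernel} for the heat-kernel bounds. First I would integrate out the primed variable in~\eqref{E:J1-SWE}: by the evenness of $f$ and of $G(\cdot,\circ)$ in the spatial variable, the change of variables $z=y'-y$ turns $\int_{\R}G(t-s,x-y')f(y-y')\,\ud y'$ into $\int_{\R}G(t-s,z+(y-x))f(z)\,\ud z$, which by part~(iii) of Lemma~\ref{L:WaveKernel} (with time parameter $t-s\in[0,t]$) is bounded by $2k(2t)$, a quantity that is finite under Hypothesis~\ref{H:corre}. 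Hence
\begin{align*}
  \calJ_1(t,x) \le 2k(2t)\int_0^t \ud s \int_{\R}\ud y\; G(t-s,x-y)\,\calJ_0^2(s,y),
\end{align*}
and it remains to estimate this double integral.

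Next I would split $\calJ_0$ into its position part $\tfrac12[\mu_0(y+s)+\mu_0(y-s)]$ and its velocity part $\int_{\R}\mu_1(\ud z)G(s,y-z)$. Using $(a+b)^2\le 2a^2+2b^2$, the convexity of the square, and $\bigl|\int_{\R}\mu_1(\ud z)G(s,y-z)\bigr|\le\int_{\R}|\mu_1|(\ud z)G(s,y-z)$, one gets $\calJ_0^2(s,y)\le \mu_0^2(y+s)+\mu_0^2(y-s)+2\bigl(\int_{\R}|\mu_1|(\ud z)G(s,y-z)\bigr)^2$. For the two position terms, the substitution $w=y\pm s$ recasts $\int_{\R}G(t-s,x-y)\mu_0^2(y\pm s)\,\ud y$ as an integral of $\mu_0^2(w)$ against $G(t-s,(x\pm s)-w)$, and since $|(x\pm s)-w|\le t-s$ forces $|x-w|\le t$ by the triangle inequality, one has the pointwise domination $G(t-s,(x\pm s)-w)\le G(t,x-w)$; integrating the resulting bound over $s\in[0,t]$ yields a contribution that is a constant multiple of $t\int_{\R}G(t,x-w)\mu_0^2(w)\,\ud w=\tfrac{t}{2}\int_{x-t}^{x+t}\mu_0^2(w)\,\ud w$, finite because $\mu_0\in L^2_{loc}(\R)$.

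The velocity term is the point that needs care. The key observation is a light-cone containment: if $G(t-s,x-y)\neq 0$, i.e.\ $|x-y|\le t-s$ with $s\le t$, then $[y-s,y+s]\subseteq[x-t,x+t]$, so that $\int_{\R}|\mu_1|(\ud z)G(s,y-z)\le \int_{\R}|\mu_1|(\ud z)G(t,x-z)$ on the entire region where the outer kernel is nonzero. This lets me replace the inner square by the $y$-independent quantity $\bigl(\int_{\R}|\mu_1|(\ud z)G(t,x-z)\bigr)^2$, pull it outside, and be left with $\int_0^t\ud s\int_{\R}G(t-s,x-y)\,\ud y=\int_0^t(t-s)\,\ud s=t^2/2$; the velocity contribution is therefore bounded by a constant multiple of $\bigl(t\int_{\R}|\mu_1|(\ud z)G(t,x-z)\bigr)^2$, finite since $\mu_1$ is locally finite. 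Combining the three pieces and using $k(2t)<\infty$ as established in the proof of part~(iii) of Lemma~\ref{L:WaveKernel} gives an inequality of the asserted form, the explicit numerical constants being read off by tracking the factors above. The only genuine obstacle is arranging the velocity term so that the \emph{square} of the linear functional $\int_{\R}|\mu_1|(\ud z)G(t,x-z)$ appears, rather than a cruder expression such as $|\mu_1|(\R)$ times a linear term; the light-cone containment is exactly what makes this work, and everything else is routine manipulation of the explicit kernel $G(t,x)=\tfrac12\one_{[-t,t]}(x)$.
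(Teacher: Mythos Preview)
Your argument is correct and follows the paper's template for the first reduction (integrating out $y'$ via part~(iii) of Lemma~\ref{L:WaveKernel}) and for the position part. The velocity part, however, you handle by a genuinely different and more elementary route: you observe the light-cone containment $[y-s,y+s]\subseteq[x-t,x+t]$ on the support of $G(t-s,x-y)$ and use it to replace $\int|\mu_1|(\ud z)\,G(s,y-z)$ by the $y$-independent quantity $\int|\mu_1|(\ud z)\,G(t,x-z)$ \emph{before} squaring. The paper instead writes $G=2G^2$, applies Minkowski's inequality in $L^2(\ud y)$ to pull the $|\mu_1|$-integral outside, and then invokes~\eqref{E:WaveFactor} on the resulting inner integral. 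Your argument is shorter and avoids Minkowski entirely; the paper's route is slightly more flexible (it would adapt to kernels not of indicator type), but for the wave kernel your direct containment is the cleaner choice.

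One small caveat: tracking your factors gives
\[
\calJ_1(t,x)\le 2k(2t)\Big(2t\int_{\R}G(t,x-w)\mu_0^2(w)\,\ud w+\big(t\!\int_{\R}|\mu_1|(\ud z)G(t,x-z)\big)^2\Big),
\]
which has the same structure as the stated bound but not the identical constants (the lemma's prefactor is $4tk(2t)$, and the two terms carry an extra factor of $t$ and $2$ respectively). Your bound is actually sharper for large $t$ and proves the finiteness claim just as well; the phrase ``the explicit numerical constants being read off'' slightly overstates the match, but this is cosmetic.
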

\begin{proof}
  We decompose $\mathcal{J}_0(t,x)$ in~\eqref{E:J0-SWE} into two parts:
  \begin{align*}
    \calJ_{0,1} (t,x) \coloneqq \frac{1}{2} [\mu_0(x+t) + \mu_0(x-t) ] \quad \mathrm{and} \quad
    \calJ_{0,2} (t,x) \coloneqq \int_{\R} \mu_1 (\ud y) G(t,x-y).
  \end{align*}
  Then, by change of variable,
  \begin{align*}
  \calJ_{1,1}(t,x)
  \coloneqq & \int_0^t \ud s \iint_{\R^2} \ud y \ud y'\: G\left(t-s,x-y\right) G\left(t-s,x-y'\right) f(y - y') \calJ_{0,1}(s,y)^2 \nonumber \\
    =       & \int_0^t \ud s \iint_{\R^2} \ud y \ud z\:  G\left(t-s,x-y\right) G\left(t-s,x - y + z\right) f(z) \calJ_{0,1}(s,y)^2.
  \end{align*}

  \noindent Using inequality~\eqref{E:wavcvl}, we have
  \begin{align*}
    \mathcal{J}_{1,1}(t,x)
    \le 2 k(2t) \int_0^t \ud s \int_{\R} \ud y\: G\left(t-s,x - y\right) \calJ_{0,1}(s,y)^2
    \eqqcolon 2 k(2t) \, \Theta_1(t).
  \end{align*}
  Notice that
  \begin{align*}
    \Theta_1(t)
    & \le \int_0^t \ud s \int_{\R}\ud  y\, G\left(t-s,x - y \right) \left(\mu_0^2(y+s)+ \mu_0^2(y-s)\right)\\
    & =   \int_{\R}\ud z \,\mu_0^2(z) \int_0^t \ud s\, \left[G\left(t-s,x - z + s \right) + G\left(t-s,x - z -s \right)\right].
  \end{align*}
  Because $s - |x-z|\le |x-z\pm s|\le t-s$, we see that the above $\ud
  s$-integral is bounded 
  $t\one_{\{|x-z|\le t\}} = 2t G(t,x-z)$. Hence, the condition $\mu_0\in
  L_{\text{loc}}^2\left(\R\right)$ implies that
  \begin{align*}
    \mathcal{J}_{1,1}(t,x)
    \le 2 t k(2t) \int_{x-t}^{x+t}\mu_0^2(z)\ud z
    =   4 t\, k(2t) \int_{\R} \ud z\, G (t,x-z) \mu_0^2(z) <\infty.
  \end{align*}

  It remains to show that
  \begin{align*}
    \calJ_{1,2}(t,x) \coloneqq \int_0^t \ud s \int_{\R^2} \ud y \ud y'\:
    G\left(t-s,x-y\right) G\left(t-s,x-y'\right) f(y - y') \calJ_{0,2}(s,y)^2 <
    \infty.
  \end{align*}
  Thanks to~\eqref{E:wavcvl}, we see that
  \begin{align*}
    \mathcal{J}_{1,2}(t,x)
    \le 2k(2t) \int_0^t \ud s\int_{\R} \ud y\, G\left(t-s,x - y\right) \calJ_{0,2}(s,y)^2
    \eqqcolon 2k(2t)\, \Theta_2(t).
  \end{align*}
  By the Minkowski inequality with respect to the $\ud y$-integral, we can write
  \begin{align*}
    \Theta_2(t) = 2 \int_{\R} \left[G(t-s, x-y) \calJ_{0,2} (s,y)\right]^2 \ud y
             \leq 2 \left[ \int_{\R} \left(\int_{\R} G(t-s, x-y) G(s,y-z)^2 \ud y \right)^{1/2} |\mu_1| (\ud z) \right]^2.
  \end{align*}
  Due to~\eqref{E:WaveFactor}, we see that $\int_{\R} G(t-s, x-y) G(s,y-z)^2 \ud
  y \le \frac{s}{4} \one_{[-t,t]} (x-z)$. Therefore, $\Theta_2(t) \le
  \frac{s}{2} \left(\int_{x-t}^{x+t} |\mu_1|(\ud z)\right)^2$ and
  \begin{gather*}
    \mathcal{J}_{1,2}(t,x)
    \le 2t^2\, k(2t)\left(\int_{x-t}^{x+t} |\mu_1|(\ud z)\right)^2
    = 8t^2\, k(2t)\left(\int_{\R} |\mu_1|(\ud z)  G(t, x - z)\right)^2 < \infty.
  \end{gather*}
  This completes the proof of Lemma~\ref{L:J1d=1wav}.
\end{proof}

Now we are ready to prove Theorem~\ref{T:mupwav}.

\begin{proof}[Proof of Theorem~\ref{T:mupwav}]
   It suffices to provide the proof for part (i), which follows a similar
   approach to that of Theorem~\ref{T:mup}, with one notable difference arising
   from the absence of the corresponding formula~\eqref{E:Factor} for the heat
   equation case. However, we can leverage~\eqref{E:WaveFactor} to overcome this
   obstacle and conclude
  \begin{align*}
    \MoveEqLeft \int_r^t \ud s \iint_{\R^2} \ud y \ud y'\, G(t-s, x-y) G(t-s, x'-y') G(s-r, y) G(s-r, y') f(y-y') \\
    \leq & G(t-r, x) G(t-r, x') \int_0^{t-r} \ud s \iint_{\R^2} \ud y \ud y'\,  G(s, y) G(s, y') f(y-y') \\
    =    & G(t-r, x) G(t-r, x') h(t-r)
    \le    G(t-r, x) G(t-r, x') h(t).
  \end{align*}
  The last sequence of inequalities play the same role as those
  in~\eqref{E:2hsck}. Following the same idea as in the proof of
  Theorem~\ref{T:mup}, we can show that
  \begin{align*}
    X \leq M^2 + 2h(t)^{-1} \mathcal{J}_1 (t,x) + 8K_M^2  p\, h(t) + 8p h(t) \rho_2
    (X),
  \end{align*}
  where $K_M$ is given in~\eqref{E:KM} and
  \begin{align*}
    X \coloneqq M^2 + h(t)^{-1} \int_0^t\ud s \iint_{\R^2} \ud y \ud y' G(t-s,x-y) G(t-s,x-y')
    f(y-y') \Norm{u(s,y)}_p^2.
  \end{align*}
  The remaining proof follows the same line of reasoning as presented in
  Theorem~\ref{T:mup},~\ref{T:Tail} and~\ref{T:Asymspc}, if one can show the
  H\"{o}lder continuity for solution(s) to~\eqref{E:SWE} under the improved
  Dalang condition. In the following, we only outline the proof for spatial
  continuity, and the time continuity can be verified similarly. Under the
  assumption that $\mu_0 \equiv 1$ and $\mu_1 \equiv 0$, we can write
  \begin{align*}
    \Norm{u(t,x) - u(t,y)}_p^2 \leq & C \int_0^t \ud s \iint_{\R^2} \ud z \ud z' f(z - z') |G(t-s, x - z) - G(t-s, y -z)|\\
    & \times |G(t-s, x - z') - G(t-s, y -z')| \Norm{\rho(u(s,z))}_p \Norm{\rho (u(s,z'))}_p\\
    \leq & C \int_0^t \ud s\iint_{\R^2} \ud z \ud z' f(z - z') |G(t-s, x - z) - G(t-s, y -z)|\\
    & \times |G(t-s, x - z') - G(t-s, y -z')|  \left(1 + F^{-1}(C_* p h(s))\right)^2.
  \end{align*}
  Then, by using the same arguments as in~\cite[Theorem
  5]{dalang.sanz-sole:05:regularity}, one finds that $u$ is H\"{o}lder
  continuous in space with and exponent $\alpha < \eta$ and constant of the form
  $C F^{-1}(C_* p h(t))$. The proof of Theorem~\ref{T:mupwav} is complete.
\end{proof}

\begin{remark}
  Parts (iii) and (iv) of Theorem~\ref{T:mupwav} requires that the initial
  conditions $\mu_0 \equiv 1$ and $\mu_1 \equiv 0$, which is not necessary. They
  are used for simplification of the proof of the H\"{o}lder continuity of the
  solution. It can be relaxed to e.g. $\mu_0\in H_2^{\eta}(\R)$ and $\mu_1 \in
  C(\R)$ with $|\mu_1|(\R) < \infty$; see~\cite{dalang.sanz-sole:05:regularity}.
\end{remark}

\section{Applications}\label{S:Application}

In this session, we will integrate the specific $\rho$ studied in
Section~\ref{S:Rho} and the noise structure, as detailed in
Appendix~\ref{S:app}, to derive more precise moment bounds, as well as, tail
probabilities and spatial asymptotics. In particular, we will demonstrate the
transitions of properties from the additive SHE to the PAM in
Section~\ref{SS:App-SHE}.

\subsection{Moment growth, tail probability and spatial asymptotics for SHE}\label{SS:App-SHE}

In this part, we showcase examples of the diffusion coefficient $\rho$ and apply
Theorems~\ref{T:mup},~\ref{T:Tail}, and~\ref{T:Asymspc} to derive moment bounds,
tail probabilities, and spatial asymptotics for SHE~\eqref{E:SHE}. We will use
$C$ to denote a generic constant that may change its value at each appearance,
but does not depend on either $p$ or $t$.

\paragraph{I.~} We first study the case when $\rho$ is given in
Proposition~\ref{P:Rho-Alpha}. We start with a simple example:

\begin{proposition}
  Under the setting of Theorem~\ref{T:mup} but with the following diffusion
  coefficient
  \begin{align*}
    \rho(u) = \frac{|u|}{ 1 + |u|},
  \end{align*}
  there exists a unique solution $u$ to SHE~\eqref{E:SHE} such that for all
  $t>0$, $x\in\R^d$ and $p\ge 2$,
  \begin{align}\label{E:SHEbdd}
    \Norm{u(t,x)}_p^2 \le 2\calJ_0^2(t,x) + 8p h(t).
  \end{align}
\end{proposition}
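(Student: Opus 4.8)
The plan is to exploit that $\rho(u)=|u|/(1+|u|)$ is simultaneously globally Lipschitz and uniformly bounded, which makes the argument much shorter than the general scheme behind Theorem~\ref{T:mup} and, in fact, produces a cleaner bound. Since $\rho$ is globally Lipschitz (as recorded in Proposition~\ref{P:Rho-Alpha}), the standard well-posedness theory for SHEs under rough initial data---see the references collected in Section~\ref{S:Remark}, e.g.~\cite{chen.dalang:15:moments*1, chen.kim:19:nonlinear}---provides a unique random field solution $u$ to~\eqref{E:SHE} satisfying $\sup_{(t,x)\in[0,T]\times\R^d}\Norm{u(t,x)}_p<\infty$ for every $T>0$ and $p\ge 2$. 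This a priori integrability is exactly what is needed to justify the stochastic-integral estimates below.

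For the moment bound I would carry out the same Burkholder--Davis--Gundy, Minkowski and Cauchy--Schwarz steps that yield~\eqref{E:BDG}, namely
\begin{align*}
  \Norm{u(t,x)}_p^2 \le 2\calJ_0^2(t,x) + 8p\int_0^t\ud s\iint_{\R^{2d}}\ud y\,\ud y'\: f(y-y')\,p_{t-s}(x-y)\Norm{\rho(u(s,y))}_p\,p_{t-s}(x-y')\Norm{\rho(u(s,y'))}_p,
\end{align*}
and then invoke $|\rho(v)|\le 1$ for all $v\in\R$, so that $\Norm{\rho(u(s,y))}_p\le 1$ and the last integral becomes purely deterministic.

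It then remains to recognize that the deterministic integral equals $h(t)$: by the substitution $s\mapsto t-s$, translation invariance of Lebesgue measure, and evenness of the heat kernel,
\begin{align*}
  \int_0^t\ud s\iint_{\R^{2d}}\ud y\,\ud y'\: p_{t-s}(x-y)p_{t-s}(x-y')f(y-y') = \int_0^t\ud s\iint_{\R^{2d}}\ud y\,\ud y'\: p_{s}(y)p_{s}(y')f(y-y') = h(t),
\end{align*}
the last equality being the definition~\eqref{E:h} of $h$ (finite by Hypothesis~\ref{H:corre}). Combining the two displays gives~\eqref{E:SHEbdd}.

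There is no real obstacle in this argument; the only point demanding attention is that the Burkholder--Davis--Gundy step presupposes $\Norm{u(t,x)}_p<\infty$, which is supplied by the Lipschitz well-posedness theory quoted above. It is worth stressing that~\eqref{E:SHEbdd} is sharper than the estimate one would obtain by specializing Theorem~\ref{T:mup} to this $\rho$, since the corresponding $F^{-1}$ from Proposition~\ref{P:Rho-Alpha} grows linearly in its argument rather than staying bounded---uniform boundedness of $\rho$ is precisely what allows one to bypass that loss.
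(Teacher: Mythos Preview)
Your proposal is correct and follows essentially the same approach as the paper's own proof: invoke global Lipschitz continuity of $\rho$ for well-posedness, apply the BDG--Minkowski--Cauchy--Schwarz chain leading to~\eqref{E:BDG}, bound $\Norm{\rho(u(s,y))}_p\le 1$ using $|\rho|\le 1$, and identify the remaining deterministic integral as $h(t)$. The paper also notes, as you do, that specializing Theorem~\ref{T:mup} via $F^{-1}(x)\le 8x$ would recover a bound of the same form (under bounded initial data), so your closing remark is in line with the authors' own commentary.
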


Note that the uniqueness comes from the fact that $\rho$ satisfies Lipschitz
condition and the moment bound in~\eqref{E:SHEbdd} coincides with the moment
bounds for the SHE with additive noise, the proof of which is straightforward by
noticing that
\begin{align*}
  \Norm{u(t,x)}_p^2
  \leq & 2 \calJ_0^2(t,x) + 8p \int_0^t \ud s \int_{\R^{2d}} \ud y \ud y' p_{t-s}(x-y)p_{t-s}(x-y') f(y-y').
\end{align*}
However, in case of the bounded initial condition and $t \geq 1$, one can also
obtain the moment bound~\eqref{E:SHEbdd} in the form of~\eqref{E:mup_smp} by
using $F^{-1}$ in~\eqref{E:F-Inv-Alpha} with $\alpha=0$: $F^{-1}(x) \le 8 x$.
\medskip

The next proposition examines the $\rho$ given in Proposition~\ref{P:Rho-Alpha}
in detail. We particularly focus on the impacts on the initial conditions on
the growth of moments.

\begin{proposition}\label{P:Alpha}
  Let $u$ be a solution to SHE~\eqref{E:SHE} under the same setting as
  Theorem~\ref{T:mup}, but with the diffusion coefficient $\rho$ given in
  Proposition~\ref{P:Rho-Alpha}. Then, for all
  $(p,t,x)\in[2,\infty)\times\R_+\times\R^d$, it holds that
  \begin{align*}
    \Norm{u(t,x)}_p^2 \leq C \left[
        \calJ_0^2(t,x)
        + p h(t)
        + \left(p h(t)\right)^{1/(1-\alpha)}
        + h(t)^{-1} \calJ_1(t,x)
      \right].
  \end{align*}
  Furthermore, thanks to Proposition~\ref{P:SHE-h}, we have the following
  special cases:
  \begin{enumerate}[(i)]
  \item If $d = 1$ and $f = \delta$, then
   \begin{align}\label{E:SHEalpha}
      \Norm{u(t,x)}_p^2
        \leq C \left(
          \calJ_0^2(t,x)
          + p\,\sqrt{t}
          + \left(p\, \sqrt{t}\right)^{1/(1-\alpha)}
          + \calJ_+^2 ( t/2, x )
        \right).
    \end{align}
    Moreover, if $\mu$ is a bounded function, then,
    \begin{align*}
     \Norm{u(t,x)}_p^2
        \leq C  \left(p\, \sqrt{t}\right)^{1/(1-\alpha)}, \quad t \geq 1.
    \end{align*}

    \item If $d = 1$ and $f(x) = |x|^{-\beta}$ with $\beta\in (0, 1)$, then
      \begin{align*}
        \Norm{u(t,x)}_p^2 \leq C \bigg(
          \calJ_0^2(t,x)
          + p\, t^{1 - \beta/2}
          + \Big(p\, t^{1 - \beta/2} \Big)^{1/(1-\alpha)}
          + \calJ_+^2 (t/2,x)
        \bigg).
      \end{align*}
      Again, if $\mu$ is bounded, then,
      \begin{align*}
        \Norm{u(t,x)}_p^2 \leq C  \left(p\, t^{1 - \beta/2}\right)^{1/(1-\alpha)}, \quad t \geq 1.
      \end{align*}

    \item If $f = |x|^{- \beta}$ with $\beta \in (0, 2 \wedge d)$, then for all $d\ge 1$
      it holds that
      \begin{align}\label{E:alpha-Riesz-corr}
        \Norm{u(t,x)}_p^2
        \leq C \left(
          \calJ_0^2(t,x)
          + p\, t^{1 - \beta/2}
          + \left(p\, t^{1 - \beta/2} \right)^{1/(1-\alpha)}
          + t^{-1+\beta/2}\, \calJ_1 ( t, x )
        \right).
      \end{align}
      Moreover,
      \begin{enumerate}[(a)]
        \item if $\mu(x) = |x|^{-\ell}$ with $\ell \in (0,2 \wedge d)$, then we
          have
          \begin{align}\label{E:alpha-Riesz}
            \Norm{u(t,x)}_p^2
            \leq C \left(
              t^{-\ell}
              + \left(p\, t^{1 - \beta/2}\right)^{1/(1-\alpha)} +   p\, t^{1 - \beta/2}
            \right);
          \end{align}
        \item and if $\mu(x) = e^{\ell |x|}$ with $\ell \in\R$, then with some
          universal constants $C_1$ and $C_2 >0$ it holds that
          \begin{align}\label{E:alpha-ExpInit}
            \begin{aligned}
            \Norm{u(t,x)}_p^2
            \leq C & \left(
              e^{ C_1 \ell^2 t + C_2 \ell |x|}
              + p\, t^{1 - \beta/2}
              + \left(p\, t^{1 - \beta/2}\right)^{1/(1-\alpha)}\right).
            \end{aligned}
          \end{align}
        \end{enumerate}
    \end{enumerate}
\end{proposition}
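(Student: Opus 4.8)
The plan is to specialise the general moment bound of Theorem~\ref{T:mup} to the diffusion coefficient treated in Proposition~\ref{P:Rho-Alpha} and then to feed in the growth rate of $h(t)$ recorded in Appendix~\ref{S:app}. Since this $\rho$ satisfies Hypothesis~\ref{H:rho} with $M_0=0$ and $|\rho|$ is concave separately on $\R_+$ and $\R_-$, part~(ii) of Theorem~\ref{T:mup} applies with $M=K_M=0$, so that
\[
  \Norm{u(t,x)}_p^2 \le 2\calJ_0^2(t,x) + 2(2\pi)^d\bigl(h(t)^{-1}\calJ_1(t,x) + F^{-1}(2p\,h(t))\bigr).
\]
Inserting $F^{-1}(x)\le(8x)^{1/(1-\alpha)}$ from Proposition~\ref{P:Rho-Alpha} and absorbing numerical constants gives the first displayed inequality of the proposition; the linear term $p\,h(t)$ is retained because for small $t$ it dominates $(p\,h(t))^{1/(1-\alpha)}$.

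For parts~(i) and~(ii) we have $d=1$, so Lemma~\ref{L:J1d=1} (equivalently part~(i) of Theorem~\ref{T:mup}) lets us replace $h(t)^{-1}\calJ_1(t,x)$ by a constant multiple of $\calJ_+^2(t/2,x)$. It then remains to insert the asymptotics of $h$ from Proposition~\ref{P:SHE-h}, namely $h(t)\asymp\sqrt t$ when $f=\delta_0$ and $h(t)\asymp t^{1-\beta/2}$ when $f(x)=|x|^{-\beta}$ with $\beta\in(0,1)$, to obtain~\eqref{E:SHEalpha} and its colored-noise analogue. When $\mu$ is a bounded function, $\calJ_0$ and $\calJ_+$ are bounded on $\R_+\times\R$, and for $t\ge1$ the term $(p\,h(t))^{1/(1-\alpha)}$ dominates the constant and the linear term; this yields the two simplified bounds.

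For part~(iii) the space dimension is arbitrary, so the $d=1$ simplification is no longer available: here we keep $\calJ_1$ and only use $h(t)\asymp t^{1-\beta/2}$ (valid for $\beta\in(0,2\wedge d)$ by Proposition~\ref{P:SHE-h}) to write $h(t)^{-1}\asymp t^{-1+\beta/2}$, which gives~\eqref{E:alpha-Riesz-corr}. For the two sub-cases one must bound $\calJ_0$ and $\calJ_1$ for the prescribed data. When $\mu(x)=e^{\ell|x|}$ with $\ell\ge0$, a Gaussian moment computation gives $\calJ_0(t,x)\lesssim e^{C_1'\ell^2 t + C_2'\ell|x|}$; plugging this into the bound $\calJ_1(t,x)\le\int_0^t k(t-s)\int_{\R^d}p_{t-s}(x-y)\calJ_0^2(s,y)\,\ud y\,\ud s$ of Lemma~\ref{L:J1d=1}, together with $\int_0^t k(u)\,\ud u = 2h(t/2)\le 2h(t)$, yields $h(t)^{-1}\calJ_1(t,x)\lesssim e^{C_1\ell^2 t + C_2\ell|x|}$ and hence~\eqref{E:alpha-ExpInit} (the case $\ell<0$ is covered by the bounded-initial-condition argument). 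When $\mu(x)=|x|^{-\ell}$ with $\ell\in(0,2\wedge d)$, one uses $\calJ_0(t,x)\lesssim t^{-\ell/2}$ (see~\cite{chen.eisenberg:22:invariant}) together with the exact joint scaling $\calJ_1(\lambda^2 t,\lambda x)=\lambda^{2-\beta-2\ell}\calJ_1(t,x)$, coming from the self-similarity of the heat kernel, of $f$, and of $\mu$; this reduces the uniform-in-$x$ estimate to $\sup_x\calJ_1(1,x)=\calJ_1(1,0)<\infty$, whence $h(t)^{-1}\calJ_1(t,x)\lesssim t^{-\ell}$ and~\eqref{E:alpha-Riesz} follows.

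The main obstacle is precisely this last, uniform-in-$x$, estimate of $\calJ_1$ for the Riesz-type datum: the naive route of pulling $\sup_y\calJ_0^2(s,y)\lesssim s^{-\ell}$ out of the spatial integral produces the time integral $\int_0^t(t-s)^{-\beta/2}s^{-\ell}\,\ud s$, which diverges as soon as $\ell\ge1$ (possible when $d\ge2$). One must instead keep the convolution, use the sharp bound $\calJ_0(s,y)\asymp(\sqrt s+|y|)^{-\ell}$, and observe that the potentially singular region $\{|y|\lesssim\sqrt s\}$ carries a volume factor $s^{d/2}$ that restores integrability under $\ell<2\wedge d$ (equivalently, makes the scaling exponent exceed $-1$); these verifications, together with the $h(t)$-asymptotics quoted above, are carried out in Appendix~\ref{S:app_exam}.
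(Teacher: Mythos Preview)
Your outline is essentially the paper's proof: specialise Theorem~\ref{T:mup}(ii) (valid since $M_0=0$), insert $F^{-1}(x)\le(8x)^{1/(1-\alpha)}$ from Proposition~\ref{P:Rho-Alpha}, and plug in the $h(t)$-asymptotics from Proposition~\ref{P:SHE-h}; the $d=1$ replacement of $h(t)^{-1}\calJ_1$ by $\calJ_+^2(t/2,x)$ via Lemma~\ref{L:J1d=1} is exactly what the paper does. For~(iii)(a) the paper takes a shorter route than your scaling argument: it simply quotes~\cite[Example~5.8]{chen.eisenberg:22:invariant} for both $\calJ_0(t,x)\le c_1 t^{-\ell/2}$ and $\calJ_1(t,x)\le c_2 t^{1-\ell-\beta/2}$. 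Your scaling identity $\calJ_1(\lambda^2 t,\lambda x)=\lambda^{2-\beta-2\ell}\calJ_1(t,x)$ is correct and is a legitimate alternative, but the assertion ``$\sup_x\calJ_1(1,x)=\calJ_1(1,0)$'' is not justified; what you actually need (and what your last paragraph addresses) is only $\sup_x\calJ_1(1,x)<\infty$.

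There is, however, a genuine gap in your treatment of~(iii)(b) when $\ell<0$. Invoking the bounded-initial-data argument only yields $\calJ_0^2(t,x)+h(t)^{-1}\calJ_1(t,x)\le C$, a constant in $x$; but the target~\eqref{E:alpha-ExpInit} asks for the bound $e^{C_1\ell^2 t + C_2\ell|x|}$ with $C_2>0$, which for $\ell<0$ decays to $0$ as $|x|\to\infty$. Since the remaining terms $p\,t^{1-\beta/2}$ and $(p\,t^{1-\beta/2})^{1/(1-\alpha)}$ can be made arbitrarily small by taking $t$ small, the constant cannot be absorbed and the shortcut does not deliver~\eqref{E:alpha-ExpInit} as stated. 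The paper handles all $\ell\in\R$ uniformly: from $c_1\sum_i|x_i|\le|x|\le c_2\sum_i|x_i|$ one gets $e^{\ell|x|}\le\prod_i e^{c\ell|x_i|}$ with $c=c_2$ if $\ell\ge0$ and $c=c_1$ if $\ell<0$, so the one-dimensional Gaussian--Laplace estimate (\cite[Proposition~A.11]{chen.dalang:15:moments*1}) gives $\calJ_0(t,x)\le 2e^{c^2 d\ell^2 t + c\ell|x|}$ for every $\ell$. Feeding this into the definition of $\calJ_1$ together with the Riesz-kernel bound $\int_{\R^d}p_{t-s}(x-y')|y-y'|^{-\beta}\,\ud y'\le c_3(t-s)^{-\beta/2}$ then produces $h(t)^{-1}\calJ_1(t,x)\lesssim e^{C_1\ell^2 t + C_2\ell|x|}$ with the required decay.
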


The proof of this proposition is essentially an application of
Proposition~\ref{P:Rho-Alpha} together with
Theorems~\ref{T:mup},~\ref{T:Tail},~\ref{T:Asymspc}, and
Proposition~\ref{P:SHE-h}. Some more details regarding the role of initial
conditions are given in Appendix~\ref{S:app_exam}.

\begin{remark}
  (1) It is clear that in Case (iii)--(b) of Proposition~\ref{P:Alpha}, if $\ell
  > 0$, as $t\to\infty$, $\Norm{u(t,x)}_p^2$ is bounded by an exponent function of
  $t$, which is dominated by the explosion rate of $\mu$ at infinity, and the
  stochastic fluctuation makes fewer contributions in the moment bounds. (2)
  When $r>0$, $\rho$ is globally Lipschitz continuous (see part~1 of
  Proposition~\ref{P:Rho-Alpha}) and the solution is unique.
\end{remark}

An application of Proposition~\ref{P:Rho-Alpha} together with
Theorems~\ref{T:mup},~\ref{T:Tail}, and~\ref{T:Asymspc} yields the following:

\begin{proposition}\label{P:AlphaCst}
  Under the same setting as Proposition~\ref{P:Alpha}, but with bounded the
  initial condition. Then for all $p\ge 2$, $t \geq 1$ and $x\in\R^d$,
  the following statements hold:
  \begin{align}\label{E:alphaCst-mnt}
    \Norm{u(t,x)}_p^2                    & \lesssim \left(p h(t)\right)^{1 /(1-\alpha)},         &  & \text{as $p \vee t\to\infty$,}                          \\
    \log \P \left(|u(t,x)| \geq z\right) & \lesssim -\frac{z^{2(1-\alpha)}}{h(t)},               &  & \text{as $z\to \infty$,} \label{E:alphaCst-Tail} \\
    \sup_{|x|\leq R} u(t,x)              & \lesssim \left[h(t) \log R \right]^{1/(2(1-\alpha))}, &  & \text{a.s. as $R\to\infty$.} \label{E:alphaCst-x}
  \end{align}
\end{proposition}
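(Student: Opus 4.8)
The plan is to read off all three bounds from results already in hand, by substituting into them the explicit $F$ and $F^{-1}$ computed in Proposition~\ref{P:Rho-Alpha} for $\rho(u) = |u|/(r+|u|)^{1-\alpha}$. Recall from that proposition that $M_0 = 0$, that $\rho$ does not vanish identically on $(-\infty,-M_0]\cup[M_0,\infty)$, that $F(u) = \tfrac18\bigl(r + u^{1/2}\bigr)^{2(1-\alpha)}$ for $u\ge 0$, and that $F^{-1}(x)\le (8x)^{1/(1-\alpha)}$ for all $x\ge 0$.

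For the moment bound~\eqref{E:alphaCst-mnt}: since the initial condition is bounded and $\rho$ is non-degenerate away from the origin, the simplified estimate~\eqref{E:mup_smp} in part (iii) of Theorem~\ref{T:mup} applies, so $\Norm{u(t,x)}_p^2 \le C_* F^{-1}(C_* p\,h(t))$. Bounding $F^{-1}$ as above gives $\Norm{u(t,x)}_p^2 \le C_*\bigl(8 C_* p\,h(t)\bigr)^{1/(1-\alpha)} \lesssim (p\,h(t))^{1/(1-\alpha)}$, which is~\eqref{E:alphaCst-mnt} — indeed with an inequality valid for every $p\ge 2$, $t\ge 1$, not just asymptotically.

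For the tail estimate~\eqref{E:alphaCst-Tail}, I would invoke Theorem~\ref{T:Tail} with the constant initial condition $\mu\equiv 1$, which gives $\P(|u(t,x)|\ge z)\le \exp\bigl(-(C_* h(t))^{-1} F(z^2/(C_* e^2))\bigr)$ for $z\ge L_t$. Since $r\ge 0$ we have $F(u)\ge \tfrac18 u^{1-\alpha}$, hence $F(z^2/(C_* e^2)) \ge c\,z^{2(1-\alpha)}$ with $c = \tfrac18 (C_* e^2)^{-(1-\alpha)}$, whence $\log\P(|u(t,x)|\ge z) \le -c\,z^{2(1-\alpha)}/(C_* h(t)) \lesssim -z^{2(1-\alpha)}/h(t)$ as $z\to\infty$. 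Likewise, for~\eqref{E:alphaCst-x} I would apply Theorem~\ref{T:Asymspc} (which additionally requires Hypothesis~\ref{H:efcdalang}, assumed in force), obtaining $\sup_{|x|\le R} u(t,x)\lesssim \sqrt{F^{-1}(C\,h(t)\log R)}$ a.s. as $R\to\infty$; using $F^{-1}(x)\le(8x)^{1/(1-\alpha)}$ once more, $\sqrt{F^{-1}(C\,h(t)\log R)} \le (8C\,h(t)\log R)^{1/(2(1-\alpha))} \lesssim [h(t)\log R]^{1/(2(1-\alpha))}$.

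All of this is a routine substitution, so there is no genuine analytic obstacle; the proposition is essentially a transcription of Theorems~\ref{T:mup},~\ref{T:Tail} and~\ref{T:Asymspc} into the concrete $\rho$ at hand. The only points deserving a line of verification are that the hypotheses of the invoked theorems truly hold here — the non-degeneracy of $\rho$ on $(-\infty,-M_0]\cup[M_0,\infty)$, the boundedness of the initial data needed for~\eqref{E:mup_smp} and for Theorem~\ref{T:Tail} (where one specializes to $\mu\equiv 1$), and Hypothesis~\ref{H:efcdalang} needed for Theorem~\ref{T:Asymspc} — and that the parameters $r$, $\alpha$, $C_*$ are simply absorbed into the generic constant $C$, which is harmless since every conclusion is asserted only up to such a constant.
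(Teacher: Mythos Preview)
Your proposal is correct and follows exactly the approach the paper indicates: the paper's ``proof'' is the single sentence preceding the proposition, saying that an application of Proposition~\ref{P:Rho-Alpha} together with Theorems~\ref{T:mup},~\ref{T:Tail} and~\ref{T:Asymspc} yields the result. You have spelled out precisely this substitution, including the relevant side conditions (non-degeneracy of $\rho$, bounded initial data for~\eqref{E:mup_smp}, the need for Hypothesis~\ref{H:efcdalang} in Theorem~\ref{T:Asymspc}, and that Theorems~\ref{T:Tail} and~\ref{T:Asymspc} are stated for $\mu\equiv 1$).
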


\begin{remark}
  In comparison to the tail estimates for super Brownian motions found
  in~\cite[Proposition 1.4]{hu.wang.ea:23:moment}, the tail estimate given
  by~\eqref{E:alphaCst-Tail} is sharp for the case when $\alpha = 1/2$. Since
  when $\alpha = 0$, the bound in~\eqref{E:alphaCst-x} coincides with the exact
  asymptotics demonstrated in~\cite[Theorem 1.2]{conus.joseph.ea:13:on}
  and~\cite[Theorem 2.3]{conus.joseph.ea:13:on*1}, we believe that the bound
  in~\eqref{E:alphaCst-x} is sharp for all $\alpha\in [0,1)$.
\end{remark}

\paragraph{II.~} Now we study the case when $\rho$ is given in
Proposition~\ref{P:Rho-Log}. An application of Proposition~\ref{P:Rho-Log}
together with Theorems~\ref{T:mup},~\ref{T:Tail},~\ref{T:Asymspc}, and
Proposition~\ref{P:SHE-h} ($h(t)=\sqrt{t/\pi}$) yields part (a) of the following
proposition:

\begin{proposition}\label{P:Log}
  Let $u$ be a solution to SHE~\eqref{E:SHE} under the setting that $d =1$, $\mu
  \equiv 1$, $f = \delta$, and the diffusion coefficient $\rho$ given in
  Proposition~\ref{P:Rho-Log}. Then, for all $p\ge 2$, $t \geq 1$, and $x\in\R$,
  the following statements hold:
  \begin{enumerate}[(a)]
    \item In cases (i) and (ii) in~\eqref{E:Ex-LogRho}, namely,
      $(\alpha,\beta)\in(0,1)\times\R$ or $(\alpha,\beta)\in
      \{0\}\times(-\infty,0)$,
      \begin{align}
         \Norm{u(t,x)}_p^2                   & \lesssim \left(p\sqrt{t}\right)^{1/(1-\alpha)} \left[\log\left( p \sqrt{t}\:\right)\right]^{-2\beta/(1-\alpha)},\label{E_:log_1}         &  & \text{as $p \vee t\to\infty$,}          \\
         \log \P\left(|u(t,x)| \geq z\right) & \lesssim - z^{2(1-\alpha)}t^{-1/2}\left[\log z\right]^{2\beta},                                                                          &  & \text{as $z\to\infty$},\label{E_:log_2} \\
         \sup_{|x|\leq R} u(t,x)             & \lesssim \left[\sqrt{t}\:\log R\right]^{\frac{1}{2(1-\alpha)}} \left[\log\left(\sqrt{t}\:\log R\right)\right]^{-\frac{\beta}{1-\alpha}}, &  & \text{a.s. as $R\to \infty$.}\label{E_:log_3}
      \end{align}
    \item In case (iii) in~\eqref{E:Ex-LogRho}, namely, $\alpha = 1$ and
      $\beta>0$, by setting $\beta^*\coloneqq \beta\wedge (1/4)$,
      \begin{align}
        \Norm{u(t,x)}_p^2                 & \lesssim \exp\left(C\left(p^2 t\right)^{1/(4\beta^*)}\right), \label{E_:exp_1} &  & \text{as $p \vee t\to\infty$,}          \\
        \log \P\left(u(t,x) \geq z\right) & \lesssim - t^{-1/2}\left[\log z\right]^{1 + 2\beta^*},                         &  & \text{as $z\to\infty$},\label{E_:exp_2} \\
        \sup_{|x|\leq R} u(t,x)           & \lesssim \exp\left(C \left(\sqrt{t} \log R\right)^{1/(1 + 2\beta^*)}\right),   &  & \text{a.s. as $R\to \infty$.}\label{E_:exp_3}
      \end{align}
  \end{enumerate}
\end{proposition}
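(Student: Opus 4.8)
The plan is to specialise Theorems~\ref{T:mup},~\ref{T:Tail} and~\ref{T:Asymspc} to the present data ($d=1$, $\mu\equiv 1$, $f=\delta_0$) and then substitute the explicit formulas for $F$ and $F^{-1}$ from Proposition~\ref{P:Rho-Log} together with the value $h(t)=\sqrt{t/\pi}$ furnished by Proposition~\ref{P:SHE-h}. First one records the reductions that hold under these hypotheses: $\calJ_0(t,x)\equiv 1$ and $\calJ_+(t/2,x)\equiv 1$ because the heat semigroup fixes constants, so by Lemma~\ref{L:J1d=1} the term $h(t)^{-1}\calJ_1(t,x)$ is bounded by the absolute constant $2^{3/2}\pi$; moreover $\rho$ satisfies Hypothesis~\ref{H:rho} and is not identically zero on $(-\infty,-M_0]\cup[M_0,\infty)$ by Proposition~\ref{P:Rho-Log}, while $f=\delta_0$ on $\R$ satisfies Hypothesis~\ref{H:efcdalang} for any $\eta<1/2$. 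Hence part~(iii) of Theorem~\ref{T:mup}, Theorem~\ref{T:Tail} and Theorem~\ref{T:Asymspc} all apply, and since $\mu\equiv 1$ is bounded the simplified moment bound $\Norm{u(t,x)}_p^2\le C_* F^{-1}(C_* p\,h(t))$ of~\eqref{E:mup_smp} is in force.

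For part~(a), where $\alpha\in[0,1)$: inserting the asymptotics $F^{-1}(x)\lesssim \big[x(\log x)^{-2\beta}\big]^{1/(1-\alpha)}$ of Proposition~\ref{P:Rho-Log} into $\Norm{u(t,x)}_p^2\le C_* F^{-1}(C_* p\,h(t))$ with $x=C_* p\,h(t)\asymp p\sqrt{t}$ and $\log x\asymp\log(p\sqrt t)$ yields~\eqref{E_:log_1}. For~\eqref{E_:log_2} one applies Theorem~\ref{T:Tail} and substitutes $F(u)=\tfrac18 u^{1-\alpha}[\log(e+u)]^{2\beta}$ from~\eqref{E:F-Log}, so that $F\big(z^2/(C_*e^2)\big)\asymp z^{2(1-\alpha)}(\log z)^{2\beta}$ as $z\to\infty$ and $(C_* h(t))^{-1}\asymp t^{-1/2}$. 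For~\eqref{E_:log_3} one applies Theorem~\ref{T:Asymspc} and substitutes the same $F^{-1}$ asymptotics with argument $C\,h(t)\log R\asymp\sqrt t\,\log R$, then takes square roots. These three steps are only bookkeeping in the ``$\lesssim/\asymp$'' calculus once Proposition~\ref{P:Rho-Log} is available.

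Part~(b), with $\alpha=1$, is different because $1/(1-\alpha)$ degenerates; here one uses $F(u)=\tfrac18[\log(e+u)]^{2\beta}$ and $F^{-1}(x)=\big(\exp(2^{3/(2\beta)}x^{1/(2\beta)})-e\big)\one_{\{x>1/8\}}$. Feeding $F^{-1}$ into~\eqref{E:mup_smp} with $x\asymp p\sqrt t$ gives $\Norm{u(t,x)}_p^2\lesssim \exp\big(C(p\sqrt t)^{1/(2\beta)}\big)=\exp\big(C(p^2 t)^{1/(4\beta)}\big)$; when $\beta>\tfrac14$ the exponent $1/(4\beta)<1$ and, since $p^2 t\ge1$ in the regime $p\vee t\to\infty$, this may be weakened to $\exp(Cp^2 t)$ (or, equivalently, one may invoke the moment comparison with the parabolic Anderson model, since $\rho$ is globally Lipschitz with $|\rho(u)|\le|u|$), which is exactly why the statement is phrased with $\beta^*=\beta\wedge\tfrac14$; this proves~\eqref{E_:exp_1}. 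For~\eqref{E_:exp_2} I would not quote the verbatim conclusion of Theorem~\ref{T:Tail} (which in Case~(iii) only yields the weaker exponent $2\beta$ on $\log z$) but re-run its proof: with the above $F^{-1}$ the function $H(p)=\tfrac p2\log\big(C_* F^{-1}(C_* p\,h(t))\big)$ is, up to constants, $p^{\,1+1/(2\beta^*)}t^{\,1/(4\beta^*)}$, and the Legendre-type transform of a monomial $c\,p^{q}t^{r}$ is $\asymp y^{q/(q-1)}t^{-r/(q-1)}$; with $q=1+1/(2\beta^*)$ this produces the exponent $q/(q-1)=1+2\beta^*$ on $y=\log z$ and the prefactor $t^{-1/2}$. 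Finally~\eqref{E_:exp_3} follows by feeding this tail bound into the Borel--Cantelli argument in the proof of Theorem~\ref{T:Asymspc}: one takes $Q(R)$ with $t^{-1/2}(\log Q(R))^{1+2\beta^*}\asymp\log R$, i.e. $Q(R)\asymp\exp\big(C(\sqrt t\,\log R)^{1/(1+2\beta^*)}\big)$, and checks summability of the tail probabilities exactly as there.

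The genuinely delicate point is the tail estimate in part~(b): the bound stated in Theorem~\ref{T:Tail} rests on the lower bound $H^*\ge (C_* h(t))^{-1}F(\cdot)$ for the Legendre transform, and in Case~(iii) this loses a full $\log$-power relative to the true order $1+2\beta^*$, so one must evaluate (or sharply lower-bound) the Legendre transform of $H$ itself and then verify that the single exponent $1+2\beta^*$ majorises the outcome in both regimes $\beta\le\tfrac14$ and $\beta>\tfrac14$. Everything else — absorbing the finitely many small values of $z$ near $L_t$ and of $R$ below the relevant threshold, and passing from $\mathbb Z^d$-maxima to suprema over unit balls — is identical to the corresponding steps in the proofs of Theorems~\ref{T:Tail} and~\ref{T:Asymspc}.
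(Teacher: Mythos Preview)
Your proposal is correct and follows essentially the same approach as the paper: part~(a) is obtained by direct substitution of Proposition~\ref{P:Rho-Log} and $h(t)=\sqrt{t/\pi}$ into Theorems~\ref{T:mup},~\ref{T:Tail},~\ref{T:Asymspc}, and for part~(b) the paper, like you, bypasses the generic bound of Theorem~\ref{T:Tail} and instead computes $H(p)\asymp p^{1+1/(2\beta^*)}t^{1/(4\beta^*)}$ and its Legendre-type transform directly to obtain the exponent $1+2\beta^*$, then reruns the Borel--Cantelli argument for~\eqref{E_:exp_3}. The only cosmetic difference is that for~\eqref{E_:exp_1} the paper justifies $\beta^*$ solely via the moment comparison principle, whereas you also give the elementary weakening argument (valid since $p\ge 2$, $t\ge 1$ force $p^2t\ge 4$); both are fine.
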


\begin{proof}[Proof of part (b) of Proposition~\ref{P:Log}]
  For~\eqref{E_:exp_1}, if we apply $F^{-1}(\cdot)$ given in~\eqref{E:F-Inv-Log}
  to the moment formula~\eqref{E:mup_smp}, we obtain
  \begin{align*}
    \Norm{u(t,x)}_p^2 \leq C\exp\left( C \left(p^2 t\right)^{1/(4\beta)} \right).
  \end{align*}
  But by invoking the moment comparison principle
  (see~\cite{joseph.khoshnevisan.ea:17:strong, chen.kim:20:stochastic}), we find
  that the exponent of $t$ cannot exceed that of the parabolic Anderson model.
  Therefore, we use $\beta^*$ in the exponent of $t$ in~\eqref{E_:exp_1}.

  The tail estimate in~\eqref{E_:exp_2} is better than that obtained by setting
  $\alpha=1$ in~\eqref{E_:log_2}, the latter of which produces of the power
  $2\beta^*$ (instead of $2\beta^*+1$), namely,
  \begin{align*}
    \log \P(|u(t,x)| \geq z) \lesssim - t^{-1/2}\left(\log z\right)^{2\beta^*},\quad \text{for all } z>0 \text{ large enough}.
  \end{align*}
  Indeed, the tail probability in~\eqref{E_:exp_2} can be obtained by directly
  computing $H(p)$ in~\eqref{E:H}
  (with $CF^{-1}(Cph(t))$ replaced by the
  sharper moment upper bound in~\eqref{E_:exp_1})
   and the corresponding
  Legendre-type transform $H^*(y)$ in~\eqref{E:H*}:
  \begin{align*}
    H(p)   = C p^{\frac{1 + 2\beta^*}{2\beta^*}} t^{1/(4\beta^*)} \quad \Rightarrow \quad
    H^*(y) = \frac{C}{\sqrt{t}} y^{1+2\beta^*},\quad \text{for large $y>0$,}
  \end{align*}
  and then plugging $H^*(y)$ back to~\eqref{E_:tail1}. Finally,
  applying~\eqref{E_:exp_2} in the proof of Theorem~\ref{T:Asymspc}
  proves~\eqref{E_:exp_3}.
\end{proof}

\paragraph{III.~} Here we study the case when $\rho$ is given in
Proposition~\ref{P:Rho-VSV}. An application of Proposition~\ref{P:Rho-VSV}
together with Theorems~\ref{T:mup},~\ref{T:Tail},~\ref{T:Asymspc}, and
Proposition~\ref{P:SHE-h} ($h(t)=\sqrt{t/\pi}$) yields the following:

\begin{proposition}\label{P:VSV}
  Suppose that $d =1$, $\mu \equiv 1$, $f = \delta$, and the diffusion
  coefficient $\rho$ given in Proposition~\ref{P:Rho-VSV} with $\kappa>1$. Then
  there exists a unique solution $u$ to SHE~\eqref{E:SHE} such that for all
  $p\ge 2$, $x\in\R$, and $t>1$, the following statements hold:
  \begin{align*}
    \Norm{u(t,x)}_p^2                  & \lesssim \exp\left(\exp\left\{ \left[(2\beta)^{-1}\log\left(C p\: \sqrt{t}\right)\right]^{1/\kappa}\right\}\right),              &  & \text{as $p \vee t\to\infty$,} \\
    \log \P\left(u(t,x) \geq z\right)  & \lesssim - \frac{1}{\sqrt{t}} \exp\Big\{2\beta \left[ \log (2) + \log \big(\log z\big)\right]^\kappa\Big\},                      &  & \text{as $z\to\infty$},
                                      \\
    \sup_{|x|\leq R} u(t,x)            & \lesssim \exp\left(\frac{1}{2}\exp\left\{ \left[(2\beta)^{-1}\log\left(C \sqrt{t}\log R\right)\right]^{1/\kappa}\right\}\right), &  & \text{a.s. as $R\to \infty$.}
  \end{align*}
\end{proposition}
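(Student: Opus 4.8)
The plan is to specialize the three general theorems already at hand---the simplified moment bound~\eqref{E:mup_smp} in Theorem~\ref{T:mup}, the tail estimate of Theorem~\ref{T:Tail}, and the spatial asymptotics of Theorem~\ref{T:Asymspc}---to the present data, using the closed forms of $F$ and $F^{-1}$ from Proposition~\ref{P:Rho-VSV} and the value $h(t)=\sqrt{t/\pi}$ supplied by Proposition~\ref{P:SHE-h} for $d=1$, $f=\delta$. First I would record that a unique random field solution exists: by part~1 of Proposition~\ref{P:Rho-VSV} the coefficient $\rho$ is globally Lipschitz, and $\mu\equiv 1$ is a bounded (hence rough) initial condition satisfying Hypothesis~\ref{H:rough} (part~(ii) being automatic in $d=1$ by Lemma~\ref{L:J1d=1}), so the well-posedness results recalled in Section~\ref{S:Remark} for globally Lipschitz coefficients apply. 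Next, since $\rho$ satisfies Hypothesis~\ref{H:rho} with some $M_0>0$ and is not identically zero on $(-\infty,-M_0]\cup[M_0,\infty)$ while $\mu\equiv1$ is bounded, part~(iii) of Theorem~\ref{T:mup} gives $\Norm{u(t,x)}_p^2\le C_* F^{-1}(C_* p\,h(t))$; inserting $h(t)=\sqrt{t/\pi}$ and the formula for $F^{-1}$ in~\eqref{E:F-Inv-VSV} (the relevant branch once $C_* p\sqrt{t/\pi}>1/8$, in particular as $p\vee t\to\infty$), and absorbing $C_*$, $8$, $\pi^{-1/2}$ together with the harmless $-e$ into a single $C$, produces the first displayed bound. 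This last step is the only place where $\kappa>1$ is used: for $\kappa>1$ the coefficient is more sublinear than case~(iii) of~\eqref{E:Ex-LogRho} (the two coincide at $\kappa=1$), so the $t$-growth coming out of $F^{-1}$ stays below the parabolic Anderson rate $e^{Ct}$ and one does not need the moment-comparison capping used for $\beta^*$ in Proposition~\ref{P:Log}.

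For the tail bound I would apply Theorem~\ref{T:Tail} with $\mu\equiv1$, which gives $\P(|u(t,x)|\ge z)\le\exp\big(-(C_* h(t))^{-1}F(z^2/(C_* e^2))\big)$ for $z\ge L_t$, and then plug in $F(u)=\frac{1}{8}\exp\big(2\beta(\log\log(e+u))^\kappa\big)$ and $h(t)=\sqrt{t/\pi}$. The remaining work---and the one genuinely delicate point---is checking that the double-exponential expression is insensitive to the multiplicative constants $C_*$ and $e^2$ hidden in the argument $z^2/(C_*e^2)$. Concretely, I would use $\log(e+z^2/(C_*e^2))=2\log z-\log(C_*e^2)+O(z^{-2})$, hence $\log\log(e+z^2/(C_*e^2))=\log\log z+\log 2+O(1/\log z)$, and then, since $(\log\log z)^{\kappa-1}/\log z\to0$, conclude by the mean value theorem that $\big(\log\log(e+z^2/(C_*e^2))\big)^\kappa=(\log 2+\log\log z)^\kappa+o(1)$, so that $F(z^2/(C_* e^2))\asymp\frac{1}{8}\exp\big(2\beta(\log 2+\log\log z)^\kappa\big)$ as $z\to\infty$; combined with $(C_*h(t))^{-1}\asymp t^{-1/2}$ this is precisely the stated second bound.

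Finally, the spatial asymptotics follow from Theorem~\ref{T:Asymspc}: its hypotheses hold here because $f=\delta$ satisfies the improved Dalang condition~\eqref{E:ED} in $d=1$ (any $\eta\in(0,1/2)$), $\rho$ satisfies Hypothesis~\ref{H:rho}, and $\rho$ is not identically zero past $M_0$, so $\sup_{|x|\le R}u(t,x)\lesssim\sqrt{F^{-1}(C\,h(t)\log R)}$ a.s.\ as $R\to\infty$. Substituting $h(t)=\sqrt{t/\pi}$ and $F^{-1}$ from~\eqref{E:F-Inv-VSV}, using $\sqrt{\exp\{A\}-e}\sim\exp\{A/2\}$ as $A\to\infty$, and absorbing the constants into $C$ gives the third bound. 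In summary, the proof is a direct substitution into results already proved; the main obstacle is the asymptotic bookkeeping in the tail estimate described above, where one must verify that the iterated logarithms and exponentials are stable under the multiplicative constants carried along from Theorem~\ref{T:Tail}.
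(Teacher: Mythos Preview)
Your proposal is correct and follows the same route as the paper, which simply states that the result is ``an application of Proposition~\ref{P:Rho-VSV} together with Theorems~\ref{T:mup},~\ref{T:Tail},~\ref{T:Asymspc}, and Proposition~\ref{P:SHE-h} ($h(t)=\sqrt{t/\pi}$)'' without further detail. Your careful asymptotic bookkeeping for the tail bound and your explanation of why $\kappa>1$ obviates the moment-comparison capping go beyond what the paper spells out, but are consistent with it.
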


We summarize the moment growth rates and the spatial asymptotics in
Figure~\ref{F:Summary} below.

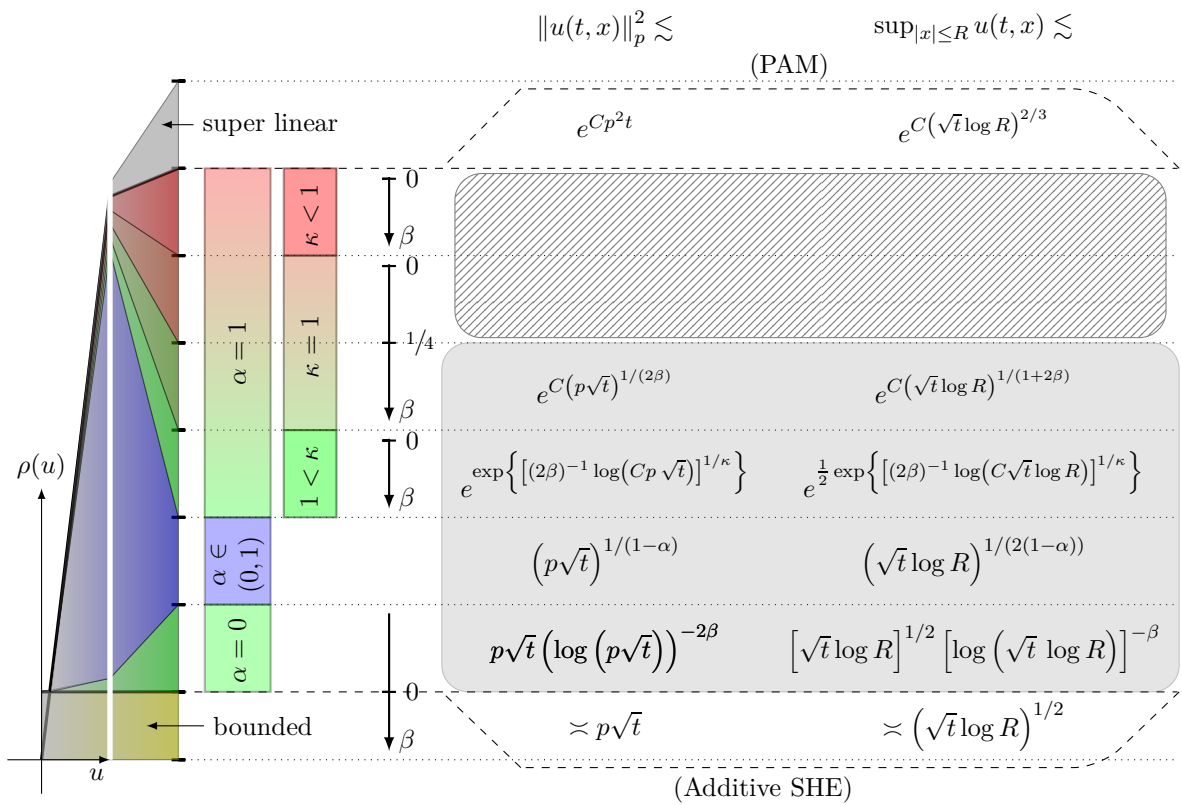
\begin{figure}[htpb!]
  \centering
  \begin{center}
    \begin{tikzpicture}[scale=0.9, transform shape]
      \tikzset{>=latex}
      \def\vd{0.2}
      \def\vx{1}
      \def\mag{1}
      \def\vy{9.0}
      \def\magbar{5.5}
      \newcommand{\MyColorH}{white}
      \newcommand{\MyColorG}{red}
      \newcommand{\MyColorF}{red!80!green}
      \newcommand{\MyColorE}{red!30!green}
      \newcommand{\MyColorD}{green}
      \newcommand{\MyColorC}{blue}
      \newcommand{\MyColorB}{green}
      \newcommand{\MyColorA}{yellow}

      \coordinate (o0) at (0,0);
      \coordinate (o1) at (0,1);           \coordinate (M0) at (\vx+\mag,0);
      \coordinate (R1) at (\vx,1.0);       \coordinate (M1) at (\vx+\mag,1);
      \coordinate (R2) at (\vx,1.0+1*\vd); \coordinate (M2) at (\vx+\mag,1+1*\vy/7);
      \coordinate (R3) at (\vx,8.5-5*\vd); \coordinate (M3) at (\vx+\mag,1+2*\vy/7);
      \coordinate (R4) at (\vx,8.5-4*\vd); \coordinate (M4) at (\vx+\mag,1+3*\vy/7);
      \coordinate (R5) at (\vx,8.5-3*\vd); \coordinate (M5) at (\vx+\mag,1+4*\vy/7);
      \coordinate (R6) at (\vx,8.5-2*\vd); \coordinate (M6) at (\vx+\mag,1+5*\vy/7);
      \coordinate (R7) at (\vx,8.5-1*\vd); \coordinate (M7) at (\vx+\mag,1+6*\vy/7);
      \coordinate (R8) at (\vx,8.5-0*\vd); \coordinate (M8) at (\vx+\mag,1+7*\vy/7);

      \begin{scope}[very thick]
        \draw [name path = Add] (0,1) -- (R1);
        \draw [name path = PAM] (0,0) -- (R7);
        \draw (R7) -- (M7);
        \draw (R1) -- (M1);
        \path[name intersections={of=Add and PAM}]
          (intersection-1) coordinate (o2);

        \draw (M0) --++ (0.1,0) --++(-0.2,0);
        \draw (M1) --++ (0.1,0) --++(-0.2,0);
        \draw (M2) --++ (0.1,0) --++(-0.2,0);
        \draw (M3) --++ (0.1,0) --++(-0.2,0);
        \draw (M4) --++ (0.1,0) --++(-0.2,0);
        \draw (M5) --++ (0.1,0) --++(-0.2,0);
        \draw (M6) --++ (0.1,0) --++(-0.2,0);
        \draw (M7) --++ (0.1,0) --++(-0.2,0);
        \draw (M8) --++ (0.1,0) --++(-0.2,0);

      \end{scope}

      \begin{scope}[dotted]
        \def\myShift{14.6}
        \draw         (M0) --++ (\myShift,0);
        \draw[dashed] (M1) --++ (\myShift,0);
        \draw         (M2) --++ (\myShift,0);
        \draw         (M3) --++ (\myShift,0);
        \draw         (M4) --++ (\myShift,0);
        \draw         (M5) --++ (\myShift,0);
        \draw         (M6) --++ (\myShift,0);
        \draw[dashed] (M7) --++ (\myShift,0);
        \draw         (M8) --++ (\myShift,0);
      \end{scope}

      \begin{scope}[thick]
        \def\myShift{8em}

        \begin{scope}[shorten >=0.3em, shorten <=0.3em]
          \draw[->] ([xshift = \myShift]M6) -- ([xshift = \myShift]M4) node [right, yshift = 0.7em] {$\beta$};
          \draw[->] ([xshift = \myShift]M2) -- ([xshift = \myShift]M0) node [right, yshift = 0.7em] {$\beta$};
          \draw[->] ([xshift = \myShift]M4) -- ([xshift = \myShift]M3) node [right, yshift = 0.7em] {$\beta$};
          \draw[->] ([xshift = \myShift]M7) -- ([xshift = \myShift]M6) node [right, yshift = 0.7em] {$\beta$};
        \end{scope}

        \draw ([xshift = \myShift, yshift = -0.4em]M7) --++(-0.1,0) --++(0.2,0) node [right] {$0$};
        \draw ([xshift = \myShift, yshift = -0.4em]M6) --++(-0.1,0) --++(0.2,0) node [right] {$0$};
        \draw ([xshift = \myShift, yshift = -0.4em]M4) --++(-0.1,0) --++(0.2,0) node [right] {$0$};
        \draw ([xshift = \myShift]M5) --++(-0.1,0) --++(0.2,0) node [right] {$\sfrac{1}{4}$};
        \draw ([xshift = \myShift]M1) --++(-0.1,0) --++(0.2,0) node [right] {$0$};
      \end{scope}

      \begin{scope}[thick, every node/.style={right, rotate=90, black, opacity = 1, align = center}]
        \def\myShift{1.0em}
        \def\myShiftPlus{3.5em}
        \draw[top color = \MyColorG, bottom color = \MyColorD,opacity =0.3] ([xshift = \myShift]M3) rectangle node [xshift = -2.0em] {$\alpha=1$} ([xshift = \myShiftPlus]M7);
        \draw[fill = \MyColorC, opacity =0.3]                               ([xshift = \myShift]M2) rectangle node [xshift = -2.0em, text width =3em, ] {$\alpha\in$\\$(0,1)$} ([xshift = \myShiftPlus]M3);
        \draw[fill = \MyColorB, opacity =0.3]                               ([xshift = \myShift]M1) rectangle node [xshift = -1.7em] {$\alpha=0$} ([xshift = \myShiftPlus]M2);
      \end{scope}

      \begin{scope}[thick, every node/.style={right, rotate=90, black, opacity = 1, xshift = -1.7em}]
        \def\myShift{4em}
        \def\myShiftPlus{6em}
        \draw[fill = \MyColorG,opacity =0.4]                                 ([xshift = \myShift]M6) rectangle node {$\kappa<1$} ([xshift = \myShiftPlus]M7);
        \draw[top color = \MyColorF, bottom color = \MyColorE, opacity =0.3] ([xshift = \myShift]M4) rectangle node {$\kappa=1$} ([xshift = \myShiftPlus]M6);
        \draw[fill = \MyColorD,opacity =0.4]                                 ([xshift = \myShift]M3) rectangle node {$1<\kappa$} ([xshift = \myShiftPlus]M4);
      \end{scope}

      \begin{scope}[left color = white, opacity = 0.4, draw = none]
        \filldraw [right color = \MyColorA] (o0) -- (o1) -- (R1) -- (M1) -- (M0) -- (o0);
        \filldraw [right color = \MyColorB] (o1) -- (R1) -- (M1) -- (M2) -- (R2) -- (o1);
        \filldraw [right color = \MyColorC] (o2) -- (R2) -- (M2) -- (M3) -- (R3) -- (o2);
        \filldraw [right color = \MyColorD] (o2) -- (R3) -- (M3) -- (M4) -- (R4) -- (o2);
        \filldraw [right color = \MyColorE] (o2) -- (R4) -- (M4) -- (M5) -- (R5) -- (o2);
        \filldraw [right color = \MyColorF] (o2) -- (R5) -- (M5) -- (M6) -- (R6) -- (o2);
        \filldraw [right color = \MyColorG] (o2) -- (R6) -- (M6) -- (M7) -- (R7) -- (o2);
        \filldraw [right color = \MyColorH] (o2) -- (R7) -- (M7) -- (M8) -- (R8) -- (o2);
      \end{scope}

      \filldraw[white, line width = 2pt] (\vx,0) -- (\vx, 10);
      \draw [->] (-0.5,0) -- (\vx,0) node [below, xshift =-0.5em] {$u$};
      \draw [->] (0,-0.5) -- (0,4) node [above] {$\rho(u)$};

      \begin{scope}[xshift =29em]
        \path (M0) -- (M1) node [midway, xshift = +3.2em] (A) {bounded};      \draw[->] (A) --++(-1.7,0);
        \path (M8) -- (M7) node [midway, xshift = +3.5em] (A) {super linear}; \draw[->] (A) --++(-1.6,0);
      \end{scope}

      \begin{scope}[every node/.style={midway, xshift = +3.2em}, rounded corners = 10pt]

        \draw[dashed] ([xshift = 13em, yshift = -0.3em]M0) -- ([xshift = 10em]M1) ([xshift = 38em]M1) -- ([xshift = 35em, yshift = -0.3em]M0) -- ([xshift = 13em, yshift = -0.3em]M0) node [midway, below,xshift = -5em]{(Additive SHE)};
        \draw[dashed] ([xshift = 13em, yshift = -0.3em]M8) -- ([xshift = 10em]M7) ([xshift = 38em]M7) -- ([xshift = 35em, yshift = -0.3em]M8) -- ([xshift = 13em, yshift = -0.3em]M8) node [midway, above,xshift = -4em]{(PAM)};
        \draw[fill = gray, opacity = 0.2] ([xshift = 10em]M1)  -- ([xshift = 10em]M5)  -- ([xshift = 38em]M5) -- ([xshift = 38em]M1) -- cycle;
        \draw[pattern=north east lines, pattern color = gray, draw = gray] ([xshift = 10.5em, yshift = +0.2em]M5) rectangle ([xshift = 37.5em,yshift = -0.2em]M7);

        \def\myShift{13em}
        \path ([xshift = \myShift]M0) -- ([xshift = \myShift]M1) node {$\asymp p\sqrt{t}$};
        \path ([xshift = \myShift]M1) -- ([xshift = \myShift]M2) node {$p\sqrt{t}\left(\log\left(p\sqrt{t}\right)\right)^{-2\beta}$};
        \path ([xshift = \myShift]M1) -- ([xshift = \myShift]M2) node {$p\sqrt{t}\left(\log\left(p\sqrt{t}\right)\right)^{-2\beta}$};
        \path ([xshift = \myShift]M2) -- ([xshift = \myShift]M3) node {$\left(p\sqrt{t}\right)^{1/(1-\alpha)}$};
        \path ([xshift = \myShift]M3) -- ([xshift = \myShift]M4) node {$e^{\exp\left\{ \left[(2\beta)^{-1}\log\left(C p\: \sqrt{t}\right)\right]^{1/\kappa}\right\} }$};
        \path ([xshift = \myShift]M4) -- ([xshift = \myShift]M5) node {$e^{C\left(p \sqrt{t}\right)^{1/(2\beta)}}$};
        \path ([xshift = \myShift]M7) -- ([xshift = \myShift]M8) node {$e^{C p^2 t}$};
        \path ([xshift = \myShift]M8) -- ([xshift = \myShift]M8) node [yshift = 2em] {$\Norm{u(t,x)}_p^2 \lesssim$};

        \def\myShift{27em}
        \path ([xshift = \myShift]M0) -- ([xshift = \myShift]M1) node {$\asymp \left(\sqrt{t}\log R\right)^{1/2}$};
        \path ([xshift = \myShift]M1) -- ([xshift = \myShift]M2) node {$\left[\sqrt{t}\log R\right]^{1/2} \left[\log\left(\sqrt{t}\:\log R\right)\right]^{-\beta}$};
        \path ([xshift = \myShift]M2) -- ([xshift = \myShift]M3) node {$\left(\sqrt{t}\log R\right)^{1/(2(1-\alpha))}$};
        \path ([xshift = \myShift]M3) -- ([xshift = \myShift]M4) node {$e^{\frac{1}{2}\exp\left\{ \left[(2\beta)^{-1}\log\left(C \sqrt{t}\log R\right)\right]^{1/\kappa}\right\}}$};
        \path ([xshift = \myShift]M4) -- ([xshift = \myShift]M5) node {$e^{ C \left(\sqrt{t} \log R\right)^{1/(1 + 2\beta)}}$};
        \path ([xshift = \myShift]M7) -- ([xshift = \myShift]M8) node {$e^{C\left(\sqrt{t}\log R\right)^{2/3}}$};
        \path ([xshift = \myShift]M8) -- ([xshift = \myShift]M8) node [yshift = 2em] {$\sup_{|x|\le R} u(t,x) \lesssim$};

      \end{scope}
    \end{tikzpicture}
  \end{center}

  \caption{Summary of results in Propositions~\ref{P:AlphaCst},~\ref{P:Log}
    and~\ref{P:VSV} in case of $d=1$, space-time white noise ($f=\delta$), and a
    constant initial condition ($\mu\equiv 1$). The moment asymptotics are
    considered under the condition that $x\in\R$ is arbitrary but fixed and
  $t\to\infty$, while the spatial asymptotics are considered under the condition
$R\to\infty$, with $t \geq 1$.}

  \label{F:Summary}
\end{figure}

\subsection{Moment bounds for other SPDEs}

In the previous Section~\ref{SS:App-SHE}, we studied the combinations of various
$\rho$ with different types of noise for SHE~\eqref{E:SHE}. One can readily
construct concrete examples for the fractional SPDE~\eqref{E:fracdiff} by
considering various $\rho$, as provided in
Propositions~\ref{P:Rho-Alpha},~\ref{P:Rho-Log}, and~\ref{P:Rho-VSV} (although
only for the space-time white noise case). Similarly, for the SWE~\eqref{E:SWE},
thanks to Proposition~\ref{P:SWE-h}, one can easily investigate examples with
various noise structures as presented in Proposition~\ref{P:SWE-h} and various
$\rho$, such as those given in Propositions~\ref{P:Rho-Alpha}, \ref{P:Rho-Log},
and~\ref{P:Rho-VSV}. This application is straightforward, so we won't carry out
the details here. Instead, we will consider one particular example below.

Let us consider the diffusion coefficient $\rho$ given in
Proposition~\ref{P:Rho-Alpha} (so that $F^{-1}(x) \le C x^{1/(1-\alpha)}$) and
restrict ourselves to the one-dimensional space-time case (i.e., $f = \delta_0$
and $d=1$).
\begin{enumerate}
  \item Fractional SPDE~\eqref{E:fracdiff} with $a=2$ and $\gamma=0$: In this
    case, $\sigma = 2-3b/2$. By Theorem~\ref{T:mupfracdiff}, we can deduce the
    moment growth of $u(t,x)$ the solution to equation~\eqref{E:fracdiff} is as
    follows:
    \begin{align}\label{E:fracdiffalpha}
      \Norm{u(t,x)}_p^2 \leq C \left(\calJ_0^2(t,x) +  t^{1 - 3b/2}\calJ_1(t, x) +  p\, t^{3\beta/2 - 1}  +  \left( p\, t^{3b/2 - 1}\right)^{1/(1 - \alpha)}\right).
    \end{align}
  \item SWE~\eqref{E:SWE}: Let $u$ be a solution to equation~\eqref{E:SWE}. It
    follows from Theorem~\ref{T:mupwav} that
    \begin{align}\label{E:swealpha}
      \Norm{u(t,x)}_p^2 \leq C\left(\calJ_0^2(t,x) +  t^{-2}\calJ_1(t, x) +  p\, t^{2}  +  \left( p\, t^{2}\right)^{1/(1 - \alpha)}\right).
    \end{align}
\end{enumerate}

Taking account of Lemma~\ref{L:J1d=1}, we find that
inequalities~\eqref{E:SHEalpha} and~\eqref{E:swealpha} are special cases of
inequality~\eqref{E:fracdiffalpha} when $b = 1$ and $b = 2$, respectively. This
aligns with the interpolation characteristic of the stochastic heat and wave
equation; see~\cite{chen.eisenberg:22:interpolating,chen.guo.ea:22:moments} for
more instances. Furthermore, with the initial condition $\mu_0 \equiv 1$ and
$\mu_1 \equiv 0$, we can write the tail estimate and spatial asymptotics (for
$t>1$ and $x\in\R$):

\begin{enumerate}
  \item for the solution to~\eqref{E:fracdiff}:
   \begin{gather*}
       \log \P\left(|u(t,x)| \geq z\right) \lesssim - t^{1 - 3b/2} z^{2(1-\alpha)}, \quad \text{as $z\to\infty$} \shortintertext{and}
      \sup_{|x|\le R} u(t,x) \lesssim \Big( t^{3b/2 - 1} \log R\Big)^{1/(2(1-\alpha))}\:, \quad \text{a.s., as $R \to \infty$;}
    \end{gather*}
  \item for the solution to~\eqref{E:SWE}:
   \begin{gather*}
      \log \P\left(|u(t,x)| \geq z\right) \lesssim - t^{-2} z^{2(1-\alpha)}, \quad \text{as $z\to\infty$} \shortintertext{and}
      \sup_{|x|\le R} u(t,x) \lesssim \Big( t^{2} \log R\Big)^{1/(2(1-\alpha))}\:, \quad \text{a.s., as $R \to \infty$.}
    \end{gather*}
\end{enumerate}

\appendix
\section{Asymptotic behaviors of \texorpdfstring{$h(t)$}{}}\label{S:app}

In this section, we present asymptotic behaviors of $h$ as in
Theorem~\ref{T:mup} at infinity with some well-known correlation functions in
Appendix~\ref{SS:apph}; see~\cite[Examples 1.2-1.5]{chen.kim:19:nonlinear}
and~\cite[Section 5.4]{chen.eisenberg:22:invariant}. Similar asymptotic
behaviors of $h$ for the SWE (defined as in Theorem~\ref{T:mupwav}) are
presented in Appendix~\ref{SS:appw}

\subsection{Stochastic heat equations}\label{SS:apph}

\begin{proposition}\label{P:SHE-h}
  Let $h(t)$ be defined in~\eqref{E:h}. The following statements hold:
  \begin{enumerate}[(i)]
    \item (Riesz Kernel) If $f(x) = |x|^{-\alpha}$ with $\alpha \in (0, 2\wedge
      d)$, then
      \begin{align*}
        h(t)= C \:t^{1-\alpha/2} \quad \text{with
        $C = \frac{\Gamma\left((d-\alpha)/2\right)}{2^{\alpha/2}(2-\alpha)\Gamma\left(1+d/2\right)}$}.
      \end{align*}
    \item (Ornstein-Uhlenbeck kernel) If $f(x) = \exp (-|x|^{\alpha})$ with
      $\alpha > 0$, then
      \begin{align} \label{E:SHE-OU}
        h(t) \asymp \begin{dcases}
          \sqrt{t}, & d=1, \\
          \log(t),  & d=2, \\
          1,        & d \geq 3,
        \end{dcases}
        \quad \text{as $t\to\infty$,} \quad \text{and} \quad
        h(t) \asymp t \quad \text{as $t\downarrow 0$,}
      \end{align}
      and in particular, when $\alpha = 2$,
      \begin{align}\label{E:SHE-OU_2}
        h(t)=
          \begin{dcases}
            \sqrt{2 t+1}-1                                            & \text{if $d=1$,} \\
            4^{-1}\log(1+2 t)                                         & \text{if $d=2$,} \\
            \left[(d-2) d \right]^{-1}\left(1-(1+2 t)^{1-d/2} \right) & \text{if $d\ge 3$.}
          \end{dcases}
      \end{align}
    \item (Brownian case: $W(t,x) = W(t)$) If $f(x)\equiv 1$, then $h(t) = t$ for
      all $t\ge 0$.
    \item (One-dimensional space-time white case) If $d=1$ and $f(x) = \delta
      (x)$, then $h(t) = \sqrt{t/\pi}$.
    \item (Bessel potential; see~\cite[Section 1.2.2]{grafakos:14:modern},
      aka.~Mat\'{e}rn correlation family; see~\cite{guttorp.gneiting:06:studies})
      Suppose that
      \begin{align}\label{E:Bessel}
        f_\nu(x) = \int_{\R^d} \frac{e^{-i x\cdot\xi}}{(1 + |\xi|^2)^{\nu/2}}\ud \xi ,\quad \nu > 0.
      \end{align}
      Then,
      \begin{align}\label{E:SHE-Bessel}
        h(t) \asymp
          \begin{dcases}
            \sqrt{t}, & d=1, \\
            \log(t),  & d=2, \\
            1,        & d \geq 3,
          \end{dcases}
          \qquad \text{as $t\to\infty$,} \quad \text{and} \quad
        h(t) \asymp t \quad \text{as $t\downarrow 0$.}
      \end{align}
      (Note that the large time asymptotics is the same as the
      Ornstein-Uhlenbeck kernel because $f_{\nu}(x)$ behaves like
      $e^{-\frac{|x|}{2}}$ for $|x|\geq 2$; see~\cite[Proposition
      1.2.5]{grafakos:14:modern}).
    \item (Bessel potential as the spectral measure) Suppose that $f_\nu(x) =
      \left(1 + |x|^2\right)^{-\nu/2}$ with $\nu > 0$. Then
      \begin{align}\label{E:SHE-Bessel_2}
        h(t) \asymp
          \begin{dcases}
            t^{1-\frac{\nu\wedge d}{2}}, & \nu \wedge d < 2, \\
            \log(t),                     & \nu \wedge d = 2, \\
            1,                           & \nu \wedge d > 2,
          \end{dcases}
          \quad \text{as $t\to\infty$},
          \quad \text{and} \quad
        h(t) \asymp t \quad \text{as $t\downarrow 0$.}
      \end{align}
  \end{enumerate}
\end{proposition}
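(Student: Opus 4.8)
The plan is to reduce the entire proposition to a single one--variable asymptotic analysis. Recall from~\eqref{E:h} and~\eqref{E:k} that, with $k(s)=\int_{\R^d}p_s(z)f(z)\,\ud z$, one has $h(t)=\tfrac12\int_0^{2t}k(s)\,\ud s$, while Plancherel's theorem (exactly as in the proof of Lemma~\ref{L:J1d=1}) gives $k(s)=(2\pi)^{-d}\int_{\R^d}e^{-s|\xi|^2/2}\,\widehat f(\ud\xi)$; in particular $k$ is positive and non-increasing. So the only conceptual step is to determine the order of $k(s)$ as $s\downarrow 0$ and as $s\to\infty$, after which the behaviour of $h$ follows from an elementary integration rule: if $k(s)\asymp s^{-\gamma}$ near infinity with $\gamma\in[0,1)$ then $h(t)\asymp t^{1-\gamma}$ as $t\to\infty$, if $\gamma=1$ then $h(t)\asymp\log t$, and if $\gamma>1$ then $\int_0^\infty k<\infty$ and $h$ tends to a finite constant; the same rule applied near $s=0$ governs the small-$t$ behaviour.

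For the kernels that admit closed forms this is immediate. When $f\equiv 1$ we have $k\equiv 1$, so $h(t)=t$; when $d=1$ and $f=\delta_0$, $k(s)=p_s(0)=(2\pi s)^{-1/2}$ and a direct integration gives $h(t)=\sqrt{t/\pi}$; for the Riesz kernel $f(x)=|x|^{-\alpha}$ the self-similarity $z\mapsto\sqrt s\,z$ yields $k(s)=s^{-\alpha/2}\int_{\R^d}p_1(z)|z|^{-\alpha}\,\ud z$, and evaluating the remaining integral in polar coordinates (using $\int_0^\infty e^{-r^2/2}r^{d-1-\alpha}\,\ud r=2^{(d-\alpha)/2-1}\Gamma((d-\alpha)/2)$) and then integrating in $s$ produces the constant in~(i). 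Finally, for $\alpha=2$ in the Ornstein--Uhlenbeck family $p_s(z)f(z)$ is a pure Gaussian in $z$, so $k(s)$ is computed in closed form and one elementary integration gives~\eqref{E:SHE-OU_2}.

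For the genuinely asymptotic cases I would split according to the decay of $f$. If $f$ is bounded and integrable---this covers the Ornstein--Uhlenbeck kernels and the Bessel (Mat\'ern) kernels, for which $f_\nu(x)\lesssim e^{-|x|/2}$ when $|x|\ge 2$ by~\cite[Proposition~1.2.5]{grafakos:14:modern}, as well as $f_\nu(x)=(1+|x|^2)^{-\nu/2}$ with $\nu>d$---then writing $k(s)=(2\pi s)^{-d/2}\int e^{-|z|^2/(2s)}f(z)\,\ud z$ and applying dominated convergence gives $k(s)\sim (2\pi s)^{-d/2}\int_{\R^d}f$ as $s\to\infty$, hence $k(s)\asymp s^{-d/2}$; since $k(s)$ stays bounded near the origin, the integration rule yields $h(t)\asymp t$ as $t\downarrow 0$ and the trichotomy $\sqrt t,\ \log t,\ 1$ in dimensions $d=1,2,\ge 3$ at infinity, so~\eqref{E:SHE-OU} and~\eqref{E:SHE-Bessel} follow. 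When instead $f$ has only polynomial decay---namely $f_\nu(x)=(1+|x|^2)^{-\nu/2}$ with $\nu<d$, where $f_\nu(x)\asymp|x|^{-\nu}$ at infinity---one works on the spectral side: $\widehat f_\nu(\xi)\asymp|\xi|^{-(d-\nu)}$ near the origin, and substituting $\xi=\eta/\sqrt s$ in the low-frequency part of $k(s)=(2\pi)^{-d}\int e^{-s|\xi|^2/2}\widehat f_\nu(\ud\xi)$ shows that the large-$s$ order of $k$ is dictated by this singularity, giving $k(s)\asymp s^{-(\nu\wedge d)/2}$; integrating produces the three regimes of~\eqref{E:SHE-Bessel_2}, while near $s=0$ the boundedness of $f_\nu$ again gives $h(t)\asymp t$.

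The genuinely technical point is this two--sided control of $k(s)$ for the kernels with polynomial decay or polynomially singular spectral density: one must split the (spectral) integral into low- and high-frequency regions, rescale in each, pin down the exact exponent $\nu\wedge d$, and treat separately the logarithmic critical case $\nu\wedge d=2$; this leans on the precise near-origin and near-infinity behaviour of Bessel potentials, which is classical but must be quoted with care. Everything else reduces to routine Gaussian integration once the reduction $h(t)=\tfrac12\int_0^{2t}k(s)\,\ud s$ is in hand.
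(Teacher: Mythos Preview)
Your proposal is correct and, for parts (i)--(iv) and the Ornstein--Uhlenbeck case, follows essentially the same computations as the paper (the paper in fact cites \cite{chen.kim:19:nonlinear} for the closed-form cases and uses the same polar-coordinate bounds for~(ii), which is just your dominated-convergence observation written out by hand).

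The genuine difference is in parts (v) and (vi). The paper does not carry out the frequency-splitting/rescaling argument you outline; instead it quotes from~\cite{chen.eisenberg:22:invariant} an expression for $k(2t)=h'(t)$ in terms of the confluent hypergeometric function $U(a,b,t)$ and then reads off the large-$t$ behaviour from the standard asymptotics of $U$ in~\cite{olver.lozier.ea:10:nist}. Your route is more elementary and self-contained---it avoids special functions entirely and makes transparent why the exponent is $(\nu\wedge d)/2$---at the cost of having to hand-check the critical case $\nu\wedge d=2$ and the two-sided bounds on the Bessel kernel near the origin. One small slip: for part~(v) with $\nu\le d$ the Bessel kernel $f_\nu$ is \emph{not} bounded (it blows up like $|x|^{-(d-\nu)}$ at the origin), so your ``bounded and integrable'' label does not apply; however your actual argument only needs $f_\nu\in L^1(\R^d)$, which does hold for all $\nu>0$, so the dominated-convergence step for the large-$s$ asymptotics of $k(s)$ still goes through. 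For the same reason, the claim that $k(s)$ stays bounded near $s=0$ (and hence $h(t)\asymp t$ as $t\downarrow 0$) requires $f$ bounded near the origin; the paper handles this via L'H\^opital and the identity $\lim_{t\downarrow 0}h(t)/t=2f(0)$, which likewise needs $f$ continuous at~$0$.
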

\begin{proof}
  Parts (i), (iii), (iv) and~\eqref{E:SHE-OU_2} are from Examples 1.2--1.5 in
  Appendix of~\cite{chen.kim:19:nonlinear}. Moreover, the asymptotics at $0$
  in~\eqref{E:SHE-OU},~\eqref{E:SHE-Bessel} and~\eqref{E:SHE-Bessel_2} can be
  formulated by evaluating the following limit by L'H\^{o}pital's rule,
  \begin{align*}
  \lim_{t\downarrow 0}h(t)/t = 2 \lim_{t\downarrow 0} \langle p_{2t}, f\rangle = 2 f(0),
  \end{align*}
  for all $f$ that is locally continuously bounded and has growth of order
  $o(e^{\epsilon x^2})$ for all $\epsilon>0$ at infinity. \medskip

  \noindent\textit{Eq.~\eqref{E:SHE-OU} in part~(ii):~} Note that the
  correlation function is bounded by $1$. Thus,
  \begin{align*}
    k(t) = \int_{\R^d} \ud z p_t(z) f(z) \leq \int_{\R^d} \ud z p_t(z) = 1.
  \end{align*}
  On the other hand, using the polar coordinates, we can write
  \begin{align*}
    k(t) =   C t^{-\frac{d}{2}} \int_{\R} \ud x  |x|^{d-1} \exp \left( - \frac{x^2}{2t} - |x|^{\alpha} \right)
        \leq C t^{-\frac{d}{2}} \int_{\R} \ud x  |x|^{d-1} \exp \left( - |x|^{\alpha} \right)
        \leq C t^{-\frac{d}{2}}.
  \end{align*}
  Therefore, for $t\geq 1$
  \begin{align*}
    h(t) = \int_0^{2t} \ud s\, k(s) \leq \int_0^1 \ud s + C\int_1^{2t} \ud s\, s^{- d/2}
    \leq
    \begin{dcases}
      C\left(1+\sqrt{t}\right),    & d=1, \\
      C\left(1 + \log t \right),   & d=2, \\
      C\left(1+t^{1 - d/2}\right), & d\geq 3.
    \end{dcases}
  \end{align*}
  For the lower bound, using the polar coordinates again, we can deduce that for
  $t\geq 1$,
  \begin{align*}
    h(t) \geq C \int_1^{2t} \ud s\, s^{-\frac{d}{2}} \int_{\R} \ud x\, |x|^{d-1} \exp \left( - \frac{x^2}{2} - |x|^{\alpha} \right)
         \geq C \int_1^{2t} \ud s\, s^{- d/2}.
  \end{align*}
  This completes the proof of the asymptotics of $h(\cdot)$ at infinity.
  \bigskip

  \noindent\textit{Part (v):~} From the proof of~\cite[Proposition
  5.12]{chen.eisenberg:22:invariant}, we have that
  \begin{align*}
     k(2t) = h'(t) \asymp U \left(\frac{d}{2}, \frac{2+d-\nu}{2}, t\right),\qquad \mathrm{as}\ t\to\infty,
  \end{align*}
  where $U$ denotes the \textit{confluent hypergeometric function};
  see~\cite[Formula 13.4.4 on page 326]{olver.lozier.ea:10:nist}. Using
  asymptotic behavior of $U (a,b,t)$ as $t\to \infty$ (see Formula 13.2.6 on
  page 322, \textit{ibid.}), one obtains the large time asymptotics
  in~\eqref{E:SHE-Bessel}.  \bigskip

  \noindent\textit{Part (vi):~} From the proof of~\cite[Proposition
  5.13]{chen.eisenberg:22:invariant} and by the asymptotic analysis for $U$ as
  in the previous case, we deduce that
  \begin{align*}
     k(2t) \asymp\, t^{-\frac{d}{2}} U \left(\frac{d}{2}, \frac{2+d-\nu}{2}, \frac{1}{4t}\right)
           \asymp\, t^{-\frac{\nu \wedge d}{2}}, \qquad \mathrm{as}\ t\to\infty,
  \end{align*}
  from which one obtains the large time asymptotics of $h(t)$
  in~\eqref{E:SHE-Bessel_2}. This completes the whole proof of
  Proposition~\ref{P:SHE-h}.
\end{proof}

\subsection{One-dimensional stochastic wave equations}\label{SS:appw}

\begin{proposition}\label{P:SWE-h}
  Let $h(t)$ be defined as in~\eqref{E:SWE-h}. The following statements hold:
  \begin{enumerate}[(i)]
    \item (Riesz Kernel) If $f(x) = |x|^{-\alpha}$ with $\alpha \in (0, 1)$,
      then for all $t>0$,
      \begin{align*}
        h(t) = C t^{3-\alpha} \quad \text{with $C= \frac{2^{1-\alpha}}{(1-\alpha)(2-\alpha)(3-\alpha)}$.}
      \end{align*}
    \item (Ornstein-Uhlenbeck kernel) If $f(x) = \exp (-|x|^{\alpha})$ with
      $\alpha > 0$, then,
      \begin{align}\label{E:SWE-OU}
        h(t) \asymp\, t^2 \quad \text{as $t\to \infty$ and}  \quad
        h(t) \asymp\, t^3 \quad \text{as $t\downarrow 0$}.
      \end{align}
    \item (Brownian case: $W(t,x) = W(t)$) If $f(x)\equiv 1$, then $h(t) =
      t^3/3$ for all $t\ge 0$.
    \item (Space-time white case) If $f(x) = \delta (x)$, then $h(t) = t^2/2$
      for all $t\ge 0$.
    \item (Bessel potential) Suppose that $f_\nu(\cdot)$ is the Bessel potential
      given in~\eqref{E:Bessel} with $d=1$ and the parameter $\nu>0$. Then
      \begin{align} \label{E:SWE-Bessel}
        h(t) \asymp t^2 \quad \text{as $t\to\infty$,} \quad \text{and} \quad
        h(t) \asymp t^3 \quad \text{as $t\downarrow 0$.}
      \end{align}
    \item (Bessel potential as the spectral measure) Suppose that $f_\nu(x) =
      \left(1 + x^2\right)^{-\nu/2}$ with $\nu > 0$ and $x\in\R$. Then,
      \begin{align}\label{E:SWE-Bessel_2}
        h(t) \asymp \begin{dcases}
          t^{3-\nu},  & \nu < 1, \\
          t^2\log(t), & \nu = 1, \\
          t^2,        & \nu > 1,
        \end{dcases}
        \qquad \text{as $t\to\infty$,} \quad \text{and} \quad
        h(t) \asymp t^3 \quad \text{as $t\downarrow 0$.}
      \end{align}
  \end{enumerate}
\end{proposition}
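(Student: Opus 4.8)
The plan is to reduce $h(t)$ to a one–dimensional integral in two equivalent ways and then read off the asymptotics case by case. First, using the triangular–convolution identity in part~(ii) of Lemma~\ref{L:WaveKernel} (namely $2(G(s,\cdot)\ast G(s,\cdot))(z)=G(2s,z)(2s-|z|)$), for a genuine function $f$ one has
\[
  \iint_{\R^2} G(s,y)G(s,y')f(y-y')\,\ud y\,\ud y'
  = \int_\R \bigl(G(s,\cdot)\ast G(s,\cdot)\bigr)(z)\,f(z)\,\ud z
  = \frac14\int_{-2s}^{2s}(2s-|z|)f(z)\,\ud z ,
\]
so that $h(t)=\tfrac14\int_0^t\ud s\int_{-2s}^{2s}(2s-|z|)f(z)\,\ud z$. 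Since $\widehat{G(s,\cdot)}(\xi)=\sin(s\xi)/\xi$, the Plancherel theorem gives the alternative spectral form
\[
  h(t)=\frac{1}{2\pi}\int_\R\Bigl(\int_0^t\frac{\sin^2(s\xi)}{\xi^2}\,\ud s\Bigr)\widehat f(\ud\xi)
       =\frac{1}{4\pi}\int_\R\frac{1}{\xi^2}\Bigl(t-\frac{\sin(2t\xi)}{2\xi}\Bigr)\widehat f(\ud\xi),
\]
which is the right tool in the Bessel/Mat\'ern case~\eqref{E:Bessel}, where $\widehat f$ rather than $f$ is elementary.

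The closed-form cases follow at once. For $f\equiv1$ the inner $\ud z$–integral equals $4s^2$, so $h(t)=\int_0^ts^2\,\ud s=t^3/3$ (part~(iii)); for $f=\delta$ the inner double integral is $\|G(s,\cdot)\|_{L^2(\R)}^2$, which after one more integration gives the quadratic of part~(iv); and for the Riesz kernel $f(z)=|z|^{-\alpha}$ the substitution $z=2sw$ turns the $\ud z$–integral into $2(2s)^{2-\alpha}B(2,1-\alpha)$ with $B(2,1-\alpha)=\int_0^1(1-w)w^{-\alpha}\,\ud w$, and multiplying by $\tfrac14$ and integrating in $s$ produces exactly the constant $2^{1-\alpha}/[(1-\alpha)(2-\alpha)(3-\alpha)]$ and the power $t^{3-\alpha}$ of part~(i). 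For the Ornstein--Uhlenbeck kernel~(ii) I would work with $\Phi(s):=\tfrac14\int_{-2s}^{2s}(2s-|z|)e^{-|z|^\alpha}\,\ud z$ and sandwich it from above by $\tfrac s2\int_\R e^{-|z|^\alpha}\,\ud z=C_\alpha s$ (finite since $\alpha>0$) and from below by $\tfrac s4\int_{|z|\le1}e^{-|z|^\alpha}\,\ud z=c_\alpha s$ for $s\ge1$; integrating in $s$ gives $h(t)\asymp t^2$ at infinity. The behaviour at $0$ in~(ii), and in~(v)--(vi) whenever $f$ is locally bounded, comes from the single observation that $\int_{-2s}^{2s}(2s-|z|)\,\ud z=4s^2$, so $\Phi(s)\sim f(0)s^2$ as $s\downarrow0$ and hence $h(t)\sim\tfrac13 f(0)t^3$.

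The Bessel cases (v)--(vi) need a little more care at large $t$. In~(vi), where $f_\nu(z)=(1+z^2)^{-\nu/2}\asymp(1+|z|)^{-\nu}$, I would split the $\ud z$–integral defining $\Phi(s)$ at the scale $|z|\sim s$: for $\nu>1$ the kernel is integrable and $\Phi(s)\asymp s$; for $\nu=1$ the part $|z|\le s$ already contributes $\asymp s\log s$; for $\nu<1$ the rescaling $z=2sw$ shows $\Phi(s)\asymp s^{2-\nu}$ (the mass concentrates where $|z|\sim s$). Integrating in $s$ gives $h(t)\asymp t^2$, $t^2\log t$, $t^{3-\nu}$ respectively, while the small-$t$ behaviour $h(t)\asymp t^3$ follows from the remark above. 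In~(v), where $\widehat f_\nu(\xi)\asymp(1+|\xi|^2)^{-\nu/2}$, I would rescale the spectral integral by $\eta=t\xi$ to obtain
\[
  h(t)=\frac{c\,t^2}{4\pi}\int_\R\frac{1}{\eta^2}\Bigl(1-\frac{\sin(2\eta)}{2\eta}\Bigr)\Bigl(1+\frac{\eta^2}{t^2}\Bigr)^{-\nu/2}\ud\eta ,
\]
and note that the integrand is dominated by $\eta^{-2}\bigl(1-\sin(2\eta)/2\eta\bigr)$ (which is $\asymp1$ near $0$ and $\asymp\eta^{-2}$ at infinity, hence integrable) and converges to it pointwise as $t\to\infty$; dominated convergence then yields $h(t)\sim c't^2$, consistent with $f_\nu$ being an integrable, rapidly decaying correlation (indeed $f_\nu(x)\asymp e^{-|x|/2}$ for $|x|\ge2$).

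The main obstacle is exactly this large-time analysis of the two Bessel cases: carrying out the spatial split in~(vi) carefully enough to exhibit the transition across $\nu=1$, and justifying the dominated-convergence passage in~(v); all the remaining estimates are routine. Two useful cross-checks are that the SWE integrals reduce to the heat-equation ones of Proposition~\ref{P:SHE-h} upon replacing $\sin^2(s\xi)/\xi^2$ by $e^{-s|\xi|^2}$ (a strictly more elementary kernel), and that letting $\alpha\downarrow0$ or $\nu\uparrow\infty$ collapses the respective formulas to the Brownian case~(iii).
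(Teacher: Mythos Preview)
Your argument is correct and covers all six items. The overall organization matches the paper's: parts~(i), (iii), (iv) are computed directly, the small-$t$ asymptotics in (ii), (v), (vi) come from $f$ being bounded and positive near the origin, and the large-$t$ behaviour in (ii) and (vi) is obtained by sandwiching the spatial integral. The one place where your route genuinely differs is part~(v): the paper works with the spectral integral $\int_0^t\int_\R \sin^2(s\xi)\,\xi^{-2}(1+\xi^2)^{-\nu/2}\,\ud\xi\,\ud s$ and splits at $\xi=\pi/(2s)$, using $|\sin(s\xi)|\le s|\xi|$ on the inner piece and $|\sin(s\xi)|\le 1$ on the outer piece, treating the upper and lower bounds separately. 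Your rescaling $\eta=t\xi$ followed by dominated convergence is cleaner, giving the sharp asymptotic $h(t)\sim c't^2$ in one stroke rather than two matching inequalities; the paper's splitting is more hands-on but avoids the need to check integrability of $\eta^{-2}\bigl(1-\sin(2\eta)/(2\eta)\bigr)$. Also worth noting: you derive and use the reduced form $h(t)=\tfrac14\int_0^t\int_{-2s}^{2s}(2s-|z|)f(z)\,\ud z\,\ud s$ systematically from the triangular-convolution identity, whereas the paper only introduces the analogous change of variables $z=y-y'$, $z'=y+y'$ ad hoc in part~(vi); your uniform reduction makes the subsequent estimates slightly more transparent.
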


\begin{proof}
  Parts~(i), (iii) and~(iv) are straightforward. Similar as in
  Proposition~\ref{P:SHE-h}, the asymptotics for $t\downarrow 0$
  in~\eqref{E:SWE-OU}--\eqref{E:SWE-Bessel_2} are obtained because
  \begin{align*}
    \frac{c_1 t^3}{3}
    = c_1 \int_0^{t} \ud s \int_{\R^2} \ud y \ud y'\, G(s,y) G(s,y')
    \leq  \int_0^{t} \ud s \int_{\R^2} \ud y \ud y'\, G(s,y) G(s,y') f(y-y')
    \leq  \frac{c_2 t^3}{3},
  \end{align*}
  for all $t\in[0,\epsilon)$ and $f(\cdot)$ such that $0< c_1 \leq f(x) \leq
  c_2$ for all $x\in [-2\epsilon, 2\epsilon]$ with some $\epsilon>0$.

  For part~(ii), we first note that
  \begin{align*}
    h(t) \leq C \int_0^t \int_{-s}^s \ud y  \int_{\R} \ud y' f(y-y') \leq C t^2.
  \end{align*}
  For the low bounds, assume that$t > 1$. Then
  \begin{align*}
    h(t) \geq & C \int_1^t \ud s \int_{[-s,s]^2} \ud y \ud y' f(y-y') \one_{[-1,1]} (y-y')
    \geq        C 2^{\frac{3}{2}} e^{-2^{\alpha}}\int_1^t \ud s\,   (s-1) \geq C (t - 1)^2.
  \end{align*}
  This complete the proof of the large time asymptotics in~\eqref{E:SWE-OU}.

  As for part~(v), by using the Fourier transform of the wave kernel, we see
  that
  \begin{align*}
    h(t) = & (2\pi)^{-1}\int_0^t\ud s\int_\R\ud\xi \frac{\sin^2(s\xi)}{\xi^2 (1+\xi^2)^{\nu/2}}                                                                                              \\
      \leq & C \int_0^t\ud s \left(\int_{0}^{\frac{\pi}{2s}}\ud\xi \frac{(s\xi)^2}{\xi^2 (1+\xi^2)^{\nu/2}} + \int_{\frac{\pi}{2s}}^{\infty} \ud\xi \frac{1}{\xi^2 (1+\xi^2)^{\nu/2}}\right) \\
      \leq & C \int_0^t\ud s \left(\int_{0}^{\frac{\pi}{2s}} s^2 \ud\xi + \int_{\frac{\pi}{2s}}^{\infty} \frac{1}{\xi^2} \ud\xi\right)
      \leq   C\int_0^t \ud s (s + s) \leq C t^2.
  \end{align*}
  On the other hand, it is clear that
  \begin{align*}
    h(t)
    \geq & C \int_0^t\ud s \int_{0}^{\frac{\pi}{2s}}\ud\xi \frac{\sin^2 (s\xi)}{\xi^2 (1+\xi^2)^{\nu/2}}
    \geq   C \int_1^t \ud s \int_{0}^{\frac{\pi}{2s}}\ud\xi \frac{ (s\xi)^2}{\xi^2 (1+\xi^2)^{\nu/2}} \\
    \geq & C \int_1^t \ud s \int_{0}^{\frac{\pi}{2s}}\ud\xi \frac{ s^2}{(1+(\pi/2)^2)^{\nu/2}}
    \geq   C (t-1)^2,
  \end{align*}
  for all $t \geq 1$. This proves the large time asymptotics
  in~\eqref{E:SWE-Bessel}.

  As for part~(vi), by changing of variables $y - y' = z$ and $y + y' = z'$, we
  have that $\ud y \ud y' = 2^{-1} \ud z \ud z'$. Thus, for $t > 2$,
  \begin{align*}
    h(t) \leq \frac{1}{8} \int_0^t \ud s \int_{-2s}^{2s}  \int_{-2s}^{2s} \frac{\ud z \ud z'}{(1 + z^2)^{\nu/2}}
         \leq C \left(1 + \int_0^t \ud s\, s \int_{1}^{2s} \ud z\, z^{-\nu}\right)
  \end{align*}
  and
  \begin{align*}
    h(t) \geq \frac{1}{8} \int_2^t \ud s \int_{-s/2}^{s/2}  \int_{-s/2}^{s/2}  \frac{\ud z \ud z'}{(1 + z^2)^{\nu/2}}
    \geq \frac{1}{8} \int_2^t \ud s\, s \int_{1}^{s/2} \frac{\ud z}{(1 + z^2)^{\nu/2}}
    \geq \frac{1}{2^{3+\nu/2}} \int_2^t \ud s\, s \int_{1}^{s/2} \ud z z^{-\nu}.
  \end{align*}
  By considering three cases $\nu\in (0,1)$, $\nu =1$, and $\nu>1$ in the above
  two inequalities, we obtain the large time asymptotics
  in~\eqref{E:SWE-Bessel_2}. This completes the proof of
  Proposition~\ref{P:SWE-h}.
\end{proof}

\section{Supplementary proof for Proposition~\ref{P:Alpha}}\label{S:app_exam}

\begin{proof}[Proof of Proposition~\ref{P:Alpha}]
  All formulas in the special cases (i)--(iii), except~\eqref{E:alpha-Riesz}
  and~\eqref{E:alpha-ExpInit}, can be deduced by plugging in the specific $h(t)$
  given in Appendix~\ref{SS:apph}. For~\eqref{E:alpha-Riesz}, by~\cite[Example
  5.8]{chen.eisenberg:22:invariant}, we find that
  \begin{align*}
    \calJ_0(t,x) \leq c_1 t^{-\ell/2} \quad \text{and} \quad
    \calJ_1(t,x) \leq c_2 t^{1-\ell-\beta/2}.
  \end{align*}
  Then, plugging these inequalities into~\eqref{E:alpha-Riesz-corr}, one
  gets~\eqref{E:alpha-Riesz} immediately.

  The proof of~\eqref{E:alpha-ExpInit} also relies on the estimates for
  $\calJ_0(t,x)$ and $\calJ_1(t,x)$. By using the equivalence of norms in
  Euclidean spaces, there exists $c_2 \geq c_1 >0$ such that
  \begin{align*}
    c_1\sum_{i=1}^d |x_i| \leq |x|
    = \left(\sum_{i=1}^d x_i^2\right)^{1/2}
    \leq c_2 \sum_{i=1}^d |x_i|.
  \end{align*}
  It follows that
  \begin{align*}
    e^{\ell |x|}\leq
    \exp \left(c \ell \sum_{i=1}^d |x_i|\right),\quad \text{for all $x = (x_1,\dots, x_d)\in \R^d$,}
  \end{align*}
  where $c = c_2$ if $\ell \geq 0$ and otherwise $c = c_1$. As a result, by
  using~\cite[Proposition A.11]{chen.dalang:15:moments*1}, we can deduce that
  \begin{align}\label{E_:J0riz}
    \calJ_0(t,x)
    \leq \prod_{i=1}^d\int_{\R} \ud y_i e^{c \ell |y_i|} \frac{1}{\sqrt{2\pi t}} e^{-\frac{(x_i - y_i)^2}{2t}}
    \leq 2 e^{c^2 d \ell^2 t + c \ell |x|}.
  \end{align}
  We still need to estimate $\calJ_1(t,x)$. Using~\eqref{E:alpha-Riesz},
  by~\cite[Example 5.8]{chen.eisenberg:22:invariant}, we have
  \begin{align}\label{E:p*Riz}
    \int_{\R^d} \ud y p_t(x-y) |y|^{-\beta} \leq c_3 t^{-\beta/2},
  \end{align}
  with some universal constant $c_3 >0$. Hence, plugging~\eqref{E_:J0riz}
  and~\eqref{E:p*Riz} into~\eqref{E:J1}, it follows that
  \begin{align*}
    \calJ_1(t,x) \leq  2 c_3
    \int_0^t \ud s (t-s)^{-\beta/2}
    \int_{\R^{d}} \ud y \: p_{t-s} (x-y) e^{2c^2 d \ell^2 s + 2c \ell |y|}.
  \end{align*}
  Finally, following the same argument as in the proof of~\eqref{E_:J0riz}, one
  can show that
  \begin{align*}
    \calJ_1(t,x)
    \leq & 2 c_3 \int_0^t \ud s (t-s)^{-\beta/2} \exp\left(2c^2 d \ell^2 s + 4 c^4 d \ell^2 (t-s) + 2c^2 \ell |x|\right) \\
    \leq & t^{1-\beta/2} \exp \left(C_1 \ell^2 t + C_2 \ell |x|\right).
  \end{align*}
  This completes the proof of~\eqref{E:alpha-ExpInit}.
\end{proof}

\printbibliography[title={References}]

\end{document}